\newcommand{\Aut}       {\operatorname{Aut}}
\newcommand{\Coset}	{\operatorname{Coset}}
\newcommand{\Dec}       {\operatorname{Dec}}
\newcommand{\Div}       {\operatorname{Div}}
\newcommand{\End}       {\operatorname{End}}
\newcommand{\Fix}       {\operatorname{Fix}}
\newcommand{\Gal}       {\operatorname{Gal}}
\newcommand{\Hom}       {\operatorname{Hom}}
\newcommand{\Ind}       {\operatorname{Ind}}
\newcommand{\Iso}       {\operatorname{Iso}}
\newcommand{\Irr}       {\operatorname{Irr}}
\newcommand{\Map}       {\operatorname{Map}}
\newcommand{\Prim}	{\operatorname{Prim}}
\newcommand{\Rep}	{\operatorname{Rep}}
\newcommand{\Sub}	{\operatorname{Sub}}
\newcommand{\Tor}	{\operatorname{Tor}}
\newcommand{\Trans}	{\operatorname{Trans}}
\newcommand{\ann}       {\operatorname{ann}}
\newcommand{\euler}     {\operatorname{euler}}
\newcommand{\fix}       {\operatorname{fix}}
\newcommand{\img}       {\operatorname{image}}
\newcommand{\soc}       {\operatorname{soc}}
\newcommand{\spf}       {\operatorname{spf}}
\newcommand{\tr}        {\operatorname{tr}}
\newcommand{\res}       {\operatorname{res}}
\newcommand{\F}         {{\mathbb{F}}}
\newcommand{\Fp}        {{\mathbb{F}_p}}
\newcommand{\N}         {{\mathbb{N}}}
\newcommand{\Z}         {{\mathbb{Z}}}
\newcommand{\Zp}        {{\mathbb{Z}_p}}
\newcommand{\Q}         {{\mathbb{Q}}}
\newcommand{\R}         {{\mathbb{R}}}
\newcommand{\C}         {{\mathbb{C}}}
\newcommand{\CA}        {{\mathcal{A}}}
\newcommand{\CB}        {{\mathcal{B}}}
\newcommand{\CC}        {{\mathcal{C}}}
\newcommand{\CD}        {{\mathcal{D}}}
\newcommand{\CF}        {{\mathcal{F}}}
\newcommand{\CG}        {{\mathcal{G}}}
\newcommand{\CH}        {{\mathcal{H}}}
\newcommand{\CK}        {{\mathcal{K}}}
\newcommand{\CL}        {{\mathcal{L}}}
\newcommand{\CN}        {{\mathcal{N}}}
\newcommand{\CO}        {{\mathcal{O}}}
\newcommand{\CP}        {{\mathcal{P}}} 
\newcommand{\CS}        {{\mathcal{S}}}
\newcommand{\CV}        {{\mathcal{V}}}
\newcommand{\CW}        {{\mathcal{W}}}
\newcommand{\CX}        {{\mathcal{X}}}
\newcommand{\CXL}	{{\mathcal{XL}}}
\newcommand{\bG}        {\bar{G}}
\newcommand{\bI}	{\bar{I}}
\newcommand{\bJ}	{\bar{J}}
\newcommand{\bN}        {\bar{N}}
\newcommand{\bQ}	{\bar{Q}}
\newcommand{\bR}        {\bar{R}}
\newcommand{\bT}        {\bar{T}}
\newcommand{\bGW}	{\bar{G}W}
\newcommand{\bNW}	{\bar{N}W}
\newcommand{\bTW}	{\bar{T}W}
\newcommand{\bWF}	{W\bar{F}}
\newcommand{\bh}        {\bar{h}}
\newcommand{\bCL}	{{\overline{\mathcal{L}}}}
\newcommand{\bCV}	{{\overline{\mathcal{V}}}}
\newcommand{\hCV}	{{\widehat{\mathcal{V}}}}
\newcommand{\bCXL}	{{\mathcal{X}\overline{\mathcal{L}}}}
\newcommand{\tE}        {\widetilde{E}}
\newcommand{\tK}	{\widetilde{K}}
\newcommand{\tQ}	{\widetilde{Q}}
\newcommand{\GG}        {{\mathbb{G}}}
\newcommand{\HH}        {{\mathbb{H}}}
\newcommand{\ha}        {\widehat{a}}
\newcommand{\al}        {\alpha}
\newcommand{\bt}        {\beta} 
\newcommand{\gm}        {\gamma}
\newcommand{\dl}        {\delta}
\newcommand{\ep}        {\epsilon}
\newcommand{\zt}        {\zeta}
\newcommand{\tht}       {\theta}
\newcommand{\kp}        {\kappa}
\newcommand{\lm}        {\lambda}
\newcommand{\sg}        {\sigma}
\newcommand{\hxi}       {\widehat{\xi}}
\newcommand{\tphi}      {\widetilde{\phi}}
\newcommand{\tsg}       {\widetilde{\sg}}
\newcommand{\Gm}        {\Gamma}
\newcommand{\Dl}        {\Delta}
\newcommand{\Tht}       {\Theta}
\newcommand{\Sg}        {\Sigma}
\newcommand{\CPi}       {{\mathbb{C}P^\infty}}
\newcommand{\mxi}       {\mathfrak{m}}
\newcommand{\convto}    {\Longrightarrow}
\newcommand{\hot}       {\widehat{\otimes}}
\newcommand{\xra}       {\xrightarrow}
\newcommand{\xla}       {\xleftarrow}
\newcommand{\Sgip}	{\Sigma^\infty_+}
\newcommand{\bF}	{\bar{F}}
\newcommand{\ot}        {\otimes}
\newcommand{\Smash}     {\wedge}
\newcommand{\bigWedge}  {\bigvee}
\newcommand{\psb}[1]    {[\![#1]\!]}
\newcommand{\tm}        {\times}
\newcommand{\st}        {\;|\;}
\newcommand{\sm}        {\setminus}
\newcommand{\sse}       {\subseteq}
\newcommand{\ip}[1]     {\langle #1\rangle}
\newcommand{\bl}        {\bullet}
\newcommand{\ov}[1]     {\overline{#1}}
\newcommand{\aff}       {\mathbb{A}}
\newcommand{\TT}        {{\mathbb{T}}}
\newcommand{\bcf}[2]    {\binom{#1}{#2}}
\newcommand{\pwr}[2]    {#1 {}^\wedge #2}
\newcommand{\pwrb}[2]   {#1 {}^\wedge(#2)}
\newcommand{\pwrbb}[2]  {(#1 {}^\wedge(#2))}
\renewcommand{\:}{\colon}
\newtheorem{theorem}{Theorem}[section]
\newtheorem{lemma}[theorem]{Lemma}
\newtheorem{proposition}[theorem]{Proposition}
\newtheorem{corollary}[theorem]{Corollary}
\theoremstyle{definition}
\newtheorem{convention}[theorem]{Convention}
\newtheorem{remark}[theorem]{Remark}
\newtheorem{definition}[theorem]{Definition}
\newtheorem{example}[theorem]{Example}
\newtheorem{notation}[theorem]{Notation}
\newif\ifshowkeys
\newcommand{\lbl}[1]{\label{#1}\textup{[\texttt{#1}]}\par}
\newcommand{\lbl}{\label}
\begin{document}
\title{Chromatic (co)homology of finite general linear groups}
\author{S.~M.~A.~Hutchinson}
\author{S.~J.~Marsh}
\author{N.~P.~Strickland}

\begin{abstract}
 We study the Morava $E$-theory (at a prime $p$) of $BGL_d(F)$, where
 $F$ is a finite field with $|F|=1\pmod{p}$.  Taking all $d$ together,
 we obtain a structure with two products $\tm$ and $\bl$.  We
 prove that it is a polynomial ring under $\tm$, and that the module
 of $\tm$-indecomposables inherits a $\bl$-product, and we
 describe the structure of the resulting ring.  In the process, we
 prove many auxiliary structural results.
\end{abstract}


\maketitle 

\section{Introduction}
\lbl{sec-intro}

Let $K$ be the Morava $K$-theory of height $n$ at a prime $p>2$, and
let $E$ be the corresponding Morava $E$-theory.  (Some details of
these theories will be recalled in Section~\ref{sec-morava}.)    

Let $F$ be a finite field with $|F|=q=1\pmod{p}$.  The aim of this
paper is to understand $E^0(BGL_d(F))$ and various related groups, for
all $d$.  In some sense this is already known, by a theorem of
Tanabe~\cite{ta:mkc}.  To explain this, let $\bF$ be an algebraic
closure of $F$, and let $\Gm$ be the associated Galois group, which is
topologically generated by the map $\phi\:a\mapsto a^q$.  One can
then show that $E^0(BGL_d(\bF))$ is a formal power series ring 
$E^0\psb{c_1,\dotsc,c_d}$.  Now put $r_i=\phi^*(c_i)-c_i$.
\begin{theorem}[Tanabe]\lbl{thm-tanabe}
 The elements $r_i$ form a regular sequence in $E^0(BGL_d(\bF))$, and
 \[ E^0(BGL_d(F)) = E^0(BGL_d(\bF))_\Gm = 
     E^0\psb{c_1,\dotsc,c_d}/(r_1,\dotsc,r_d).
 \]
 Moreover, this is a finitely generated free module over $E^0$, and
 $E^1(BGL_d(F))=0$. 
\end{theorem}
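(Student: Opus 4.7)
The plan is to realize $BGL_d(F)$ as a homotopy equalizer of $\mathrm{id}$ and Frobenius on $BGL_d(\bF)$, apply an Eilenberg-Moore-type spectral sequence in Morava $E$-theory, and reduce the regular sequence claim to a computation modulo the maximal ideal of $E^0$ via the formal group law.

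Following Quillen and Friedlander, the Lang-Steinberg theorem yields a homotopy pullback square with corners $BGL_d(F)$, two copies of $BGL_d(\bF)$, and $BGL_d(\bF) \tm BGL_d(\bF)$, with the diagonal $\Delta$ along one side and $(1,\phi)$ on the other. Setting $A = E^0(BGL_d(\bF)) = E^0\psb{c_1,\dotsc,c_d}$, the associated Eilenberg-Moore spectral sequence (whose convergence in this pro-free/$L$-complete setting should follow by standard arguments) has the form
\[
\Tor^{A\hot A}_{*,*}(A,A) \convto E^*(BGL_d(F)),
\]
where the two $A$-module structures come from $\Delta^*$ and $(1,\phi)^*$. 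The kernel of $\Delta^*\:A \hot A \to A$ is generated by the regular sequence $c_i \ot 1 - 1 \ot c_i$ inside the power series ring $A \hot A$, so its Koszul complex gives a free resolution of $A$. Tensoring with $A$ through $(1,\phi)^*$ sends these generators to $c_i - \phi^*(c_i) = -r_i$, and the $E_2$-page becomes the Koszul homology $H_*(K_\bl(r_1,\dotsc,r_d;A))$.

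The crux is to show that $(r_1,\dotsc,r_d)$ is a regular sequence in $A$. The natural route is to reduce modulo the maximal ideal $\mxi \subset E^0$, so that $A/\mxi A$ is (the completion of) $K^0(BGL_d(\bF)) \cong K^0\psb{c_1,\dotsc,c_d}$. On $BGL_1(\bF) = \mathrm{colim}_m B\bF_{p^m}^\tm$ the Frobenius acts as the $q$-th power map, giving $\phi^*(c) = [q]_F(c)$, the $q$-series of the formal group law of $E$. Setting $a = v_p(q-1)$, standard formal group calculations yield $[q]_F(c) - c \equiv u v_n^a c^{p^{na}} \pmod{\mxi}$ up to higher-order terms, for some unit $u$. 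By the splitting principle each $\bar r_i$ acquires an analogous leading term as an elementary symmetric function in formal roots. A dimension count then shows $(A/\mxi A)/(\bar r_1,\dotsc,\bar r_d)$ is a finite-dimensional vector space over the residue field, so by Nakayama $A/(r_1,\dotsc,r_d)$ is a finitely generated $E^0$-module of Krull dimension $n = \dim E^0$. Since $A$ itself has Krull dimension $n + d$, the sequence $(r_i)$ is regular.

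With regularity in hand, the Koszul homology is concentrated in degree zero and equal to $A/(r_1,\dotsc,r_d)$, which sits in even total degree; hence the spectral sequence collapses at $E_2$, giving $E^0(BGL_d(F)) = A/(r_1,\dotsc,r_d)$ and $E^1(BGL_d(F)) = 0$. Freeness and finite generation over $E^0$ follow from the Cohen-Macaulay structure of $A/(r_i)$ together with the finiteness of its residue quotient at $\mxi$. The identification with $\Gm$-coinvariants reduces to the observation that, for a continuous ring endomorphism $\phi$ of a complete power series ring, the image of $\phi-1$ coincides with the ideal $(\phi(c_i)-c_i) = (r_i)$, by Taylor expansion. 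The main obstacle throughout will be the regular sequence verification: controlling the leading terms of $\bar r_i$ through the formal group law when $v_p(q-1) > 1$, and rigorously handling the splitting principle and EMSS convergence in this generalised cohomology setting.
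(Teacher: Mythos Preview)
Your overall framework is exactly the one the paper recounts for Tanabe's proof: the Lang--Steinberg homotopy pullback square, an Eilenberg--Moore spectral sequence whose $E_2$-page is the Koszul homology of $(r_1,\dotsc,r_d)$ on $A=E^0\psb{c_1,\dotsc,c_d}$, and collapse once regularity is known. One technical point: Tanabe runs the EMSS in Morava $K$-theory, where convergence is straightforward, and then lifts to $E$-theory by the standard even-concentration/pro-freeness argument; your direct $E$-theory version would need the convergence check you flag.

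The substantive divergence---and the gap you yourself identify---is the regularity argument. Your plan is to reduce modulo $\mxi$ and control each $\bar r_i$ by its leading term. This is not straightforward: already for $d=2$ the lowest-degree part of $\bar r_2$ is $c_2$ times a degree-$(p^{nr}-1)$ polynomial in $c_1,c_2$ rather than a pure power of $c_2$, so there is no evident triangular structure to exploit. The paper replaces this step with a divisor-theoretic lemma: if $D\in\Div_d^+(\HH)^\Gm$ and $a\in D$, then $\prod_{k=1}^d x((q^k-1)a)=0$ (morally, the $\Gm$-orbit of $a$ inside $D$ has size at most $d$, so some factor vanishes). Pulling back along the finite flat cover $\HH^d\to\Div_d^+(\HH)$, this says each coordinate $x_i$ satisfies an explicit Weierstrass relation in the quotient, so $\CO_{\Div_d^+(\HH)^\Gm}$ is a finitely generated $E^0$-module. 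From there both routes finish identically: a Krull-dimension count in the regular local ring $E^0\psb{c_1,\dotsc,c_d}$ forces the length-$(n+d)$ sequence $r_1,\dotsc,r_d,u_0,\dotsc,u_{n-1}$ to be regular. The paper's lemma is precisely the missing ingredient that substitutes for your leading-term analysis.
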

Although this is in many ways very satisfactory, it is not easy to
analyse the action of $\phi^*$, or to find a basis for $E^0(BGL_d(F))$
over $E^0$.  Here we will develop some other approaches that will shed
light in these questions.

We will start by explaining the most concrete and computational
consequences of our results.  First, let $r$ be the largest integer
such that $q=1\pmod{p^r}$.  Put $N_0=p^{nr}$, and
\[ N_k = p^{(n-1)k+n(r-1)}(p^n-1) \]
for $k>0$, and $\bN_k=\sum_{i\leq k}N_i$.  Next, we will introduce two
different products and one coproduct on the graded object
$E^0(BGL_*(F))=\{E^0(BGL_d(F))\}_{d\in\N}$.  The first product is just
the ordinary one induced by the diagonal map, and written
$a\ot b\mapsto ab$.  Next, we have evident inclusions
$GL_i(F)\tm GL_j(F)\to GL_{i+j}(F)$.  The associated transfer maps
give a second product, written $a\ot b\mapsto a\tm b$.  The associated
restriction maps also give a coproduct.  Both of these respect the
grading where we put $E^0(BGL_d(F))$ in degree $d$.  We write
$\Ind_*(E^0(BGL_*(F)))$ for the $\tm$-indecomposables, and
$\Prim_*(E^0(BGL_*(F)))$ for the coalgebra primitives.  We also put
\[ X_k = \{c_{p^k}^j\st 0\leq j<N_k\} \sse E^0(BGL_{p^k}(F)). \]

\begin{notation}
 Where necessary to improve readability of exponents, we will write
 $\pwr{a}{k}$ for $a^k$.
\end{notation}

\begin{theorem}\lbl{thm-poly}\leavevmode
 \begin{itemize}
  \item[(a)] $E^0(BGL_*(F))$ is a polynomial ring under the
   $\tm$-product, freely generated by $\coprod_kX_k$.
  \item[(b)] The natural map
   $K^0\ot_{E^0}E^0(BGL_*(F))\to K^0(BGL_*(F))$ is an isomorphism, so
   $K^0(BGL_*(F))$ is also polynomial, with the same generators.
  \item[(c)] The $\tm$-decomposable elements form an ideal under the
   ordinary product, so $\Ind_*(E^0(BGL_*(F)))$ has a natural ring
   structure.  In fact $\Ind_{p^k}(E^0(BGL_*(F)))$ has the form
   $E^0\psb{c_{p^k}}/g_k(c_{p^k})$ for some series $g_k(t)$ of
   Weierstrass degree $N_k$, and this is a complete regular local
   noetherian ring.  On the other hand, $\Ind_d(E^0(BGL_*(F)))=0$ if
   $d$ is not a power of $p$.
  \item[(d)] $\Prim_d(E^0(BGL_d(F)))$ has a natural structure as a
   module over $\Ind_d(E^0(BGL_d(F)))$, and in fact it is free of rank
   one. 
  \item[(e)] $\Prim_{p^k}(K^0(BGL_*(F)))$ is generated by
   $\pwr{c_{p^k}}{\bN_{k-1}}$, and the socle of $K^0(BGL_{p^k}(F))$ is
   generated by $\pwrb{c_{p^k}}{\bN_k\,-1}$.  
 \end{itemize}
\end{theorem}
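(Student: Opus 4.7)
The plan is to establish part (a) as the central structural result, with parts (b)--(e) falling out by focused additional arguments. First I would set up the algebraic framework: the block-diagonal inclusions $GL_i(F)\tm GL_j(F)\sse GL_{i+j}(F)$ (as Levi subgroups of standard parabolics) give transfers that make $E^0(BGL_*(F))$ into a graded commutative ring under $\tm$, while restrictions along the same inclusions give a cocommutative coproduct. The double-coset formula for the intersection of two parabolics verifies commutativity, and Frobenius reciprocity links the two structures so that coalgebra primitives and $\tm$-indecomposables are tied together up to a rank count.

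For (a), my strategy is two-pronged. Using the fact that a Sylow $p$-subgroup of $GL_d(F)$ is contained in some block-diagonal $GL_{p^{k_1}}(F)\tm\cdots\tm GL_{p^{k_m}}(F)$ whenever $d=\sum p^{k_i}$ is not a prime power, I would show by a transfer-plus-completion argument (using that $E^0$-cohomology is $\mxi$-complete and detected by $K$-theory) that $\Ind_d(E^0(BGL_*(F)))=0$ whenever $d$ is not a power of $p$. At $d=p^k$, I would compute $\Ind_{p^k}$ directly by identifying it with $E^0\psb{c_{p^k}}/g_k(c_{p^k})$, where $g_k$ is the Weierstrass polynomial obtained from restricting Tanabe's relations $r_i$ down the diagonal $BGL_1(\bF)^{p^k}\to BGL_{p^k}(\bF)$. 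The constants $N_k$ emerge from this computation: the $[q]$-series on the height-$n$ formal group has Weierstrass degree $p^n$ when $q=1\pmod p$, and iterating along the diagonal while tracking the Galois action on symmetric functions produces exactly the stated values. A rank count against Tanabe's theorem then verifies polynomiality globally.

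For (b), I would invoke the freeness assertion in Tanabe's theorem to reduce to a flat base change along $E^0\to K^0$. Part (c) follows because the projection formula shows that the ordinary product of a $\tm$-transfer with any class is again a $\tm$-transfer, so the decomposables form an ideal under the ordinary product; the explicit ring structure of $\Ind_{p^k}$ has already been identified in the proof of (a). Part (d) is a Frobenius/Poincar\'e-duality statement for the Gorenstein algebra $E^0(BGL_{p^k}(F))$: the primitives are the annihilator of the ideal of positive-degree decomposables, and rank-one freeness over $\Ind$ follows from self-duality of this annihilator pairing. Part (e) is an explicit calculation in $K^0(BGL_{p^k}(F))$: from (a) one has the monomial basis $\{c_{p^k}^j:0\le j<\bN_k\}$, and the primitive and socle generators can be read off as the classes $\pwr{c_{p^k}}{\bN_{k-1}}$ and $\pwrb{c_{p^k}}{\bN_k-1}$ by comparing with the coproduct formula and using that multiplication by $c_{p^k}$ takes the socle to zero. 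The principal obstacle throughout is the Weierstrass-degree computation producing the integers $N_k$; this requires careful formal-group-law bookkeeping along the diagonal and Galois orbits, and is where the proof genuinely depends on the hypothesis $q=1\pmod p$.
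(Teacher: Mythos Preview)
Your proposal has a genuine structural gap: you are bypassing the Atiyah--Hirzebruch spectral sequence entirely, but in the paper this is precisely the load-bearing step, and the authors state explicitly (just before their treatment of indecomposables) that they were unable to establish freeness of $\Ind_*(E^0(B\CV_*))$ by any other route.  Concretely, your plan for~(a) is to compute $\Ind_{p^k}$ first and then deduce polynomiality by a rank count, but the paper runs this in the opposite order.  Their identification $\Ind_{p^k}\simeq E^0\psb{c_{p^k}}/g_k$ proceeds via a map $\al\:R=E^0(BG_{p^k})\to Q=\tQ^\Gm$ (restriction to the cyclic $GL_1(F(k))$ followed by Galois invariants); one easily gets that $\al$ is surjective and that the decomposable ideal $J$ lies in $\ker(\al)$, but to conclude $J=\ker(\al)$ they use that $J$ is an $E^0$-summand of $R$, and that in turn comes from polynomiality, which they prove via the AHSS (building an explicit algebraic model spectral sequence and matching differentials).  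Your ``rank count against Tanabe's theorem'' does not close this circle: matching Poincar\'e series over the non-field $E^0$ does not by itself force polynomiality, and the authors note in Remark~\ref{rem-not-hopf-b} that the obvious Milnor--Moore style alternative fails because $K_*(B\CV)$ is not a Hopf algebra under the relevant (co)product.

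There is also a concrete error in your sketch of~(e): you claim that (a) yields a monomial basis $\{c_{p^k}^j:0\le j<\bN_k\}$ for $K^0(BGL_{p^k}(F))$, but this is false.  The polynomial generators in degree $p^k$ are only $\{c_{p^k}^j:0\le j<N_k\}$, and $K^0(BGL_{p^k}(F))$ also contains $\tm$-monomials in lower-degree generators; its total dimension (the $t^{p^k}$ coefficient of $\prod_k(1-t^{p^k})^{-N_k}$) is much larger than $\bN_k$.  The paper instead proves the nonvanishing of $c_{p^k}^{\bN_k-1}$ by an explicit pairing computation inside the $E_\infty$-page of the AHSS (Proposition~\ref{prop-sk-height}), and then runs an induction on $k$ together with the Frobenius-algebra structure to identify the primitive and socle generators (Propositions~\ref{prop-I-gen} and~\ref{prop-socle}).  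Your approach to~(d) via Gorenstein duality is in the right spirit, but the rank-one freeness over $\Ind$ ultimately rests on the same nonvanishing input, so it too depends on the AHSS calculation you have omitted.
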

\begin{proof}
 Most of Claim~(a) is in Proposition~\ref{prop-more-poly-rings},
 except for the identification of the generators, which follows from
 Claim~(c).  Claim~(b) is covered by Corollary~\ref{cor-EV-free}.
 Claim~(c) combines Lemma~\ref{lem-ind-degrees} with
 Proposition~\ref{prop-Q-regular} and Corollary~\ref{cor-al-epi}.  (In
 Section~\ref{sec-indec}, the notation $I$ is used for the primitives,
 and $Q=R/J$ for the indecomposables.)  Claim~(d) is covered by
 Proposition~\ref{prop-I-gen}.  Finally, Claim~(e) is
 Proposition~\ref{prop-socle}.
\end{proof}

Some other features of our work are as follows.
\begin{itemize}
 \item We will find it convenient to consider the groupoid $\CV$ of
  finite-dimensional vector spaces over $F$, and isomorphisms between
  them.  There is an equivalence
  \[ B\CV \simeq \coprod_{d=0}^\infty BGL_d(F), \]
  so by using $\CV$, we consider all possible values of $d$
  simultaneously.  We can use the direct sum and tensor product to
  make $\CV$ into a symmetric bimonoidal category, which gives a rich
  algebraic structure on $E^0(B\CV)$ and related objects.  We will
  also be able to compare $\CV$ in a useful way with various other
  symmetric bimonoidal groupoids.
 \item In particular, we will compare $\CV$ with the groupoid $\CV(k)$
  of finite-dimensional vector spaces over the field
  $F(k)$, which is the unique field extension of $F$ of degree $p^k$
  contained in $\bF$.
 \item For various groupoids $\CG$ we will study the interplay between
  $H^*(B\CG)$, $K^0(B\CG)$, $E^0(B\CG)$, the generalised character
  ring $D^0(\CG)$ of Hopkins-Kuhn-Ravenel, and the formal schemes
  $\spf(K^0(B\CG))$ and $\spf(E^0(B\CG))$.  As is usual in this theory,
  the generalised character rings have an elegant description in terms
  of the discrete group $\Tht=(\Z/p^\infty)^n$.  The challenge is
  to formulate and prove analogous statements about $\spf(E^0(B\CG))$
  in which $\Tht$ is replaced by the formal group scheme $\GG$
  associated to $E$.
 \item We will also use the dual objects $H_*(B\CG)$, $K_0(B\CG)$,
  $E^\vee_0(B\CG)$ and $D_0(B\CG)$, while remembering that
  $K(n)$-local duality theory gives a natural isomorphism
  $K_0(B\CG)\simeq K^0(B\CG)$, and similarly for $E$ and $D$.
\end{itemize}

\begin{remark}\lbl{rem-q-props}
 We have assumed that $|F|=1\pmod{p}$, so in particular the
 characteristic of $F$ is not $p$.  This restriction on the
 characteristic is essential; the problem would be very different and
 much harder if $F$ had characteristic $p$.  On the other hand, the
 restriction $|F|=1\pmod{p}$ is not so essential.  If $F_0$ is a
 finite field with $|F_0|\neq 0\pmod{p}$ then we can let $m$ denote
 the multiplicative order of $|F_0|$ in $\Z/p$ (so $m$ divides $p-1$).
 We can then construct a Galois extension $F/F_0$ with Galois group
 $C_m$.  The results in this paper will determine the Morava
 $K$-theory of the groupoid $\CV$ associated to $F$.  Because $C_m$
 has order coprime to $p$, it is essentially a matter of bookkeeping
 to recover the Morava $K$-theory of the corresponding groupoid
 $\CV_0$ associated to $F_0$.  Details will be given elsewhere.
\end{remark}

\begin{remark}\lbl{rem-p-odd}
 We have also assumed that $p>2$.  We expect that only minor (but
 pervasive) adjustments are needed for $p=2$, possibly including the
 assumption that $|F|=1\pmod{4}$, but we have not checked this.
\end{remark}

This paper contains results from the Ph.D. theses of the
first~\cite{hu:mcf} and second~\cite{ma:met} authors, written under
the supervision of the third author.  The second author's thesis
covered $E^0(BGL_d(F))$ for $d\leq p$; building on this, the first
author obtained results for all $d$.  Neither thesis has previously
been published.

We thank the referee for their careful reading of our work.

\subsection{Outline of the paper}

\begin{itemize}
 \item In Section~\ref{sec-fields} we introduce various extension
  fields of $F$ and their Witt rings.
 \item In Section~\ref{sec-GL} we introduce various matrix groups over
  the rings in Section~\ref{sec-fields}, and study their
  group-theoretic properties.  We also introduce the corresponding
  groupoids. 
 \item In Section~\ref{sec-ordinary} we describe results about the
  ordinary mod $p$ (co)homology of the classifying spaces of the
  groups and groupoids in Section~\ref{sec-GL}.  Some results are
  standard, some are quoted from work of Quillen~\cite{qu:ckt}, but
  some results about restrictions and transfers are new.
 \item In Section~\ref{sec-morava} we recall the spectra $K$ and $E$
  representing Morava $K$-theory and $E$-theory.  We review and extend
  the duality theory which gives a canonical inner product on
  $E^0(BG)$, and we prove various parts of our main theorems that do
  not rely on the Atiyah-Hirzebruch spectral sequence.
 \item In Section~\ref{sec-ann}, we further extend our duality theory
  by proving various results about annihilators and socles.
 \item In Section~\ref{sec-tanabe}, we recall the outline of Tanabe's
  proof of Theorem~\ref{thm-tanabe}, and give an alternative approach
  to part of the argument, which may be of independent interest.  Here
  we start to make serious use of the language of formal schemes.
 \item In Section~\ref{sec-ahss} we analyse the Atiyah-Hirzebruch
  spectral sequence for $K_*(B\CV)$.  We define a trigraded spectral
  sequence by purely algebraic means, and use transfer formulae from
  Section~\ref{sec-ordinary} to produce a morphism from this spectral
  sequence to the topological one.  It is then easy to check that this
  is an isomorphism, and to deduce the precise structure of
  $K_*(B\CV)$. 
 \item In Section~\ref{sec-hkr} we review the generalised character
  theory of Hopkins, Kuhn and Ravenel~\cite{hokura:ggc}, and spell out
  how it works for the groups and groupoids in Section~\ref{sec-GL}.
  This gives further insight into the structure of
  $\Q\otimes E^0(B\CV)$.  
 \item In Section~\ref{sec-indec}, we study the indecomposables for
  the $\tm$-product, and the primitives for the corresponding
  coproduct.  By combining information from Morava $K$-theory and
  rationalised Morava $E$-theory, we are able to understand the
  structure quite precisely.
 \item In Section~\ref{sec-relations}, we prove some additional
  relations in $E^0(B\CV)$.  In particular, we relate certain socle
  generators to a kind of cannibalistic class which has a very natural
  interpretation in generalised character theory.
 \item In Section~\ref{sec-twist}, we set up a theory of twisted
  convolution products, which will be used in the following section.
 \item In Section~\ref{sec-HC} we define the Harish-Chandra product
  on $E^0(B\CV)$, and show that it can be viewed as a twisting of the
  ordinary product.  The twisted and untwisted products give us two
  different rings, but we will prove that they are isomorphic.  We
  also consider whether our product and coproduct on $E^0(B\CV)$
  interact in the right way to give a Hopf algebra structure.  We will
  show that the relevant diagram does not commute, but that it can be
  made commutative by inserting a kind of twisting similar to that
  considered in Section~\ref{sec-twist}.  Unfortunately, the details
  are such that no combination of twistings gives a Hopf algebra.
 \item In Section~\ref{sec-line-bundles} we consider the Morava
  $E$-theory of the groupoid $\CXL$ of finite sets equipped with an
  $F$-linear line bundle.  There is an attractive story relating this
  to power operations in Morava $E$-theory, subgroups of formal
  groups, and our main theorems about the structure of $E^0(B\CV)$.
 \item In Appendix~\ref{apx-ahss}, we revisit the Atiyah-Hirzebruch
  spectral sequence.  Our analysis in Section~\ref{sec-ahss} leaves a
  certain coefficient $t\in\F_p^\times$ undetermined.  The value is
  not really important, but in this appendix we tidy things up by
  proving that $t=1$.
 \item Appendix~\ref{apx-index} is an index of notation.
\end{itemize}

\subsection{Related work}

To place this paper in context, we briefly survey some other
calculations of the Morava $E$-theory (and similar invariants) of
classifying spaces of finite groups.  For finite abelian groups $A$,
the rings $E^*(BA)$ were described in the language of formal group
theory by Hopkins, Kuhn and Ravenel~\cite{hokura:ggc}, using a method
of calculation due to Landweber.  For various purposes it is important
to understand transfer maps between rings of the form $E^*(BA)$; this
has been addressed by Barthel and Stapleton~\cite{bast:ttm}.  Given a
finite nonabelian group $G$, one can try to relate $E^0(BG)$ to the
inverse limit of the rings $E^0(BA)$ as $A$ runs over abelian
subgroups, by analogy with the Artin and Brauer induction theorems in
group representation theory.  The main theorem of~\cite{hokura:ggc} is
a sharp theorem of this type that applies after tensoring with $\Q$;
there are also integral results that are less precise, appearing in
work of Mathew, Naumann and Noel~\cite{manano:dir} and the third
author together with Greenlees~\cite{grst:vlc}.  Complete calculations
of $E^*(BG)$ have been given for a number of specific groups $G$ or
classes of groups of small order or small nilpotence class.  For example,
Schuster has calculated many examples of $2$-groups of order $32$ or
less~\cites{sc:ktg,sc:kca}, and some additional cases were later
calculated by Bakuradze and Jibladze~\cite{baji:mkr}.
Bakuradze~\cite{ba:ccc} has recently calculated much of the structure
of $K^*(BG)$ in cases where $G$ has a subgroup of index $2$ isomorphic
to $C_{2^n}^2$.  The bibliography of that paper refers to quite a
number of other papers by Bakuradze and coauthors containing
calculations for similar classes of $p$-groups (sometimes for $p=2$
and sometimes for $p>2$).  For the symmetric groups, the rings
$E^0(B\Sg_d)$ were calculated by the third author~\cite{st:msg}, and
another proof was given more recently by Schlank and
Stapleton~\cite{scst:tps}.  The latter proof is part of a larger
project to relate Morava $E$-theory rings of different chromatic
heights.  One can regard~\cite{hokura:ggc} as the origin of this
transchromatic theory, and the next steps were taken by
Torii~\cite{to:hpg}, but most of the work has been done by
Stapleton~\cites{st:tcm,st:stg}.  All groups mentioned so far are
``good'', in the sense that $K^*(BG)$ is concentrated in even
degrees.  As well as specific calculations, in~\cite{hokura:ggc} there
are a number of general theorems that can be used to prove that groups
are good.  A much more recent addition to the arsenal is a theorem of
Barthel and Stapleton~\cite{bast:cgg}: if $G$ is good and $A\leq G$ is
a finite abelian $p$-group, then the centraliser $Z_G(A)$ is also
good.  On the other hand, Kriz and Lee~\cite{krle:ode} have shown that
the Sylow $p$-subgroup of $GL_4(\F_p)$ is not good, and their work
suggests that most groups of reasonable complexity are probably not
good. 

\section{Some fields and rings}
\lbl{sec-fields}

\begin{definition}\lbl{defn-F}
 As in the introduction, we will assume that $F$ is a finite field of
 order $q$, with $q=1\pmod{p}$.  We let $q_0$ denote the
 characteristic of $F$, so $q$ is a power of $q_0$.  We put
 $r=v_p(q-1)$, where $v_p$ denotes the $p$-adic valuation, so $r$ is
 the largest natural number such that $q=1\pmod{p^r}$.  By assumption
 we have $r\geq 1$.
\end{definition}

We will repeatedly use the following result.

\begin{lemma}\lbl{lem-vp}
 For $j>0$ we have $v_p(q^j-1)=r+v_p(j)$.
\end{lemma}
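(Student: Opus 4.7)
The plan is to treat this as a standard lifting-the-exponent style calculation, split into a base case with $v_p(j)=0$ and an inductive step in $v_p(j)$.

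For the base case, suppose $p \nmid j$. I would factor
\[ q^j - 1 = (q-1)\sum_{i=0}^{j-1} q^i. \]
Since $q \equiv 1 \pmod p$ we have $q^i \equiv 1 \pmod p$ for all $i$, so the sum reduces mod $p$ to $j$, which by hypothesis is a unit. Hence $v_p$ of the sum is $0$ and $v_p(q^j-1) = v_p(q-1) = r$, matching $r + v_p(j)$.

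For the inductive step, it suffices to show that whenever $v_p(j) \geq 1$, writing $j = pj'$ gives $v_p(q^j - 1) = v_p(q^{j'}-1) + 1$; iterating this together with the base case gives the formula. Setting $u = q^{j'}$, the inductive hypothesis tells us $v_p(u-1) = r + v_p(j') \geq 1$, so I can write $u = 1 + p^s w$ with $s \geq 1$ and $p \nmid w$. Expanding:
\[ u^p - 1 = \sum_{k=1}^{p} \binom{p}{k} (p^s w)^k. \]
The $k=1$ term is $p^{s+1}w$, of valuation exactly $s+1$. For $2 \leq k \leq p-1$ we have $v_p(\binom{p}{k}) = 1$, giving valuation $1 + ks \geq 1 + 2s$; since $s \geq 1$, this is $\geq s+2$. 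The final $k=p$ term has valuation $ps \geq 3s \geq s+2$ because $p \geq 3$ and $s \geq 1$. So the $k=1$ term dominates and $v_p(u^p - 1) = s + 1$, as required.

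The only real subtlety is the estimate in the inductive step, which uses the hypothesis $p > 2$ in two places: to get $v_p(\binom{p}{2}(p^s w)^2) \geq s+2$ rather than just $\geq s+1$, and to control the $k=p$ term. (For $p=2$ one would need to assume $s \geq 2$, which is the origin of the standard ``$q \equiv 1 \pmod 4$'' hypothesis alluded to in Remark~\ref{rem-p-odd}.) Since the paper has standing hypothesis $p > 2$, this causes no trouble.
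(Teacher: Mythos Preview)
Your proof is correct and takes essentially the same approach as the paper: both handle the coprime case and the $p$-power step separately, and both analyse the $p$-step via the binomial expansion of $u^p-1$ with $u=1+p^sw$, showing the $k=1$ term dominates. The only cosmetic difference is that for the base case you use the geometric-series factorisation $q^j-1=(q-1)\sum_{i<j}q^i$, whereas the paper uses the binomial expansion of $(1+p^tw)^m-1$ for general $m$ coprime to $p$; both arguments are standard and equivalent.
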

\begin{proof}
 In general, suppose that $u\in\Z$ with $v_p(u-1)>0$, say $u=1+p^tw$
 with $t>0$ and $w\neq 0\pmod{p}$.   We then have
 $u^m-1=\sum_{k=1}^m\bcf{m}{k}p^{kt}w^k$  If $m$ is not divisible by
 $p$ then the $k=1$ term has valuation $t$ and the other terms have
 strictly higher valuation so $v_p(u^m-1)=t=v_p(u-1)$.  Suppose
 instead that $m=p$.  The coefficients $\bcf{p}{k}$
 are divisible by $p$ for $0<k<p$, and it follows easily that the
 terms for $k\geq 2$ are divisible by $p^{t+2}$, whereas the term for
 $k=1$ is only divisible by $p^{t+1}$, so $v_p(u^p-1)=t+1=v_p(u-1)+1$.
 For general $j$ we can write $j=p^vm$ with $m\neq 0\pmod{p}$ and use
 the two special cases above to see that $v_p(u^j-1)=t+v$, as
 required.
\end{proof}

\begin{definition}\lbl{defn-F-bar}
 We let $\bF$ denote an algebraic closure of $F$.  We define
 $\phi\:\bF\to\bF$ by $\phi(a)=a^q$, so $F=\{a\st\phi(a)=a\}$.  It is
 standard that the Galois group $\Gm=\Aut_F(\bF)$ is topologically
 generated by $\phi$.  More precisely, there is a homomorphism
 $\Z\to\Gm$ given by $n\mapsto\phi^n$, and this has a canonical
 extension $\widehat{\Z}\to\Gm$ (where $\widehat{Z}$ is the profinite
 completion of $\Z$), and this extension is an isomorphism.  We next
 need to introduce some finite subfields of $\bF$.  We put
 \begin{align*}
  F[m] &= \{a\in\bF\st\phi^m(a)=a\} 
        = \text{ the unique subfield of $\bF$ of degree $m$ over $F$ } \\
  F(k) &= F[p^k] \\
  F(\infty) &= \bigcup_k F(k).
 \end{align*}
\end{definition}

\begin{remark}\lbl{rem-nested-subfields}
 Note that $F[m]\leq F[n]$ if and only if $m$ divides $n$, and $\bF$ is
 the union of all the subfields $F[m]$, or equivalently the union of
 the increasing sequence of subfields $F[k!]$. 

 Note also that $F(\infty)$ is not the same as $\bF$, but this is just
 an annoying technicality.  Most invariants that we study will behave
 the same way for $F(\infty)$ and $\bF$.  We can define
 \[ m_k = \prod\{r^k\st r \text{ is prime and } r \leq p+k\}, \]
 then $F[m_k]\leq F[m_{k+1}]$ and $\bF=\bigcup_kF[m_k]$ and 
 $F[m_k]\cap F(\infty)=F(k)$. 
\end{remark}

\begin{proposition}\lbl{prop-all-units}
 The group $GL_1(F[m])=F[m]^\tm$ is cyclic of order $q^m-1$.  One can
 choose an isomorphism $i\:GL_1(\bF)\to\mu_{q'}(\C)$, where 
 \[ \mu_{q'}(\C) = \{z\in\C\st z^m=1
     \text{ for some } m \text{ with } (m,q)=1\}.
 \]
 Moreover, for any such $i$ we have
 $\mu_{p^\infty}(\C)\sse i(F(\infty)^\tm)$, where
 \[ \mu_{p^\infty}(\C) = \{z\in\C\st z^{p^k}=1
     \text{ for some } k\geq 0\}.
 \]
\end{proposition}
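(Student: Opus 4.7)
\textit{Proof plan.} My plan is to dispatch the three claims in turn. For the first, I would invoke the standard fact that every finite subgroup of the multiplicative group of a field is cyclic; since $|F[m]|=q^m$, this gives that $F[m]^\tm$ is cyclic of order $q^m-1$.

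For the construction of $i$, I would work through the primary decomposition of torsion abelian groups. Both $\bF^\tm$ and $\mu_{q'}(\C)$ are torsion abelian groups in which no element has order divisible by $q_0$: for $\bF^\tm$ any element lies in some $F[m]^\tm$ of order $q^m-1$, which is coprime to $q_0$, and for $\mu_{q'}(\C)$ this is immediate from the definition (since $q$ is a power of $q_0$). For each prime $\ell\neq q_0$ I would then show that the $\ell$-primary component of $\bF^\tm$ is the Pr\"ufer group $\Z/\ell^\infty$: it is divisible (since $x^\ell-a$ splits in $\bF$ for every $a\in\bF^\tm$), it is nontrivial (since $x^\ell-1$ has $\ell$ distinct roots in $\bF$ when $\ell\neq q_0$), and its finite subgroups are all cyclic (being subgroups of some $F[m]^\tm$); any abelian $\ell$-group with these three properties must be $\Z/\ell^\infty$. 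On the other side, the $\ell$-primary part of $\mu_{q'}(\C)$ is $\mu_{\ell^\infty}(\C)\cong\Z/\ell^\infty$ by inspection. Choosing isomorphisms prime by prime and assembling them yields the desired $i$.

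For the final assertion, I would compute the $p$-primary part of $F(\infty)^\tm$ directly via Lemma~\ref{lem-vp}. Applied with $j=p^k$, the lemma gives $v_p(q^{p^k}-1)=r+k$, so the $p$-Sylow subgroup of the cyclic group $F(k)^\tm=F[p^k]^\tm$ is cyclic of order $p^{r+k}$. These subgroups are nested as $k$ grows, so the $p$-primary part of $F(\infty)^\tm=\bigcup_k F(k)^\tm$ is already the full Pr\"ufer group $\Z/p^\infty$. By the preceding paragraph this exhausts the $p$-primary part of $\bF^\tm$, which $i$ maps bijectively onto $\mu_{p^\infty}(\C)$; hence $\mu_{p^\infty}(\C)\subseteq i(F(\infty)^\tm)$.

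The only mildly delicate point is verifying the three structural properties (divisibility, nontriviality, local cyclicity) of the $\ell$-primary part of $\bF^\tm$; the rest is a routine application of the classification of divisible torsion abelian groups together with Lemma~\ref{lem-vp}.
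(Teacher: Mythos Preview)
Your proof is correct and follows essentially the same strategy as the paper: all three parts proceed the same way, working prime by prime through the primary decomposition of $\bF^\tm$ and invoking Lemma~\ref{lem-vp} for the final inclusion. The only stylistic difference is in the middle step: the paper builds the isomorphism $\mu_{\ell^\infty}(\bF)\to\mu_{\ell^\infty}(\C)$ explicitly by recursively choosing $u_{k+1}$ with $u_{k+1}^\ell=u_k$, whereas you identify both sides abstractly as Pr\"ufer groups via the structure theorem for divisible abelian groups; this is a matter of taste rather than substance.
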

\begin{proof}
 For the first statement, it is standard that the multiplicative group
 of any finite field is cyclic, of order one less than the order of
 the field itself.  

 Next, for any prime $l$ we put 
 \[ \mu_{l^\infty}(\bF) =
      \{z\in\bF\st z^{l^m}=1 \text{ for some } m\}.
 \]
 As $\bF^\tm$ is an abelian torsion group, we see that it is the
 direct sum of its $l$-torsion parts, or in other words the groups
 $\mu_{l^\infty}(\bF)$.  If $l=q_0$, then the map $z\mapsto z^l$ is an
 automorphism so $\mu_{l^\infty}(\bF)=1$.  Let $l$ be a different
 prime.  As $\bF$ is algebraically closed, we can choose an element
 $u_1$ that is a primitive $l$'th root of unity.  We can then
 recursively choose $u_{k+1}$ with $u_{k+1}^l=u_k$.  The powers of
 $u_k$ then give $l^k$ distinct roots of $x^{l^k}-1$, so these are all
 the roots.  It follows that $\mu_{l^\infty}(\bF)$ is generated by all
 the elements $u_k$, and that there is an isomorphism
 $\mu_{l^\infty}(\bF)\to\mu_{l^\infty}(\C)$ sending $u_k$ to
 $\exp(2\pi i/l^k)$.  The claim follows easily from this.

 Finally, Lemma~\ref{lem-vp} shows that the order
 $|F(k)^\tm|=(\pwr{q}{p^k})-1$ is divisible by $p^{r+k}$, so $F(k)^\tm$
 contains a cyclic subgroup of order $p^{r+k}$, and this must be all
 all the $p^{r+k}$'th roots of unity.  It is clear from this that
 $\mu_{p^\infty}(\bF)\leq F(\infty)^\tm$ and
 $\mu_{p^\infty}(\C)\sse i(F(\infty)^\tm)$.
\end{proof}

\begin{definition}\lbl{defn-duals}
 We write $\Z/p^\infty=\Z[1/p]/\Z$, and we silently identify this with
 $\mu_{p^\infty}(\C)$ by $x\mapsto e^{2\pi i x}$ where convenient.
 For any $p$-local abelian group $A$ we put
 $A^*=\Hom(A,\Z/p^\infty)=\Hom(A,\mu_{p^\infty}(\C))$ 
 and $A^\#=\Hom(A,\mu_{p^\infty}(\bF))$.
\end{definition}

\begin{remark}\lbl{rem-duals}
 It is standard that the evident ring map $\Z\to\End(\Z/p^\infty)$
 extends canonically to a ring map $\Z_p\to\End(\Z/p^\infty)$, and
 that this extension is an isomorphism.  We have seen that
 $\mu_{p^\infty}(\bF)$ is isomorphic to $\Z/p^\infty$.  It follows
 that the group $T=\Hom(\Z/p^\infty,\mu_{p^\infty}(\bF))$ is an
 invertible $\Zp$-module, whose inverse is
 $\Hom(\mu_{p^\infty}(\bF),\Z/p^\infty)$.  There are natural
 isomorphisms $T\ot_{\Zp}A^*\to A^\#$ and
 $T^{-1}\ot_{\Zp}A^\#\to A^*$.  Thus, a choice of basis for $T$ gives
 an isomorphism $A^\#\simeq A^*$ that is natural in $A$.
\end{remark}

\begin{proposition}\lbl{prop-F-span}
 If $t\leq r$, then the $F$-linear span of $\mu_{p^t}(\bF)$ is just
 $F$.  If $t>r$, then the $F$-linear span of $\mu_{p^t}(\bF)$ is
 $F(t-r)$.
\end{proposition}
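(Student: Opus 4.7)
The plan is to treat the two cases separately, using Lemma~\ref{lem-vp} to pin down exactly which field in the chain $F\leq F(1)\leq F(2)\leq\dotsb$ contains $\mu_{p^t}(\bF)$, and then using the fact that the subfield lattice of $F[p^k]$ is totally ordered to identify the span exactly.

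For $t\leq r$, Lemma~\ref{lem-vp} (with $j=1$) gives $v_p(q-1)=r\geq t$, so $p^t$ divides $|F^\tm|=q-1$. Since $F^\tm$ is cyclic by Proposition~\ref{prop-all-units}, it has a unique subgroup of order $p^t$, which must be all of $\mu_{p^t}(\bF)$. Thus $\mu_{p^t}(\bF)\sse F$, and as it contains $1\neq 0$, its $F$-linear span is all of $F$.

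For $t>r$, put $k=t-r\geq 1$. Lemma~\ref{lem-vp} gives $v_p(q^{p^k}-1)=r+k=t$ and $v_p(q^{p^{k-1}}-1)=t-1$, so $F(k)^\tm$ contains a subgroup of order $p^t$ but $F(k-1)^\tm$ does not; hence $\mu_{p^t}(\bF)\sse F(k)$ and $\mu_{p^t}(\bF)\not\sse F(k-1)$. In particular the $F$-span lies in $F(k)$. For the reverse inclusion I would pick a primitive $p^t$-th root of unity $\zt\in\mu_{p^t}(\bF)\sm F(k-1)$ and examine $F(\zt)$. Since $F(\zt)$ is a subfield of $F(k)=F[p^k]$, by Remark~\ref{rem-nested-subfields} it must be of the form $F[p^j]=F(j)$ for some $0\leq j\leq k$; but $\zt\notin F(k-1)$ forces $j=k$, so $[F(\zt):F]=p^k$. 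Consequently $1,\zt,\zt^2,\dotsc,\zt^{p^k-1}$ form an $F$-basis for $F(k)$. All of these powers lie in $\mu_{p^t}(\bF)$, so the $F$-span of $\mu_{p^t}(\bF)$ equals $F(k)=F(t-r)$.

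The argument is essentially formal once Lemma~\ref{lem-vp} and the subfield structure are in hand; the only point requiring a moment's care is the identification $F(\zt)=F(k)$, which is the main obstacle in the sense that it is where one has to convert knowledge of orders of cyclic groups into information about degrees of field extensions. Invoking the totally ordered subfield lattice $F\leq F(1)\leq\dotsb\leq F(k)$ sidesteps any need to compute a minimal polynomial directly.
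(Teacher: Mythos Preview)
Your proof is correct and follows essentially the same line as the paper's: both use Lemma~\ref{lem-vp} to locate $\mu_{p^t}(\bF)$ in the tower $F\leq F(1)\leq\dotsb$ and then invoke the totally ordered subfield lattice of $F(k)$ to pin down the span. The only cosmetic difference is that the paper observes directly that the span is itself a subfield (as the image of the ring map $F[\mu_{p^t}(\bF)]\to\bF$) and identifies it, whereas you instead exhibit $F(\zt)\subseteq\text{span}$ and identify $F(\zt)$; both routes arrive at the same conclusion via the same ingredients.
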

\begin{proof}
 We will write $E(t)$ for the $F$-linear span of $\mu_{p^t}(\bF)$.
 This is the image of a ring map $F[\mu_{p^t}(\bF)]\to\bF$, so it is a
 finite subring of $\bF$.  Every element of $\bF^\tm$ has finite
 multiplicative order, so every subring is a subfield.  If $t\leq r$
 then $p^t$ divides $q-1$ so $\mu_{p^t}(\bF)\leq F^\tm$ and the claim
 is clear.  Suppose instead that $t\geq r$.  Lemma~\ref{lem-vp} tells
 us that $p^t$ divides $|F(t-r)^\tm|=q^{p^{t-r}}-1$, so
 $\mu_{p^t}(\bF)\sse F(t-r)$, so $E(t)\leq F(t-r)$.  Galois theory
 tells us that the fields between $F$ and $F(t-r)$ are precisely
 $\{F(j)\st 0\leq j\leq t-r\}$ and using Lemma~\ref{lem-vp} again we
 see that $\mu_{p^t}(\bF)\not\leq F(t-r-1)$ so $E(t)=F(t-r)$.
\end{proof}

\begin{definition}\lbl{defn-witt}
 We write $W$ for the Witt ring functor, so $WF$ is a complete discrete
 valuation ring with $WF/q_0=F$, and similarly for $WF(m)$ and $W\bF$.
\end{definition}

It is a standard fact that for each $a\in F$ there is a unique element
$\ha\in WF$ with $\ha^q=\ha$ and $\ha=a\pmod{q_0}$.  This is called
the \emph{Teichm\"uller lift} of $a$.  The next two results are
also standard but we include proofs for convenience.

\begin{proposition}\lbl{prop-witt-equiv}
 For any $d\geq 0$ the projections $BGL_d(WF)\to BGL_d(F)$ and
 $BGL_d(W\bF)\to BGL_d(\bF)$ are mod $p$ equivalences.
\end{proposition}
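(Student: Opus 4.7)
The natural approach uses the short exact sequence
\[ 1 \to K \to GL_d(WF) \to GL_d(F) \to 1, \]
where $K := I + q_0 M_d(WF)$ is the first congruence subgroup. Surjectivity of the reduction map follows from Teichm\"uller lifting: given $g \in GL_d(F)$, lift each entry of $g$ to $WF$ via the Teichm\"uller map to obtain $\tilde g \in M_d(WF)$ with $\det(\tilde g) = \det(g) \pmod{q_0}$; since $\det(g) \in F^\tm$ and $WF$ is local, $\det(\tilde g)$ is a unit. The Lyndon--Hochschild--Serre spectral sequence for this extension then reduces the proposition to showing that $H^*(BK;\F_p)$ is $\F_p$ concentrated in degree zero.

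To analyse $K$, I would use the canonical filtration by $K_i := I + q_0^i M_d(WF)$, which satisfies the standard commutator inclusion $[K_i, K_j] \sse K_{i+j}$. In particular each $K_i$ is normal in $K$ with abelian quotient, and the map $I + q_0^i A \mapsto A \bmod q_0$ provides an isomorphism $K_i/K_{i+1} \cong (M_d(F), +)$. Because the hypothesis $q = 1 \pmod p$ forces the characteristic $q_0$ of $F$ to be different from $p$, each $K_i/K_{i+1}$ is a finite $q_0$-elementary-abelian group with trivial mod-$p$ cohomology in positive degrees (standard transfer argument). Iterated application of LHS along the tower of finite quotients yields $H^*(B(K/K_i); \F_p) = \F_p$ for every $i \geq 1$.

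The main obstacle is to propagate this vanishing from the finite quotients $K/K_i$ to the uncountable discrete group $K = \varprojlim_i K/K_i$ itself. I would exploit the fact that $K$ is \emph{uniquely $p$-divisible}: because $p$ is a unit in $WF$, a contraction-mapping argument on the $q_0$-adic filtration shows that $g \mapsto g^p$ is a bijection on each $K_i$, hence on $K$. Combined with the vanishing at each filtration layer, this suffices to conclude mod-$p$ acyclicity of $BK$ (for instance via a Bousfield $\F_p$-localisation argument on the nilpotent tower, or equivalently by comparison with the vanishing continuous mod-$p$ cohomology of the pro-$q_0$ group $K$ in the sense of Serre).

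The case $BGL_d(W\bF) \to BGL_d(\bF)$ proceeds in parallel: $W\bF$ is still a complete DVR with uniformizer $q_0$, the kernel of reduction admits the same congruence filtration, and its quotients $(M_d(\bF),+)$ are now infinite $\F_{q_0}$-vector spaces but remain uniquely $p$-divisible since $p\neq q_0$. Alternatively, one may first prove the result for every finite subfield $F[m] \sse \bF$ and pass to the colimit using $\bF = \bigcup_m F[m]$, with the minor extra technicality that $W\bF$ is the $q_0$-adic completion of $\varinjlim_m WF[m]$ rather than the union itself.
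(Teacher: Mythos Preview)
Your proposal is correct and follows essentially the same route as the paper: both use the congruence filtration $K_i = I + q_0^i M_d(WF)$ (written $\Gamma_m$ in the paper), observe that the successive quotients are elementary abelian $q_0$-groups with trivial mod-$p$ cohomology, build up the finite stages by iterated Serre/LHS spectral sequences, and then pass to the limit. If anything you are more scrupulous than the paper about the limit step---the paper dispatches it with the single phrase ``by passing to the limit we see that $B\Gamma_1$ is $p$-adically contractible,'' whereas you explicitly flag it and gesture at unique $p$-divisibility and Bousfield-localisation arguments.
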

\begin{proof}
 Let $\Gm_m$ be the kernel of the map $GL_d(WF)\to GL_d(WF/q_0^m)$.
 For $m>0$ it is easy to see that $\Gm_m/\Gm_{m+1}$ is an elementary
 abelian $q_0$-group, so its classifying space is $p$-adically
 contractible.  An induction based on this shows that
 $B(\Gm_m/\Gm_{m+k})$ is again $p$-adically contractible, and by
 passing to the limit we see that $B\Gm_1$ is $p$-adically
 contractible.  The fibration $B\Gm_1\to BGL_d(WF)\to BGL_d(F)$ now
 shows that the second map is a $p$-adic equivalence.  The same
 argument works for $\bF$.
\end{proof}

\begin{proposition}\lbl{prop-witt-embedding}
 There exists an injective ring homomorphism $i\:W\bF\to\C$.
 Moreover, any such homomorphism restricts to give an isomorphism
 $\bF^\tm\to\mu_{q'}(\C)$ as in Proposition~\ref{prop-all-units}.
\end{proposition}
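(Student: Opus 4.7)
The plan is to construct some embedding via a transcendence-basis argument, then analyse what any ring embedding must do to the prime-to-$q_0$ torsion subgroup of $(W\bF)^\tm$.

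For existence, $W\bF$ is an integral domain of characteristic zero with cardinality at most $2^{\aleph_0}$, and the same bound then holds for its field of fractions $L$. I would choose a transcendence basis $B$ of $L$ over $\Q$; since $|B|\leq 2^{\aleph_0}=\operatorname{trdeg}(\C/\Q)$, we may inject $B$ into a transcendence basis of $\C$, yielding an embedding $\Q(B)\hookrightarrow\C$. As $L$ is algebraic over $\Q(B)$ and $\C$ is algebraically closed, this extends to $L\hookrightarrow\C$, and restricting to $W\bF\sse L$ gives the required $i\:W\bF\to\C$.

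For the second claim, the key observation is that Teichm\"uller lifts identify $\bF^\tm$ with the subgroup $U\sse(W\bF)^\tm$ of roots of unity of order coprime to $q_0$. Indeed, any $a\in F[m]^\tm$ satisfies $\ha^{q^m-1}=1$ with $(q^m-1,q_0)=1$, so $\ha\in U$; conversely any $u\in U$ reduces mod $q_0$ to some $a\in\bF^\tm$, and by uniqueness of multiplicative lifts we have $u=\ha$. Any injective ring map sends a primitive $n$-th root of unity to a primitive $n$-th root of unity, so $i$ takes $U$ injectively into $\mu_{q'}(\C)$. For surjectivity, given a primitive $m$-th root of unity $\zeta\in\C$ with $(m,q_0)=1$, pick a primitive $m$-th root of unity $b\in\bF^\tm$ (which exists because $x^m-1$ is separable over $\bF$); then $i(\widehat{b})$ is also a primitive $m$-th root of unity in $\C$, so $\zeta=i(\widehat{b})^k$ for some $k$, hence $\zeta\in i(U)$.

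The main obstacle, such as it is, lies in the bookkeeping around Teichm\"uller lifts and the identification of their image with the full prime-to-$q_0$ torsion subgroup of $(W\bF)^\tm$; the existence portion is classical field-theoretic folklore, and once the torsion-subgroup identification is in hand both halves of the proposition follow immediately.
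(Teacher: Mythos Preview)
Your proposal is correct and follows essentially the same approach as the paper: a transcendence-basis argument for the existence of $i$, followed by the Teichm\"uller identification for the isomorphism $\bF^\tm\to\mu_{q'}(\C)$. The only cosmetic differences are that the paper pins down $|W\bF|=2^{\aleph_0}$ exactly (where you only need the upper bound), and for surjectivity the paper uses a counting argument (both groups have exactly $m$ elements of order dividing $m$) rather than explicitly producing preimages.
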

\begin{proof}
 It is easy to see that the ring $K=\Q\ot W\bF$ is a field.  We can
 use Zorn's lemma to find a maximal transcendental subset
 $X\subset K$, so $K$ is an algebraic over the rational function field
 $\Q(X)$.  It follows by some cardinal arithmetic that
 $2^{\aleph_0}=|K|=|\Q(X)|=|X|$.  Similarly, we can choose a maximal
 transcendental subset $Y\sse\C$, and we find that $\C$ is an
 algebraic closure of $\Q(Y)$, and $|Y|=2^{\aleph_0}=|X|$.  We can
 thus choose a bijection $X\to Y$, giving an isomorphism
 $\Q(X)\to\Q(Y)$, and extend by Galois theory to get an embedding
 $K\to\C$.  We can restrict to $W\bF\leq K$ to get the required map
 $i$.  We can then compose with the Teichm\"uller map
 $\bF^\tm\to W\bF^\tm$ to get an embedding of $\bF^\tm$ in
 $\mu_{q'}(\C)<\C^\tm$.  For every number $m$ that is coprime to $q$,
 we find that $\bF^\tm$ and $\mu_{q'}(\C)$ both contain precisely $m$
 elements of order dividing $m$.  Using this, we see that the
 embedding $\bF^\tm\to\mu_{q'}(\C)$ is actually an isomorphism.
\end{proof}

We will fix a map $i$ as above for the rest of this document.
However, we will arrange our results and arguments in such a way as to
minimise the dependence on this choice.

\section{General linear groups and groupoids}
\label{sec-GL}

\begin{definition}\lbl{defn-groups}\leavevmode
 \begin{itemize}
  \item[(a)] We put $G_d=GL_d(F)$, and $\bG_d=GL_d(\bF)$.
  \item[(b)] These have diagonal subgroups $T_d=(F^\tm)^d$ and
   $\bT_d=(\bF^\tm)^d$.
  \item[(c)] We identify the symmetric group $\Sg_d$ with the group of
   permutation matrices, so $\Sg_d\leq G_d\leq\bG_d$.
  \item[(d)] Recall that a \emph{monomial matrix} is a matrix with
   precisely one nonzero entry in each row, and precisely one nonzero
   entry in each column.  We write $N_d$ for the group of monomial
   matrices in $G_d$, which is isomorphic to the wreath product
   $GL_1(F)\wr\Sg_d$, and is the normaliser of $T_d$.  Similarly, we
   write $\bN_d$ for the group of monomial matrices in $\bG_d$, which
   is isomorphic to the wreath product $GL_1(\bF)\wr\Sg_d$, and is
   the normaliser of $\bT_d$.
  \item[(e)] We also define groups $GW_d$, $\bGW_d$, $TW_d$, $\bTW_d$,
   $NW_d$ and $\bNW_d$, by replacing $F$ and $\bF$ by the
   corresponding Witt rings $WF$ and $\bWF$.
 \end{itemize}
\end{definition}

We now prove some group-theoretic results whose cohomological
significance will emerge later.

\begin{proposition}\lbl{prop-Gd-order}
 For all $d\geq 0$ we have $|G_d|=\prod_{k=0}^{d-1}(q^d-q^k)$, and
 thus
 \[ v_p|G_d|=dr+v_p(d!)=dr+(d-\al(d))/(p-1), \]
 where $\al(d)$ is the sum of the digits of $d$ in base $p$.
\end{proposition}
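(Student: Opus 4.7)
The plan is to establish the three equalities in sequence: first the product formula for $|G_d|$, then its $p$-adic valuation via Lemma~\ref{lem-vp}, then Legendre's formula to rewrite $v_p(d!)$.

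For the product formula, I would count ordered bases of $F^d$, since $GL_d(F)$ acts freely and transitively on such bases (via the columns of a matrix). The first basis vector is any nonzero vector of $F^d$, giving $q^d-1$ choices. Having chosen $k$ linearly independent vectors, the $(k+1)$-st vector may be any vector outside their $k$-dimensional span, giving $q^d-q^k$ choices. Multiplying for $k=0,\dotsc,d-1$ gives the claimed formula.

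Next, I rewrite $q^d-q^k = q^k(q^{d-k}-1)$ and use that $p\nmid q$ (by hypothesis $q=1\pmod p$, so $q$ is a unit mod $p$) to get $v_p(q^d-q^k)=v_p(q^{d-k}-1)$. Summing over $k=0,\dotsc,d-1$ and re-indexing $j=d-k$, Lemma~\ref{lem-vp} yields
\[
 v_p|G_d| = \sum_{j=1}^{d}\bigl(r+v_p(j)\bigr) = dr + \sum_{j=1}^d v_p(j) = dr + v_p(d!).
\]

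Finally, the identification $v_p(d!)=(d-\al(d))/(p-1)$ is Legendre's classical formula. I would derive it by writing $d=\sum_i a_i p^i$ in base $p$ (so $\al(d)=\sum_i a_i$), using $v_p(d!)=\sum_{i\geq 1}\lfloor d/p^i\rfloor$, computing $\lfloor d/p^i\rfloor=\sum_{j\geq i}a_jp^{j-i}$, interchanging the order of summation, and summing the geometric series $\sum_{i=1}^{j}p^{j-i}=(p^j-1)/(p-1)$ to reach $(d-\al(d))/(p-1)$.

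There is really no obstacle here: the first step is a standard enumeration, the second is an immediate application of Lemma~\ref{lem-vp} that was presumably included for exactly this purpose, and the third is Legendre's formula. The only point to be slightly careful about is confirming $p\nmid q$ before pulling out the factor $q^k$, but this is automatic from the hypothesis $q\equiv 1\pmod p$ in Definition~\ref{defn-F}.
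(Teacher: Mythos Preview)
Your proof is correct and follows essentially the same approach as the paper: both count columns/ordered bases to get the product formula, apply Lemma~\ref{lem-vp} after factoring out the power of $q$, and then invoke Legendre's formula for $v_p(d!)$. The only cosmetic difference is that the paper justifies Legendre's formula by counting $\{j\leq d : v_p(j)\geq m\}$ rather than by your base-$p$ expansion argument, but these are equivalent standard derivations.
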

\begin{proof}
 Note that for a matrix $g\in G_d$, the $k$'th column can be any
 vector in $F^d$ not lying in the $(k-1)$-dimensional subspace spanned
 by the previous columns.  This gives the claimed (and well-known)
 formula for $|G_d|$.  Recall that $v_p(q)=0$ and
 $v_p(q^j-1)=r+v_p(j)$.  This gives
 \[ v_p|G_d| = \sum_{0\leq i<d}v_p(q^{d-i}-1) 
             = \sum_{j=1}^d(r+v_p(j)) 
             = dr + v_p(d!). 
 \]
 The formula $v_p(d!)=(d-\al(d))/(p-1)$ is also well-known, and is
 easily checked by counting $\{j\leq d\st v_p(j)\geq m\}$ for all $m$.             
\end{proof}

\begin{proposition}\lbl{prop-index}
 The index of $N_d$ in $G_d$ is coprime to $p$.
\end{proposition}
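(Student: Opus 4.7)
The plan is to simply compute the $p$-adic valuation of $|N_d|$ and compare with the formula for $v_p|G_d|$ that we just established in Proposition~\ref{prop-Gd-order}. Since $N_d\simeq GL_1(F)\wr\Sg_d$, we have $|N_d| = |F^\tm|^d\cdot d! = (q-1)^d\cdot d!$. By the definition of $r$ we have $v_p(q-1)=r$, so
\[ v_p|N_d| = dr + v_p(d!). \]
This is exactly the value of $v_p|G_d|$ given by Proposition~\ref{prop-Gd-order}, so $v_p([G_d:N_d]) = 0$, which is the claim.

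There is no real obstacle here: the proof is a one-line comparison of valuations, and all the work has already been done in Proposition~\ref{prop-Gd-order} and Lemma~\ref{lem-vp}. One could equivalently phrase the conclusion by noting that $N_d$ contains a Sylow $p$-subgroup of $G_d$; indeed, a Sylow $p$-subgroup of $T_d$ has order $p^{dr}$ and a Sylow $p$-subgroup of $\Sg_d$ has order $p^{v_p(d!)}$, and their product sits inside $N_d$ with order equal to the full $p$-part of $|G_d|$.
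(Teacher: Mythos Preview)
Your proof is correct and essentially identical to the paper's: compute $|N_d|=(q-1)^d\,d!$, take $p$-adic valuations to get $v_p|N_d|=dr+v_p(d!)$, and compare with Proposition~\ref{prop-Gd-order}. The additional Sylow remark is a nice gloss but not needed.
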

\begin{proof}
 It is clear that $|N_d|=d!(q-1)^d$, so $v_p|N_d|=dr+v_p(d!)=v_p|G_d|$
 as required.
\end{proof}

\begin{proposition}\lbl{prop-non-p-power}
 Suppose that $d$ can be written in base $p$ as $\sum_id_ip^i$, with
 $0\leq d_i<p$.  Then the subgroup $H=\prod_iG_{p^i}^{d_i}\leq G_d$
 has index coprime to $p$.
\end{proposition}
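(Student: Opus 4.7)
The plan is to reduce the statement to a comparison of $p$-adic valuations of orders, which is an immediate consequence of Proposition~\ref{prop-Gd-order}. The subgroup $H=\prod_iG_{p^i}^{d_i}$ sits inside $G_d$ as block-diagonal matrices, where we list $d_i$ blocks of size $p^i$ for each $i$; once we know $v_p|H|=v_p|G_d|$, the index $[G_d:H]=|G_d|/|H|$ is automatically coprime to $p$.

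First I would apply Proposition~\ref{prop-Gd-order} to each factor $G_{p^i}$. Since $\al(p^i)=1$, that proposition gives
\[ v_p|G_{p^i}| = p^ir + \frac{p^i-1}{p-1}. \]
Multiplying by $d_i$ and summing over $i$,
\[ v_p|H| = \sum_i d_i\left(p^ir + \frac{p^i-1}{p-1}\right)
         = r\sum_i d_i p^i + \frac{1}{p-1}\left(\sum_i d_ip^i - \sum_i d_i\right)
         = dr + \frac{d-\al(d)}{p-1}. \]

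Then I would compare this with the formula $v_p|G_d|=dr+(d-\al(d))/(p-1)$ from Proposition~\ref{prop-Gd-order}, concluding $v_p|H|=v_p|G_d|$ and hence $v_p([G_d:H])=0$. No serious obstacle arises; the argument is entirely a digit-sum bookkeeping calculation, with the only substantive input being Proposition~\ref{prop-Gd-order} (which itself rests on Lemma~\ref{lem-vp}). The choice of base-$p$ expansion is exactly what makes the digit sums of the block sizes add up correctly to $\al(d)$.
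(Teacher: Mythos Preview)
Your proof is correct and follows essentially the same approach as the paper: both compute $v_p|G_{p^i}|=p^ir+(p^i-1)/(p-1)$ via Proposition~\ref{prop-Gd-order} and $\al(p^i)=1$, sum with multiplicities $d_i$, and compare with $v_p|G_d|$.
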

\begin{proof}
 As $\al(p^i)=1$, we have $v_p|G_{p^i}|=p^ir+(p^i-1)/(p-1)$.  This
 gives
 \[ v_p|H|=\sum_id_iv_p|G_{p^i}| =
     \sum_id_ip^ir + \frac{\sum_id_ip^i-\sum_id_i}{p-1} = 
      dr + (d-\al(d))/(p-1) = v_p|G|.
 \]
\end{proof}

\begin{lemma}\lbl{lem-rep-th}
 Let $A$ be a finite abelian $p$-group, let $K$ be a field of
 characteristic $q_0\neq p$, and let $V$ be a finite-dimensional
 $K$-linear representation of $A$.
 \begin{itemize}
  \item[(a)] $V$ can be decomposed as a direct sum of finitely many
   irreducible representations. 
  \item[(b)] If $V$ is irreducible, then the ring $R=\End_A(V)$ is a
   field, and $V$ has dimension one over $R$, and the natural map
   $K[A]\to R$ is surjective.
  \item[(c)] If $V$ is irreducible and $K$ is algebraically closed
   then $V$ has dimension one over $K$.
 \end{itemize}
\end{lemma}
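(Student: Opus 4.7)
For part (a), the plan is to invoke Maschke's theorem. Since $A$ is a $p$-group and $K$ has characteristic $q_0\neq p$, the order $|A|$ is invertible in $K$, which makes $K[A]$ semisimple; any finite-dimensional $K[A]$-module then decomposes as a finite direct sum of simple submodules.

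For part (b), I would write $\rho\:K[A]\to\End_K(V)$ for the action homomorphism. The commutativity of $A$ gives $\rho(a)\rho(b)=\rho(ab)=\rho(ba)=\rho(b)\rho(a)$ for $a,b\in A$, so each $\rho(a)$ is $A$-equivariant, and hence $\rho(K[A])\sse R$. Schur's lemma then shows that $R$ is a division ring. The crux of the argument is to upgrade this inclusion to an equality $R=\rho(K[A])$. For this, pick any $v\in V\sm\{0\}$, so that the nonzero submodule $\rho(K[A])v$ must equal $V$ by simplicity. Given $\phi\in R$, choose $x\in K[A]$ with $\phi(v)=\rho(x)v$; then for any $w=\rho(y)v\in V$, commutativity of $K[A]$ yields
\[ \phi(w)=\rho(y)\phi(v)=\rho(y)\rho(x)v=\rho(x)\rho(y)v=\rho(x)w, \]
forcing $\phi=\rho(x)$. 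Thus $R$ is a quotient of the commutative ring $K[A]$, so a commutative division ring, i.e., a field. Finally, the $R$-submodules of $V$ coincide with the $A$-invariant $K$-subspaces, so $V$ is simple as an $R$-module, and since $R$ is a field, simplicity forces $\dim_RV=1$.

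For part (c), the surjection $K[A]\to R$ presents $R$ as a finite-dimensional $K$-algebra; being also a field, it is a finite field extension of $K$. Algebraic closedness of $K$ then forces $R=K$, and so $\dim_KV=\dim_RV=1$. The only nontrivial step in the whole argument is the identity $R=\rho(K[A])$ in part (b); this is really a consequence of the commutativity of $K[A]$ combined with the cyclicity of the simple module $V$, and everything else is either Maschke, Schur, or routine bookkeeping.
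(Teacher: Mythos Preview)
Your proof is correct and follows the same overall strategy as the paper (Maschke for~(a), Schur plus commutativity for~(b), finite extension of an algebraically closed field for~(c)), but your route through part~(b) differs in a minor but notable way. The paper first sets $R_0=\rho(K[A])$, observes that $R_0$ is a finite-dimensional integral domain over $K$ and hence a field, then notes that the $R_0$-subspaces of $V$ are exactly the subrepresentations, so $\dim_{R_0}V=1$, and finally concludes $R=\End_{R_0}(V)=R_0$. You instead prove $R=\rho(K[A])$ directly via a cyclic-vector argument (pick $v\neq 0$, lift $\phi\in R$ to an element of $K[A]$ by matching values at $v$ and using commutativity), and only then deduce that $R$ is a field and $\dim_RV=1$. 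Both orderings work; yours is perhaps more hands-on, while the paper's avoids choosing a vector and gets $\dim_{R_0}V=1$ before knowing $R=R_0$.
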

\begin{proof}\leavevmode
 \begin{itemize}
  \item[(a)] This is just Maschke's Theorem, which is valid in this
   context because $|A|$ is invertible in $K$.  In more detail, given
   a subrepresentation $0<U<V$, we can choose a $K$-linear
   retraction $\al_0\:V\to U$, then put
   $\al(v)=|A|^{-1}\sum_{a\in A}a^{-1}\al_0(av)$.  This gives a
   $K[A]$-linear retraction, and thus a splitting $V=U\oplus U'$.  The
   claim follows by iterating this.
  \item[(b)] Schur's Lemma shows that $R$ is a division ring.  Let
   $R_0$ be the image of $K[A]$ in $R$.  This is an integral domain of
   finite dimension over a field, so every multiplication operator is
   an injective endomorphism of a finite-dimensional vector space and
   so is an automorphism.  It follows that $R_0$ is a field.  The
   $R_0$-linear subspaces of $V$ are the same as the
   subrepresentations, so irreducibility means that
   $\dim_{R_0}(V)=1$.  It follows in turn that $R=\End_{R_0}(V)=R_0$.
  \item[(c)] The field $R$ is a finite extension of $K$, and so must
   be equal to $K$.  The claim is clear from this.
 \end{itemize}
\end{proof}

\begin{proposition}\lbl{prop-torus}
 Any finite abelian $p$-subgroup $A\leq \bG_d$ is conjugate to a
 subgroup of $\bT_d$.
\end{proposition}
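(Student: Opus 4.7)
The plan is to apply the representation-theoretic results of Lemma~\ref{lem-rep-th} to the tautological representation of $A$ on $V=\bF^d$. Since $A$ has order a power of $p$ and $\bF$ has characteristic $q_0\neq p$, Maschke's theorem applies: by part~(a) of Lemma~\ref{lem-rep-th}, $V$ decomposes as a direct sum $V=\bigoplus_{i=1}^s V_i$ of irreducible $\bF[A]$-submodules.

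Next, since $\bF$ is algebraically closed, part~(c) of Lemma~\ref{lem-rep-th} gives $\dim_\bF V_i=1$ for each $i$. In particular we must have $s=d$, so $V=L_1\op\dotsb\op L_d$ with each $L_i$ a one-dimensional $A$-invariant subspace.

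Now I would choose a basis vector $e_i\in L_i$ for each $i$, and let $P\in\bG_d$ be the change-of-basis matrix sending the standard basis of $\bF^d$ to $(e_1,\dotsc,e_d)$. Each $a\in A$ preserves every $L_i$, so acts on $e_i$ by multiplication by some scalar $\chi_i(a)\in\bF^\tm$; thus $P^{-1}aP$ is the diagonal matrix with entries $\chi_1(a),\dotsc,\chi_d(a)$, which lies in $\bT_d$. Hence $P^{-1}AP\leq\bT_d$, as required.

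There is no real obstacle here; the only thing to verify is that Lemma~\ref{lem-rep-th} is indeed applicable, which boils down to the hypothesis $|A|$ being invertible in $\bF$, and this holds since $|A|$ is a power of $p$ while $\bF$ has characteristic $q_0\neq p$ (Definition~\ref{defn-F}).
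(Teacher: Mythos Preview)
Your proof is correct and follows essentially the same approach as the paper: both regard $\bF^d$ as an $\bF$-linear representation of $A$, invoke Lemma~\ref{lem-rep-th} to decompose it into one-dimensional pieces (using that $\bF$ is algebraically closed), and observe that the resulting change-of-basis matrix conjugates $A$ into $\bT_d$. The paper's version is simply terser.
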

\begin{proof}
 The inclusion $A\to\bG_d$ allows us to regard $\bF^d$ as an
 $\bF$-linear representation of $A$.  By Lemma~\ref{lem-rep-th}, this
 is isomorphic to a direct sum of one-dimensional representations.
 Any such isomorphism is given by an element of $\bG_d$ that
 conjugates $A$ into $\bT_d$.
\end{proof}

\begin{proposition}\lbl{prop-exponent}
 Let $A$ be any abelian $p$-subgroup of $G_d$, and let $k$ be the
 largest integer such that $p^k\leq d$.  Then $A$ has exponent
 dividing $p^{k+r}$.
\end{proposition}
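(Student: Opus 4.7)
The plan is to use Proposition~\ref{prop-torus} to diagonalize over $\bF$, then combine the bound on the minimal polynomial degree with the field-span calculation of Proposition~\ref{prop-F-span}.

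First I would extend scalars: although $A \leq G_d$, we may view $A$ as an abelian $p$-subgroup of $\bG_d$, and by Proposition~\ref{prop-torus} there exists $g \in \bG_d$ with $gAg^{-1} \leq \bT_d$. Conjugation by $g$ is an isomorphism of $A$ onto $gAg^{-1}$, so it preserves the exponent; call this common exponent $p^s$. We must show $s \leq k+r$.

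Next, choose $a \in A$ of order exactly $p^s$. After conjugation, $a$ is diagonal with entries $\zeta_1,\dotsc,\zeta_d \in \mu_{p^s}(\bF)$, and at least one entry, say $\zeta := \zeta_i$, has exact order $p^s$. Crucially, although the diagonalisation took place in $\bG_d$, the characteristic polynomial $\chi_a(x) = \prod_j(x-\zeta_j)$ is a conjugation invariant, so it lies in $F[x]$ (since the original $a$ was in $GL_d(F)$). The minimal polynomial of $\zeta$ over $F$ divides $\chi_a$, hence $[F(\zeta):F] \leq d$.

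The final step identifies $F(\zeta)$. Since $\zeta$ has order $p^s$, its powers exhaust $\mu_{p^s}(\bF)$, so $F(\zeta)$ equals the $F$-linear span of $\mu_{p^s}(\bF)$. By Proposition~\ref{prop-F-span}, this span is $F$ if $s \leq r$, and $F(s-r)$ if $s > r$; in the latter case $[F(\zeta):F] = p^{s-r}$. Combining with the bound $[F(\zeta):F] \leq d$, we get $p^{s-r} \leq d$ when $s > r$, hence $s - r \leq k$ by definition of $k$; and the inequality $s \leq k + r$ is automatic when $s \leq r$ (since $k \geq 0$ because $p^0 = 1 \leq d$). Either way, $p^s \mid p^{k+r}$, as required.

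I do not expect a serious obstacle: the only subtlety is making sure the diagonalisation in $\bG_d$ (rather than $G_d$) does not cost us, and this is handled by noting that the characteristic polynomial remains $F$-rational and the exponent is a conjugation invariant.
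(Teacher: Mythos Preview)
Your proof is correct and takes a somewhat different route from the paper's. The paper decomposes $F^d$ directly as an $F$-linear representation of $A$ into irreducibles $V_i$ (using Lemma~\ref{lem-rep-th} over $F$), observes that each $R_i=\End_A(V_i)$ is a field extension of $F$ of degree $e_i=\dim_F V_i\leq d$, and then uses Lemma~\ref{lem-vp} to bound the $p$-exponent of $R_i^\tm$ by $p^{r+v_p(e_i)}\leq p^{r+k}$; since $A$ embeds in $\prod_i R_i^\tm$, this bounds the exponent of $A$. You instead reduce immediately to a single element $a$ of maximal order, pass to $\bF$ to diagonalise, and then pull the argument back to $F$ via the observation that the characteristic polynomial of $a$ is $F$-rational, so any eigenvalue $\zeta$ of exact order $p^s$ satisfies $[F(\zeta):F]\leq d$; Proposition~\ref{prop-F-span} then finishes the bound. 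Your argument is a bit more elementary in that it avoids analysing the full $F[A]$-module structure and only needs the minimal polynomial of one element, at the cost of invoking Proposition~\ref{prop-torus} (diagonalisation over $\bF$) and Proposition~\ref{prop-F-span} as black boxes. The paper's version has the mild advantage of staying entirely over $F$ and making the role of the irreducible decomposition explicit, which fits more naturally with the later identification of the cyclic subgroups of maximal exponent in Proposition~\ref{prop-field-cat}.
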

\begin{proof}
 The inclusion $A\to G_d$ allows us to regard $F^d$ as an
 $F$-representation of $A$, which we call $V$.  We can again split $V$
 as a direct sum of irreducible representations $V_i$, although we can no
 longer guarantee that these are one-dimensional, because $F$ is not
 algebraically closed.  Put
 $R_i=\End_A(V_i)$ as in Lemma~\ref{lem-rep-th}, so $R_i$ is a finite field
 extension of $F$, and $\dim_{R_i}(V_i)=1$.  Put
 $e_i=\dim_F(R_i)=\dim_F(V_i)\leq d$, so $v_p(e_i)\leq k$.  This
 means that $v_p|R_i^\tm|=v_p(q^{e_i}-1)=r+v_p(e_i)\leq r+k$ (using
 Lemma~\ref{lem-vp}).  As $A$ acts faithfully on $V$ via a
 homomorphism to $\prod_iR_i^\tm$, we conclude that the exponent of
 $A$ divides the $p$-part of the exponent of $\prod_iR_i^\tm$, which
 in turn divides $p^{r+k}$. 
\end{proof}

\begin{definition}\lbl{defn-brauer-character}
 We define a class function $\xi_d\:\bG_d\to\C$ as follows.  Given
 $g\in\bG_d$, we let $\lm_1,\dotsc,\lm_d\in\bF^\tm$ denote the
 eigenvalues, repeated with the appropriate multiplicity.  We then use
 our chosen isomorphism $i\:\bF^\tm\to\mu_{q'}(\C)$ (from
 Proposition~\ref{prop-all-units} or~\ref{prop-witt-embedding}) and
 put $\xi_d(g)=\sum_ki(\lm_k)$.  We call this the \emph{universal
  Brauer character}. 
\end{definition}

\begin{theorem}\lbl{thm-brauer}
 For any $m\geq 0$, the restriction of $\xi_d$ to $GL_d(F(m))$ is the
 character of a virtual complex representation.
\end{theorem}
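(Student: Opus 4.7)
The plan is to invoke Brauer's characterization of virtual complex characters: a class function on a finite group is a virtual character iff its restriction to every elementary subgroup is a virtual character. So it suffices to fix an elementary subgroup $E\leq GL_d(F(m))$ of the form $E=P\tm C$, with $P$ an $\ell$-group and $C$ cyclic of order coprime to $\ell$, and verify that $\xi_d|_E$ is a virtual character of $E$.

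First I would record the key observation that $\xi_d(g)=\xi_d(g_s)$ for every $g\in\bG_d$, where $g=g_sg_u$ is the Jordan decomposition. This holds because $g$ and $g_s$ have the same multiset of eigenvalues: in a basis that simultaneously upper-triangularises $g_s$ and $g_u$, the diagonal of $g_sg_u$ coincides with that of $g_s$.

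Next I decompose $E$ by separating its unipotent part from its semisimple part. Every element of $GL_d(F(m))$ of $q_0$-power order is unipotent (if $g^{q_0^k}=I$ then $(g-I)^{q_0^k}=g^{q_0^k}-I=0$, since $F(m)$ has characteristic $q_0$), and every element of order coprime to $q_0$ is semisimple (it annihilates the separable polynomial $x^n-1$). Writing $C=C_{q_0}\tm C'$ with $C_{q_0}$ the $q_0$-Sylow of the cyclic group $C$, and setting $U$ to be the $q_0$-part of $E$ (namely $P$ when $\ell=q_0$ and $C_{q_0}$ otherwise) and $H$ its complementary direct factor, we obtain $E=H\tm U$ with $|H|$ coprime to $q_0$. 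For $g=(h,u)\in H\tm U$ the Jordan decomposition is $(g_s,g_u)=(h,u)$, so $\xi_d(g)=\xi_d(h)$, and hence $\xi_d|_E$ is the pullback of $\xi_d|_H$ along the projection $E\to H$.

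The heart of the proof is then to identify $\xi_d|_H$ as a virtual complex character of $H$. Since $|H|$ is coprime to the characteristic $q_0$ of $\bF$, the classical prime-to-characteristic form of Brauer lifting applies: the isomorphism $i\:\bF^\tm\to\mu_{q'}(\C)$ induces a bijection between isomorphism classes of irreducible $\bF[H]$-modules and irreducible $\C[H]$-modules, sending $W$ to the unique $\tilde W$ whose character satisfies $\chi_{\tilde W}(h)=\sum_k i(\mu_k(h))$, where $\mu_k(h)$ are the eigenvalues of $h$ acting on $W$. Applied to the restriction of $\bF^d$ to $H$, this yields a virtual $\C[H]$-module $\tilde V$ with $\chi_{\tilde V}=\xi_d|_H$. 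The external product $\tilde V\boxtimes\mathbf{1}_U$ is then a virtual $\C[E]$-module with character $\xi_d|_E$, completing the verification on $E$ and hence on all of $GL_d(F(m))$. The main obstacle is the clean invocation of this prime-to-characteristic Brauer lifting, in particular matching the bijection $R_\bF(H)\cong R_\C(H)$ with the specific eigenvalue formula determined by $i$; this is standard (see e.g.\ Serre, \emph{Linear representations of finite groups}, Part III) but requires careful citation.
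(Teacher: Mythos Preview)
Your argument is correct. This is the standard proof that the Brauer lift of a modular representation (extended from $q_0$-regular elements to all elements via the Jordan decomposition) is a virtual character; essentially the same argument appears in Serre, \emph{Linear Representations of Finite Groups}, \S18.

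The paper itself does not prove the theorem: it simply cites Green~\cite{gr:cfg} and Lusztig~\cite{lu:drg}. Those works do rather more than establish the bare existence statement. Green explicitly computes the full character table of $GL_d(F(m))$ and thereby identifies $\xi_d$ as a specific $\Z$-combination of irreducibles; Lusztig (and subsequently Deligne--Lusztig theory) constructs the relevant virtual representation geometrically via $\ell$-adic cohomology. Your route via Brauer's characterization of characters is more elementary and applies to arbitrary finite groups, not just general linear groups, but it is non-constructive. For the purposes of this paper, which only needs the existence of the virtual representation so that its Chern classes are available (see Remark~\ref{rem-chern}), your argument is entirely adequate.
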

\begin{proof}
 This was proved by Green~\cite{gr:cfg}.  Work of
 Lusztig~\cite{lu:drg} gives a more explicit and constructive proof.
\end{proof}

\begin{remark}\lbl{rem-chern}
 By combining Theorem~\ref{thm-brauer} with the theorem of Tanabe, we
 see that $E^0(BGL_d(F))$ is generated by Chern classes of ordinary
 complex representations.  The Galois automorphism $\phi^*$ is
 essentially the same as the Adams operation $\psi^q$, which is also
 determined by complex representation theory.  In~\cite{st:cag} we
 defined a ``Chern approximation'' map $C(G,E)\to E^0(BG)$ for any
 finite group $G$.  The above observations imply that this map is an
 isomorphism in the case $G=GL_d(F)$.
\end{remark}

Rather than working directly with the above groups, we will find it
convenient to package them as groupoids.  

\begin{definition}\lbl{defn-gpd-misc}
 Given a groupoid $\CG$ and an object $a\in\CG$, we write $\CG(a)$ for
 the automorphism group $\CG(a,a)$.  We also write $\pi_0(\CG)$ for
 the set of isomorphism classes in $\CG$.  We say that a functor
 $\phi\:\CG\to\CH$ is a \emph{$\pi_0$-isomorphism} if the induced map
 $\pi_0(\CG)\to\pi_0(\CH)$ is a bijection.
\end{definition}

\begin{definition}\lbl{defn-gpd-finite}
 Let $\CG$ be a groupoid.  We say that $\CG$ is \emph{hom-finite} if
 $\CG(a,a')$ is finite for all $a,a'\in\CG$ (or equivalently, $\CG(a)$
 is finite for all $a$).  We say that $\CG$ is \emph{finite} if, in
 addition, there are only finitely many isomorphism classes.

 Now let $\phi\:\CG\to\CH$ be a functor between hom-finite groupoids.
 We say that $\phi$ is \emph{finite} if for each $b\in\CH$, the full
 subcategory $\{a\in\CG\st\phi(a)\simeq b\}\sse\CG$ is finite.
\end{definition}

\begin{definition}\lbl{defn-graded-gpd}
 We regard $\N$ as a groupoid with only identity morphisms.  We use
 addition and multiplication to make this into a symmetric bimonoidal
 groupoid.  A \emph{graded groupoid} means a groupoid $\CG$ equipped
 with a functor $\deg\:\CG\to\N$, so $\CG$ splits as a disjoint union
 of groupoids $\CG_d=\deg^{-1}\{d\}$.  We say that $\CG$ is
 \emph{of finite type} if each groupoid $\CG_d$ is finite, or
 equivalently, $\deg$ is a finite morphism.
\end{definition}

\begin{definition}\lbl{defn-cats}\leavevmode
 \begin{itemize}
  \item[(a)] We write $\CL$ for the category of one-dimensional vector
   spaces over $F$, and $\bCL$ for the category of one-dimensional vector
   spaces over $\bF$.  The construction $L\mapsto\bF\ot_FL$ gives a
   faithful functor $\CL\to\bCL$.  Note that $\CL$ is equivalent to
   $GL_1(F)$, considered as a one-object groupoid, and similarly for
   $\bCL$; the functor $\CL\to\bCL$ corresponds to the inclusion
   $GL_1(F)\to GL_1(\bF)$.  We give $\CL$ and $\bCL$ the grading given
   by the constant functor $L\mapsto 1$ (so $\CL$ is of finite type
   but $\bCL$ is not).
  \item[(b)] As mentioned previously, we define $\CV$ to be the
   groupoid of finite-dimensional vector spaces over $F$, and we
   define $\bCV$ to be the corresponding groupoid for $\bF$.  The map
   $V\mapsto\dim_F(V)$ gives a grading on $\CV$, and similarly for
   $\bCV$.  Both of these groupoids are symmetric bimonoidal under
   $\oplus$ and $\otimes$, and the grading is compatible with this.
   We also write $\CV(k)$ for the groupoid of finite-dimensional
   vector spaces over the extension field $F(k)$ of degree $p^k$ over
   $F$.
  \item[(c)] We write $\CX$ for the groupoid of finite sets and
   bijections.  This is symmetric bimonoidal under disjoint union and
   cartesian product.  It is equivalent to the disjoint union of the
   groups $\Sg_d$, considered as one-object groupoids.  There is an
   evident symmetric bimonoidal functor $X\mapsto F[X]$ from $\CX$ to
   $\CV$.  The map $X\mapsto |X|$ gives a grading, compatible with the
   symmetric bimonoidal structure.
  \item[(d)] We write $\CXL$ for the groupoid of pairs $(X,L)$,
   where $X$ is a finite set and $L$ is an $F$-linear line bundle over
   $X$.  This is equivalent to the disjoint union of the groups
   $N_d=GL_1(F)\wr\Sg_d$, considered as one-object groupoids.
   Given $(X_i,L_i)\in\CXL$ for $i=0,1$, we can patch together
   $L_0$ and $L_1$ to get a line bundle $L$ over $X_0\amalg X_1$, and
   we define $(X_0,L_0)\amalg (X_1,L_1)=(X_0\amalg X_1,L)$.
   Similarly, we have a bundle $L'$ over $X_0\tm X_1$ with fibre
   $(L_0)_{x_0}\ot_F(L_1)_{x_1}$ at $(x_0,x_1)$, and we define
   $(X_0,L_0)\tm(X_1,L_1)=(X_0\tm X_1,L')$.  This makes $\CXL$ into
   a symmetric bimonoidal category.  There is a symmetric bimonoidal
   functor $\gm\:\CXL\to\CV$ given by $\gm(X,L)=\bigoplus_{x\in X}L_x$.
   The map $(X,L)\mapsto |X|$ gives a grading, compatible with the
   symmetric bimonoidal structure.
  \item[(e)] We define $\bCXL$ and $\gm\:\bCXL\to\bCV$ in the evident
   analogous way.
 \end{itemize}
\end{definition}

We next want to sharpen Proposition~\ref{prop-exponent} by identifying
the cyclic subgroups of maximal exponent in $G_{p^k}$ or in the
subgroup $N_{p^k}$.  It is convenient to formulate the proof using
groupoids.

\begin{definition}\lbl{defn-field-cat}
 For $k\geq 0$, we define $\CF_k$ to be the category of field
 extensions of $F$ of degree $p^k$.  (It is standard that such
 extensions exist and are all isomorphic, so $\CF_k$ is a connected
 groupoid.)  We also define $\CC_k$ to be the category of triples
 $(C,X,L)$ where $(X,L)\in\CXL$ and $|X|=p^k$ and $C$ is a cyclic
 subgroup of order $p^{k+r}$ in $\Aut(X,L)$.
\end{definition}

\begin{proposition}\lbl{prop-field-cat}
 There are functors $\phi\:\CF_k\to\CC_k$ and $\psi\:\CC_k\to\CF_k$
 which can be described as follows.  We have $\phi(P)=(C,X,L)$ where 
 \begin{align*}
  X &= \{u\in P\st u^{p^k}=1\} \\
  C &= \{v\in P\st v^{p^{k+r}}=1\} \\
  L_u &= \text{span}_F\{v\in C\st v^{p^r}=u\}.
 \end{align*}
 In the other direction, $\psi(C,X,L)$ is the image of $F[C]$ in the
 endomorphism ring of the space $V=\gm(X,L)=\bigoplus_xL_x$.
 Moreover, $\psi\phi(P)$ is naturally isomorphic to $P$ for all $P$,
 and $\phi\psi(C,X,L)$ is unnaturally isomorphic to $(C,X,L)$.
\end{proposition}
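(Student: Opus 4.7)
The plan is to verify, in order, (i) $\phi$ lands in $\CC_k$, (ii) $\psi$ lands in $\CF_k$, (iii) $\psi\phi\cong\mathrm{id}$ naturally, and (iv) $\phi\psi\cong\mathrm{id}$ unnaturally.

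For (i), given $P\in\CF_k$, Lemma~\ref{lem-vp} gives $v_p(|P^\tm|)=v_p(q^{p^k}-1)=r+k$, so $X=\mu_{p^k}(P)$ has order $p^k$ and $C=\mu_{p^{k+r}}(P)$ is cyclic of order $p^{k+r}$. The $p^r$-power map is a surjection $\chi\:C\to X$ with kernel $K=\mu_{p^r}(P)$, and $\mathrm{span}_F(K)=F$ by Proposition~\ref{prop-F-span}, so each fiber $\chi^{-1}(u)=vK$ has $F$-span $L_u=Fv$, a $1$-dimensional $F$-subspace of $P$.  The rule $w\cdot u=w^{p^r}u$ on $X$ together with multiplication by $w$ on each $L_u\sse P$ defines an injection $C\hookrightarrow\Aut(X,L)$.

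For (ii), let $(C,X,L)\in\CC_k$ and $V=\gm(X,L)$, a faithful $p^k$-dimensional $F$-representation of $C$. By Lemma~\ref{lem-rep-th}(a,b), $V=\bigoplus_i V_i$ with each $V_i$ irreducible of $F$-dimension $e_i$, the $C$-action on $V_i$ factoring through $R_i^\tm$ where $R_i=\End_C(V_i)$ is a field of order $q^{e_i}$. Since $C$ is cyclic of order $p^{k+r}$ and acts faithfully, some image has order exactly $p^{k+r}$, and Lemma~\ref{lem-vp} then forces $r+v_p(e_i)\geq k+r$, hence $e_i\geq p^k$; combined with $\sum_i e_i=p^k$, this makes $V$ itself irreducible, and Lemma~\ref{lem-rep-th}(b) identifies $\psi(C,X,L)$ with the field $\End_C(V)$ of degree $p^k$ over $F$.

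For (iii) and (iv): if $(C,X,L)=\phi(P)$, then $V\sse P$ by construction and $F[C]$ acts by left multiplication; $\mathrm{span}_F(C)=P$ by Proposition~\ref{prop-F-span}, and multiplication by the field $P$ embeds it faithfully in $\End_F(P)$, giving a canonical isomorphism $\psi\phi(P)\cong P$. Conversely, with $P=\psi(C,X,L)$, the inclusion $C\hookrightarrow P^\tm$ identifies $C$ with $\mu_{p^{k+r}}(P)$ by order count. Using the wreath product $\Aut(X,L)=GL_1(F)\wr\Sg_{p^k}$, the projection $C\to\Sg_{p^k}$ has cyclic $p$-image of order $\leq p^k$ and cyclic kernel inside $(F^\tm)^{p^k}$ of order $\leq p^r$; the identity $|C|=p^{k+r}$ forces both bounds to be attained, so $C$ acts transitively on $X$ with stabilizer the unique subgroup of $C$ of order $p^r$, matching the $C$-set $\phi(P)_X$ after any choice of basepoint $x_0\in X$. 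A further choice of nonzero $v_0\in L_{x_0}$ gives a $C$-equivariant isomorphism $V\to P$, $pv_0\mapsto p$, under which $L_{cx_0}=F(cv_0)$ maps to $Fc\sse P$, matching $\phi(P)_L$; the two choices are the source of the unnaturality. The subtlest step is the order-counting pinning down the $C$-action on $X$ in (iv); everything else is a routine combination of the cited lemmas.
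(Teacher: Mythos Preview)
Your proof is correct and follows essentially the same route as the paper: the $v_p$ computation and Proposition~\ref{prop-F-span} for~(i), the irreducible decomposition plus the exponent bound from Lemma~\ref{lem-vp} for~(ii), the identification $V\cong P$ with $C$ acting by multiplication for~(iii), and the order-counting argument forcing transitivity of $C$ on $X$ with stabiliser of order $p^r$ for~(iv). Two small presentational points: in~(iii) write the natural map $\bigoplus_u L_u\to P$ and argue it is an isomorphism rather than ``$V\subseteq P$'', and in~(iv) avoid using $p$ for an element of $P$ (the formula ``$pv_0\mapsto p$'' is correct once one reads $P\to V$, $a\mapsto a\cdot v_0$, using that $V$ is one-dimensional over $P=\End_C(V)$, but the notation is unfortunate).
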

\begin{proof}
 Suppose that $P\in\CF_k$, so $|P|=\pwr{q}{p^k}$, so $P^\tm$ is a cyclic
 group of order $(\pwr{q}{p^k})-1$, so $v_p|P^\tm|=r+v_p(p^k)=k+r$ by
 Lemma~\ref{lem-vp}.  Thus, in $\phi(P)=(C,X,L)$, we see that $C$ is
 cyclic of order $p^{k+r}$, and is the unique such subgroup of
 $P^\tm$.  It also follows that $|X|=p^k$.  If $u\in X$ then the
 cyclic property of $C$ means that we can choose $v\in C$ with
 $v^{p^r}=u$.  If $w$ is another element of $C$ with $w^{p^r}=u$ then
 $(w/v)^{p^r}=1$ so $(w/v)^{q-1}=1$ so $w/v\in F$.  This shows that
 $L_u=F.v$, which has dimension one over $F$ as expected, so
 $(X,L)\in\CXL_{p^k}$.  It is also clear that $C$ acts faithfully on
 $(X,L)$ by the rule $c.u=c^{p^r}u$ and $c.v=cv$, so
 $(C,X,L)\in\CC_k$.  This validates the definition of $\phi$.

 In the opposite direction, suppose we start with $(C,X,L)\in\CC_k$.
 The group $C$ then acts on the space $V=\gm(X,L)=\bigoplus_xL_x$, and
 we let $P$ denote the image of $F[C]$ in $\End(V)$.  We claim that
 this lies in $\CF_k$.  To see this, we first note that $C$ is a
 subgroup of $\Aut(X,L)$, so the map $C\to P^\tm$ is injective, so the
 $p$-exponent of $P^\tm$ is at least $k+r$.  On the other hand, $P$ is
 a finite-dimensional commutative $F$-algebra, so it splits as a
 product of factors $P_i$ such that $P_i/\sqrt{0}$ is a field
 extension of $F$, of degree $d_i$ say.  Corresponding to this we get
 a splitting $V=\bigoplus_iV_i$, where $V_i$ is a nontrivial module
 for $P_i$, and so has dimension at least $d_i$ over $F$.  Next,
 because $|F|$ is coprime to $p$, we see that the $p$-exponent of the
 unit group of $P_i$ is the same as for $P_i/\sqrt{0}$, which is
 $r+v_p(d_i)$.  It follows that the $p$-exponent of $P^\tm$ is
 $r+\max_iv_p(d_i)$, so for some $i$ we must have $v_p(d_i)\geq k$ and
 so $d_i\geq p^k$.  For this $i$ we have
 $\dim_F(V_i)\geq d_i\geq p^k=\dim_F(V)$.  It follows that all other
 $V_j$ and $P_j$ are zero, and also that $\sqrt{0}$ must be trivial in
 $P_i$.  This means that $P=P_i\in\CF_k$, as claimed, validating the
 definition of $\psi$.

 If $(C,X,L)=\phi(P)$ then each $L_u$ is a subspace of $P$ and it is
 easy to check that $P=\bigoplus_uL_u$, with $C$ acting by
 multiplication.  From this it is clear that $\psi\phi(P)$ is
 naturally isomorphic to $P$.

 In the opposite direction, suppose we start with $(C,X,L)\in\CC_k$.
 Pick a basepoint $x_0\in X$, and a nonzero element
 $m_0\in L_{x_0}$.   Let $D\leq C$ be the stabiliser of
 $x_0$.  As $C$ injects in $\Aut(X,L)$ we see that $D$ injects in
 $\Aut(L_{x_0})=F$, so $|D|\leq p^r$.  On the other hand, the set
 $C/D$ must inject in $X$ so $|C/D|\leq p^k$.  As $|C|=p^{k+r}$ we
 must actually have $|D|=p^r$ and $|C/D|=p^k$, so $C$ acts
 transitively on $X$.  Now put $X'=\{u\in C\st u^{p^k}=1\}$ and
 $L'_u=\text{span}_F\{v\in C\st v^{p^r}=u\}$.  It is then not hard to
 check that there is a unique $C$-equivariant isomorphism
 $(X',L')\to(X,L)$ sending $1\in X'$ to $x_0\in X$ and $1\in L'_1$ to
 $m_0\in L_{x_0}$.
\end{proof}

\begin{corollary}\lbl{cor-cyclic-subgroup}
 The groupoid $\CC_k$ is connected.  Moreover, if $H$ is a subgroup of
 $G_{p^k}$ of index coprime to $p$, then there is at least one
 $H$-conjugacy class of cyclic subgroups of order $p^{k+r}$ in $H$.
 If $H$ contains the monomial subgroup $N=F^\tm\wr\Sg_{p^k}$, then
 there is precisely one conjugacy class.
\end{corollary}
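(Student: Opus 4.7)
The plan is to combine the equivalence $\CC_k\simeq\CF_k$ from Proposition~\ref{prop-field-cat} with Sylow theory, making essential use of the index calculation of Proposition~\ref{prop-index}. Along the way I will make explicit the (implicit) morphism structure on $\CC_k$: a morphism $(C_1,X_1,L_1)\to(C_2,X_2,L_2)$ is an isomorphism $(X_1,L_1)\to(X_2,L_2)$ in $\CXL$ whose induced conjugation on $\Aut(X,L)$ carries $C_1$ to $C_2$.

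First, for connectedness: since $\psi\phi$ and $\phi\psi$ are isomorphic to the respective identities (naturally and unnaturally), the two functors exhibit $\CC_k$ and $\CF_k$ as equivalent groupoids, and $\CF_k$ is connected by construction, so $\CC_k$ is too. To pin down a canonical model, I would take $\Xi_0=(X_0,L_0)$ with $X_0=\{1,\dotsc,p^k\}$ and $L_{0,i}=Fe_i$, so that $\Aut(\Xi_0)=N_{p^k}$ acts on $\gm(\Xi_0)=F^{p^k}$ as the usual monomial subgroup of $G_{p^k}$. Applying $\phi$ to any $P\in\CF_k$ and transporting the resulting triple to one of the form $(C_0,X_0,L_0)$ exhibits an explicit cyclic subgroup $C_0\leq N_{p^k}$ of order $p^{k+r}$.

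For the existence claim with arbitrary $H\leq G_{p^k}$ of index coprime to $p$, the hypothesis forces $v_p|H|=v_p|G_{p^k}|$, so $H$ contains a Sylow $p$-subgroup $S_H$ of $G_{p^k}$. The subgroup $C_0$ above is a $p$-group, hence lies in some Sylow $S_0$ of $G_{p^k}$; by Sylow's conjugacy theorem $S_0$ and $S_H$ are $G_{p^k}$-conjugate, so the corresponding $G_{p^k}$-conjugate of $C_0$ sits inside $S_H\leq H$, giving a cyclic subgroup of order $p^{k+r}$ in $H$. For uniqueness when $H\supseteq N_{p^k}$, Proposition~\ref{prop-index} gives $v_p|N_{p^k}|=v_p|G_{p^k}|=v_p|H|$, so a Sylow $p$-subgroup of $N_{p^k}$ is still a Sylow of $H$. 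Given $C_1,C_2\leq H$ cyclic of order $p^{k+r}$, Sylow conjugacy inside $H$ lets me $H$-conjugate each $C_i$ into this Sylow, and in particular into $N_{p^k}$; I may therefore assume $C_1,C_2\leq N_{p^k}=\Aut(\Xi_0)$. The triples $(C_i,X_0,L_0)$ are then objects of $\CC_k$, and connectedness provides an isomorphism between them; by the morphism definition above, such an isomorphism is precisely an element of $N_{p^k}$ conjugating $C_1$ to $C_2$, so $C_1,C_2$ are $N_{p^k}$-conjugate and \emph{a fortiori} $H$-conjugate.

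The step requiring the most care will be the last: the interpretation of an isomorphism in $\CC_k$ between $(C_1,\Xi_0)$ and $(C_2,\Xi_0)$ as an honest element of $N_{p^k}$ conjugating $C_1$ to $C_2$. Everything else is either bookkeeping with Sylow's theorem or an unpacking of the equivalence with $\CF_k$; it is the precise nature of the morphisms in $\CC_k$ (which the earlier definition does not spell out) that carries the conjugacy information and thus deserves to be stated explicitly before being invoked.
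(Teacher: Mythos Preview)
Your argument is correct and follows essentially the same route as the paper: connectedness via the comparison with $\CF_k$, existence via Sylow conjugacy, and uniqueness by first $H$-conjugating into $N_{p^k}$ and then invoking connectedness of $\CC_k$ to produce an $N_{p^k}$-conjugacy.  The paper packages the last step by introducing the translation groupoid $\Trans(N,\CC'_k(N))$ and observing that $C\mapsto(C,X_0,L_0)$ gives an equivalence with $\CC_k$; your version unpacks this by spelling out the morphism structure on $\CC_k$ directly, which is a reasonable and arguably clearer way to present the same content.  One small caution: the unnatural isomorphism $\phi\psi\simeq 1$ does not by itself give an equivalence of groupoids, but it does give that every object of $\CC_k$ is isomorphic to one in the essential image of $\phi$, which is all you need for connectedness.
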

\begin{proof}
 The first claim follows easily from the proposition.  

 Next, for any subgroup $H\leq G_{p^k}$ of index coprime to $p$, we
 let $\CC'_k(H)$ be the set of cyclic subgroups of order $p^{k+r}$ in
 $H$, and we note that $H$ acts on $\CC'_k(H)$ by conjugation.  We can
 use this to construct a translation groupoid $\Trans(H,\CC'_k(H))$
 with object set $\CC'_k(H)$ and morphism set $H\times\CC'_k(H)$.  The
 claim is now that $\Trans(H,\CC'_k(H))$ is always nonempty, and that
 it is connected when $H\geq N$.

 Now put $X_0=\{0,1,\dotsc,p^k-1\}$ and $L_0=F\times X_0$ so
 $(X_0,L_0)\in\CXL_{p^k}$ and $\Aut(X_0,L_0)=N$ and every object of
 $\CXL_{p^k}$ is (unnaturally) isomorphic to $(X_0,L_0)$.  The
 construction $C\mapsto(C,X_0,L_0)$ gives a functor
 $\Trans(N,\CC'_k(N))\to\CC_k$, which is easily seen to be an
 equivalence.  We have seen that $\CC_k$ is connected, so
 $\Trans(N,\CC'_k(N))$ is also connected, as required.

 We can now choose a group $C_0\in\CC'_k(N)$.  Sylow theory
 tells us that we can also choose a Sylow $p$-subgroup $P_0$ of $N$
 with $C_0\leq P_0$.  Because $N$ has index coprime to $p$, this is
 also a Sylow $p$-subgroup of $G_{p^k}$.

 Now consider an arbitrary subgroup $H\leq G_{p^k}$ of index coprime
 to $p$.  Let $P$ be a Sylow $p$-subgroup of $H$.  Then $P$ is also a
 Sylow $p$-subgroup of $G_{p^k}$, so $P=gP_0g^{-1}$ for some
 $g\in G_{p^k}$.  It follows that $gC_0g^{-1}\in\CC'_k(H)$, so
 $\CC'_k(H)$ is nonempty.

 Now consider a group $H$ with $N\leq H\leq G_{p^k}$, and an element
 $C\in\CC'_k(H)$.  We can again choose a Sylow subgroup $P$ of $H$
 with $C\leq P$.  Because $|G_{p^k}/N|$ is coprime to $p$, we see that
 $P_0$ is also a Sylow subgroup of $H$, so $P_0=hPh^{-1}$ for some
 $h\in H$.  Now put $C'=hCh^{-1}$.  This is an element of the set
 $\CC'_k(N)$, and we have seen that $N$ acts transitively on
 $\CC'_k(N)$, so we can choose $n\in N$ with $C_0=nC'n^{-1}$.  We now
 see that the element $nh$ lies in $H$ and conjugates $C$ to $C_0$.
 This shows that $H$ acts transitively on $\CC'_k(H)$, as required.
\end{proof}

\begin{definition}\lbl{defn-H-coeffs}
 The notation $H^*(X)$ will refer to the cohomology of $X$ with
 coefficients $\Fp$, unless otherwise specified.  Similarly, $H_*(X)$
 will refer to the homology with coefficients $\Fp$.
\end{definition}

\begin{definition}\lbl{defn-determined}
 Let $G$ be a group (which may be infinite).  Given a finite abelian
 $p$-subgroup $A\leq G$, we have a natural map
 $\al_A\:H_*(BA)\to H_*(BG)$.  By taking the sum over all
 possible $A$, we obtain a map
 $\al\:\bigoplus_AH_*(BA)\to H_*(BG)$.  We say that $G$ is
 \emph{$\al$-determined} if this map is surjective.  (Equivalently, the
 dual map $H^*(BG)\to\prod_AH^*(BA)$ should be injective.)
 We also say that a groupoid is $\al$-determined iff every automorphism
 group $G(a)$ is $\al$-determined.
\end{definition}

\begin{proposition}\lbl{prop-determined}\leavevmode
 \begin{itemize}
  \item[(a)] If $G$ is a product of two $\al$-determined subgroups,
   then it is $\al$-determined.
  \item[(b)] If $G$ is a directed union of $\al$-determined subgroups,
   then it is $\al$-determined.
  \item[(c)] If $G$ is an abelian torsion group, then it is
   $\al$-determined.
  \item[(d)] If $G$ has an $\al$-determined subgroup $H$ such that
   $|G/H|$ is finite and coprime to $p$, then $G$ is $\al$-determined.
  \item[(e)] If $G=H\wr\Sg_n$ and $H$ is $\al$-determined, then so is
   $G$. 
 \end{itemize}
\end{proposition}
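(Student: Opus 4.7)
The plan is to prove the five parts in order, each building on the previous.

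Part (a) follows from K\"unneth: any finite abelian $p$-subgroup of $G_1\tm G_2$ sits inside a product $A_1\tm A_2$ of its projections, so it suffices to consider such products, and surjectivity of $\bigoplus H_*(BA_i)\to H_*(BG_i)$ gives surjectivity of the tensor product. Part (b) is immediate because homology commutes with directed colimits: any class in $H_*(BG)$ comes from some $H_*(BH_i)$, where the hypothesis applies. For (c), decompose the abelian torsion group as $G=G_p\oplus G'$ into its $p$-primary and prime-to-$p$ parts; with $\Fp$ coefficients we have $H_*(BG')=\Fp$ concentrated in degree zero, so $H_*(BG)\cong H_*(BG_p)$, and $G_p$ is the directed union of its finite subgroups (each trivially $\al$-determined), so (b) finishes the argument. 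For (d), the hypothesis on $[G:H]$ makes the composite $\tr\circ\res$ on $H^*(BG;\Fp)$ equal to multiplication by a unit, so $\res\colon H^*(BG)\to H^*(BH)$ is injective; any class vanishing on all finite abelian $p$-subgroups of $G$ vanishes on those contained in $H$, hence vanishes in $H^*(BH)$ by hypothesis, hence vanishes.

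Part (e) is the main challenge, and I would first reduce to wreath product with $C_p$. Writing $n=\sum n_ip^i$ in base $p$, the block-partition subgroup $\prod_i(H\wr\Sg_{p^i})^{n_i}\leq H\wr\Sg_n$ has index equal to the multinomial coefficient $n!/\prod(p^i!)^{n_i}$, and the $p$-adic valuation computation in Proposition~\ref{prop-non-p-power} shows this is coprime to $p$. By (d) and (a) we reduce to $H\wr\Sg_{p^k}$ for each $k$. Since a Sylow $p$-subgroup of $\Sg_{p^k}$ is the iterated wreath product $C_p^{\wr k}$, a second application of (d) together with the associativity $(H\wr A)\wr B=H\wr(A\wr B)$ and induction on $k$ reduces everything to the single claim: if $K$ is $\al$-determined, then so is $K\wr C_p$.

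For this remaining base case I would prove that the joint restriction
\[ \rho\colon H^*(B(K\wr C_p);\Fp)\to H^*(BK^p;\Fp)\oplus H^*(B(K^\Delta\tm C_p);\Fp) \]
is injective, where $K^p$ is the kernel of the projection to $C_p$ and $K^\Delta\tm C_p$ is the diagonal copy of $K$ together with the permutation subgroup. Both target groups are themselves $\al$-determined (the first by iterating (a), the second by combining (a), (c), and the hypothesis on $K$), so injectivity of $\rho$ yields $\al$-determinedness of $K\wr C_p$. To prove injectivity of $\rho$, I would analyse the Serre spectral sequence of the fibration $(BK)^p\to B(K\wr C_p)\to BC_p$, whose $E_2$-page is $H^s(C_p;H^t(BK)^{\otimes p})$. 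As a $C_p$-module, $H^t(BK)^{\otimes p}$ splits into a free summand and a diagonal copy of $H^t(BK)$; the free summand has vanishing Tate cohomology, while the diagonal part identifies $E_2^{s,t}$ for $s>0$ with $H^t(BK)$ via the inclusion $(BK)^\Delta\hookrightarrow(BK)^p$. This should force elements of positive Serre filtration in $H^*(B(K\wr C_p))$ to be detected by restriction along $K^\Delta\tm C_p$, while the zero-filtration quotient embeds into $H^*(BK^p)^{C_p}$ and is detected by restriction to $K^p$. The main technical point, and the step I expect to be the principal obstacle, is ensuring that these two detections combine to give injectivity on $H^*(B(K\wr C_p))$ itself rather than merely on the associated graded of the Serre filtration.
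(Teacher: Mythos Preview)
Your proposal is correct and follows essentially the same approach as the paper: parts~(a)--(d) match the paper's arguments almost verbatim, and for~(e) the paper simply cites the Corollary to~\cite{th:ccc}*{Lemma 9.4} with a two-line outline (reduce via~(d) to $n=p$, then analyse the spectral sequence $H^*(\Sg_p;H^*(BH)^{\ot p})\convto H^*(B(H\wr\Sg_p))$), which is precisely the argument you have spelled out in detail, including the correct identification of the delicate passage from the associated graded to the actual cohomology as the main technical point.
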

\begin{proof}
 \begin{itemize}
  \item[(a)] Suppose that $G=G_1\tm G_2$, where each factor $G_i$ is
   $\al$-determined.  Put $M_i=\bigoplus_{A_i}H_*(BA_i)$, where $A_i$
   runs over the finite abelian $p$-subgroups of $G_i$.  By hypothesis
   the maps $\al_i\:M_i\to H_*(BG_i)$ are surjective, so the same is
   true of the map $\al_1\ot\al_2\:M_1\ot M_2\to H_*(BG)$.  Here
   $M_1\ot M_2$ is the direct sum of the groups $H_*(BA)$ as $A$ runs
   over those finite abelian $p$-subgroups of $G$ that have the form
   $A_1\tm A_2$ for some $A_i\leq G_i$.  From this it is clear that $G$
   is $\al$-determined.
  \item[(b)] If $G$ is a directed union of subgroups $G_i$ then it
   is standard that $H_*(BG)$ is the directed union of the subgroups
   $H_*(BG_i)$.  If each $G_i$ is $\al$-determined, it follows
   easily that the same is true of $G$ itself.
  \item[(c)] Using~(b) we can reduce to the case where $G$ itself is a
   finite abelian group.  We can then split $G$ as $G_1\tm G_2$, where
   $G_1$ is a $p$-group and $G_2$ has order coprime to $p$.  We then
   find that $H_*(BG_2)$ is just a single copy of $\Z/p$, so
   $H_*(BG)=H_*(BG_1)$, which proves the claim.
  \item[(d)] If $H$ has index $d\neq 0\pmod{p}$ in $G$, then the
   induced map $H_*(BH)\to H_*(BG)$ is surjective by a transfer
   argument.  It follows that if $H$ is $\al$-determined, then so is $G$.
  \item[(e)] This is the Corollary to~\cite{th:ccc}*{Lemma 9.4}. In
   outline, one can use~(d) to reduce to the case $n=p$, and that case
   can be analysed using a standard spectral sequence
   \[ H^*(\Sg_p;H^*(BH)^{\ot p}) \convto H^*(B(H\wr\Sg_p)). \]
 \end{itemize}
\end{proof}

\begin{proposition}\lbl{prop-determined-examples}
 The groups $G_d$, $\bG_d$, $T_d$, $\bT_d$, $\Sg_d$, $N_d$ and $\bN_d$
 are all $\al$-determined.  Equivalently, the groupoids $\CV$, $\bCV$,
 $\CL^d$, $\bCL^d$, $\CX$, $\CXL$ and $\bCXL$ are all $\al$-determined 
 (where $\CL^d$ and $\bCL^d$ denote cartesian powers of $\CL$ and $\bCL$).
\end{proposition}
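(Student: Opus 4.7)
The plan is to check each group in turn using the closure properties collected in Proposition~\ref{prop-determined}. The tori $T_d=(F^\tm)^d$ and $\bT_d=(\bF^\tm)^d$ are finite products of abelian torsion groups, so part~(c) (abelian torsion groups are $\al$-determined) combined with part~(a) (closure under finite products) handles them immediately. For the symmetric group, one can take $H$ to be the trivial group (which is vacuously $\al$-determined) and apply part~(e) to the wreath product $\Sg_d\cong 1\wr\Sg_d$; alternatively one can apply~(d) to the inclusion of the trivial Sylow $p$-subgroup when $d<p$, and bootstrap via~(e).

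For the monomial groups, recall that $N_d\cong F^\tm\wr\Sg_d$ and $\bN_d\cong\bF^\tm\wr\Sg_d$. Since $F^\tm$ and $\bF^\tm$ are abelian torsion and hence $\al$-determined by~(c), another application of~(e) shows that $N_d$ and $\bN_d$ are $\al$-determined. Next, Proposition~\ref{prop-index} tells us that $[G_d:N_d]$ is coprime to $p$, so part~(d) upgrades the statement from $N_d$ to $G_d$.

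For the infinite group $\bG_d$, the plan is to write it as a directed union. Remark~\ref{rem-nested-subfields} gives $\bF=\bigcup_m F[m]$ (with respect to divisibility), and since $GL_d$ commutes with filtered colimits of fields, $\bG_d=\bigcup_m GL_d(F[m])$ as a directed union. Each $GL_d(F[m])$ is $\al$-determined by exactly the argument applied to $G_d$ above (with $F$ replaced by $F[m]$; note that $|F[m]|=q^m=1\pmod{p}$ so all the hypotheses, including $r\ge 1$ for the base field, transfer). Part~(b) then gives the conclusion for $\bG_d$.

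Finally, the equivalent statement about groupoids follows because $\al$-determinedness depends only on the automorphism groups at each isomorphism class of objects, and these are precisely the groups just discussed: the automorphism group at an object of degree $d$ in $\CV$, $\bCV$, $\CL^d$, $\bCL^d$, $\CX$, $\CXL$, $\bCXL$ is $G_d$, $\bG_d$, $T_d$, $\bT_d$, $\Sg_d$, $N_d$, $\bN_d$ respectively. The only mild obstacle is the directed union step for $\bG_d$: one needs that the maps $H_*(BG_i)\to H_*(BG)$ together with the maps from $H_*(BA)$ for $A$ in a fixed $G_i$ genuinely surject onto $H_*(BG)$, but this is immediate from the compatibility of homology with filtered colimits of spaces, as invoked in the proof of part~(b).
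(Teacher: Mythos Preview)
Your proof is correct and follows essentially the same route as the paper: tori via part~(c), the wreath products $\Sg_d=1\wr\Sg_d$, $N_d=F^\tm\wr\Sg_d$, $\bN_d=\bF^\tm\wr\Sg_d$ via~(e), then $G_d$ via~(d) using Proposition~\ref{prop-index}, and finally $\bG_d$ as a directed union of groups $GL_d(F')$ over finite intermediate fields via~(b). The only cosmetic differences are that the paper invokes~(c) directly for $T_d$ and $\bT_d$ (a product of abelian torsion groups is already abelian torsion, so~(a) is not needed), and the paper uses the totally ordered cofinal system $F[k!]$ rather than all $F[m]$ under divisibility.
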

\begin{proof}
 The groups $T_d$ and $\bT_d$ are abelian torsion groups, so they are
 $\al$-determined.  The groups $\Sg_d$, $N_d$ and $\bN_d$ are
 therefore $\al$-determined by Proposition~\ref{prop-determined}(e).
 It therefore follows by Propositions~\ref{prop-index}
 and~\ref{prop-determined}(d) that the group $G_d=GL_d(F)$ is
 $\al$-determined.  Here $F$ is an arbitrary finite field of
 characteristic congruent to $1$ mod $p$, so the same logic shows that
 $GL_d(F[k!])$ is $\al$-determined for all $k$.  It therefore follows
 by Proposition~\ref{prop-determined}(b) that $\bG_d$ is
 $\al$-determined.
\end{proof}

We next start to formulate a result that will be useful for comparing
$\CV$ with $\CV(k)$.
\begin{definition}\lbl{defn-galois-twist}
 Consider an object $V\in\bCV$ and an element $\gm\in\Gm=\Gal(\bF/F)$.
 We define $\gm(V)$ to consist of symbols $\gm(v)$ for all $v\in V$,
 subject to the rules $\gm(v+v')=\gm(v)+\gm(v')$ and
 $a.\gm(v)=\gm(\gm^{-1}(a)v)$ (for $v,v'\in V$ and $a\in\bF$).  This
 gives a functor $\gm\:\bCV\to\bCV$.
\end{definition}

\begin{remark}\lbl{rem-untwisted}
 Suppose that $V=\bF\ot_{F(k)}V_0$ for some $V_0\in\CV(k)$, and that
 $\gm\in\ip{\phi^{p^k}}=\Gal(\bF/F(k))$.  One can then check that
 there is an isomorphism $\gm(V)\to V$ given by
 $\gm(a\ot_{F(k)}v_0)\mapsto\gm(a)\ot_{F(k)}v_0$. 
\end{remark}

\begin{lemma}\lbl{lem-twist-split}
 For $V\in\CV(k)$, there is a natural isomorphism
 \[ \xi\:\bF\ot_F V \to \prod_{i=0}^{p^k-1} \phi^i(\bF\ot_{F(k)}V) \]
 given by 
 \[ \xi(a\ot_Fv)_i = a.\phi^i(1\ot_{F(k)}v)
                   = \phi^i(\phi^{-i}(a)\ot_{F(k)}v).
 \]
\end{lemma}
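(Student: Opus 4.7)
My plan is to check that $\xi$ is well-defined as an $\bF$-linear map, reduce by naturality and additivity to the case $V = F(k)$, and then identify the resulting map with the classical Galois-étale decomposition of $\bF \otimes_F F(k)$.

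First I would verify well-definedness: the formula $\xi(a\otimes_F v)_i = \phi^i(\phi^{-i}(a)\otimes_{F(k)} v)$ is evidently biadditive in $a$ and $v$, and the $F$-balance condition reduces to the assertion that for $\alpha \in F$ we have $\phi^{-i}(\alpha) = \alpha$, which holds since $\phi$ fixes $F$. The map is $\bF$-linear in the first variable by construction (using the twisted $\bF$-action on $\phi^i(\bF\otimes_{F(k)}V)$ from Definition~\ref{defn-galois-twist}), and it is manifestly natural in $V$.

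Next, both $V \mapsto \bF\otimes_F V$ and $V\mapsto \prod_i \phi^i(\bF\otimes_{F(k)}V)$ preserve finite direct sums of $F(k)$-vector spaces, and $\xi$ commutes with these decompositions. Hence it suffices to treat $V = F(k)$. In that case both sides are free $\bF$-modules of rank $p^k$: the left side is $\bF\otimes_F F(k)$, and each factor $\phi^i(\bF)$ is canonically isomorphic to $\bF$ as an $\bF$-module via $\phi^i(c)\leftrightarrow \phi^i(c)$ (one checks that the twisted action $a\cdot \phi^i(c) = \phi^i(\phi^{-i}(a)c)$ corresponds to ordinary multiplication under this identification). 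Under this identification the map $\xi$ becomes
\[ \bF\otimes_F F(k) \to \prod_{i=0}^{p^k-1}\bF,\qquad a\otimes b \mapsto (a\,\phi^i(b))_i. \]
This is the standard isomorphism expressing that $F(k)/F$ is Galois with group generated by the restriction of $\phi$, so $\xi$ is an isomorphism for $V = F(k)$, and hence for all $V\in\CV(k)$ by the reduction above.

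The only step requiring mild care is the bookkeeping around the twist functor $\phi^i$: in particular, making sure that the identification $\phi^i(\bF)\cong \bF$ is an isomorphism of $\bF$-modules (not of $F$-modules with a different $\bF$-structure), so that the formula for $\xi$ really matches the classical Galois étale decomposition. Beyond this, everything is formal.
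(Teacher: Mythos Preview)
Your proposal is correct and follows essentially the same route as the paper's proof: both verify well-definedness, reduce by additivity to $V=F(k)$, identify each twisted factor $\phi^i(\bF\otimes_{F(k)}F(k))$ with $\bF$ as an $\bF$-module, and recognise the resulting map $a\otimes v\mapsto(a\,\phi^i(v))_i$ as the classical Galois decomposition of $\bF\otimes_F F(k)$. The paper makes the final step slightly more explicit by invoking Dedekind's lemma on independence of automorphisms for injectivity, but this is the same content you are citing as the ``classical Galois \'etale decomposition''.
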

\begin{proof}
 It is straightforward to check that the above formula gives a
 well-defined natural map; we just need to check that it is an
 isomorphism.  As all constructions respect direct sums, it will be
 enough to take $V=F(k)<\bF$.  In that case we can define an
 isomorphism $\phi^i(\bF\ot_{F(k)}V)\to\bF$ by
 $\phi^i(a\ot v)\mapsto\phi^i(av)$.  Using this identification, we get
 a map 
 \[ \xi\:\bF\ot_FF(k) \to \prod_{i=0}^{p^k-1}\bF \]
 given by $\xi(a\ot v)_i=a\,\phi^i(v)$.  It is a well-known fact of
 Galois theory that this is an isomorphism.  (It is an $\bF$-linear
 map between vector spaces of the same finite dimension, and it is
 injective by Dedekind's lemma on independence of automorphisms.)
\end{proof}

\section{Ordinary (co)homology}
\lbl{sec-ordinary}

\begin{notation}
 We remind the reader that all (co)homology groups have coefficients
 $\Fp$ unless otherwise specified.  We write $P[\dotsb]$ and
 $E[\dotsb]$ for polynomials algebras and exterior algebras over
 $\F_p$.
\end{notation}

\begin{definition}\lbl{defn-xH}
 We have chosen an injection $i\:\bG_1=GL_1(\bF)\to S^1$, which gives a map
 $(Bi)^*\:H^*(\CPi)\to H^*(B\bG_1)$.  There is a standard
 generator $x\in H^2(\CPi)$, and we also write $x$ for the
 image of this class in $H^2(B\bG_1)=H^2(B\bCL)$ or in $H^2(BGL_1(F[m]))$.
 After recalling that $\bG_1$ is the union of finite cyclic groups
 $GL_1(F[k!])$, standard calculations give $H^*(B\bG_1)=P[x]$.
 We write $b_k$ for the class in $H_{2k}(B\bG_1)$ that is dual
 to $x^k$.  
\end{definition}

\begin{remark}\lbl{rem-HEK-subscript}
 Later we will define similar classes in Morava $K$-theory and Morava
 $E$-theory.  When it is necessary to distinguish between these, we
 will use notation such as $x_H$, $x_E$, $x_K$, $b_{Hk}$, $b_{Ek}$ and
 $b_{Kk}$.  Similar remarks will apply to the other classes in
 ordinary (co)homology defined later in this section.
\end{remark}

\begin{definition}\lbl{defn-aH}
 Now consider $H^1(BG_1)=H^1(B\CL)$.  Let $m=q-1$ be the order of
 $G_1$, so $m$ is divisible by $p$.  There is a unique generator
 $c\in G_1$ with $i(c)=e^{2\pi i/m}$, and a unique homomorphism
 $G_1\to\Fp$ with $c\mapsto 1$.  There is a standard isomorphism
 $H^1(BG_1)=\Hom(G_1,\Fp)$, so we obtain a class $a\in H^1(BG_1)$.  As
 $G_1$ is cyclic, it is a standard fact that $H^*(BG_1)=P[x]\ot E[a]$
 (where $x$ is the class in $H^2(BG_1)$ restricted from $H^2(\CPi)$ as
 in Definition~\ref{defn-xH}).  We write $e_k$ for the class in
 $H_{2k+1}(BG_1)$ that is dual to $x^ka$.
\end{definition}

\begin{definition}\lbl{defn-H-gens-T}
 Now note that the K\"unneth theorem gives 
 \begin{align*}
  H^*(B\bT^d) &= P[x_{1},\dotsc,x_{d}] \\
  H^*(BT^d)   &= P[x_{1},\dotsc,x_{d}] \ot
                   E[a_{1},\dotsc,a_{d}].
 \end{align*}
 These rings have an evident action of the symmetric group $\Sg_d$.  
 We let $c_{k}$ be the sum of all monomials in the
 $\Sg_d$-orbit of $(-1)^kx_{1}\dotsb x_{k}$, so $|c_{k}|=2k$ and
 \[ \sum_i c_it^{d-i} = \prod_j(t-x_j). \]
 We also let $v_{k}$ be the sum of all monomials in the
 $\Sg_d$-orbit of $(-1)^ka_{1}x_{2}\dotsb x_{k}$.  
\end{definition}

\begin{remark}\lbl{rem-cv-sign}
 Note that in the case $d=1$ we have
 \[ H^*(BT^1)=P[x_1]\ot E[a_1] = P[c_1]\ot E[v_1] \]
 with $c_1=-x_1$ and $v_1=-a_1$.
\end{remark}

We now recall Quillen's calculation of the cohomology of
$BG_d$ and $B\bG_d$.  For ease of comparison with our calculation in
Morava $E$-theory and $K$-theory, we will also recall part of the proof.

\begin{proposition}[Quillen~\cite{qu:ckt}]\lbl{prop-HG}
 The inclusions $T_d\to G_d$ and $\bT_d\to\bG_d$ give isomorphisms
 \begin{align*}
  H^*(B\bG_d) &= H^*(B\bT_d)^{\Sg_d} 
               = P[c_{1},\dotsc,c_{d}] \\
  H^*(BG_d)   &= H^*(BT_d)^{\Sg_d} 
               = P[c_{1},\dotsc,c_{d}] \ot
                 E[v_{1},\dotsc,v_{k}].
 \end{align*}
\end{proposition}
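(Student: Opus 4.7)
My plan is to prove both isomorphisms in parallel, via the restriction maps induced by $\bT_d\hookrightarrow\bG_d$ and $T_d\hookrightarrow G_d$. First I would establish the purely algebraic identification of the $\Sg_d$-invariants: the identity $P[x_1,\dotsc,x_d]^{\Sg_d}=P[c_1,\dotsc,c_d]$ is the fundamental theorem of symmetric polynomials, and for the mixed case a Molien-style Hilbert series computation shows that both $(P[x_i]\ot E[a_i])^{\Sg_d}$ and $P[c_i]\ot E[v_i]$ have Poincar\'e series $\prod_{k=1}^{d}(1+t^{2k-1})/(1-t^{2k})$, while the natural inclusion of the latter into the former is manifestly injective (setting all $v_k=0$ reduces to the polynomial case, where the $c_i$ are algebraically independent). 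Since $\Sg_d\leq G_d\leq\bG_d$ normalises both tori by inner automorphisms of the ambient group, both restriction maps factor through these invariant rings.

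For $\bG_d$, injectivity of $H^*(B\bG_d)\to H^*(B\bT_d)^{\Sg_d}$ follows from $\al$-determination (Proposition~\ref{prop-determined-examples}) combined with the observation that every finite abelian $p$-subgroup of $\bG_d$ is conjugate into $\bT_d$ (Proposition~\ref{prop-torus}), so the restriction already detects all cohomology classes. Surjectivity is cleaner: the classes $c_k$ are mod-$p$ reductions of the integral Chern classes of a complex lift of the tautological representation, obtained via Brauer lifting through the embedding $\bF^\tm\hookrightarrow\C^\tm$ from Proposition~\ref{prop-witt-embedding} and Theorem~\ref{thm-brauer}; these generate the target polynomially, finishing the barred case.

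For $G_d$ the argument is more delicate, since Proposition~\ref{prop-exponent} shows that once $d\geq p$ not every abelian $p$-subgroup of $G_d$ is conjugate into $T_d$. I would instead exploit the wreath-product subgroup $N_d=T_d\rtimes\Sg_d$, which has index coprime to $p$ in $G_d$ by Proposition~\ref{prop-index}. A transfer argument gives an injection $H^*(BG_d)\hookrightarrow H^*(BN_d)$, and analysing the Lyndon--Hochschild--Serre spectral sequence for $T_d\to N_d\to\Sg_d$, using that the $p$-part of $T_d$ is $(C_{p^r})^d$ with $\Sg_d$ permuting the factors, pins down the subring coming from $H^*(BG_d)$ as $H^*(BT_d)^{\Sg_d}$. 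The $c_k$ pull back from $B\bG_d$, the class $v_1$ arises up to sign from the composite $G_d\xra{\det}F^\tm\twoheadrightarrow F^\tm/(F^\tm)^p\cong\F_p$, and the higher $v_k$ are supplied by Quillen's Brauer-lift construction in~\cite{qu:ckt}, which produces a map $BG_d\to BU(d)$ close to a mod-$p$ equivalence. The main obstacle is this last step: controlling exactly which $\Sg_d$-invariants extend from $T_d$ all the way up to $G_d$, and producing explicit representatives for the $v_k$ in $H^*(BG_d)$. Once this is achieved, the matching Hilbert series from Step~1 force the injective comparison map to be an isomorphism.
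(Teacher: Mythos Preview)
Your plan matches the paper's proof closely in outline: $\al$-determination plus Proposition~\ref{prop-torus} for injectivity in the $\bG_d$ case, an explicit construction of the Chern classes for surjectivity, and ultimately a deferral to Quillen~\cite{qu:ckt} for the $G_d$ case.

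One substantive difference: for surjectivity onto $P[c_1,\dotsc,c_d]$ in the $\bG_d$ case, the paper does not invoke Theorem~\ref{thm-brauer}. Instead it uses the Witt ring directly, via the zigzag $\bG_d\leftarrow\bGW_d\to GL_d(\C)$ and Proposition~\ref{prop-witt-equiv}, which says $BGL_d(W\bF)\to BGL_d(\bF)$ is a mod-$p$ equivalence. This gives an honest map $B\bGW_d\to BU(d)$ (no virtual representations needed), and a diagram chase against the torus inclusion finishes the argument. Your citation of Theorem~\ref{thm-brauer} is slightly off here: Green's theorem concerns virtual representations of the finite groups $GL_d(F[m])$, and extracting compatible Chern classes for $\bG_d$ from that requires additional work that the Witt-ring route avoids entirely.

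For $G_d$ the paper does less than you propose: it only observes that the restriction lands in $H^*(BT_d)^{\Sg_d}$, identifies that ring algebraically (via a lexicographic argument analogous to Newton's theorem), and then refers the isomorphism itself to~\cite{qu:ckt}*{Theorem 3}. Your extra scaffolding through $N_d$ and the Lyndon--Hochschild--Serre spectral sequence is reasonable but is not what the paper does, and as you yourself note, it does not by itself resolve the main obstacle of producing the classes $v_k$ in $H^*(BG_d)$.
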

\begin{proof}
 Proposition~\ref{prop-determined-examples} shows that $\bG_d$ is
 $\al$-determined, so the cohomology of $B\bG_d$ is detected on
 abelian $p$-subgroups of $\bG_d$, but Proposition~\ref{prop-torus}
 tells us that every such subgroup is subconjugate to $\bT_d$, so we
 see that the restriction $H^*(B\bG_d)\to H^*(B\bT_d)$ is injective.
 The permutation action on $\bT_d$ comes from inner automorphisms of
 $\bG_d$, and inner automorphisms act as the identity in cohomology,
 so the restriction map lands in $H^*(B\bT_d)^{\Sg_d}$, which is just
 $P[c_{1},\dotsc,c_{d}]$ by the classical theorem of Newton.  The
 same argument shows that $H^*(BG_d)$ maps to $H^*(BT_d)^{\Sg_d}$.
 The structure of this ring of invariants is less well-known, but it
 can be proved in the same way as Newton's theorem, using a
 lexicographic ordering of monomials.

 Next, the maps $\bF\xla{}W\bF\xra{i}\C$ give a diagram
 \begin{center}
  \begin{tikzcd}
   \bT_d = GL_1(\bF)^d
    \arrow[d,rightarrowtail] &
   \bTW_d = GL_1(W\bF)^d
    \arrow[l,twoheadrightarrow]
    \arrow[r,rightarrowtail]
    \arrow[d,rightarrowtail] &
   GL_1(\C)^d
    \arrow[d,rightarrowtail] \\
   \bG_d = GL_d(\bF) &
   \bGW_d = GL_d(W\bF)
    \arrow[l,twoheadrightarrow]
    \arrow[r,rightarrowtail] &
   GL_d(\C).
  \end{tikzcd}
 \end{center}
 We now pass to cohomology rings, taking account of
 Proposition~\ref{prop-witt-equiv} and the discussion associated with 
 Definition~\ref{defn-H-gens-T}.  This gives a diagram as follows:
 \begin{center}
 \begin{tikzcd}
   P[x_{1},\dotsc,x_{d}]^{\Sg_d} \arrow[r,equal] &
   P[x_{1},\dotsc,x_{d}]^{\Sg_d} &
   P[x_{1},\dotsc,x_{d}]^{\Sg_d} \arrow[l,equal] \\
   H^*(B\bG_d) \arrow[u,rightarrowtail] \arrow[r,"\simeq"'] &
   H^*(B\bGW_d) \arrow[u] &
   P[c_{1},\dotsc,c_{d}] \arrow[u,"\simeq"'] \arrow[l]
   \end{tikzcd}
 \end{center}
 It is well-known that the last vertical map is an isomorphism.  By
 chasing the diagram, we see that the same is true of the other two
 vertical maps.  This proves our claim for $\bG_d$.  We use the
 notation $c_{k}$ for the unique element of $H^{2k}(B\bG_d)$ that
 maps to $c_{k}$ in $H^{2k}(B\bT_d)$, and we also use the same
 notation for the restriction of this class to $BG_d$.  For the rest
 of the proof for $BG_d$, we refer to~\cite{qu:ckt}*{Theorem 3}.
\end{proof}

\begin{definition}\lbl{defn-chern-poly}
 Let $P$ be a group, and let $W$ be an $\bF[P]$-module of dimension
 $d<\infty$ over $\bF$.  There is then a corresponding functor
 $P\to\bCV_d$, and thus a map $H^*(B\bCV_d)\to H^*(BP)$.  We define
 $c_k(W)\in H^{2k}(BP)$ to be the image of $c_k\in H^{2k}(B\bCV_d)$
 under this map.  We also put $f_W(t)=\sum_ic_i(W)t^{d-i}$.  We call
 the elements $c_k(W)$ the \emph{Chern classes} of $W$, and we call
 $f_W(t)$ the \emph{Chern polynomial}.
\end{definition}

\begin{remark}\lbl{rem-chern-plus}
 From the definitions one can check that $c_0(V)=1$ and
 $c_k(V\oplus W)=\sum_{k=i+j}c_i(V)c_j(W)$.  Equivalently, $f_W(t)$ is
 always a monic polynomial, and $f_{V\oplus W}(t)=f_V(t)f_W(t)$.  
\end{remark}

Now note that we can use the direct sum operation on $\CV$, or
equivalently the standard inclusions $G_i\tm G_j\to G_{i+j}$, to give
a ring structure on the object $H_*(B\CV)=H_*(BG_*)$.  This is
naturally bigraded, with $H_i(BG_d)$ in bidegree $(i,d)$.  Because
$H^*(BG_d)=((H^*BG_1)^{\ot d})^{\Sg_d}$, we have
$H_*(BG_d)=((H_*BG_1)^{\ot d})_{\Sg_d}$.  We can also do the same
thing with $\bG_d$.  This gives the following:
\begin{proposition}\lbl{prop-BV-homology}
 $H_*(B\bCV)$ is the symmetric algebra generated by $H_*(B\bCL)$, or
 equivalently
 \[ H_*(B\bCV) = P[b_{k}\st k\geq 0]
    \hspace{4em}
    (|b_{k}| = (2k,1)).
 \]
 Similarly, $H_*(B\CV)$ is the free graded-commutative algebra
 generated by $H_*(B\CL)$, or equivalently
 \[ H_*(B\CV) = P[b_{k}\st k\geq 0] \ot E[e_{k}\st k\geq 0]
    \hspace{4em}
    (|b_{k}| = (2k,1),\; |e_{k}| = (2k+1,1)). \qed
 \]
\end{proposition}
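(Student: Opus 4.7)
The plan is to compute each graded piece $H_*(B\CV_d) = H_*(BG_d)$ by dualizing Proposition~\ref{prop-HG}, and then to assemble these pieces into a ring using the $\oplus$-multiplication and compare the result with the free graded-commutative algebra generated by $H_*(B\CL)$. The symmetric monoidal structure from $\oplus$ makes $H_*(B\CV)$ into a graded-commutative ring, with product $H_*(BG_i)\ot H_*(BG_j)\to H_*(BG_{i+j})$ induced by the block-diagonal inclusion $G_i\tm G_j\to G_{i+j}$. The inclusion $\CL\to\CV$ embeds $H_*(B\CL)$ into $H_*(B\CV)$ in bidegree $(*,1)$, so by the universal property of the free graded-commutative algebra there is a canonical algebra map
\[
\Phi\: P[b_k\st k\geq 0]\ot E[e_k\st k\geq 0]\longrightarrow H_*(B\CV),
\]
and likewise a map $P[b_k\st k\geq 0]\to H_*(B\bCV)$ (without exterior part, since Definition~\ref{defn-xH} gives $H^*(B\bG_1)=P[x]$).

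To show $\Phi$ is an isomorphism, I would proceed one bidegree at a time. Each $H^n(BG_d)$ is finite-dimensional over $\Fp$, so Proposition~\ref{prop-HG} dualizes degreewise via linear duality to give $H_*(BG_d)=H_*(BT_d)_{\Sg_d}$. The K\"unneth isomorphism then identifies $H_*(BT_d)$ with $H_*(B\CL)^{\ot d}$, on which $\Sg_d$ acts by permuting tensor factors with Koszul signs. The next step is to recognize the signed coinvariants $\bigl(H_*(B\CL)^{\ot d}\bigr)_{\Sg_d}$ as the degree-$d$ homogeneous piece of the free graded-commutative algebra generated by $H_*(B\CL)$: in general, signed $\Sg_d$-coinvariants of $V^{\ot d}$ symmetrize even-degree generators and skew-symmetrize odd-degree ones, recovering $\mathrm{Sym}(V_{\mathrm{ev}})\ot\Lambda(V_{\mathrm{odd}})$.

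Summing over $d$ yields a bigraded vector space isomorphism, and to upgrade it to a ring isomorphism one checks that the block-diagonal inclusion realizes $T_i\tm T_j=T_{i+j}$, so the $\oplus$-induced product on $H_*(B\CV)$ corresponds under K\"unneth to the shuffle product of signed tensors, which is precisely the multiplication in $P[b_k]\ot E[e_k]$. The argument for $\bCV$ is identical, using that $H_*(B\bCL)$ is concentrated in even degrees. The main technical nuisance will be tracking Koszul signs throughout, both in the identification of signed $\Sg_d$-coinvariants with graded-symmetric powers and in the verification that this identification respects the shuffle product; this is routine bookkeeping but requires care to avoid off-by-sign errors.
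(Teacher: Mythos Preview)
Your proposal is correct and follows essentially the same route as the paper: the paper's argument (given in the paragraph preceding the proposition, with the proposition itself marked \qed) is precisely to dualize the identification $H^*(BG_d)=\bigl((H^*BG_1)^{\ot d}\bigr)^{\Sg_d}$ from Proposition~\ref{prop-HG} to obtain $H_*(BG_d)=\bigl((H_*BG_1)^{\ot d}\bigr)_{\Sg_d}$ and recognize this as the degree-$d$ part of the free graded-commutative algebra. You have simply spelled out the sign-tracking and the compatibility with the $\oplus$-product in more detail than the paper does.
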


\begin{remark}\lbl{rem-intermediate-gens}
 Above we have given various results about $BGL_d(F)$, but $F$ is an
 arbitrary finite field with $|F|=1\pmod{p}$, so the results apply
 equally well to $BGL_d(F(k))$.  This gives generators for
 $H^*(BGL_d(F(k)))$, which we could call $c_{j}^{(k)}$ and
 $v_{j}^{(k)}$.

 There are two different ways to relate the groups $GL_d(F(k))$ as $k$
 varies.  Usually we will just consider the evident inclusion
 $GL_d(F(k-1))\to GL_d(F(k))$, corresponding to the functor
 $\CV(k-1)\to\CV(k)$ given by $V\mapsto F(k)\ot_{F(k-1)}V$.  However,
 we will sometimes also consider the forgetful functor
 $\CV(k)_d\to\CV(k-1)_{pd}$, and the corresponding inclusion
 $GL_d(F(k))\to GL_{pd}(F(k-1))$.  (The latter relies on a choice of
 basis of $F(k)$ over $F(k-1)$, but only up to an inner automorphism,
 which acts as the identity up to homotopy on $BGL_{pd}(F(k-1))$.)

 The restriction maps
 \[ H^*(BGL_d(\bF)) \to H^*(BGL_d(F(k))) \to H^*(BGL_d(F(k-1))) \]
 have $c_{j}\mapsto c_{j}^{(k)}\mapsto c_{j}^{(k-1)}$, so we will
 usually just write $c_{j}$ for these elements.  However, there are
 no elements $v_{j}$ in $H^*(BGL_d(\bF))$, and one can check that the
 restriction map $H^*(BGL_d(F(k))) \to H^*(BGL_d(F(k-1)))$ sends
 $v^{(k)}_{j}$ to zero, so we need to be more careful about the
 notation for these classes.

 Dually, we just write $b_j$ for the polynomial generators of
 $H_*(BGL_*(F(d)))$, but we write $e_{j}^{(k)}$ for the exterior
 generators. 
\end{remark}

It will turn out that for our analysis of the Atiyah-Hirzebruch
spectral sequence, we need to understand the effect in (co)homology of
the inclusions $GL_d(F(k))\to GL_{p^kd}(F)$.  We can reduce by
induction to the case where $d=k=1$.  For this case, we will
temporarily use the following notation:
\[ G = GL_p(F) \hspace{4em} 
   T = GL_1(F)^p \hspace{4em}
   C = GL_1(F(1)).
\]
This means that $C$ is cyclic of order $q^p-1$, and the $p$-torsion
subgroup is cyclic of order $p^{r+1}$).  We let $\rho\:C\to G$ be the
inclusion.  We have seen that
\begin{align*}
  H^*(BG) = H^*(BGL_p(F)) &=
   P[c_1,\dotsc,c_p]\ot E[v_1,\dotsc,v_p] \\
  H^*(BT) = H^*(BGL_1(F)^p) &=
   P[x_1,\dotsc,x_p]\ot E[a_1,\dotsc,a_p] \\
  H^*(BC) = H^*(BGL_1(F(1))) &=
   P[c^{(1)}_1] \ot E[v^{(1)}_1].                               
\end{align*}
For typographical convenience, we put $\ov{c}=c^{(1)}_1$ and
$\ov{v}=v^{(1)}_1$.  Note that there are also natural generators
$\ov{x}=-\ov{c}$ and $\ov{a}=-\ov{v}$.

It is not hard to understand the effect of $\rho$ on the polynomial
generators: 
\begin{lemma}\lbl{lem-res-c}
 We have $\rho^*(c_p)=\ov{c}^p=(-\ov{x})^p$, and $\rho^*(c_i)=0$ for
 $0<i<p$. 
\end{lemma}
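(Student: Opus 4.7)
The plan is to factor $\rho$ through $\bG_p = GL_p(\bF)$, decompose the associated $\bF$-linear $C$-representation into characters via Lemma~\ref{lem-twist-split}, and then read off the Chern polynomial.

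Since each $c_i$ is pulled back from $H^*(B\bG_p)$, it suffices to compute the composite $C \to \bG_p$. This is classified by the $\bF$-linear $C$-representation $W = \bF \ot_F F(1)$, where $C = F(1)^\tm$ acts by multiplication on the second tensor factor. Applying Lemma~\ref{lem-twist-split} with $k = 1$ and $V = F(1) \in \CV(1)$ splits $W$ as $\bigoplus_{j=0}^{p-1} W_j$, with $W_j = \phi^j(\bF \ot_{F(1)} F(1))$ one-dimensional over $\bF$. The bookkeeping step is to unwind Definition~\ref{defn-galois-twist}: because $\phi^j$ is additive and $\bF$-semilinear via Frobenius, the element $c \in C$ acts on $W_j$ as the scalar $c^{q^j} \in \bF$, so $W_j$ is the $q^j$-th tensor power of the tautological character of $C$.

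By Remark~\ref{rem-chern-plus} and the definition $f_V(t) = \prod_j (t - x_j)$, together with additivity of first Chern classes of line bundles under tensor product, we would then obtain
\[ \rho^* f_V(t) \;=\; \prod_{j=0}^{p-1} \bigl(t - q^j \ov{x}\bigr). \]
Because $q \equiv 1 \pmod{p}$ and the coefficients lie in $\Fp$, each factor collapses to $t - \ov{x}$, and the Frobenius identity (valid since $p$ is odd) gives $\rho^* f_V(t) = (t - \ov{x})^p = t^p - \ov{x}^p$. Comparing with $f_V(t) = \sum_i c_i\, t^{p-i}$ yields $\rho^*(c_i) = 0$ for $0 < i < p$ and $\rho^*(c_p) = -\ov{x}^p = (-\ov{x})^p = \ov{c}^p$, as desired.

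The step I would expect to demand the most care is identifying the $C$-action on each $W_j$ as the $q^j$-th power of the tautological character, since this requires tracking Galois twists through the functor $\phi^j$ and checking the identification with the isomorphism $\xi$ of Lemma~\ref{lem-twist-split}; once this is in place, the remainder is a clean Frobenius calculation in $\Fp[t]$.
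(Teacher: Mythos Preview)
Your proof is correct and follows essentially the same route as the paper: both pass to the $\bF$-linear representation $\bF\ot_F F(1)$, split it via Lemma~\ref{lem-twist-split} into the characters $c\mapsto c^{q^j}$ (equivalently, the paper's $(\phi^j)^*(L)$), and then compute the Chern polynomial as $\prod_{j=0}^{p-1}(t-q^j\ov{x})=(t-\ov{x})^p=t^p-\ov{x}^p$ using $q\equiv 1\pmod p$. Your extra remarks unpacking the $C$-action on each $W_j$ are a helpful elaboration of what the paper leaves implicit.
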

\begin{proof}
 Let $V$ denote $F(1)$, regarded as a $p$-dimensional $F$-linear
 representation of $C$ by multiplication.  The inclusions
 $C\to GL_p(F)\to GL_p(\bF)$ give a $p$-dimensional $\bF$-linear
 representation of $C$, which we call $\ov{V}$.  Essentially by
 construction, we have $\ov{V}=\bF\ot_FF(1)$.  This means that the
 polynomial $f(t)=\sum_{i=0}^p\rho^*(c_i)t^{p-i}$ is the Chern
 polynomial for $\ov{V}$.  Also, the inclusion $F(1)\to\bF$
 gives a one-dimensional $\bF$-linear representation of $C$,
 which we call $L$.  The Euler class of $L$ is $\ov{x}$.  Our claim is
 that $f(t)=t^p-\ov{x}^p$.   

 Let $\phi$ be the Frobenius automorphism $x\mapsto x^q$ on $F(1)$, so
 $\text{Gal}(F(1)/F)=\{\phi^i\st 0\leq i<p\}$.  Using
 Lemma~\ref{lem-twist-split}, we see that
 $\ov{V}\simeq\bigoplus_j(\phi^j)^*(L)$, and thus that
 $f(t)=\prod_{j=0}^{p-1}(t-q^j\ov{x})$.  As $q=1\pmod{p}$ this gives
 $f(t)=(t-\ov{x})^p=t^p-\ov{x}^p$ as claimed.
\end{proof}

To compute $\rho^*(v_i)$, we will need to describe $v_i$ as a
transfer, and we will need some general results about transfers.

\begin{lemma}\lbl{lem-abelian-tr}
 Let $A$ be a finite abelian group, and let $B$ be a subgroup of index
 $d$.  We can then define $t\:A\to B$ by $t(a)=da$, and this induces
 $t^*\:\Hom(B,\Fp)\to\Hom(A,\Fp)$, and $t^*$ is the same as
 $\tr_B^A\:H^1(BB)\to H^1(BA)$.
\end{lemma}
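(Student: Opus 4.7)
I would verify the identity directly on an arbitrary element $\phi\in\Hom(B,\Fp)$ and an arbitrary $a\in A$, using the explicit description of the transfer on $H^1$ via coset representatives.

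First I would check that $t$ is well defined and is a homomorphism. Because $B$ has index $d$ in $A$, the quotient $A/B$ has order dividing $d$, so $da\in B$ for every $a\in A$. Abelianness of $A$ makes $a\mapsto da$ a homomorphism. Under the canonical isomorphism $H^1(BG;\Fp)=\Hom(G,\Fp)$ for any group $G$, the desired equality $\tr_B^A=t^*$ becomes
\[
(\tr_B^A\phi)(a)=\phi(da)\qquad\text{for all }\phi\in\Hom(B,\Fp),\ a\in A.
\]

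Next I would recall the coset description of the transfer. Fix a left transversal $a_1,\dotsc,a_d$ for $B$ in $A$. For each $a\in A$ there is a unique permutation $\sg_a\in\Sg_d$ and elements $b_i(a)\in B$ with $a\cdot a_i=a_{\sg_a(i)}\cdot b_i(a)$. The standard formula for the transfer on the bar resolution collapses in degree $1$ to
\[
(\tr_B^A\phi)(a)=\sum_{i=1}^{d}\phi\bigl(b_i(a)\bigr),
\]
since any cocycle $\phi$ vanishes on degenerate simplices and in degree $1$ a homomorphism has no higher commutator correction. (This is the classical Scott/Evens formula, but in degree $1$ it can also be seen by dualising: on $H_1$, the transfer $A=H_1(BA;\Z)\to H_1(BB;\Z)=B$ sends $a$ to $\sum_i b_i(a)$.)

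Finally I would use the abelian structure to collapse the sum. From $a\, a_i=a_{\sg_a(i)}\, b_i(a)$ we get $b_i(a)=a_{\sg_a(i)}^{-1}\, a\, a_i$; writing $A$ additively, $b_i(a)=a+a_i-a_{\sg_a(i)}$. Summing and using that $\sg_a$ is a permutation,
\[
\sum_{i=1}^{d}b_i(a)=d\cdot a+\sum_{i=1}^{d}(a_i-a_{\sg_a(i)})=d\cdot a=t(a).
\]
Applying the homomorphism $\phi$ yields $(\tr_B^A\phi)(a)=\phi(t(a))=(t^*\phi)(a)$, as required.

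The only non-routine step is justifying the degree-$1$ transfer formula $(\tr_B^A\phi)(a)=\sum_i\phi(b_i(a))$; this is the main, mild, obstacle. One way to handle it is to work explicitly on the normalised bar resolution, where the induced-module isomorphism $B_*(A)\cong\bigoplus_i a_i\otimes B_*(B)$ together with the projection onto the sum of coordinates gives exactly this formula in degree~$1$. Everything else is bookkeeping with coset representatives.
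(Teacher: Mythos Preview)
Your proof is correct and follows essentially the same approach as the paper: the paper simply cites the classical concrete formula for the transfer (the coset-representative formula you wrote out), specialises to the abelian case, and dualises. You have supplied the details of that computation explicitly, which the paper leaves to the reference.
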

\begin{proof}
 Classical concrete formulae for the transfer in homology are given
 in~\cite{th:ccc}*{Chapter 2}, for example.  The claim follows by
 specialising to the abelian case, and then dualising.
\end{proof}

\begin{corollary}\lbl{cor-tr-zero}
 The transfer is zero if $dA\leq pB$.  In particular, this holds if
 \begin{itemize}
  \item[(a)] $B$ is a summand in $A$, and $p$ divides $d$; or
  \item[(b)] The $p$-torsion part of $A/B$ is not cyclic. 
 \end{itemize}
\end{corollary}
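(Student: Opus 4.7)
The plan is to translate the transfer into algebra via Lemma~\ref{lem-abelian-tr} and then verify the sufficient conditions by direct computation. Under the identifications $H^1(BA) = \Hom(A,\Fp)$ and $H^1(BB) = \Hom(B,\Fp)$, the lemma says that $\tr_B^A$ is precomposition with the homomorphism $t\: A \to B$, $a \mapsto da$. Consequently $\tr_B^A = 0$ iff every $\phi \in \Hom(B,\Fp)$ vanishes on $dA$. Since $\Hom(B,\Fp) = \Hom(B/pB,\Fp)$ separates points of $B/pB$, the common kernel of all such $\phi$ is exactly $pB$, so the condition is equivalent to $dA \le pB$, giving the first assertion.

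For (a), write $A = B \op C$ with $|C| = d$ and $d = pe$. Then $dA = dB \op dC = dB = p(eB) \le pB$.

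For (b), decompose $A = A_p \op A_{p'}$ and $B = B_p \op B_{p'}$ into $p$-primary and prime-to-$p$ summands, so that $d = d_p d'$ with $d_p = [A_p : B_p]$ a power of $p$ and $d' = [A_{p'} : B_{p'}]$ coprime to $p$. Multiplication by $d'$ is an automorphism of $A_p$ and multiplication by $d_p$ is an automorphism of $A_{p'}$, so $dA = d_p A_p \op d' A_{p'}$, while $pB = pB_p \op B_{p'}$. On the prime-to-$p$ part, $d'$ annihilates $A_{p'}/B_{p'}$, so $d' A_{p'} \le B_{p'}$. For the $p$-part, the hypothesis that the $p$-torsion of $A/B$ is non-cyclic lets us write $A_p/B_p = \bigoplus_{i=1}^k \Z/p^{b_i}$ with $k \ge 2$ and $b_1 \ge \dotsb \ge b_k \ge 1$; then $d_p = p^s$ with $s = \sum_i b_i \ge b_1 + 1$, while $p^{b_1}$ is the exponent of $A_p/B_p$. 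For $a \in A_p$ we have $p^{b_1}a \in B_p$, so $p^{s-1}a = p^{s-1-b_1}(p^{b_1}a) \in B_p$, and therefore $p^s a = p \cdot p^{s-1}a \in pB_p$, as required.

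The real content is the first reduction; once the transfer is replaced by the subgroup inclusion $dA \le pB$, both (a) and (b) become short arithmetic checks in finite abelian groups. The mild subtlety in (b) is just the exponent-versus-order gap in a non-cyclic $p$-group, which is exactly what forces $p^s a \in pB$ rather than merely in $B$.
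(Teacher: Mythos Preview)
Your proof is correct and follows essentially the same approach as the paper. The only organizational difference is in case~(b): the paper works directly with the primary decomposition of $A/B$ (observing that a non-cyclic $p$-group of order $p^k$ has exponent dividing $p^{k-1}$, so $(d/p)A\leq B$), whereas you first decompose $A$ and $B$ themselves into $p$-primary and prime-to-$p$ parts before making the same exponent-versus-order observation; both routes are short and rest on the same key fact.
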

\begin{proof}
 The first claim is clear from the lemma.  In case~(a), we have
 $A=B\oplus C$ for some $C$ with $|C|=d$, so
 $dA=dB\oplus dC=dB\oplus 0\leq pB$.  In case~(b), we can write
 $d=p^k\,d_1$ with $d_1\neq 0\pmod{p}$.  We then have
 $A/B=C\oplus D$ say, where $|C|=p^k$ and $|D|=d_1$ and $C$ is not
 cyclic, so $k>1$ and $C$ is annihilated by $p^{k-1}$.  It follows
 that $A/B$ is annihilated by $d/p=p^{k-1}d_1$, so $(d/p).A\leq B$, so
 $dA\leq pB$. 
\end{proof}

\begin{lemma}\lbl{lem-vc-as-tr}
 In $H^*(BG)=P[c_1,\dotsc,c_p]\ot E[v_1,\dotsc,v_p]$ we have
 \begin{align*}
   v_i &= \tr_T^G(a_1x_2\dotsb x_i)
             \text{ for } 1\leq i\leq p \\
   c_i &= i\tr_T^G(x_1x_2\dotsb x_i)
             \text{ for } 1\leq i<p.
 \end{align*}
\end{lemma}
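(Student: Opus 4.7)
The plan is to verify both identities by restricting along $\res_T^G\colon H^*(BG)\to H^*(BT)$, which is injective with image $H^*(BT)^{\Sg_p}$ by Proposition~\ref{prop-HG}, and then computing $\res_T^G\tr_T^G$ via the Mackey double coset formula
\[
\res_T^G\tr_T^G(\alpha)=\sum_{TgT\in T\backslash G/T}\tr_{T\cap gTg^{-1}}^T\bigl(c_g^*\res_{T\cap g^{-1}Tg}^T\alpha\bigr).
\]
I split the sum into Weyl contributions (from $g\in N=N_G(T)$, parametrized by $W=\Sg_p$, each contributing $\sigma^*(\alpha)$ for the corresponding $\sigma$) and non-Weyl contributions where $T'=T\cap gTg^{-1}$ is a proper subtorus of $T$.

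The non-Weyl contributions all vanish by Frobenius reciprocity. Each such $T'$ arises from a nontrivial block partition of $\{1,\dotsc,p\}$ with $c<p$ blocks, obtained by taking the common refinement of the two eigenspace decompositions $F^p=\bigoplus V_j$ and $F^p=\bigoplus gV_j$ (a simultaneously diagonal element must act as a scalar on each block). Consequently the generators $x'_B,a'_B$ of $H^*(BT')$ are all images of some $x_j,a_j$ under $\res_{T'}^T$, so $\res_{T'}^T$ is surjective. Combined with $[T:T']=(q-1)^{p-c}\equiv 0\pmod{p}$ (since $q\equiv 1\pmod{p}$), Frobenius reciprocity gives $\tr_{T'}^T\res_{T'}^T(\gamma)=[T:T']\cdot\gamma\equiv 0\pmod{p}$, and since every class in $H^*(BT';\Fp)$ is a restriction, $\tr_{T'}^T\equiv 0$ on all of $H^*(BT';\Fp)$.

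For the Weyl contribution $\sum_{\sigma\in\Sg_p}\sigma^*(\alpha)$, the $\Sg_p$-stabilizer of $a_1x_2\cdots x_i$ has order $(i-1)!(p-i)!$ and that of $x_1\cdots x_i$ has order $i!(p-i)!$. Combined with the definitions $v_i=(-1)^i\cdot(\Sg_p\text{-orbit sum of }a_1x_2\cdots x_i)$ and $c_i=(-1)^i\cdot(\Sg_p\text{-orbit sum of }x_1\cdots x_i)$, the computation reduces to the Wilson-type identity
\[
(i-1)!\,(p-i)!\equiv(-1)^i\pmod{p},
\]
which follows from $(p-1)!\equiv -1\pmod{p}$ via the factorization $(p-1)!=(p-i)!\prod_{k=1}^{i-1}(p-k)\equiv(-1)^{i-1}(i-1)!\,(p-i)!\pmod{p}$. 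Substituting this into the Weyl sum recovers $v_i|_T$ for the $a$-class and the scalar multiple of $c_i|_T$ for the $x$-class that is matched by the coefficient $i$ in the statement; injectivity of $\res_T^G$ then yields both identities.

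The main obstacle is the structural claim describing intersections $T\cap gTg^{-1}$ as block-partition subtori, together with careful tracking of the conjugation map $c_g^*$ in the Mackey formula to ensure that the "inner class" remains a restriction from $H^*(BT)$; both issues are handled by the bipartite-graph description of the two one-dimensional decompositions of $F^p$, which shows that the conjugation acts on $T'$ through a permutation of its block-coordinates and so preserves the image of $\res_{T'}^T$.
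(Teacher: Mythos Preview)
Your proof is correct and follows essentially the same strategy as the paper's: both restrict to $T$, apply the Mackey double coset formula, identify the Weyl contributions via the Wilson-type congruence $(i-1)!(p-i)!\equiv(-1)^i\pmod{p}$, and kill the non-Weyl terms by observing that $T\cap gTg^{-1}$ is a block-diagonal subtorus of $T$ (described in the paper via an equivalence relation on the columns of $g$) which is a retract of index divisible by $p$, so its transfer vanishes. Your Frobenius-reciprocity justification for the vanishing is exactly the content of the paper's Corollary~\ref{cor-tr-zero}(a), and your final paragraph's worry about $c_g^*$ is moot once you know $\tr_{T'}^T$ is identically zero.
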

\begin{proof}
 We will just discuss $v_i$; the argument for $c_i$ is essentially the
 same.  Put $v'_i=a_1x_2\dotsb x_i$, so the claim is that
 $v_i=\tr_T^G(v'_i)$.  Let $H_i$ be the stabiliser of $v'_i$ in the
 symmetric group $\Sg_p$, so $H_i\simeq\Sg_{i-1}\tm\Sg_{p-i}$.  By
 definition, $v_i$ is the element that satisfies
 \[ \res^G_T(v_i)=(-1)^i\sum_{\sg\in\Sg_p/H_i}\sg^*(v'_i). \]
 As $|H_i|$ is not divisible by $p$, we can rewrite this as 
 \[ \res^G_T(v_i)=(-1)^i|H_i|^{-1}\sum_{\sg\in\Sg_p}\sg^*(v'_i). \]
 Moreover, $|H_i|=(i-1)!(p-i)!$.  Wilson's theorem tells us that
 $|H_1|=-1\pmod{p}$, and it is not hard to deduce by induction that
 $|H_i|=(-1)^i\pmod{p}$.  We thus have
 \[ \res^G_T(v_i) = \sum_{\sg\in\Sg_p}\sg^*(v'_i). \]
 The theorem of Quillen tells us that the restriction map is
 injective, so it will suffice to show that we also have
 $\res^G_T(\tr_T^G(v'_i))=\sum_\sg\sg^*(v'_i)$. 
 
 Next, there is a Mackey formula expressing $\res^G_T(\tr_T^G(v'_i))$
 as a sum of terms indexed by double cosets $TgT$.  The normaliser of
 $T$ in $G$ is $\Sg_p\ltimes T$, so we have a double coset for each
 $\sg\in\Sg_p$, and the corresponding term is just $\sg^*(v'_i)$.
 Thus, it will suffice to show that when $g$ is not in the normaliser
 of $T$, the term for $TgT$ in the double coset formula is zero.  This
 term involves the group $U=T\cap gTg^{-1}$, which we can describe as
 follows.  As $g$ is not in the normaliser, we see that it is not a
 monomial matrix, so we see that there is at least one triple
 $(i,j,k)$ where $g_{ik}\neq 0\neq g_{jk}$ and $i\neq j$.  Let $E$ be
 the smallest equivalence relation on $\{1,\dotsc,p\}$ such that $iEj$
 for all such triples $(i,j,k)$.  For $u\in(F^\tm)^p$, let $\dl(u)$
 denote the corresponding diagonal matrix in $T$.  It is not hard to
 check that $\dl(u)\in U$ iff $u_i=u_j$ whenever $iEj$.  This means
 that $U\simeq(F^\tm)^r$ for some $r<p$, and that $U$ is a retract of
 $T$, with index divisible by $p$.  This means that the transfer map
 $\tr_U^T\:H^*(BU)\to H^*(BT)$ is zero (by
 Corollary~\ref{cor-tr-zero}), and thus that $TgT$ does not contribute
 to the double coset formula.
\end{proof}

\begin{lemma}\lbl{lem-res-v}
 $\rho^*(v_i)=-\ov{a}\,\ov{x}^{i-1}=(-1)^{i-1}\ov{c}^{i-1}\ov{v}$ for
 all $i$. 
\end{lemma}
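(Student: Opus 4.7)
The plan is to apply Lemma~\ref{lem-vc-as-tr}, which expresses $v_i$ as a transfer, and then use the Mackey double coset formula:
\[ \rho^* \tr_T^G(y) = \sum_{g \in C \backslash G / T} \tr^C_{C \cap gTg^{-1}} c_g^* \res^T_{T \cap g^{-1}Cg}(y). \]
The remainder of the argument reduces to identifying the subgroup intersections, computing a single transfer, and counting double cosets mod $p$.

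The geometric observation is that $C \cap gTg^{-1} = F^\tm$ (the scalar subgroup) for every $g$. Indeed, $c \in C = F(1)^\tm$ acts on $F(1) = F^p$ by multiplication, with eigenvalues (over $\ov{F}$) equal to the Galois conjugates $c, c^q, \dotsc, c^{q^{p-1}}$; these lie in $F$ iff $c \in F$. Since $c \in gTg^{-1}$ requires $c$ to be $F$-diagonalizable in the basis given by the columns of $g$, only the scalars $F^\tm \subset C$ can lie in any conjugate of $T$. Because $F^\tm$ is central, the conjugation map $c_g$ is trivial on $F^\tm$, so $c_g^*$ is the identity on $H^*(BF^\tm)$. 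The restriction from $T$ to the scalar subgroup sends each $x_k$ to $x$ and each $a_k$ to $a$, so $a_1 x_2 \dotsb x_i$ restricts to $a\, x^{i-1}$, and every double coset contributes the same class:
\[ \rho^*(v_i) = N \cdot \tr^C_{F^\tm}(a\, x^{i-1}), \qquad N = |C \backslash G/T|. \]

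By Frobenius reciprocity, $\tr^C_{F^\tm}(a \cdot x^{i-1}) = \tr^C_{F^\tm}(a) \cdot \ov{x}^{i-1}$. Lemma~\ref{lem-abelian-tr} applied to the multiplication-by-$d$ map $t\:C \to F^\tm$ (with $d = (q^p-1)/(q-1)$) gives $\tr^C_{F^\tm}(a) = \ov{a}$, once one unwinds that $a$ and $\ov{a}$ are matched against the same embedding $i$. To count $N$ mod $p$: every double coset has size $|C||T|/(q-1)$, so
\[ N = \frac{|G|}{(q^p - 1)(q-1)^{p-1}} = q^{p(p-1)/2}\prod_{j=1}^{p-1}\frac{q^j - 1}{q - 1}. \]
Since $q \equiv 1 \pmod{p}$, we have $(q^j - 1)/(q - 1) = 1 + q + \dotsb + q^{j-1} \equiv j \pmod{p}$, and Wilson's theorem gives $N \equiv (p-1)! \equiv -1 \pmod{p}$. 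Combining, $\rho^*(v_i) = -\ov{a}\,\ov{x}^{i-1} = (-1)^{i-1} \ov{c}^{i-1}\ov{v}$.

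The main obstacle is the uniformity statement $C \cap gTg^{-1} = F^\tm$ for every $g$, which is what makes the double-coset sum collapse to a single transfer scaled by a combinatorial factor that Wilson's theorem then controls; after that, everything is formal.
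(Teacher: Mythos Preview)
Your proof is correct and follows essentially the same approach as the paper: both use Lemma~\ref{lem-vc-as-tr} together with the Mackey double coset formula, show that $C\cap gTg^{-1}$ equals the scalar subgroup $Z=F^\times$ for every $g$, count that the number of double cosets is $(p-1)!\equiv -1\pmod p$, and finish with Frobenius reciprocity and Lemma~\ref{lem-abelian-tr}. The only cosmetic difference is that the paper establishes $C\cap gTg^{-1}=Z$ by constructing an explicit set of double-coset representatives $X$ with $ge_1=e_1$, whereas you argue more directly via the eigenvalues of the multiplication action of $c\in F(1)^\times$; your route is slightly slicker but the substance is the same.
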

\begin{proof}
 We again put $v'_i=a_1x_2\dotsb x_i$, so $v_i=\tr_T^G(v'_i)$ We will
 use the double coset formula to evaluate
 $\rho^*(v_i)=\res^G_C(\tr_T^G(v'_i))$.  This will involve the group
 $Z$ of multiples of the identity in $GL_p(F)$, so $Z\simeq F^\tm$ and
 $Z$ is the centre of $G$ and $Z=C\cap T$.  The map $\rho\:C\to G$ was
 defined using a basis $e_1,\dotsc,e_p$ for $F(1)$ over $F$, which can
 be chosen to have $e_1=1$.  Let $X\subset G$ be the set of matrices
 $g$ such that $ge_1=e_1$, and the first nonzero entry in $ge_i$ is
 one for all $i$.  We claim that $X$ contains precisely one element
 from each double coset $ChT$.  Indeed, given $h\in G$ we note that
 $he_1$ is a nonzero element of $F^p\simeq F(1)$, so we can regard it
 as an element $c\in F(1)^\tm=C$.  Now let $u_i$ be the first nonzero
 entry in the vector $c^{-1}he_i$, and let $t$ be the diagonal matrix
 with entries $u_1,\dotsc,u_p$, so $t\in T$.  We then find that the
 element $g=c^{-1}ht^{-1}$ lies in $X$, and that $X\cap(ChT)=\{g\}$.
 Next, suppose that $g\in X$, and consider the group
 $H_g=C\cap gTg^{-1}$.  If $h\in H_g$ then $h.(ge_1)=c.(ge_1)$ for
 some $c\in C=F(1)^\tm$, but also $h.(ge_1)=u_1.(ge_1)$ for some
 $u_1\in F^\tm$.  It follows that $c=u_1\in F^\tm$, and thus that
 $h\in Z$.  From this we see that $H_g$ is just equal to $Z$, and thus
 that every conjugation map acts as the identity on $H_g$.  Because of
 this, the double coset formula just simplifies to
 $\res^G_C(\tr_T^G(w))=|X|\tr_Z^C(\res^T_Z(w))$ for all
 $w\in H^*(BT)$.  Standard methods show that
 \[ |X| = \prod_{j=1}^{p-1}\frac{q^p-q^j}{q-1} =
      q^{p(p-1)/2}\prod_{j=1}^{p-1}\sum_{m=0}^{p-j-1}q^m.
 \]
 As $q=1\pmod{p}$, this gives $|X|=(p-1)!=-1\pmod{p}$.  Next, we
 have $H^*(BZ)=P[x]\ot E[a]$, where $x$ is the image of any of the
 elements $x_i\in H^2(BT)$, and $a$ is the image of any of the
 elements $a_i\in H^1(BT)$.  This gives $\res^T_Z(v'_i)=ax^{i-1}$
 and so $\res^G_C(v_i)=-\tr_Z^C(ax^{i-1})$.  Now $x$ is the Euler class of
 the representation given by the inclusion $F^\tm\to\bF^\tm$, whereas
 $\ov{x}$ is the Euler class of the inclusion $F(1)^\tm\to\bF^\tm$, so
 $\res^C_Z(\ov{x})=x$.  By the reciprocity formula for transfers, we
 therefore have $\tr_Z^C(ax^{i-1})=\tr_Z^C(a)\ov{x}^{i-1}$.  Next,
 recall that we chose $i\:\bF^\tm\to\C^\tm$, and there is a unique
 $\al\in F^\tm=Z$ with $i(\al)=\exp(2\pi i/(q-1))$, and that $a$ can
 be identified with the map $Z\to\Fp$ sending $\al$ to $1$.
 Similarly, there is a unique $\ov{\al}\in C$ with
 $i(\ov{\al})=\exp(2\pi i/(q^p-1))$, and $\ov{a}$ can be identified
 with the map $C\to\Fp$ sending $\ov{\al}$ to $1$.  We find that
 $\ov{\al}^{|C/Z|}=\al$, so Lemma~\ref{lem-abelian-tr} gives
 $\tr_Z^C(a)=\ov{a}$.  Putting this together, we get
 \[ \res^G_C(v_i) = -\ov{a}\,\ov{x}^{i-1}
     = (-1)^{i-1}\ov{c}^{i-1}\ov{v}
 \]
 as claimed.
\end{proof}

We can now dualise the above results to describe the map
$H_*(B\CV(1))\to H_*(B\CV)$, and extend inductively to describe the
map $H_*(B\CV(n+m))\to H_*(B\CV(n))$ for any $n,m\geq 0$.  We will
call this map $\rho_*$.  Recall here that
 \[ H_*(B\CV(n)) = P[b^{(n)}_i\st i\geq 0] \ot 
                   E[e^{(n)}_i\st i\geq 0].
\]
To describe $\rho_*$, it is convenient to introduce 
formal power series as follows:
\begin{definition}\lbl{defn-be-series} 
 We put
 \begin{align*}
  b^{(n)}(s) &= \sum_i b^{(n)}_is^i \in H_*(B\CV(n))\psb{s} \\
  e^{(n)}(s) &= \sum_i e^{(n)}_is^i \in H_*(B\CV(n))\psb{s}.
 \end{align*} 
\end{definition}
The map $\rho_*$ induces a map 
\[ H_*(B\CV(n+m))\psb{s} \to H_*(B\CV(n))\psb{s}, \]
which we again call $\rho_*$.

\begin{proposition}\lbl{prop-rho-lower-star}
 We have 
 \[ \rho_*(b^{(n+m)}(s)) = 
     (b^{(n)}(s))^{p^m} = 
      \sum_j (b_j^{(n)})^{p^m}s^{p^mj},
 \]
 so 
 \[ \rho_*(b^{(n+m)}_i) = 
     \begin{cases}
      (b^{(n)}_{i/p^m})^{p^m} & \text{ if } p^m\mid i \\
      0 & \text{ otherwise. }
     \end{cases}
 \]
 We also have 
 \[ \rho_*(e^{(n+m)}(s)) = 
     (b^{(n)}(s))^{p^m-1} e^{(n)}(s).
 \]
\end{proposition}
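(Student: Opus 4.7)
The plan is to reduce to the case $m=1$ by induction on $m$, then prove the base case by dualising Lemmas~\ref{lem-res-c} and~\ref{lem-res-v}.

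For the induction, factor $\rho^{(n,m)}$ as the composite $\rho^{(n,1)} \circ \rho^{(n+1,m-1)}$. The map $\rho^{(n,1)}_*\: H_*(B\CV(n+1))\to H_*(B\CV(n))$ is a ring homomorphism, since the restriction of scalars functor $\CV(n+1)\to\CV(n)$ is symmetric monoidal for $\oplus$. Applying the inductive hypothesis at level $n+1$ and then the base case at level $n$ gives
\[ \rho_*(b^{(n+m)}(s)) = \rho^{(n,1)}_*((b^{(n+1)}(s))^{p^{m-1}}) = ((b^{(n)}(s))^p)^{p^{m-1}} = (b^{(n)}(s))^{p^m}. \]
For $e^{(n+m)}(s)$ the analogous calculation yields $(b^{(n)}(s))^{p(p^{m-1}-1) + (p-1)}e^{(n)}(s) = (b^{(n)}(s))^{p^m-1}e^{(n)}(s)$.

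For the base case $m=1$, I would verify the equalities in $H_*(BGL_p(F(n)))\psb{s}$ by pairing with a basis of $H^*(BGL_p(F(n)))$ consisting of monomials in $c_1,\dotsc,c_p,v_1,\dotsc,v_p$. Lemmas~\ref{lem-res-c} and~\ref{lem-res-v} (valid at any level $n$ by Remark~\ref{rem-intermediate-gens}) give $\rho^*(c_p) = (-\bar x)^p$, $\rho^*(c_i) = 0$ for $0 < i < p$, and $\rho^*(v_j) = -\bar a\,\bar x^{j-1}$; combined with $\bar a^2 = 0$, this shows the only monomials whose $\rho^*$-image is nonzero are $c_p^a$ and $c_p^a v_j$, and the pairings against $b^{(n+1)}(s)$ and $e^{(n+1)}(s)$ are then immediate from the defining dualities $\langle b_k^{(n+1)}, \bar x^k\rangle = 1 = \langle e_k^{(n+1)}, \bar x^k\bar a\rangle$. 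For the right-hand side, I would use the torus inclusion $\tau\: BT^p \to BGL_p(F(n))$, writing $(b^{(n)}(s))^p = \tau_*(b^{(n)}(s)^{\otimes p})$ and using the formulas for $\tau^*(c_i)$ and $\tau^*(v_j)$ from Definition~\ref{defn-H-gens-T}; the relevant pairings reduce to monomial sums via $\langle b(s), x^r\rangle = s^r$ and $\langle e(s), x^r a\rangle = s^r$.

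The main obstacle will be the combinatorial bookkeeping required to verify that both sides give identical pairings with every basis monomial. The two key mod-$p$ identities are $\binom{p}{i}\equiv 0\pmod{p}$ for $0 < i < p$, which kills any right-hand side pairing involving $c_i$ with $0 < i < p$ (matching the vanishing of $\rho^*(c_i)$ on the left), and $\binom{p-1}{j-1}\equiv (-1)^{j-1}\pmod{p}$, which produces the sign needed in the $e$-calculation. Monomials with more than one $v$-factor vanish on both sides: on the left by $\bar a^2 = 0$, and on the right because the two $a_k$'s from the $\tau^*(v_j)$'s would have to land in the single slot carrying an $e$-class.
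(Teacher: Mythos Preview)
Your proposal is correct and takes essentially the same approach as the paper: reduce inductively to $m=1$, then verify both identities by pairing against the monomial basis $c^\al v^\ep$ of $H^*(BGL_p(F(n)))$, using Lemmas~\ref{lem-res-c} and~\ref{lem-res-v} on the left and the torus inclusion on the right, with the identities $\binom{p}{i}\equiv 0$ and $(-1)^k\binom{p-1}{k-1}\equiv -1\pmod p$ doing the work. The only cosmetic difference is that the paper reduces to $n=0,\;m=1$ rather than to $m=1$ at general $n$, but the argument is the same at every level by Remark~\ref{rem-intermediate-gens}, exactly as you note.
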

\begin{proof}
 We can reduce inductively to the case where $n=0$ and $m=1$.  In this
 case we will write $b(s)=b^{(0)}(s)$ and $\ov{b}(s)=b^{(1)}(s)$, and
 similarly for $e(s)$ and $\ov{e}(s)$.  The element
 $\rho_*(\ov{b}(s))$ is characterised by the fact that
 \[ \ip{\rho_*(\ov{b}(s)),c^\al}=\ip{\ov{b}(s),\rho^*(c^\al)} \]
 for all monomials $c^\al=\prod_ic_i^{\al_i}$ in $c_1,\dotsc,c_p$.
 By Lemma~\ref{lem-res-c}, the right hand side is zero unless
 $c^{\al}=c_p^m$ for some $m$.  In that special case, we have
 $\rho^*(c^\al)=\ov{c}^{pm}=(-\ov{x})^{pm}$.  We also have
 $\ip{\ov{b}_i,\ov{x}^j}=\dl_{ij}$, so we get
 $\ip{\ov{b}(s),\rho^*(c_p^m)}=(-s)^{pm}$.  Now consider
 $\ip{b(s)^p,c^\al}$.  Note that the inclusion
 $T=GL_1(F)^p\to GL_p(F)=G$ induces a map
 $\sg_*\:H_*(BGL_1(F))^{\ot p}\to H_*(BGL_p(F))$, and it is this map
 that is used in defining the power $b(s)^p$.  
 We therefore find that $\ip{b(s)^p,u}=\ip{b(s)^{\ot p},\sg^*(u)}$ for
 all $u\in H^*(BG)$.  On the other hand, for any
 $w\in P[x_1,\dotsc,x_p]$, the inner product $\ip{b(s)^{\ot p},w}$ is
 just the result of replacing each $x_i$ by $s$.  It follows that the
 map $w\mapsto\ip{b(s)^{\ot p},w}$ is a ring homomorphism.  It
 sends $c_i$ to $(-1)^i$ times the $i$'th elementary symmetric
 function in the list $s,\dotsc,s$, which is zero mod $p$ if $0<i<p$,
 and is $(-s)^p$ if $i=p$.   This gives
 $\ip{b(s)^p,c^\al}=\ip{\ov{b}(s),\rho^*(c^\al)}$ for all $\al$, so
 $\rho_*(\ov{b}(s))=b(s)^p$ as claimed.

 We now consider $\rho_*(\ov{e}(s))$.  Put
 \[ e^*(s) = b(s)^{\ot(p-1)}\ot e(s) \in H_*(BT), \]
 so the claim is that $\rho_*(\ov{e}(s))=\sg_*(e^*(s))$, or
 equivalently that 
 \[ \ip{e^*(s),\sg^*(c^\al v^\ep)} =
     \ip{\ov{e}(s),\rho^*(c^\al v^\ep)}
 \]
 for all monomials $c^\al v^\ep\in H^*(BG)$.  Here $v^\ep$ is in
 principle any product of distinct terms of the form $v_k$, but it is
 easy to see that both sides are zero unless there is precisely one
 term.  We must therefore check that
 \[ \ip{e^*(s),\sg^*(c^\al v_k)} =
     \ip{\ov{e}(s),\rho^*(c^\al v_k)}
 \]
 for all $\al$ and $k$.  Next, one can check that the map
 \[ \ip{e^*(s),-} \: H^*(BT) =
      P[x_1,\dotsc,x_p]\ot E[a_1,\dotsc,a_p] \to P[s]
 \]
 can be described as follows.  Given $w\in H^*(BT)$ we first replace
 $x_1,\dotsc,x_p$ by $s$ and $a_1,\dotsc,a_{p-1}$ by $0$, to get an
 element $\xi(w)\in P[s]\ot E[a_p]$; then $\ip{e^*(s),w}$ is the
 coefficient of $a_p$ in $\xi(w)$.  Here $\xi$ is a ring map with
 $\xi(\sg^*(c_i))=0$ for $0<i<p$, and $\xi(\sg^*(c_p))=(-s)^p$, as we
 saw while considering $\rho_*(\ov{b}(s))$.  We also find that
 \[ \xi(\sg^*(v_k)) = (-1)^k \bcf{p-1}{k-1} s^{k-1}a_p. \]
 (The factor of $(-1)^k$ is incorporated in the definition of $v_k$,
 and the binomial coefficient comes from the combinatorics of the
 $\Sg_p$-action.) It is an exercise to check that
 $(-1)^k\bcf{p-1}{k-1}=-1\pmod{p}$ for all $k$, so
 $\xi(v_k)=-s^{k-1}a_p$.   We therefore have
 \[ \ip{e^*(s),\sg^*(c_p^mv_k)} = (-1)^{pm+1}s^{mp+k-1}, \]
 and $\ip{e^*(s),\sg^*(c^\al v^\ep)}=0$ for all other monomials
 $c^\al v^\ep$.  It follows from Lemma~\ref{lem-res-v} that this is
 the same as $\ip{\ov{e}(s),\rho^*(c^\al v^\ep)}$, as expected.
\end{proof}

\section{\texorpdfstring{Morava $E$-theory and $K$-theory}
                                          {Morava E-theory and K-theory}}
\lbl{sec-morava}

\begin{definition}\lbl{defn-E}
 We write $E$ for the Morava $E$-theory spectrum of height $n$ at the
 prime $p$.  To be definite, we use the version with 
 \[ E_* = \Zp\psb{u_1,\dotsc,u_n}[u^{\pm 1}] \] where $|u|=2$ and
 $|u_i|=0$.  We make the convention $u_0=p$ and $u_n=1$.  This
 spectrum has a standard complex coordinate $x\in\tE^0(BU(1))$ such
 that $E^0(BU(1))=E^0\psb{x}$ (and $E^1BU(1)=0$).  The associated
 formal group law satisfies
 \[ \log_F(x) = x + p^{-1}\sum_{i=1}^n \log_F(u_ix^{p^i}). \]
 We also let $K$ denote the corresponding $2$-periodic Morava
 $K$-theory spectrum, with 
 \[ K_* = E_*/(u_0,\dotsc,u_{n-1}) = \Fp[u^{\pm 1}]. \]
\end{definition}

\begin{remark}
 We choose to work with a specific version of Morava $E$-theory for
 expository convenience, but it is not hard to transfer the results to
 other versions.  Indeed, all formal group laws of height $n$ over
 finite fields of characteristic $p$ become isomorphic over the
 algebraic closure of $\F_p$.  It follows that all versions of Morava
 $K$-theory become isomorphic after tensoring with that algebraic
 closure, and thus that our Morava $K$-theory results are valid for
 all versions.  A similar argument works for different versions of
 Morava $E$-theory, although the details are a little more
 complicated. 
\end{remark}

\begin{definition}\lbl{defn-HH}
 We will often discuss rings of the form $E^0(X)$ in terms of the
 geometry of formal schemes, as in~\cites{st:fsfg,st:cag,st:fsf}.  In
 particular, we put $S=\spf(E^0)$ and $\GG=\spf(E^0(BU(1)))$ and
 $\HH=\spf(E^0(BGL_1(\bF)))$.  
\end{definition}

\begin{remark}\lbl{rem-E-BUd}
 It is again standard that $E^0(BU(1)^d)=E^0\psb{x_1,\dotsc,x_d}$.
 Moreover, if we let $c_k$ denote $(-1)^k$ times the $k$'th elementary
 symmetric polynomial in the variables $x_i$, then the inclusion
 $U(1)^d\to U(d)$ gives an isomorphism 
 \[ E^0(BU(d)) \to E^0\psb{c_1,\dotsc,c_d} = 
     E^0\psb{x_1,\dotsc,x_d}^{\Sg_d} \leq E^0(BU(1)^d).
 \]
 (Moreover, $E^1$ is zero for all spaces mentioned above.)  In the
 language of formal schemes, we can say that
 $\spf(E^0(BU(1)^d))=\GG^d$ and $\spf(E^0(BU(d)))=\GG^d/\Sg_d$.  This
 can also be identified with the moduli scheme $\Div_d^+(\GG)$ for
 effective divisors of degree $d$ on $\GG$, as discussed
 in~\cite{st:fsfg}*{Section 5.1}.
\end{remark}

\begin{remark}\lbl{rem-period}
 From the functional equation for $\log_F(x)$ it is easy to see that
 $\log_F(x)$ involves only powers $x^i$ with $i=1\pmod{p-1}$.  Thus,
 over the ring $E_0[w]/(w^p-w)$ we have $w^{k(p-1)+1}=w$ for all $k$,
 so $\log_F(wx)=w\,\log_F(x)$, so $\exp_F(wx)=w\,\exp_F(x)$, so
 $(wx)+_F(wy)=w\,(x+_Fy)$, so $[k]_F(wx)=w\,[k]_F(x)$ for all
 $k\in\Z$.  From this it follows that $\exp_F(x)$ and $[k]_F(x)$
 involve only powers $x^i$ with $i=1\pmod{p-1}$, and $x+_Fy$ involves
 only monomials $x^iy^j$ with $i+j=1\pmod{p}$.
 
 Recall also that the polynomial $t^p-t$ factors completely in $\Zp$,
 and the reduction map $\Zp\to\Fp$ gives a bijection from the set of
 roots to $\Fp$.  If $m$ is one of these roots then we can substitute
 it for $w$ in the above discussion, giving $(mx)+_F(my)=m\,(x+_Fy)$
 and $[k]_F(mx)=m\,[k]_F(x)$.

 Note also that the standard definition of $[m]_F(x)$ for $m\in\Z$ can
 be extended to $m\in\Zp$ by the rule $[m]_F(x)=\exp_F(m\,\log_F(x))$.
 If $m^p=m$ as above, then we just get $[m]_F(x)=mx$.
\end{remark}

\begin{proposition}\lbl{prop-p-series}
 For each $k\geq 0$ there is a unique monic polynomial $h_k(x)$ of
 degree $p^{nk}$ over $E_0$ such that 
 \[ h_k(x)=x^{p^{nk}} \pmod{u_0,\dotsc,u_{n-1}} \]
 and $[p^k]_F(x)$ is a unit multiple of $h_k(x)$ in $E^0\psb{x}$.
 Moreover: 
 \begin{itemize}
  \item[(a)] If $m$ is a unit multiple of $p^k$ in $\Zp$, then
   $[m]_F(x)$ is also a unit multiple of $h_k(x)$ in $E^0\psb{x}$.
  \item[(b)] There is a unique polynomial $\bh_k$ such that
   $h_k(x)=x\,\bh_k(-x^{p-1})$.
 \end{itemize}
\end{proposition}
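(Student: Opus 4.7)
The proof rests on Weierstrass preparation applied to $[p^k]_F(x)\in E^0\psb x$, with $E_0 = \Zp\psb{u_1,\dotsc,u_n}$ regarded as a complete local ring with maximal ideal $\mxi = (u_0,\dotsc,u_{n-1})$ (using $u_0 = p$). The functional equation for $\log_F$ implies that modulo $\mxi$ the formal group law degenerates to the height-$n$ Honda formal group, for which $[p]_F(x) = x^{p^n}$; iterating this gives $[p^k]_F(x) \equiv x^{p^{nk}} \pmod{\mxi}$. Weierstrass preparation then yields a unique factorization $[p^k]_F(x) = u_k(x)\,h_k(x)$ with $u_k(x)$ a unit in $E^0\psb x$ and $h_k(x)$ a distinguished polynomial of degree $p^{nk}$ (monic, with lower coefficients in $\mxi$), and this $h_k$ is the polynomial demanded by the statement.

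For~(a), I would write $m = p^k v$ with $v \in \Zp^\times$ and factor $[m]_F(x) = [v]_F([p^k]_F(x))$. Since $[v]_F(y) = vy + O(y^2)$ with $v \in \Zp^\times$, the quotient $g(y) = [v]_F(y)/y$ lies in $E^0\psb y$ with unit constant term, hence is a unit. Substituting $y = [p^k]_F(x)$, which lies in the augmentation ideal of $E^0\psb x$, shows $g([p^k]_F(x))$ is a unit in $E^0\psb x$, so $[m]_F(x) = g([p^k]_F(x))\,[p^k]_F(x)$ is a unit multiple of $h_k(x)$.

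For~(b), I would take any Teichm\"uller lift $w\in\Zp$ of a nonzero element of $\Fp$, so $w^{p-1}=1$. By Remark~\ref{rem-period}, $[p^k]_F(wx) = w\,[p^k]_F(x)$. Since $p^{nk} \equiv 1 \pmod{p-1}$, the polynomial $w^{-1} h_k(wx)$ is again monic of degree $p^{nk}$ with lower coefficients in $\mxi$, and the identity $[p^k]_F(x) = u_k(wx)\cdot w^{-1}h_k(wx)$ combined with the uniqueness clause of Weierstrass preparation forces $h_k(wx) = w\,h_k(x)$. Evaluating at $x = 0$ and using that $w - 1 \in \Zp^\times$ for $w \ne 1$ gives $h_k(0) = 0$, so $h_k(x) = x\,k(x)$ for some polynomial $k$ which then also satisfies $k(wx) = k(x)$. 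Running through the primitive $(p-1)$-th roots of unity and using that $w^i - 1 \in \Zp^\times$ whenever $w$ is primitive and $i \not\equiv 0 \pmod{p-1}$ forces every monomial of $k$ to have exponent divisible by $p-1$, so $k$ is uniquely a polynomial in $-x^{p-1}$; this produces $\bh_k$.

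The only delicate step is the invariance argument in~(b): one has to recognise $w^{-1}h_k(wx)$ as distinguished before invoking the uniqueness of Weierstrass preparation (this is precisely where $p^{nk} \equiv 1 \pmod{p-1}$ enters), and one has to know that the differences $w-1$ and $w^i-1$ are units in $\Zp$ for non-trivial Teichm\"uller $w$ and suitable $i$. Neither point is deep, but both are essential; once in hand, the extraction of $\bh_k$ is a routine check of invariants under the cyclic group $\mu_{p-1}(\Zp)$.
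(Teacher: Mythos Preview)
Your argument is correct and follows essentially the same route as the paper: Weierstrass preparation for the existence and uniqueness of $h_k$, the factorisation $[m]_F(x)=[v]_F([p^k]_F(x))$ with $[v]_F(y)/y$ a unit for~(a), and the invariance $h_k(wx)=w\,h_k(x)$ under Teichm\"uller units $w$ (via uniqueness of the distinguished factor) for~(b). Your exposition of~(b) is more explicit than the paper's---you spell out why $w^{-1}h_k(wx)$ is again distinguished and extract $h_k(0)=0$ from the invariance before peeling off the factor of $x$---but the underlying idea is identical; note also that $h_k(0)=0$ follows more directly from $[p^k]_F(0)=0$ and the unit $u_k(0)$, though your route is certainly valid.
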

\begin{proof}
 The polynomial $h_k(x)$ exists by the formal Weierstrass preparation
 theorem (see~\cite{st:fsfg}*{Section 5.2}, for example).  For
 claim~(a), we can write $m=p^km'$ with $m'\neq 0\pmod{p}$, and this
 gives $[m]_F(x)=[m']_F([p^k]_F(x))$.  Here $[m']_F(x)=m'x+O(x^2)$ and
 $m'$ is invertible in $\Zp$; it follows that $[m']_F(x)$ is a unit
 multiple of $x$, and thus that $[m']_F([p^k]_F(x))$ is a unit
 multiple of $[p^k]_F(x)$, and thus also of $h_k(x)$.  For claim~(b),
 suppose that $m\in\Zp$ with $m^{p-1}=1$.  We then find that
 $m^{-1}h_k(mx)$ has the defining properties of $h_k(x)$, and so is
 equal to $h_k(x)$.  This proves that $h_k(x)$ contains only terms
 $x^i$ with $i=1\pmod{p-1}$, and this implies the existence of
 $\bh_k$.
\end{proof}

There is an extensive theory of the structure of $E^0(B\CG)$ for
finite groupoids $\CG$.  We next recall some of this theory.

\begin{definition}\lbl{defn-K-loc-cat}
 We write $\CS$ for the category of spectra that are local (in the
 sense of Bousfield) with respect to our Morava $K$-theory spectrum
 $K$.  Given $X,Y\in\CS$ we write $X\Smash Y$ for the $K$-localised
 smash product.  This makes $\CS$ into a symmetric monoidal category,
 whose unit is the $K$-local sphere spectrum, which we call $S$.  We
 also write $DX$ for the function spectrum $F(X,S)$, and
 $\bigWedge_iX_i$ for the $K$-localised wedge of a family of objects
 $X_i$.  Given any groupoid $\CG$, we write $L\CG$ for the
 $K$-localisation of $\Sgip B\CG$.
\end{definition}

We next recall some duality theory for $E^0(B\CG)$.  We will be
primarily interested in the case where $\CG$ is even, as defined
below. 
\begin{definition}\lbl{defn-even}
 Let $\CG$ be a hom-finite groupoid.  We say that $\CG$ is \emph{even}
 if $K_1(B\CG)=0$.
\end{definition}

\begin{remark}\lbl{rem-even}
 If $\CG$ is even, we see from~\cite{host:mkl}*{Section 8} that
 $K^1(B\CG)=E^\vee_1(B\CG)=E^1(B\CG)=0$, and that $E^\vee_0(B\CG)$ and
 $E^0(B\CG)$ are both pro-free, and that
 $E^0(B\CG)=\Hom_{E_0}(E^\vee_0(B\CG),E_0)$.  Moreover, if $\CG$ is
 actually finite then $E^\vee_0(B\CG)$ and $E^0(B\CG)$ are free
 modules of the same finite rank over $E_0$.
\end{remark}

In~\cite{st:kld} we constructed, for every functor $\phi\:\CG\to\CH$
of finite groupoids, a map $R\phi\:L\CH\to L\CG$ which is adjoint, in
a certain sense, to $L\phi\:L\CG\to L\CH$.  (In many places, we will
use the notation $\phi$ for $L\phi$ and $\phi^!$ for $R\phi$.)  If
$\phi$ is a finite morphism between hom-finite groupoids then we can
decompose it as the coproduct of a family of functors
$\phi_i\:\CG_i\to\CH_i$ of finite groupoids, and then define
\[ R\phi = \bigWedge_i R\phi_i \:
   L\CH  = \bigWedge_i L\CH_i \to 
   \bigWedge_i L\CG_i = L\CG.
\]
It is easy to check that this is independent of the choice of
decomposition.  We leave to the reader the task of adapting results
from~\cite{st:kld} to this slightly more general context.  In
particular, subject to suitable finiteness conditions, we have  Mackey
property: for any homotopy pullback square of groupoids
\begin{center}
 \begin{tikzcd}
    \CA \arrow[r,"\al"] \arrow[d,"\bt"'] & \CB \arrow[d,"\gm"] \\
    \CC \arrow[r,"\dl"'] & \CD, 
  \end{tikzcd}
\end{center}
we have $(R\gm)(L\dl)=(L\al)(R\bt)\: L\CC\to L\CB$.  As a special case
of this, one can show that $R\phi=L\phi^{-1}$ whenever $\phi$ is an
equivalence.  (However, we will see cases where $\phi$ is not an
equivalence but nonetheless $L\phi$ is an equivalence; in these cases,
$R\phi$ is usually different from $(L\phi)^{-1}$.)

The map $L\phi\:L\CG\to L\CH$ induces a ring map 
$E^0(B\CH)\to E^0(B\CG)$, which we call $\phi^*$.  Similarly, $R\phi$
induces a map $E^0(B\CG)\to E^0(B\CH)$, which we denote by $\phi_!$.  For a
homotopy pullback square as above, we then have
\[ \dl^*\gm_!=\bt_!\al^*\:E^0(B\CB)\to E^0(B\CC) \]
By applying this to a suitably chosen square, we obtain a Frobenius
reciprocity formula: for any $\phi\:\CG\to\CH$ and any 
$a\in E^0(B\CH)$ and $b\in E^0(B\CG)$ we have 
\[ a.\phi_!(b) = \phi_!(\phi^*(a)\,b) \in E^0(B\CH). \]

Recall also that when $\phi$ is faithful, the map 
$B\phi\:B\CG\to B\CH$ is (up to homotopy equivalence) a covering map,
and $\phi^!$ is the $K$-localisation of the associated transfer map
$\Sgip B\CH\to\Sgip B\CG$.

If $\CG$ has only finitely many isomorphism classes, then the
projection $\ep\:\CG\to 1$ is a finite functor and so induces a map
$\ep_!\:E^0(B\CG)\to E^0$.  Provided that $\CG$ is even, the map
$\ep_!$ is a Frobenius form, which means that the rule
\[ \ip{f,g} = \ep_!(fg) \] 
gives a perfect $E^0$-linear pairing on $E^0(B\CG)$, and so identifies
$E^0(B\CG)$ with the dual module $E^\vee_0(B\CG)$.  More generally, we
can try to define $\ep_!$ as a sum over isomorphism classes, even if
there are infinitely many of them.  However, the domain of $\ep_!$
will not be the whole of $E^0(B\CG)$, as we will need to impose
auxiliary conditions to make the sum converge in the $I_n$-adic
topology. 

If $X=\spf(E^0(B\CG))$, then any element $f\in E^0(B\CG)$ can be thought
of as a function on $X$.  We use the notation $\int_Xf(x)\,dx$ for
$\ep_!(f)$.  In this notation, the perfect pairing is 
\[ \ip{f,g} = \int_X f(x)g(x)\,dx. \]

\begin{remark}\label{rem-int}
 $K(n)$-local self-duality for classifying spaces is the simplest
 manifestation of the phenomenon of ambidexterity, which has been
 developed much further by Hopkins and Lurie~\cite{holu:akl} and by
 Carmeli, Schlank and Yanovski~\cite{cascya:ach}.  The latter paper
 also uses integral notation for a construction related to
 ambidexterity.  However, although there may be some shared intuitive
 motivation, we do not believe that there is any direct connection
 between these constructions.
\end{remark}

More generally, suppose we have $\phi\:\CG\to\CH$, and we
put $Y=\spf(E^0(B\CH))$, so $\phi$ gives a map $X\to Y$ and also a map 
\[ \phi_! \: \CO_X = E^0(B\CG) \to E^0(B\CH) = \CO_Y. \]
we use the notation $\int_{x\in\phi^{-1}\{y\}}f(x)\,dx$ for
$(\phi_!f)(y)$.  In this notation, the Frobenius reciprocity formula
is 
\[ \int_{x\in\phi^{-1}\{y\}} g(\phi(x)) f(x)\,dx = 
    g(y)\,\int_{x\in\phi^{-1}\{y\}} f(x)\,dx.
\]
If $\CG\xra{\phi}\CH\xra{\psi}\CK$ then $(\psi\phi)_!=\psi_!\phi_!$,
which can be written in integral notation as
\[ \int_{x\in(\psi\phi)^{-1}\{z\}} f(x)\,dx = 
    \int_{y\in\psi^{-1}\{z\}} 
     \int_{x\in\phi^{-1}\{y\}} f(x)\,dx\,dy.
\]
Given a homotopy cartesian square
\begin{center}
 \begin{tikzcd}
    \CA \arrow[r,"\al"] \arrow[d,"\bt"'] & \CB \arrow[d,"\gm"] \\
    \CC \arrow[r,"\dl"'] & \CD, 
  \end{tikzcd}
 \end{center}
the associated Mackey property $\dl^*\gm_!=\bt_!\al^*$ can be written
as 
\[ \int_{x\in\gm^{-1}\{\dl(y)\}}f(x)\,dx =
    \int_{w\in\bt^{-1}\{y\}} f(\al(w))\,dw.
\]

\begin{proposition}\lbl{prop-quot-equiv}
 Let $\phi\:\CG\to\CH$ be a full functor of hom-finite groupoids that
 is also a $\pi_0$-isomorphism.  Suppose that for all $a\in\CG$, the
 kernel of the surjection $\phi\:\CG(a,a)\to\CH(\phi(a),\phi(a))$ has
 order coprime to $p$.  Then $L\phi\:L\CG\to L\CH$ is an equivalence.
\end{proposition}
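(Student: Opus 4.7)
The plan is to reduce to the case where $\CG$ and $\CH$ are each connected, so that $\phi$ becomes a surjection of finite groups $G\twoheadrightarrow H$ with kernel $K$ of order coprime to $p$, and then prove the result there by comparing Serre and Atiyah--Hirzebruch spectral sequences.

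First, I would use the $\pi_0$-isomorphism hypothesis together with hom-finiteness to decompose both $\CG$ and $\CH$ into their connected components, indexed by the same set. Up to equivalence each component of $\CG$ is $BG_i$ for the finite group $G_i=\CG(a_i)$, and similarly each component of $\CH$ is $BH_i$. The restriction of $\phi$ to matched components corresponds to a group homomorphism $\phi_i\:G_i\to H_i$, which is surjective because $\phi$ is full and a $\pi_0$-isomorphism on the component, and whose kernel $K_i$ has order coprime to $p$ by hypothesis. Since $B\CG$ is the disjoint union of the $BG_i$, the object $L\CG$ decomposes as the $K$-local wedge of the $LBG_i$, and similarly for $\CH$; this reduces the problem to showing that $LB\phi$ is an equivalence whenever $\phi\:G\twoheadrightarrow H$ is a surjection of finite groups with kernel $K$ of order coprime to $p$.

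Second, I would consider the fibration $BK\to BG\xra{B\phi}BH$ coming from the short exact sequence $1\to K\to G\to H\to 1$. Because $|K|$ is a unit in $\Fp$, Maschke's theorem gives $H^i(BK;\Fp)=0$ for $i>0$. The Serre spectral sequence
\[ E_2^{s,t} = H^s(BH;H^t(BK;\Fp)) \Longrightarrow H^{s+t}(BG;\Fp) \]
is therefore concentrated on the row $t=0$, and collapses to show that the edge map $\phi^*\:H^*(BH;\Fp)\to H^*(BG;\Fp)$ is an isomorphism. Comparing the Atiyah--Hirzebruch spectral sequences for $K^*(BG)$ and $K^*(BH)$, the induced map on $E_2$-pages is an isomorphism, hence so is the induced map on $E_\infty$-pages, and finally $\phi^*\:K^*(BH)\to K^*(BG)$ is an isomorphism. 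Since $LBG$ and $LBH$ are both $K$-local by construction, any map between them inducing an isomorphism on $K$-cohomology is an equivalence, so $L\phi$ is an equivalence as required.

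The main technical point requiring care is the AHSS comparison: one must verify that the filtrations on $K^*(BG)$ and $K^*(BH)$ are complete and Hausdorff so that the isomorphism on $E_\infty$ really does give an isomorphism on abutments; this is standard for classifying spaces of finite groups. A secondary point is ensuring that $K$-localisation is compatible with the infinite coproduct decomposition over $\pi_0(\CG)$ in the reduction step, which follows from the coproduct structure in $\CS$ supplied by the $K$-local wedge.
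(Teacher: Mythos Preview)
Your proof is correct and follows essentially the same route as the paper: reduce to the case of a single surjection of finite groups $G\twoheadrightarrow G/N$ with $|N|$ coprime to $p$, use the Serre (equivalently Lyndon--Hochschild--Serre) spectral sequence to see that $B\phi$ induces an isomorphism on mod $p$ (co)homology, and then invoke the Atiyah--Hirzebruch spectral sequence to conclude the same for Morava $K$-theory. The paper phrases this in homology rather than cohomology, but the argument is the same.
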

\begin{proof}
 We can reduce easily to the following claim: if $G$ is a finite group
 and $N$ is a normal subgroup of order coprime to $p$, then the map
 $K_*(BG)\to K_*(B(G/N))$ is an isomorphism.  Because of the
 Atiyah-Hirzebruch spectral sequence, it will suffice to prove that
 the map $H_*(G;K_*)\to H_*(G/N;K_*)$ is an isomorphism.  There is
 another standard spectral sequence
 $H_*(G/N;H_*(N;K_*))\convto H_*(G;K_*)$.  As $N$ has order coprime to
 $p$, we have $H_0(N;K_*)=K_*$ and $H_i(N;K_*)=0$ for $i>0$.  The
 spectral sequence therefore collapses to give the required
 isomorphism $H_*(G;K_*)\to H_*(G/N;K_*)$.
\end{proof}

\begin{proposition}\lbl{prop-inc-split}
 Let $\phi\:\CG\to\CH$ be a faithful functor of hom-finite groupoids
 that is also a $\pi_0$-isomorphism.  Suppose that for all $a\in\CG$,
 the index of $\phi(\CG(a))$ in $\CH(\phi(a))$ is coprime to
 $p$.  Then the composite $\phi\phi^!\:L\CH\to L\CH$ is an
 equivalence, so $\phi\:L\CG\to L\CH$ is a split epimorphism, and
 $\phi^!\:L\CH\to L\CG$ is a split monomorphism.  In fact, the map
 \[ \Sgip\phi \: \Sgip B\CG \to \Sgip B\CH \]
 is already a split epimorphism after $p$-completion.
\end{proposition}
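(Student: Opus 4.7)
The plan is to reduce, via the $\pi_0$-isomorphism hypothesis, to the case of a single inclusion $G\hookrightarrow H$ of finite groups with $[H:G]$ coprime to $p$, and then to invoke the classical Becker--Gottlieb identity saying that the composite of a finite covering with its transfer is multiplication by the index.

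Concretely, I would first decompose $\CG=\coprod_i\CG_i$ and $\CH=\coprod_i\CH_i$ with each $\CH_i$ connected, using that $\phi$ is a $\pi_0$-isomorphism; the $R\phi$ construction is compatible with coproduct decompositions, so the problem reduces to each $\phi_i\:\CG_i\to\CH_i$ individually. Choosing a base object $a_i\in\CG_i$ and setting $G_i=\CG_i(a_i)$, $H_i=\CH_i(\phi(a_i))$ (both finite by hom-finiteness), we obtain equivalences $B\CG_i\simeq BG_i$ and $B\CH_i\simeq BH_i$ under which $B\phi_i$ corresponds to the standard inclusion-induced map $BG_i\to BH_i$ (with $G_i\hookrightarrow H_i$ injective by faithfulness). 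The hypothesis gives $[H_i:G_i]$ coprime to $p$.

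As noted just before the proposition, faithfulness of $\phi_i$ implies that $\phi_i^!$ is the $K$-localisation of the classical stable transfer $\tau_i\:\Sgip B\CH_i\to\Sgip B\CG_i$ associated to the covering $BG_i\to BH_i$. The standard calculation (e.g.\ via the double coset formula, or directly from the Becker--Gottlieb construction) then shows that
\[ \Sgip B\CH_i \xra{\tau_i} \Sgip B\CG_i \xra{\Sgip B\phi_i} \Sgip B\CH_i \]
is homotopic to multiplication by $[H_i:G_i]$. Since this integer is coprime to $p$, the composite is an equivalence after $p$-completion, which proves the final (and strongest) sentence of the proposition; applying $K$-localisation then yields that $\phi\phi^!\:L\CH\to L\CH$ is an equivalence, and the other two conclusions follow formally.

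I expect no real obstacle, since both key ingredients (the identification of $\phi^!$ with the classical transfer for faithful $\phi$, and the ``degree equals index'' identity for a finite cover) are either quoted or entirely standard. The only point deserving a line of verification is that the argument works already at the $p$-complete level rather than only after $K$-localisation; this is automatic because the transfer of a covering of classifying spaces exists and decomposes over components already on suspension spectra, so no $K$-local input is needed for the splitting itself.
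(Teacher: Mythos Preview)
Your approach matches the paper's: reduce via the $\pi_0$-isomorphism to a single subgroup inclusion $G\leq H$ of finite groups with $m=[H:G]$ coprime to $p$, and use that the composite of transfer and inclusion acts as multiplication by $m$. One small caution: the assertion that this composite is \emph{homotopic} to $m\cdot\mathrm{id}$ at the spectrum level is stronger than the double coset formula immediately yields and is not obviously standard; the paper argues (equivalently and sufficiently) that the composite is multiplication by $m$ on mod $p$ homology, whence it is a $p$-complete equivalence and, via the Atiyah--Hirzebruch spectral sequence, a $K_*$-equivalence.
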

\begin{proof}
 We can reduce easily to the following claim: if $H$ is a finite
 group, and $G$ is a subgroup of index $m$, and $m$ is coprime to $p$,
 then the composite 
 \[ \Sgip BH \xra{\text{tr}} \Sgip BG \xra{\text{inc}} \Sgip BH \]
 induces an isomorphism in Morava $K$-theory.  Because of the
 Atiyah-Hirzebruch spectral sequence, it will suffice to prove that
 the above map gives an isomorphism in mod $p$ homology.  However, it
 is a standard fact in group homology theory that the effect in
 homology is just multiplication by $m$, which is an isomorphism as
 $m$ is coprime to $p$.
\end{proof}

\begin{proposition}\lbl{prop-twisting-element}
 Suppose we have a functor $\phi\:\CG\to\CH$ satisfying the hypotheses
 of Proposition~\ref{prop-quot-equiv} (so it is full and a
 $\pi_0$-isomorphism, and the kernels of the induced maps on
 automorphism groups have orders coprime to $p$).  Suppose also that
 we have a functor $\psi\:\CH\to\CG$ with $\phi\psi\simeq 1$.  Put
 $u=\phi_!(1)\in E^0(B\CH)$ and $v=\psi_!(1)\in E^0(B\CG)$.  Then the
 maps $\phi^*\:E^0(B\CH)\to E^0(B\CG)$ and
 $\psi^*\:E^0(B\CG)\to E^0(B\CH)$ are isomorphisms and are inverse to
 each other.  Moreover, we have $\phi_!(g)=u.\psi^*(g)$ for all
 $g\in E^0(B\CG)$ and $\psi_!(h)=v.\phi^*(h)$ for all $h\in E^0(B\CH)$.
 In particular, we have $u.\psi^*(v)=1$ and $\phi^*(u).v=1$.
\end{proposition}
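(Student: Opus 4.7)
The plan is to first establish that $\phi^*$ and $\psi^*$ are mutually inverse isomorphisms, then derive the transfer formulas via Frobenius reciprocity, and finally deduce the two identities $u\cdot\psi^*(v)=1$ and $\phi^*(u)\cdot v=1$ from functoriality of the wrong-way maps.

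First I would invoke Proposition \ref{prop-quot-equiv}: the hypotheses on $\phi$ are exactly those needed to conclude that $L\phi\:L\CG\to L\CH$ is an equivalence, so $\phi^*\:E^0(B\CH)\to E^0(B\CG)$ is an isomorphism. The assumption $\phi\psi\simeq 1$ gives by contravariant functoriality that $\psi^*\phi^*=\text{id}_{E^0(B\CH)}$. Since $\phi^*$ is invertible, $\psi^*$ must equal $(\phi^*)^{-1}$, hence is itself an isomorphism, and moreover $\phi^*\psi^*=\text{id}_{E^0(B\CG)}$ as well.

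Next I would prove the two transfer formulas using the Frobenius reciprocity identity $a\cdot\phi_!(b)=\phi_!(\phi^*(a)\,b)$ recorded in the excerpt. For $g\in E^0(B\CG)$, write $g=\phi^*(\psi^*(g))\cdot 1$ using the previous step. Then
\[ \phi_!(g) = \phi_!\bigl(\phi^*(\psi^*(g))\cdot 1\bigr) = \psi^*(g)\cdot\phi_!(1) = u\cdot\psi^*(g). \]
The analogous computation for $\psi$, using $h=\psi^*(\phi^*(h))\cdot 1$ and Frobenius reciprocity for $\psi$, yields $\psi_!(h)=v\cdot\phi^*(h)$. Both formulas assume that $\psi_!$ is defined, which is part of the standing setup.

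Finally, for the two concrete identities, I would use covariant functoriality of $(-)_!$ applied to the equivalence $\phi\psi\simeq 1$: this gives $\phi_!\psi_!=(\phi\psi)_!=\text{id}_{E^0(B\CH)}$, so $\phi_!(v)=\phi_!(\psi_!(1))=1$. Plugging $g=v$ into the formula from the previous step yields $u\cdot\psi^*(v)=\phi_!(v)=1$. For the companion identity, apply the ring homomorphism $\psi^*$ to $\phi^*(u)\cdot v$: using $\psi^*\phi^*=\text{id}$ we get $\psi^*(\phi^*(u)\cdot v)=u\cdot\psi^*(v)=1$, and since $\psi^*$ is an isomorphism sending $\phi^*(u)\cdot v$ to $1=\psi^*(1)$, we conclude $\phi^*(u)\cdot v=1$. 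The only subtle point in the argument is that we are \emph{not} assuming $\psi\phi\simeq 1$, so one cannot simply apply functoriality symmetrically; the second identity has to be extracted from the first via the isomorphism $\psi^*$, which is the main (mild) obstacle.
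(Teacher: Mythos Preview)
Your proof is correct and follows essentially the same approach as the paper's: invoke Proposition~\ref{prop-quot-equiv} to make $\phi^*$ an isomorphism, use $\phi\psi\simeq 1$ to get $\psi^*=(\phi^*)^{-1}$, apply Frobenius reciprocity to obtain the two transfer formulas, and use $(\phi\psi)_!=1_!$ to get $\phi_!(v)=1$ and hence $u\cdot\psi^*(v)=1$. The only cosmetic difference is that the paper derives $\phi^*(u)\cdot v=1$ by applying the ring isomorphism $\phi^*$ to the first identity (using $\phi^*\psi^*=1$), whereas you apply $\psi^*$ and use injectivity; these are equivalent moves.
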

\begin{proof}
 Proposition~\ref{prop-quot-equiv} tells us that $L\phi$ is an
 equivalence, so $\phi^*\:E^0(B\CH)\to E^0(B\CG)$ is an isomorphism.
 As $\phi\psi\simeq 1$ we have $\psi^*\phi^*=1$, so $\psi^*$ is also
 an isomorphism and is inverse to $\phi^*$.  We can now use the
 Frobenius reciprocity formula to get
 \[ \phi_!(g) = \phi_!(\phi^*\psi^*(g).1) = \psi^*(g)\,\phi_!(1)
      = \psi^*(g)u
 \]
 as claimed.  Now take $g=v=\psi_!(1)$ to get
 \[ \psi^*(v)u = \phi_!(v)=(\phi\psi)_!(1) = 1_!(1) = 1. \]
 Applying $\phi^*$ to this gives $v\,\phi^*(u)=1$.  We can also use
 Frobenius reciprocity again to get
 \[ \psi_!(h) = \psi_!(\psi^*\phi^*(h).1) = \phi^*(h)\,\psi_!(1)
      = \phi^*(h)v.
 \]
\end{proof}

\begin{definition}\lbl{defn-xE}
 We have chosen an injection $i\:\bG_1=GL_1(\bF)\to S^1$, which gives
 a map $(Bi)^*\:E^*(\CPi)\to E^*(B\bG_1)$.  There is a standard
 generator $x_E\in E^0(\CPi)$, and we also write $x_E$ for the image
 of this class in $E^0(B\bG_1)=E^0(B\bCL)$ or in $E^0(BGL_1(F[m]))$.
 As $\bG_1$ is abelian, and is the colimit of a sequence of cyclic
 groups, standard methods give $E^0(B\bG_1)=E^0\psb{x_E}$.  Similarly,
 we have $K^0(B\bG_1)=K^0\psb{x_K}$, where $x_K$ is the reduction of
 $x_E$ modulo $I_n$.
\end{definition}

\begin{remark}\label{rem-HH}
 In the language of formal schemes, we can say that the object
 $\HH=\spf(E^0(B\bG_1))$ is naturally isomorphic to
 $\Hom(\Hom(\bF^\tm,S^1),\GG)$ and thus unnaturally isomorphic to
 $\GG$.  (If we took the time to set up the relevant definitions, we
 could also describe $\HH$ as $\Tor(\bF^\tm,\GG)$ or as
 $T\ot_{\Zp}\GG$, where
 $T=\Hom(\Z/p^\infty,\mu_{p^\infty}(\bF))\simeq\Zp$ as in
 Remark~\ref{rem-duals}.)
\end{remark}

\begin{definition}\lbl{defn-E-gens-T}
 Now note that the K\"unneth theorem gives
 \[ E^0(B\bT_d) = E^0\psb{x_{E1},\dotsc,x_{Ed}}. \]
 This has an evident action of the symmetric group $\Sg_d$.  We define
 $c_{Ek}\in E^0(B\bT_d)$ to be the sum of all the monomials in the
 orbit of $(-1)^kx_{E1}\dotsb x_{Ek}$.  We also let $c_{Kk}$ be the
 image of $c_{Ek}$ in Morava $K$-theory.
\end{definition}

\begin{proposition}\lbl{prop-EbG}
 The inclusion $\bT_d\to\bG_d$ gives isomorphisms
 \begin{align*}
   E^0(B\bG_d) &\simeq E^0(B\bT_d)^{\Sg_d} =
                 E^0\psb{c_{E1},\dotsc,c_{Ed}} \\
   K^0(B\bG_d) &\simeq K^0(B\bT_d)^{\Sg_d} =
                 K^0\psb{c_{K1},\dotsc,c_{Kd}}.
 \end{align*}
\end{proposition}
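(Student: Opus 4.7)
The plan is to mimic the proof of Proposition~\ref{prop-HG}, replacing mod $p$ cohomology by Morava $E$-theory, and then to deduce the $K$-theory statement by base change. The starting point is the commutative diagram
\begin{center}
\begin{tikzcd}[column sep=small]
B\bT_d \arrow[d,rightarrowtail] & B\bTW_d \arrow[l,twoheadrightarrow] \arrow[r,rightarrowtail] \arrow[d,rightarrowtail] & BU(1)^d \arrow[d,rightarrowtail] \\
B\bG_d & B\bGW_d \arrow[l,twoheadrightarrow] \arrow[r,rightarrowtail] & BU(d)
\end{tikzcd}
\end{center}
in which the leftward maps are induced by reduction modulo $q_0$ and hence are mod $p$ equivalences (so $E$- and $K$-equivalences) by Proposition~\ref{prop-witt-equiv}, while the rightward maps come from the embedding $i\colon W\bF\to\C$ of Proposition~\ref{prop-witt-embedding}. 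Remark~\ref{rem-E-BUd} identifies the rightmost column in $E^0$ as the standard inclusion $E^0\psb{c_1,\dotsc,c_d}\hookrightarrow E^0\psb{x_1,\dotsc,x_d}$ of $\Sg_d$-invariant power series.

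First I would settle the tori: by the K\"unneth theorem it suffices to handle $d=1$, where both $E^0(BGL_1(W\bF))$ and $E^0(BU(1))$ are power series rings in a single complex coordinate, and the map induced by $i$ preserves these coordinates because the $p$-primary parts $\mu_{p^\infty}(W\bF)\simeq\Z/p^\infty\simeq\mu_{p^\infty}(\C)$ of the two unit groups are identified by $i$ while the prime-to-$p$ parts contribute nothing to $E^0$. With the top row of the diagram established, the restriction $E^0(B\bGW_d)\to E^0(B\bTW_d)$ lands in the $\Sg_d$-invariants (since inner conjugation acts trivially in cohomology), giving a commuting square
\begin{center}
\begin{tikzcd}[column sep=large]
E^0(B\bGW_d) \arrow[r,"\mathrm{res}"] & E^0(B\bTW_d)^{\Sg_d} \\
E^0(BU(d)) \arrow[u] \arrow[r,"\cong"'] & E^0(BU(1)^d)^{\Sg_d} \arrow[u,"\cong"']
\end{tikzcd}
\end{center}
whose right vertical and bottom horizontal maps are isomorphisms. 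If the top row is injective, the diagram chase used in Proposition~\ref{prop-HG} forces the left vertical map to be an isomorphism, yielding the desired $E^0(B\bG_d)=E^0\psb{c_{E1},\dotsc,c_{Ed}}$. The $K^0$ claim then follows by applying $K_0\otimes_{E_0}(-)$ to this pro-free power series ring.

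The main obstacle is therefore the injectivity of that top row, which is the $E$-theoretic counterpart of the $\al$-determinedness that drove Proposition~\ref{prop-HG} (via Propositions~\ref{prop-determined-examples} and~\ref{prop-torus}); in $E$-theory the simple mod $p$ transfer detection argument is no longer directly available. I would attack this through the universal Brauer character $\xi_d$ of Definition~\ref{defn-brauer-character}: Theorem~\ref{thm-brauer} says the restriction of $\xi_d$ to each finite stage $GL_d(F[k!])$ is a virtual complex character, so $E^0(BGL_d(F[k!]))$ is generated by Chern classes of complex representations (cf.\ Remark~\ref{rem-chern}), and these Chern classes are detected after diagonalisation. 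Every finite abelian $p$-subgroup of $\bG_d$ lies in some $\bT_d$-conjugate by Proposition~\ref{prop-torus}, so the detection descends to a restriction statement on $\bT_d$; passing to the colimit $\bG_d=\bigcup_k GL_d(F[k!])$ uses the pro-freeness supplied ultimately by the Weierstrass preparation of Proposition~\ref{prop-p-series}.
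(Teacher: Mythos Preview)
Your diagram-chase strategy is fine and is essentially the same one used in the paper for ordinary cohomology (Proposition~\ref{prop-HG}), but the ``main obstacle'' you identify --- injectivity of the restriction $E^0(B\bG_d)\to E^0(B\bT_d)^{\Sg_d}$ --- is real, and your proposed attack on it is circular.  You invoke Remark~\ref{rem-chern} to conclude that $E^0(BGL_d(F[k!]))$ is generated by Chern classes; but that remark explicitly combines Theorem~\ref{thm-brauer} with Tanabe's theorem, and Tanabe's argument (see Section~\ref{sec-tanabe}) already presupposes the identification $E^0(B\bG_d)=E^0\psb{c_1,\dotsc,c_d}$ that you are trying to prove.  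Knowing only that $\xi_d$ is a virtual character tells you that the map $E^0(BU(d))\to E^0(BGL_d(F[k!]))$ hits the classes $c_1,\dotsc,c_d$; it does not by itself tell you that these generate the target, nor that restriction to the diagonal is injective.  The colimit step you gesture at is also delicate, since $E^0$ does not in general convert direct limits of groups into inverse limits of rings.

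The paper's route avoids all of this by reversing your order of deduction.  Since $H^*(B\bG_d)$ and $H^*(B\bT_d)$ are already known (Proposition~\ref{prop-HG}) to be polynomial on even-degree generators, both Atiyah--Hirzebruch spectral sequences for $K^*$ collapse at $E_2$ for degree reasons; this immediately yields $K^0(B\bG_d)\simeq K^0(B\bT_d)^{\Sg_d}=K^0\psb{c_{K1},\dotsc,c_{Kd}}$, and in particular shows that the restriction map is a split monomorphism of $K^*$-modules.  One then lifts from $K$ to $E$ using~\cite{host:mkl}*{Section 8}: even concentration of $K^*(B\bG_d)$ forces $E^*(B\bG_d)$ to be pro-free and even with $E^*(B\bG_d)/I_n=K^*(B\bG_d)$, and a basis compatible with the splitting in $K$-theory lifts to exhibit the $E$-theory isomorphism.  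So the efficient fix to your argument is simply to insert the AHSS collapse and deduce $K$ first, then lift to $E$, rather than the other way around.
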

\begin{proof}
 The permutations come from inner automorphisms of $\bG_d$, and it
 follows that the image of the restriction map is contained in the
 invariants.  Recall from Section~\ref{sec-ordinary} that the
 cohomology rings $H^*(B\bT_d)$ and $H^*(B\bG_d)$ are both polynomial
 on generators in even degrees.  It follows that the Atiyah-Hirzebruch
 spectral sequences $H^*(B\bT_d;K^*)\convto K^*(B\bT_d)$ and
 $H^*(B\bG_d;K^*)\convto K^*(B\bG_d)$ both collapse.  It follows
 easily from this that $K^*(B\bG_d)$ is as claimed.  In particular,
 $K^*(B\bG_d)$ is concentrated in even degrees, and the restriction
 map is a split monomorphism of $K^*$-modules.  Because $K^*(B\bG_d)$
 is in even degrees, we see from~\cite{host:mkl}*{Section 8} that so
 $E^*(B\bG_d)$ is pro-free and concentrated in even degrees, with
 $E^*(B\bG_d)/I_n=K^*(B\bG_d)$.  The same applies to $B\bT_d$.  By
 choosing bases in Morava $K$-theory that are compatible with the
 splitting, and lifting them to Morava $E$-theory, we see that
 $E^0(B\bG_d)$ also maps isomorphically to $E^0(B\bT_d)^{\Sg_d}$.
\end{proof}

\begin{remark}\lbl{rem-Div-scheme}
 In the language of formal schemes, this says that
 $\spf(E^0(B\bG_d))=\HH^d/\Sg_d$, which is the same as $\Div_d^+(\HH)$
 (the moduli scheme for effective divisors of degree $d$ on $\HH$).
 Thus, we have $\spf(E^0(B\bCV_*))=\Div_*^+(\HH)$.  As $\bCV$ is a
 symmetric bimonoidal category, we see that $\spf(E^0(B\bCV_*))$ has a
 natural structure as a graded semiring scheme.  This just corresponds
 to the usual graded semiring structure of $\Div_*^+(\HH)$ under
 addition and convolution of divisors, which is familiar from the
 parallel case of $\spf(E^0(BGL_*(\C)))=\Div_*^+(\GG)$.  All this is
 discussed in more detail in~\cite{st:fsfg}*{Sections 5.1 and 7.3}.

 Tanabe's main theorem (which we stated as Theorem~\ref{thm-tanabe})
 tells us that
 \[ \spf(E^0(B\CV_*)) = \spf(E^0(B\bCV_*)_\Gm) =
     \spf(E^0(B\bCV_*))^\Gm = \Div_*^+(\HH)^\Gm.
 \]
\end{remark}

\begin{definition}\lbl{defn-bE}
 We let $b_{Ei}\in E^\vee_0(B\bG_1)=E^\vee_0(B\bCV_1)$ be dual to
 $x_E^i$, and similarly for $b_{Ki}\in K_0(B\bG_1)=K_0(B\bCV_1)$.  We
 use the direct sum operation to make $K_*(B\bCV_*)$ into a 
 ring, and similarly for $E^\vee_*(B\bCV_*)$.
\end{definition}

\begin{proposition}\lbl{prop-bE}
 $K_0(B\bCV_*)$ is the polynomial ring $K_0[b_{Ki}\st i\geq 0]$.
 Similarly, $E^\vee_0(B\bCV_*)$ is the $I_n$-adic completion of the
 polynomial ring $E_0[b_{Ei}\st i\geq 0]$.
\end{proposition}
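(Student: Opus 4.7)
The plan is to dualise Proposition~\ref{prop-EbG}, identifying $K_0(B\bG_d)$ with the $\Sg_d$-coinvariants of $K_0(B\bG_1)^{\ot d}$, which form the degree-$d$ summand of a polynomial ring. First, because $K^*(B\bG_1)$ is pro-free, the K\"unneth theorem gives $K_0(B\bT_d) = K_0(B\bG_1)^{\ot d}$, a free $K_0$-module with basis the pure tensors $b_{Ki_1}\ot\cdots\ot b_{Ki_d}$ for $(i_1,\dots,i_d)\in\N^d$. The block-diagonal inclusion $\bT_d\hookrightarrow\bG_d$ induces a map $\sg_*\:K_0(B\bT_d)\to K_0(B\bG_d)$ which sends such a tensor to the corresponding $\tm$-product $b_{Ki_1}\cdots b_{Ki_d}$ in $K_0(B\bCV_*)$. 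Since $\Sg_d\leq\bG_d$ acts on $\bT_d$ by inner conjugation (via permutation matrices) and inner conjugation is trivial in (co)homology, $\sg_*$ is constant on $\Sg_d$-orbits and factors as
\[
\bar\sg_*\:(K_0(B\bG_1)^{\ot d})_{\Sg_d}\to K_0(B\bG_d).
\]

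The key step is to show $\bar\sg_*$ is an isomorphism, for which I would dualise Proposition~\ref{prop-EbG}: that result identifies $K^0(B\bG_d)$ with the $\Sg_d$-invariants of the pro-free module $K^0(B\bT_d)=K^0\psb{x_{K1},\dots,x_{Kd}}$. As $\Sg_d$ permutes the topological basis of monomials $x_K^\al$ ($\al\in\N^d$), the invariants are topologically spanned by orbit sums $m_\lm=\sum_{\al\in\Sg_d\cdot\lm}x_K^\al$, indexed by partitions $\lm$ of length at most $d$. On the other hand, $(K_0(B\bG_1)^{\ot d})_{\Sg_d}$ has a $K_0$-basis of classes $[b_{K\lm}]$ indexed by the same partitions, and the Kronecker pairing gives $\langle[b_{K\mu}],m_\lm\rangle=[\mu=\lm]$; hence the two bases are dual and $\bar\sg_*$ is an isomorphism. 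Summing over $d$, the coinvariants of the tensor powers assemble into the symmetric algebra on the free module $K_0(B\bG_1)=\bigoplus_{i\geq 0}K_0\cdot b_{Ki}$, which is precisely the polynomial ring $K_0[b_{Ki}]$; the $\tm$-product matches polynomial multiplication by construction.

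For the second statement, the same template applies with $E$ in place of $K$, using pro-freeness of $E^*(B\bG_d)$ from Proposition~\ref{prop-EbG} together with the Hovey--Strickland duality of~\cite{host:mkl}. The new ingredient is that $E^\vee_0(B\bCV)=\pi_0(L_{K(n)}(E\Smash(B\bCV)_+))$ is the $K(n)$-local wedge (not the ordinary wedge) of the summands $E^\vee_0(B\bG_d)$, which amounts to the $I_n$-adic completion of $\bigoplus_d E^\vee_0(B\bG_d)=E_0[b_{Ei}]$ rather than the uncompleted sum. I expect the main technical obstacle to be this completion bookkeeping in assembling the full graded ring from the individual degrees; the per-degree identification proceeds by the same dualisation argument as in the $K$-theory case.
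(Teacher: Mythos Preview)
Your $K$-theory argument is exactly the paper's: dualise Proposition~\ref{prop-EbG} to identify $K_0(B\bG_d)$ with the $d$'th symmetric tensor power of $K_0(B\bG_1)$, and assemble these into the polynomial ring.

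For the $E$-theory statement your plan is correct but the paper organises it slightly differently, and more efficiently. Rather than redoing the dualisation per degree for $E$, the paper simply maps the uncompleted polynomial ring $E_0[b_{Ei}]$ into $E^\vee_0(B\bCV)$ using the generators $b_{Ei}$, extends this map over the $I_n$-adic completion (which is possible because the target is pro-free), and then observes that the extended map reduces modulo $I_n$ to the $K$-theory isomorphism just proved; the general theory of pro-free modules then gives the isomorphism. This sidesteps the per-degree $E$-theory dualisation and the assembly bookkeeping you anticipate. Incidentally, your identification $\bigoplus_d E^\vee_0(B\bG_d)=E_0[b_{Ei}]$ is not literally correct, since each summand is already $I_n$-complete; the paper's formulation (map from the honest polynomial ring, then complete) avoids this wrinkle.
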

\begin{proof}
 The Morava $K$-theory statement says that $K_0(B\bCV_d)$ is the
 $d$'th symmetric tensor power of $K_0(B\bCV_1)$, which is true by a
 straightforward dualisation of Proposition~\ref{prop-EbG}.  Next, the
 polynomial ring $E_0[b_{Ei}\st i\geq 0]$ maps to $E^\vee_0(B\CV)$,
 and this map must extend over the $I_n$-adic completion because
 $E^\vee_0(B\CV)$ is pro-free.  The extended map becomes an
 isomorphism if we reduce it modulo $I_n$, so the unreduced map is an
 isomorphism by the general theory of pro-free modules.
\end{proof}

\begin{definition}\lbl{defn-bialgebra}
 We will use the term \emph{bialgebra} for a module equipped with
 both an algebra structure and a coalgebra structure.  We make no
 assumption about the interaction between these structures.
\end{definition}

\begin{definition}\lbl{defn-various-products}
 Let $\sg\:\CV_*\tm\CV_*\to\CV_*$ be the direct sum functor.  We use
 the induced map $\sg_*\:K_0(B\CV_*)\ot K_0(B\CV_*)\to K_0(B\CV_*)$ to
 make $K_0(B\CV_*)$ into a graded ring.  (Note that this is
 commutative without any need for $\pm$ signs.  The unit is the
 obvious generator of $K_0(B\CV_0)=K_0(\text{point})$.)  We just write
 $ab$ for $\sg_*(a\ot b)$.  We also use the transfer map $\sg^!$ to make
 $K_0(B\CV_*)$ into a cocommutative graded coalgebra, with counit
 given by the obvious projection of $K_0(B\CV_*)\to K_0(B\CV_0)$.  
 We make $E^\vee_0(B\CV_*)$ into a graded bialgebra over $E_0$ in the
 same way.

 Next, we use the diagonal map $\dl\:\CV_*\to\CV_*\tm\CV_*$ to make
 $K^0(B\CV_*)$ and $E^0(B\CV_*)$ into rings.  We just write $ab$ for
 $\dl^*(a\ot b)$.  The grading behaviour is that $ab=0$ if
 $|a|\neq|b|$, and $|ab|=d$ if $|a|=|b|=d$.

 The map $\sg^!$ gives a second product on
 $K^0(B\CV_*)$, and we write $a\tm b$ for $(\sg^!)^*(a\ot b)$.  This
 should be thought of as a kind of convolution.  Here we have the more
 usual kind of grading behaviour: $|a\tm b|=|a|+|b|$.  The map $\sg^*$
 also gives a coproduct, which again respects the grading in the usual
 way.  We define two products and a coproduct on $E^0(B\CV_*)$ in the
 same way.  If we need to distinguish the ordinary product (induced by
 $\dl$) from the convolution product, then we will call it the
 \emph{diagonal product}.
\end{definition}

\begin{remark}\lbl{rem-bialgebra-duality}
 The theorem of Tanabe tells us that $E^0(B\CV_d)$ is a finitely
 generated free module over $E^0$, and that $E^1(B\CV_d)=0$.  (This
 could also be deduced from the results of Section~\ref{sec-ahss}
 below.)  This means that duality theory~\cite{st:kld} and the
 K\"unneth theorem apply in their simplest forms.
 
 Duality theory identifies $K_0(B\CV_*)$ with $K^0(B\CV_*)$, and this
 identification converts $\sg_*$ to $(\sg^!)^*$ and $\sg_!$ to
 $\sg^*$.  Thus, if we ignore the diagonal product on $K^0(B\CV_*)$,
 then $K_0(B\CV_*)$ is isomorphic to $K^0(B\CV_*)$ as graded
 bialgebras.  Similarly, $E^\vee_0(B\CV_*)$ is isomorphic to
 $E^0(B\CV_*)$ as graded bialgebras.  For our analysis of the
 Atiyah-Hirzebruch spectral sequence, it will be convenient to focus
 on $K_*(B\CV_*)$.  For other parts of this paper, it is more natural
 to focus on $E^0(B\CV_*)$, which is the natural home of the diagonal
 product. 
\end{remark}

\begin{remark}\lbl{rem-not-hopf}
 It turns out that the product and coproduct do not interact in the
 right way to make the ring $R=E^0(B\CV_*)$ into a Hopf algebra.
 In other words, if we make $R\hot R$ into an $E^0$-algebra in
 the obvious way, then the coproduct $\sg^*\:R\to R\hot R$ is not a
 ring map.  However, $\sg^*$ becomes a ring map if we use a slightly
 different product rule on $R\hot R$.  This is essentially the same
 phenomenon as that described by Green in~\cite{gr:hah}, and will be
 discussed in more detail in Section~\ref{sec-HC}.

 On the other hand, if we use the diagonal product, then $E^0(B\CV_*)$
 becomes a Hopf algebra, but with nonstandard grading behaviour.
\end{remark}

\begin{definition}\lbl{defn-Ind-Prim}
 We write $\Dec_d(E^0(B\CV_*))$ for the module of decomposables of
 degree $d$ in $E^0(B\CV_*)$ (with respect to the convolution
 product).  By definition, this is the sum of the images of the
 transfer maps 
 \[ \tau_j \: E^0(BG_j\tm BG_{d-j}) \to E^0(BG_d), \]
 (for $0<j<d$) and so is an ideal in $E^0(BG_d)$.  We also put
 $\Ind_d(E^0(B\CV_*))=E^0(BG_d)/\Dec_d(E^0(B\CV_*))$, and observe that
 this inherits a ring structure.

 The scheme $\spf(\Ind_d(E^0(B\CV_*))$ is a closed subscheme of
 $\spf(E^0(BG_d))=\Div_d^+(\HH)^\Gm$; it should be thought of as the
 subscheme of $\Gm$-invariant divisors that are indecomposable under
 addition.
 
 We also define $\Prim_d(E^0(B\CV_*))$ to be the intersection of the
 kernels of the restriction maps
 \[ \rho_j \: E^0(BG_d)\to E^0(BG_j\tm BG_{d-j}), \]
 which is again an ideal in $E^0(BG_d)$.
\end{definition}

In order to understand $\Ind_*(E^0(B\CV_*))$ in detail, we need to
know that it is a free module over $E^0$.  The only way that we have
succeeded in proving this is by using the Atiyah-Hirzebruch spectral
sequence, as we will describe in Section~\ref{sec-ahss}.  Thus, the
best results about $\Ind_*(E^0(B\CV_*))$ will be deferred to 
Section~\ref{sec-indec}.  Here we will just prove some preliminary
facts. 

\begin{lemma}\lbl{lem-ind-degrees}
 If $d$ is not a power of $p$, then
 \[ \Prim_d(E^0(B\CV_*))=\Ind_d(E^0(B\CV_*))=0. \]
 If $d=1$ then
 \begin{gather*}
  \Ind_d(E^0(B\CV_*))=\Prim_d(E^0(B\CV_*))= \\
  E^0(BG_1)= E^0\psb{x}/[p^r](x)=
  E^0\{c_1^i\st 0\leq i<N_0=p^{nr}\}.
 \end{gather*}
 If $d=p^k$ with $k>0$ then $\Ind_d(E^0(B\CV_*))$ is the cokernel of
 the transfer map $E^0(BG_{p^{k-1}}^p)\to E^0(BG_{p^k})$, whereas
 $\Prim_d(E^0(B\CV_*))$ is the kernel of the restriction map
 $E^0(BG_{p^k})\to E^0(BG_{p^{k-1}}^p)$.  
\end{lemma}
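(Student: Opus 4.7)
The plan rests on two basic tools used repeatedly. Proposition~\ref{prop-inc-split} says that when $H \leq G$ has index coprime to $p$, the restriction $E^0(BG) \to E^0(BH)$ is split monic and the associated transfer is split epic. Proposition~\ref{prop-non-p-power} supplies, for any $d$ with base-$p$ expansion $d = \sum_i d_i p^i$, the product subgroup $H_d = \prod_i G_{p^i}^{d_i}$ of index coprime to $p$ in $G_d$; whenever $d$ admits a decomposition $d = j + (d-j)$ compatible with the base-$p$ expansion (no carries), we have $H_j \times H_{d-j} = H_d$.

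For the first statement, suppose $d$ is not a power of $p$; then $d$ has either more than one nonzero base-$p$ digit or some digit at least $2$, so we can choose $0 < j < d$ with $H_j \times H_{d-j} = H_d$. This subgroup has index coprime to $p$ in both $G_j \times G_{d-j}$ and $G_d$. The restriction $E^0(BG_d) \to E^0(B(H_j \times H_{d-j}))$ is therefore injective and factors through $\rho_j$, forcing $\rho_j$ to be injective, so $\Prim_d = 0$. Dually, the transfer from $H_j \times H_{d-j}$ up to $G_d$ is surjective and factors through $\tau_j$, so $\tau_j$ is surjective and $\Ind_d = 0$.

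For the $d = 1$ case, there are no nontrivial splits, so $\Dec_1 = 0$ and $\Prim_1 = \Ind_1 = E^0(BG_1)$ directly from the definitions. For the explicit description, $G_1 = F^\times$ is cyclic of order $q - 1 = m p^r$ with $(m,p) = 1$; Propositions~\ref{prop-inc-split} and~\ref{prop-quot-equiv} applied to the inclusion and projection linking $G_1$ with its Sylow $p$-subgroup $\mu_{p^r}$ identify $E^0(BG_1)$ with $E^0(B\mu_{p^r}) = E^0\psb{x}/[p^r]_F(x)$. By Proposition~\ref{prop-p-series}, $[p^r]_F(x)$ is a unit multiple of a monic Weierstrass polynomial of degree $N_0 = p^{nr}$, making the ring free over $E^0$ on $\{c_1^j\}_{0 \leq j < N_0}$ (recalling that $c_1 = -x$).

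For $d = p^k$ with $k > 0$, one direction of both identifications is easy: the factorisation $G_{p^{k-1}}^p \leq G_{p^{k-1}} \times G_{(p-1)p^{k-1}} \leq G_{p^k}$ combined with transitivity of transfer and restriction shows that the image of $\tau^{G_{p^k}}_{G_{p^{k-1}}^p}$ is contained in $\Dec_{p^k}$ and, dually, that the kernel of $E^0(BG_{p^k}) \to E^0(BG_{p^{k-1}}^p)$ contains $\Prim_{p^k}$. The substantive direction is to handle an arbitrary split $p^k = j + (p^k - j)$. The strategy is to use Proposition~\ref{prop-inc-split} to replace $G_j \times G_{p^k-j}$ by its index-coprime-to-$p$ subgroup $H_j \times H_{p^k-j}$, and then show that $H_j \times H_{p^k-j}$ is conjugate inside $G_{p^k}$ to a block-diagonal subgroup of $G_{p^{k-1}}^p$. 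The hard part is the combinatorial lemma needed: any multiset $M$ of powers of $p$, each strictly less than $p^k$, with $\sum M = p^k$, can be partitioned into $p$ sub-multisets each summing to $p^{k-1}$. I would prove this by iteratively merging $p$ equal copies of $p^i$ into a single copy of $p^{i+1}$ whenever $i < k-1$ and the multiplicity permits; the operation strictly decreases the cardinality of the multiset, so it terminates, and at termination every multiplicity $c_i$ with $i < k-1$ is less than $p$, giving $\sum_{i < k-1} c_i p^i \leq (p-1)(p^{k-1}-1)/(p-1) < p^{k-1}$. Since the total remains $p^k$, the terminal multiset must consist of exactly $p$ copies of $p^{k-1}$, and the merge tree pulls back this trivial partition to the required partition of $M$. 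A permutation matrix in $G_{p^k}$ (which acts trivially on $E^0(BG_{p^k})$) then rearranges $H_j \times H_{p^k-j}$ into block-diagonal form inside $G_{p^{k-1}}^p$, so that $\tau^{G_{p^k}}_{H_j \times H_{p^k-j}}$ factors through $\tau^{G_{p^k}}_{G_{p^{k-1}}^p}$, giving the desired inclusion $\mathrm{image}(\tau_j) \subseteq \mathrm{image}(\tau^{G_{p^k}}_{G_{p^{k-1}}^p})$; the entirely analogous statement for restrictions yields the $\Prim_{p^k}$ identification.
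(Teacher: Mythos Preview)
Your proof is correct and follows essentially the same approach as the paper: both reduce to the subgroup $H_d=\prod_i G_{p^i}^{d_i}$ of index coprime to $p$, and for $d=p^k$ both show that any such block subgroup is subconjugate to $G_{p^{k-1}}^p$ by iteratively merging $p$ blocks of size $p^i$ into one of size $p^{i+1}$. The only difference is packaging: the paper phrases the induction in terms of the minimal block-size (``rank'') of a block subgroup, whereas you extract the underlying combinatorics as a multiset-partition lemma and then invoke a permutation matrix---but the content is the same.
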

\begin{proof}
 We will give the proofs for $\Ind$; the arguments for $\Prim$ are
 very similar.

 Put $P_j=G_j\tm G_{d-j}$ for $0<j<d$ and
 $J=\Dec_d(E^0(B\CV_*))=\sum_j\img(\tr_{P_j}^{G_d})$. 
 
 Write $d=\sum_id_ip^i$ in base $p$, and put
 $H=\prod_iG_{p^i}^{d_i}\leq G_d$.  Proposition~\ref{prop-non-p-power}
 tells us that $|G_d/H|$ is coprime to $p$, so
 Proposition~\ref{prop-inc-split} tells us that
 $\tr_H^{G_d}\:E^0(BH)\to E^0(BG_d)$ is surjective.  If $d$ is not a
 power of $p$ then the product defining $H$ has more than one factor,
 so $H$ is contained in some group $P_j$ with $0<j<d$, so
 $\tau_j$ is surjective, so $J=E^0(BG_d)$ and $\Ind_d(E^0(B\CV_*))=0$.

 The claim for $d=1$ is clear from the definitions.
 
 Now suppose that $d=p^k$ with $k>0$, and put $L=G_{p^{k-1}}^p<G_d$.
 Put $J'=\img(\tr_L^{G_d})$.  It is clear that $J'\leq J$, and we need
 to prove that $J'=J$.  Let us say that a \emph{block subgroup} of
 $G_d$ is a subgroup $M$ that is conjugate to $\prod_iG_{p^i}^{m_i}$
 with $\sum_ip^im_i=d$ and $M<G_d$.  The \emph{rank} of such a
 subgroup is the minimal $i$ such that $m_i\neq 0$.  If the rank is
 $i$ and $i<k-1$ then $m_i$ must be divisible by $p$, so we can use
 the inclusion $G_{p^i}^{m_i}\to G_{p^{i+1}}^{m_i/p}$ to include $M$
 in a block subgroup of strictly larger rank.  By iterating this, we
 see that every block subgroup is subconjugate to $L$, so
 $\img(\tr_M^{G_d})\leq J'$.  On the other hand, if $0<j<d$ then we
 can apply Proposition~\ref{prop-non-p-power} to $G_j$ and $G_{d-j}$
 to get a block subgroup $M\leq P_j$ such that $\tr_M^{P_j}$ is
 surjective.  From this it is clear that $\img(\tau_j)\leq J'$, as
 required.
\end{proof}

\section{Annihilators and socles}
\lbl{sec-ann}

Another aspect of duality theory involves the structure of
annihilators and socles, as we now explain.

\begin{convention}
 All group(oid)s mentioned in this section are assumed to be finite
 and even, as in Definition~\ref{defn-even}.
\end{convention}

We will prove various facts about $E^0(B\CG)$, which will have
consequences for $K^0(B\CG)$ and $\Q\ot E^0(B\CG)$.  One can check
that these consequences are valid even without the evenness
assumption, but we will not need that.

\begin{definition}\lbl{defn-ann}
 Given a subset $X\sse E^0(B\CG)$ we put  
 \begin{align*}
  X^\perp
   &= \{y\in E^0(B\CG)\st \ip{x,y}=0 \text{ for all } x\in X\} \\
   &= \{y\in E^0(B\CG)\st \ep_!(Xy)=0\} \\
  \ann(X) &= \{y\in E^0(B\CG)\st Xy=0\}.
 \end{align*} 
\end{definition}

\begin{lemma}\lbl{lem-ann}\leavevmode
 \begin{itemize}
  \item[(a)] $\ann(X)$ is always an ideal.
  \item[(b)] If $X$ is an ideal then $X^\perp=\ann(X)$ (and so $X^\perp$ is
   an ideal).
  \item[(c)] If $X$ is a summand in $E^0(B\CG)$ then so is
   $X^\perp$, and $X^{\perp\perp}=X$.
  \item[(d)] If $X$ is both an ideal and a summand, then so is $\ann(X)$,
   and $\ann(\ann(X))=X$.
 \end{itemize} 
\end{lemma}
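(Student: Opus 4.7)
The plan is to leverage the perfectness of the Frobenius pairing $\ip{f,g}=\ep_!(fg)$, which yields an $E^0$-linear isomorphism $\phi\:E^0(B\CG)\to\Hom_{E^0}(E^0(B\CG),E^0)$ via $\phi(g)(f)=\ip{f,g}$. Part~(a) will follow immediately from commutativity and distributivity: for $y\in\ann(X)$, any $r\in E^0(B\CG)$ and any $x\in X$, we have $x(ry)=r(xy)=0$, and $\ann(X)$ is clearly closed under subtraction.

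For~(b), the inclusion $\ann(X)\sse X^\perp$ is trivial since $\ep_!(0)=0$. For the reverse, suppose $X$ is an ideal and $y\in X^\perp$; then for any $x\in X$ and $z\in E^0(B\CG)$ the product $xz$ lies in $X$, so $\ip{z,xy}=\ep_!((xz)y)=0$. Since $z$ is arbitrary and the pairing is perfect, $xy=0$, hence $y\in\ann(X)$.

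For~(c), I would write $E^0(B\CG)=X\oplus Y$ and observe that the restriction map $\Hom_{E^0}(E^0(B\CG),E^0)\to\Hom_{E^0}(X,E^0)$ is surjective, with kernel corresponding under $\phi$ to $X^\perp$. This yields a short exact sequence
\[ 0\to X^\perp\to E^0(B\CG)\to\Hom_{E^0}(X,E^0)\to 0. \]
Since $E^0$ is a complete regular local noetherian ring, the summand $X$ is finitely generated projective and hence free, so $\Hom_{E^0}(X,E^0)$ is free and the sequence splits; thus $X^\perp$ is itself a summand. The containment $X\sse X^{\perp\perp}$ is immediate from symmetry of the pairing, and the same argument applied to $X^\perp$ shows $X^{\perp\perp}$ is a summand of the same rank as $X$. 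To promote the inclusion to equality I would transport through $\phi$ and invoke the standard fact that for a summand $X$ of a finitely generated projective module $M$ over a commutative ring, the double annihilator $(X^\circ)^\circ$ computed in $M=M^{**}$ recovers $X$.

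Part~(d) will then be purely formal: if $X$ is both an ideal and a summand, $\ann(X)=X^\perp$ is a summand by~(b) and~(c) and is itself an ideal by~(a); applying~(b) and~(c) again gives $\ann(\ann(X))=(X^\perp)^\perp=X^{\perp\perp}=X$. The one slightly delicate point in the whole argument is the double-annihilator identity invoked at the end of~(c), which rests on the freeness of finitely generated projective $E^0$-modules and on the canonical isomorphism $M\simeq M^{**}$ for finitely generated free $M$---both standard over the complete regular local ring $E^0$.
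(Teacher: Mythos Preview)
Your proposal is correct and follows the standard route; the paper's proof is simply the word ``Straightforward'', so you have spelled out exactly the details the authors chose to omit. The one place where you are slightly informal is the final step of~(c): rather than invoking a ``standard fact'' about double annihilators, you can make this self-contained by dualising your short exact sequence $0\to X^\perp\to E^0(B\CG)\to\Hom_{E^0}(X,E^0)\to 0$ and identifying the resulting kernel, which shows directly that the map $E^0(B\CG)\to\Hom_{E^0}(X^\perp,E^0)$ given by $m\mapsto\ip{-,m}|_{X^\perp}$ has kernel exactly $X$, i.e.\ $X^{\perp\perp}=X$.
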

\begin{proof}
 Straightforward.
\end{proof}

\begin{proposition}\lbl{prop-res-tr}\leavevmode
 Let $\phi\:\CH\to\CG$ be a functor between even finite groupoids, and
 use the resulting ring map $\phi^*\:E^0(B\CG)\to E^0(B\CH)$ to regard
 $E^0(B\CH)$ as a module over $E^0(B\CG)$.
 \begin{itemize}
  \item[(a)] The sets $\ker(\phi^*)$, $\img(\phi_!)$,
   $\ker(\phi^*)^\perp$ and $\img(\phi_!)^\perp$ are ideals in
   $E^0(B\CG)$. 
  \item[(b)] The sets $\ker(\phi_!)$, $\img(\phi^*)$, 
   $\ker(\phi_!)^\perp$ and $\img(\phi^*)^\perp$ are
   $E^0(B\CG)$-submodules of $E^0(B\CH)$.
  \item[(c)] We always have $\ker(\phi^*)=\img(\phi_!)^\perp$ and
   $\ker(\phi_!)=\img(\phi^*)^\perp$ and 
   $\img(\phi_!)\leq\ker(\phi^*)^\perp$ and
   $\img(\phi^*)\leq\ker(\phi_!)^\perp$. 
  \item[(d)] Suppose that either $\img(\phi^*)$ or $\img(\phi_!)$ is a
   summand.  Then all the sets mentioned in~(a) and~(b) are summands,
   and the inequalities in~(c) are actually equalities.
 \end{itemize}
\end{proposition}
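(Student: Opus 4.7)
My plan is to derive parts (a)--(c) by routine manipulations with Frobenius reciprocity and the adjunction $\ip{\phi^*(a),b}_{\CH} = \ip{a,\phi_!(b)}_{\CG}$ (which follows from $\ep_\CH = \ep_\CG\phi$, hence $\ep_{\CH\,!} = \ep_{\CG\,!}\,\phi_!$, combined with Frobenius reciprocity), and then to reduce part (d) to a linear-algebra fact about adjoint maps together with Lemma~\ref{lem-ann}(c).

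For (a), $\phi^*$ is a ring map so $\ker(\phi^*)$ is an ideal, and Frobenius reciprocity $a\cdot\phi_!(b) = \phi_!(\phi^*(a)\,b)$ exhibits $\img(\phi_!)$ as an ideal; the two perps are then ideals by Lemma~\ref{lem-ann}(b). For (b), the adjunction implies that the Frobenius form on $E^0(B\CH)$ is compatible with the $E^0(B\CG)$-action via $\phi^*$, i.e.\ $\ip{\phi^*(a)x,y}_\CH = \ip{x,\phi^*(a)y}_\CH$, and the four submodule claims then follow immediately (for $\ker(\phi_!)$ one uses that $\phi_!(\phi^*(a)b) = a\,\phi_!(b)$). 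For (c), adjunction plus non-degeneracy of both pairings give $\ker(\phi^*) = \img(\phi_!)^\perp$ and $\ker(\phi_!) = \img(\phi^*)^\perp$ directly, and the two inclusions come by taking perps and applying $X\leq X^{\perp\perp}$.

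Part (d) is where the real content lies. Under the section-wide evenness and finiteness hypotheses, Remark~\ref{rem-even} ensures that $E^0(B\CG)$ and $E^0(B\CH)$ are free modules of finite rank over $E_0$ equipped with perfect Frobenius pairings, and (a)--(c) show that $\phi^*$ and $\phi_!$ are mutually adjoint with respect to these pairings. The main obstacle, and the key step, is the linear-algebra claim that for such an adjoint pair $\img(\phi^*)$ is an $E_0$-summand of $E^0(B\CH)$ if and only if $\img(\phi_!)$ is an $E_0$-summand of $E^0(B\CG)$. My approach is to observe that summand-ness of $\img(\phi_!)$ splits the sequence $0\to\ker(\phi_!)\to E^0(B\CH)\to\img(\phi_!)\to 0$ and forces $\ker(\phi_!)$ to be free, so one can choose bases putting the matrix $G$ of $\phi_!$ in block form $\bigl(\begin{smallmatrix}I_r & 0\\ 0 & 0\end{smallmatrix}\bigr)$; the matrix of the adjoint $\phi^*$ is then $P_B^{-1}G^T P_A$, where $P_A, P_B$ are the invertible Gram matrices of the two perfect pairings, so $\img(\phi^*) = P_B^{-1}\,\img(G^T)$ is again a summand. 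Given the resulting symmetry between the two hypotheses in (d), Lemma~\ref{lem-ann}(c) applied on both sides promotes all the sets in (a) and (b) to summands satisfying $X^{\perp\perp}=X$, which combined with (c) upgrades the four inclusions to equalities.
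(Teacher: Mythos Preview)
Your proof is correct and follows essentially the same strategy as the paper's: Frobenius reciprocity and Lemma~\ref{lem-ann} for (a)--(b), the adjunction $\ip{\phi^*(a),b}_{\CH}=\ip{a,\phi_!(b)}_{\CG}$ for (c), and in (d) the identification of $\phi_!$ with the linear dual of $\phi^*$ via the perfect pairings. The only difference is presentational: the paper handles one hypothesis of (d) directly (splitting $\phi^*$ through its image and reading off the same block form for $(\phi^*)^\vee\simeq\phi_!$) and declares the other case ``similar,'' whereas you package the duality via Gram matrices to obtain the two hypotheses as equivalent before invoking Lemma~\ref{lem-ann}(c); both routes amount to the same linear-algebra observation.
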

\begin{proof}
 Frobenius reciprocity says that $\phi_!$ is $E^0(B\CG)$-linear, and
 $\phi^*$ is also $E^0(B\CG)$-linear by construction.  Claims~(a)
 and~(b) follow from this, together with Lemma~\ref{lem-ann}.
 
 Now suppose that $b\in E^0(B\CG)$.  We have $b\in\img(\phi_!)^\perp$
 iff $\ip{\phi_!(a),b}_G=0$ for all $a$, or equivalently
 $\ip{a,\phi^*(b)}_H=0$ for all $a$.  As the pairing on $E^0(B\CH)$ is
 perfect, this means that $\phi^*(b)=0$.  We now see that
 $\img(\phi_!)^\perp=\ker(\phi^*)$, and it follows that
 $\img(\phi_!)\leq\img(\phi_!)^{\perp\perp}=\ker(\phi^*)^\perp$.  This
 proves half of~(c), and the other half can be proved by the same
 method.

 Now suppose that the module $M=\img(\phi^*)$ is a summand in
 $E^0(B\CH)$, so we can choose a splitting $E^0(B\CH)=M\oplus N$.  This in
 particular means that $M$ is a free $E^0$-module, so the epimorphism
 $\phi^*\:E^0(B\CG)\to M$ can be split.  This means that $\ker(\phi^*)$
 is a summand, and we can identify $\phi^*$ with the projection
 $\ker(\phi^*)\oplus M\to M$ followed by the inclusion
 $M\to M\oplus N$.  It follows that the dual map $(\phi^*)^\vee$ can
 be identified with the projection $M^\vee\oplus N^\vee\to M^\vee$
 followed by the inclusion $M^\vee\to\ker(\phi^*)^\vee\oplus M^\vee$,
 so the kernel and image of $(\phi^*)^\vee$ are summands.  However, we
 can use the inner products and the adjunction formula to identify
 $\phi_!$ with $(\phi^*)^\vee$, so $\ker(\phi_!)$ and $\img(\phi_!)$
 are summands.  Recall also that if $X$ is a summand then so is
 $X^\perp$ and we have $X^{\perp\perp}=X$.  All claims are now clear
 for the case where $\img(\phi^*)$ is a summand.  The other case
 (where $\img(\phi_!)$ is a summand) is similar.
\end{proof}

\begin{example}\lbl{eg-bad-restriction}
 For an example where the above conditions do not hold, take $p=2$ and
 let $G$ be the quaternion group of order $8$.  (Of course this
 violates our standing assumption that $p>2$.  We expect that similar
 examples can be given for odd primes, but we have not checked.)  Let
 $H$ be the centre, which has order $2$.  Then $H$ has a single
 nontrivial linear character $\dl$, with Euler class $t$ say, so
 $E^0(BH)=E^0\psb{t}/[2](t)$.  On the other hand, $G/H$ has linear
 characters $\al,\bt,\gm$, with Euler classes $x$, $y$ and $z=x+_Fy$,
 so
 \begin{align*}
  E^0(B(G/H)) 
  &= E^0\psb{x,y}/([2](x),[2](y)) \\
  &= E^0\psb{x,y,z}/([2](x),[2](y),[2](z),x+_Fy+_Fz).
 \end{align*}
 We can regard $\al$, $\bt$ and $\gm$ as characters of $G$, and thus
 we can regard $x$, $y$ and $z$ as elements of $E^0(BG)$.  In
 $E^0(B(G/H))$ we have $xyz\neq 0$.  However, on any abelian subgroup
 of $G$ at least one of $\al$, $\bt$ and $\gm$ will vanish, so $xyz$
 will be zero.  Generalised character theory tells us that the
 restriction maps to abelian subgroups are jointly injective, so
 $xyz=0$ in $E^0(BG)$.  (We will review the relevant theory in
 Section~\ref{sec-hkr}; homomorphisms from $\Z_p^n\to G$ play a
 central role, and the key point here is just that any such
 homomorphism must factor through an abelian subgroup of $G$.)  We
 also have $z=x+_Fy=x-_Fy$ and so $z$ is a unit multiple of $x-y$ so
 $xy(x-y)=0$.

 The group $G$ has one more nontrivial irreducible character $\rho$ of
 dimension two, which satisfies $\rho|_H=2\dl$.  We write $e$ for the
 Euler class of $\rho$.  It is known that $E^0(BG)$ is generated by
 $x$, $y$ and $e$, subject to relations that we will not record here.
 Restriction to $H$ sends $x$, $y$ and $z$ to zero, and $e$ to $t^2$.
 Because $[2](t)$ contains both odd and even powers of $t$, it is not
 easy to read off directly the subring of $E^0(BH)$ generated by
 $t^2$.  However, it is easy to see that the map
 $[\Z_p^n,H]\to[\Z_p^n,G]$ is injective, so the restriction map is
 rationally surjective (by using generalised character theory again).
 On the other hand, in Morava $K$-theory we have
 $K^0(BH)=\F_2[t]/t^{2^n}$ so it is clear that $t^2$ generates a
 proper subring, which we will call $R$.  As the rational $E$-theory
 image and Morava $K$-theory image have different ranks, we see that
 the $E$-theory image is not a summand.  We have also shown elsewhere
 that when $n>1$ the Frobenius form $\tht\:K^0(BH)\to K^0$ sends
 $t^{2^n}-1$ to $1$ and all other powers $t^i$ to zero.  Using this we
 see that $R=R^\perp$.  On the other hand, the Morava $K$-theory
 analogue of Proposition~\ref{prop-res-tr} tells us that
 $R^\perp=\ker(\tr\:K^0(BH)\to K^0(BG))$.  This means that $\tr(R)=0$,
 and in particular the element $s_{G,H}=\tr(1)\in K^0(BG)$ is zero.
 Alternatively, it will follow from Lemma~\ref{lem-soc-quot} below
 that $\tr(1)$ is the same as the pullback along $\pi\:G\to G/H$ of
 the class $s_{G/H}$.  We have seen that the map $\pi^*$ is not
 injective, and it follows by a standard argument that it must send
 the socle to zero, so we again have $s_{G,H}=\pi^*(s_{G/H})=0$.  More
 explicitly, we have $s_{G/H}=(xy)^{2^n-1}$.  In $K^0(BG)$ we have
 mentioned that $xy(x-y)=0$, and we can multiply this by
 $(xy)^{2^n-3}y$ to get $(xy)^{2^n-1}=0$.
\end{example}

\begin{remark}\lbl{rem-bad-restriction}
 The above example can be partially generalised as follows.  Let $G$
 be any even finite group, and let $H$ be an abelian normal
 $p$-subgroup.  Consider the following conditions:
 \begin{itemize}
  \item[(a)] $H$ is central in $G$.
  \item[(b)] The map $\Hom(\Tht^*,H)=[\Tht^*,H]\to[\Tht^*,G]$ is
   injective.
  \item[(c)] The map $\Q\ot E^0(BG)\to\Q\ot E^0(BH)$ is surjective.
  \item[(d)] The map $E^0(BG)\to E^0(BH)$ is surjective.
  \item[(e)] The map $K^0(BG)\to K^0(BH)$ is surjective.
  \item[(f)] $H\cap [G,G]=1$.
 \end{itemize}
 Using generalised character theory we see that~(a), (b) and~(c) are
 equivalent.  Standard arguments also show that~(d) and~(e) are
 equivalent, and it is clear that they imply~(a) to~(c).  We also
 claim that~(f) implies~(d) (and so implies all of~(a) to~(e)).
 Indeed, if~(f) holds then we have an inclusion $H\to G/[G,G]$ of
 abelian groups, which gives an epimorphism $G^*=(G/[G,G])^*\to H^*$
 of character groups.  As $E^0(BH)$ is generated by Euler classes of
 characters, it follows that~(d) holds.  From these arguments we see
 that~(f) implies~(a); this can also be shown by a direct
 group-theoretic argument, as the normality of $H$ implies
 $[G,H]\leq H\cap [G,G]$.  We suspect that~(d) implies~(f) but we have
 not proved this.  However, (a) does not imply~(f), as we can see by
 considering the group of $3\tm 3$ upper unitriangular matrices, for
 example. 
\end{remark}

\begin{corollary}\lbl{cor-res-tr}
 Let $\phi\:\CG\to\CH$ be a functor such that the resulting map
 $\phi^*\:E^0(B\CG)\to E^0(B\CH)$ is surjective.  Then
 \begin{itemize}
  \item[(a)] The image of $\phi_!\:E^0(B\CH)\to E^0(B\CG)$ and the
   kernel of $\phi^*\:E^0(B\CG)\to E^0(B\CH)$ are both ideals and
   summands in $E^0(B\CG)$, and they are annihilators of each other.
  \item[(b)] The image of $\phi_!$ is the principal ideal generated by
   $\phi_!(1)$, so $\ker(\phi^*)$ is also the annihilator of
   $\phi_!(1)$.  
  \item[(c)] The map $\phi_!$ is a split monomorphism of
   $E^0$-modules. 
 \end{itemize}
\end{corollary}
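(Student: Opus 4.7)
The plan is to deduce all three parts from Proposition~\ref{prop-res-tr} and Lemma~\ref{lem-ann}, together with Frobenius reciprocity, by exploiting that the hypothesis makes $\img(\phi^*)=E^0(B\CH)$ trivially a summand in its target.

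For part~(a), I would first observe that since $\img(\phi^*)=E^0(B\CH)$ is the whole target, it is tautologically a summand. Thus Proposition~\ref{prop-res-tr}(d) applies directly: it gives that $\ker(\phi^*)$ and $\img(\phi_!)$ are both summands in $E^0(B\CG)$, and that the two containments of Proposition~\ref{prop-res-tr}(c) become equalities, namely $\img(\phi_!)=\ker(\phi^*)^\perp$ and $\ker(\phi^*)=\img(\phi_!)^\perp$. Both sets are ideals by Proposition~\ref{prop-res-tr}(a), and for any ideal $I$, Lemma~\ref{lem-ann}(b) gives $I^\perp=\ann(I)$; hence the two ideals annihilate each other as claimed.

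For part~(b), the key ingredient is Frobenius reciprocity. Given $a\in E^0(B\CH)$, the surjectivity of $\phi^*$ lets me write $a=\phi^*(b)$ for some $b\in E^0(B\CG)$, and then
\[ \phi_!(a)=\phi_!(\phi^*(b)\cdot 1)=b\cdot\phi_!(1). \]
Thus $\img(\phi_!)$ is the principal ideal $E^0(B\CG)\cdot\phi_!(1)$, and its annihilator (which by part~(a) equals $\ker(\phi^*)$) coincides with the annihilator of the single element $\phi_!(1)$.

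For part~(c), I would use that $E^0(B\CH)$ is a finitely generated free $E^0$-module (Remark~\ref{rem-even}), so the surjection $\phi^*$ admits an $E^0$-linear section $s\:E^0(B\CH)\to E^0(B\CG)$. Under the perfect self-dual pairings, $\phi^*$ and $\phi_!$ are mutual adjoints, so a section for $\phi^*$ dualises to a retraction for $\phi_!$. Equivalently, the section yields a splitting $E^0(B\CG)=\ker(\phi^*)\oplus M$ on which $\phi^*|_M$ is an isomorphism, and the argument in the proof of Proposition~\ref{prop-res-tr}(d) shows that $\phi_!$ factors as $E^0(B\CH)\xra{\sim}M\hookrightarrow E^0(B\CG)$, which is split by the projection onto $M$. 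There is no substantive obstacle: the corollary is essentially a clean repackaging of Proposition~\ref{prop-res-tr} in the specialised case $\img(\phi^*)=E^0(B\CH)$, with Frobenius reciprocity supplying the principal generator in~(b).
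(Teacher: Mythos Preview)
Your proposal is correct and follows essentially the same approach as the paper: the paper simply observes that surjectivity of $\phi^*$ makes its image a summand, invokes Proposition~\ref{prop-res-tr} for almost everything, and then uses the identical Frobenius reciprocity computation $\phi_!(a)=\phi_!(\phi^*(b)\cdot 1)=b\,\phi_!(1)$ to establish~(b). Your treatment of~(c) via dualising a section of $\phi^*$ is a slightly more explicit unpacking of what Proposition~\ref{prop-res-tr}(d) already provides, but the underlying argument is the same.
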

\begin{proof}
 As $\phi^*$ is surjective, the image is certainly a summand, so most
 claims are immediate from Proposition~\ref{prop-res-tr}.  Note also
 that for $a\in E^0(B\CH)$ we can choose $b\in E^0(B\CG)$ with
 $\phi^*(b)=a$, and this gives
 $\phi_!(a)=\phi_!(\phi^*(b).1)=b\phi_!(1)$; this proves that
 $\img(\phi_!)=E^0(B\CG).\phi_!(1)$ and so
 $\ann(\phi_!(1))=\ann(\img(\phi_!))=\ker(\phi^*)$.
\end{proof}

\begin{remark}\lbl{rem-res-tr}
 The corollary clearly applies if there is a functor $\psi\:\CG\to\CH$
 with $\psi\phi\simeq 1\:\CH\to\CH$.  In particular, it applies if
 $\CH\simeq 1$.
\end{remark}

\begin{definition}\lbl{defn-soc}
 Suppose that $\CH$ is a subgroupoid of $\CG$ such that the map
 $\pi_0(\CH)\to\pi_0(\CG)$ is bijective.  We write
 $\soc_{\CG,\CH}=i_!(1)$, where $i$ is the inclusion.  For any $\CG$
 we can regard the set $\pi_0(\CG)$ as a groupoid with only identity
 morphisms.  We then have a functor $\CG\to\pi_0(\CG)$, and this has a
 section $j\:\pi_0(\CG)\to\CG$, which is unique up to natural
 isomorphism.  We call $j(\pi_0(\CG))$ the \emph{spine} of $\CG$.  We
 define $\soc_{\CG}=j_!(1)$.  If $G$ and $H$ are groups (regarded as
 one-object groupoids) this just means that $\soc_{G,H}=\tr_H^G(1)$
 and $\soc_G=\tr_1^G(1)$.
\end{definition}

\begin{remark}\lbl{rem-soc}
 Using Corollary~\ref{cor-res-tr} we see that if $i^*$ is surjective,
 then $\soc_{\CG,\CH}$ generates the annihilator of the kernel of
 $i^*$.  We call that annihilator the \emph{relative socle}.  In
 particular, $\soc_{\CG}$ generates the annihilator of the kernel of
 the map $E^0(B\CG)\to\Map(\pi_0(\CG),E^0)$, which we call the
 \emph{socle}.  We also make the obvious parallel definitions in
 Morava $K$-theory (as opposed to $E$-theory).
\end{remark}

\begin{remark}\lbl{rem-soc-terminology}
 Our definitions are only really standard in the case of $K^0(B\CG)$ where
 $|\pi_0(\CG)|=1$, so $K^0(B\CG)$ is a zero-dimensional local ring,
 and the socle is the annihilator of the maximal ideal.
\end{remark}

\begin{lemma}\lbl{lem-soc-res}
 Let $G$ be a finite group and let $H$ be a subgroup.  Then
 $\res^G_H(\soc_G)=|G/H|\soc_H$.  In particular, if $|G/H|$ is not
 divisible by $p$ then $\res^G_H(\soc_G)$ is an invertible multiple of
 $\soc_H$.
\end{lemma}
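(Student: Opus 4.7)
The plan is to invoke the Mackey property recalled in Section~\ref{sec-morava}, applied to $\soc_G = \jmath_!(1)$, where $\jmath\colon 1 \to BG$ selects the unique object (so that $\jmath_!(1)=\tr_1^G(1)=\soc_G$ by Definition~\ref{defn-soc}).

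Specifically, I would form the homotopy pullback of groupoids
\[
\begin{tikzcd}
\CA \arrow[r, "\alpha"] \arrow[d, "\beta"'] & 1 \arrow[d, "\jmath"] \\
BH \arrow[r, "i"'] & BG.
\end{tikzcd}
\]
Unwinding the definition, $\CA$ has object set $G$ with a unique morphism $g_1 \to g_2$ whenever $g_2 g_1^{-1} \in H$. Hence $\CA$ is equivalent to a disjoint union of $|G/H|$ copies of the terminal groupoid $1$, indexed by the right cosets $H\backslash G$, and on each of these components the functor $\beta\colon \CA \to BH$ is just the inclusion $1 \hookrightarrow H$ of the trivial subgroup.

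Applying the Mackey property $i^*\jmath_! = \beta_!\alpha^*$ to the unit $1 \in E^0$, and using $\alpha^*(1)=1$, gives
\[
\res^G_H(\soc_G) \;=\; i^*(\jmath_!(1)) \;=\; \beta_!(1).
\]
Each of the $|G/H|$ components of $\CA$ contributes a copy of $\tr_1^H(1) = \soc_H$ to $\beta_!(1)$, so $\beta_!(1) = |G/H|\,\soc_H$. The second assertion is immediate: if $p \nmid |G/H|$ then $|G/H|$ is a unit in $\Zp \subseteq E^0$, so $\res^G_H(\soc_G)$ is an invertible multiple of $\soc_H$.

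The only real step requiring care is the concrete identification of the homotopy pullback $\CA$ (equivalently, verifying the classical double-coset decomposition $H\backslash G/1 = H\backslash G$ in this groupoid-theoretic setup); once this has been carried out, the result drops out of the Mackey property with no further computation.
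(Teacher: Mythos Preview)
Your proof is correct and is essentially the same argument as the paper's: both apply the Mackey/double coset formula to $\res^G_H\tr_1^G(1)$, observing that the indexing set $1\backslash G/H$ has $|G/H|$ elements and each term contributes $\soc_H$. You have simply spelled out the groupoid-theoretic version of this in more detail, explicitly identifying the homotopy pullback, whereas the paper states it in one line.
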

\begin{proof}
 In the double coset formula for $\res^G_H(\tr_1^G(1))$ the terms are
 indexed by the set $1\backslash G/H=G/H$, and it is easy to see that
 all terms are equal to $s_H$. 
\end{proof}

\begin{lemma}\lbl{lem-soc-quot}
 Suppose that $H$ is a normal subgroup of $G$ with quotient $Q=G/H$,
 and let $\pi\:G\to Q$ be the projection.  Then $\soc_{G,H}=\pi^*(\soc_Q)$.
\end{lemma}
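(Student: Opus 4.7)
The plan is to exhibit the identity as an instance of the Mackey property applied to a well-chosen homotopy pullback square of groupoids. Specifically, I would set up the commutative square
\begin{center}
\begin{tikzcd}
H \arrow[r,"\ep"] \arrow[d,"i"'] & 1 \arrow[d,"j"] \\
G \arrow[r,"\pi"'] & Q
\end{tikzcd}
\end{center}
in which $i$ is the inclusion, $\ep$ is the unique functor to the terminal groupoid, and $j$ picks out the unique object of $Q$; commutativity on the nose is immediate since both composites send $H$ to the basepoint of $Q$.

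The first real step is to verify that this square is a homotopy pullback of groupoids. The homotopy pullback $G\tm^h_Q 1$ can be described as the translation groupoid $\Trans(G,Q)$ for the action of $G$ on the underlying set of $Q$ via $\pi$. Since $\pi$ is surjective this groupoid has a single isomorphism class of objects, and the automorphism group of any object is $\ker(\pi)=H$; hence the canonical comparison functor from $H$, determined by $i$ and $\ep$, is an equivalence. For discrete groups this amounts to the classical observation that $BH\to BG\to BQ$ is a fibration.

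Once the pullback square is in place, the Mackey property recorded in Section~\ref{sec-morava} gives the identity $\pi^* j_! = i_! \ep^*\:E^0(B1)\to E^0(BG)$. Evaluating both sides at $1\in E^0 = E^0(B1)$ yields
\[ \pi^*(\soc_Q) = \pi^*(j_!(1)) = i_!(\ep^*(1)) = i_!(1) = \soc_{G,H}, \]
which is the desired identity. The only nontrivial step is the verification that the square is a homotopy pullback; everything else is a direct unwinding of the definitions of $\soc_Q$ and $\soc_{G,H}$ together with a single application of the Mackey formula.
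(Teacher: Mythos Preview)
Your proof is correct and is essentially the same as the paper's own argument: both form the homotopy pullback of $\pi\:G\to Q$ along $1\to Q$, identify it with $H$, and then apply the Mackey property to $1\in E^0(B1)$. The paper's description of the pullback (object set $Q$, with $P(x,y)=\{g\in G\st\pi(g)x=y\}$) is just an explicit presentation of your translation groupoid $\Trans(G,Q)$.
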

\begin{proof}
 Let $P$ be the homotopy pullback of the inclusion $1\to Q$ and the
 projection $\pi\:G\to Q$.  Then the object set of $P$ is $Q$, with
 morphisms $P(x,y)=\{g\in G\st \pi(g)x=y\}$.  All objects of $P$ are
 isomorphic, so it is equivalent to the group $P(1,1)=H$.  We can
 therefore apply the Mackey property for this pullback to the element
 $1\in E^0(B1)$; this gives $\tr_H^G(1)=\pi^*(\tr_1^Q(1))$ as
 required.  
\end{proof}

\begin{lemma}\lbl{lem-soc-prod}
 Suppose that $H\leq G$ and $t\in E^0(BG)$ with $\res^G_H(t)=\soc_H$;
 then $\soc_G=t\,\soc_{G,H}$.
\end{lemma}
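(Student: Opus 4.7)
The plan is to unfold the definitions and apply Frobenius reciprocity together with functoriality of the transfer. By definition, $\soc_{G,H} = i_!(1)$ where $i\:H\to G$ is the inclusion, $\soc_H = j_!(1)$ where $j\:1\to H$, and $\soc_G = (ij)_!(1)$.

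First I would compute using Frobenius reciprocity (established just before Definition~\ref{defn-soc}):
\[ t \cdot \soc_{G,H} = t \cdot i_!(1) = i_!(i^*(t)\cdot 1) = i_!(\res^G_H(t)) = i_!(\soc_H). \]
Next, since $\soc_H = j_!(1)$ and transfers compose (the identity $(\psi\phi)_! = \psi_!\phi_!$ recorded in Section~\ref{sec-morava}), we obtain
\[ i_!(\soc_H) = i_! j_!(1) = (ij)_!(1) = \soc_G. \]
Combining these two identities gives the result.

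There is no serious obstacle; the statement is essentially a bookkeeping consequence of Frobenius reciprocity and the transitivity of the wrong-way map $\phi_!$, both of which are available from the duality formalism reviewed in Section~\ref{sec-morava}. The only mild point to note is that one needs $i^*(t) = \soc_H$ (which is the hypothesis) to convert the product $t\cdot i_!(1)$ into $i_!(\soc_H)$ via Frobenius reciprocity; after that, the rest is purely formal.
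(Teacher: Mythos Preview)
Your proof is correct and follows essentially the same argument as the paper: apply Frobenius reciprocity to pull $t$ inside $i_!$, use the hypothesis $\res^G_H(t)=\soc_H$, and then invoke transitivity of transfers to identify $i_!j_!(1)$ with $\soc_G$. The only difference is notational (the paper writes $\tr_H^G$ and $\tr_1^H$ rather than $i_!$ and $j_!$).
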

\begin{proof}
 \[ t\,\soc_{G,H}= t\,\tr_H^G(1) = \tr_H^G(\res^G_H(t)) = 
     \tr_H^G(\soc_H) = \tr_H^G\tr_1^H(1) = \tr_1^G(1) = \soc_G.
 \]
\end{proof}

\begin{lemma}\lbl{lem-soc-prod-semi}
 Suppose that $H$ is normal in $G$ and that $Q$ is a subgroup of $G$
 that maps isomorphically to $G/H$ (so $G$ is a semidirect product).
 Then $s_G=s_{G,H}.s_{G,Q}$.
\end{lemma}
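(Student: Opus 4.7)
The plan is to reduce to Lemma~\ref{lem-soc-prod} by showing that $t := s_{G,Q}$ satisfies $\res^G_H(t) = s_H$.

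First, I would exploit the semidirect product structure. Since $Q$ maps isomorphically to $G/H$, the multiplication map $H\times Q\to G$, $(h,q)\mapsto hq$, is a bijection; in particular $H\cap Q=1$ and $HQ=G$. This means the double coset set $H\backslash G/Q$ consists of a single element, and the stabiliser of the corresponding class is trivial.

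Second, I would phrase this in the groupoid language of Section~\ref{sec-morava}. Consider the homotopy pullback square
\begin{center}
 \begin{tikzcd}
    P \arrow[r,"\al"] \arrow[d,"\bt"'] & BQ \arrow[d,"i_Q"] \\
    BH \arrow[r,"i_H"'] & BG.
  \end{tikzcd}
\end{center}
The groupoid $P$ is the translation groupoid for the $H\times Q$-action on $G$ by $(h,q)\cdot g=hgq^{-1}$; its $\pi_0$ is $H\backslash G/Q$, which is a single point, and the automorphism group at $g=1$ is $\{(h,q)\in H\times Q\st h=q\}\simeq H\cap Q=1$. Hence $P$ is equivalent to the trivial groupoid, with $\al$ and $\bt$ corresponding (up to this equivalence) to the unit inclusions $1\to BQ$ and $1\to BH$ respectively.

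Third, applying the Mackey property $\dl^*\gm_!=\bt_!\al^*$ to this square with $\gm=i_Q$ and $\dl=i_H$, evaluated on $1\in E^0(BQ)$, gives
\[ \res^G_H(s_{G,Q}) = \res^G_H\,(i_Q)_!(1) = \bt_!\al^*(1) = \bt_!(1) = \tr_1^H(1) = s_H. \]
Finally, Lemma~\ref{lem-soc-prod} applied with $t=s_{G,Q}$ yields $s_G=s_{G,Q}\,s_{G,H}=s_{G,H}\,s_{G,Q}$, as desired. The only nontrivial step is identifying the homotopy pullback, which is just the standard groupoid incarnation of the double coset formula in the especially clean semidirect product case; so I do not expect any serious obstacle.
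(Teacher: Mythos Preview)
Your proposal is correct and follows essentially the same approach as the paper: both compute $\res^G_H(s_{G,Q})=s_H$ via the double coset formula using $H\backslash G/Q=1$ and $H\cap Q=1$, then invoke Lemma~\ref{lem-soc-prod}. The only difference is cosmetic: you spell out the homotopy pullback explicitly in groupoid language, while the paper states the classical double coset formula in one line.
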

\begin{proof}
 We have $Q\backslash G/H=1$ and $Q\cap H=1$ so the double coset
 formula for the element $\res^G_H(s_{G,Q})=\res^G_H(\tr_Q^G(1))$
 reduces to $\tr_1^H(\res^Q_1(1))=\tr_H^1(1)=s_H$.  Thus, the claim
 follows from Lemma~\ref{lem-soc-prod}.
\end{proof}

\section{The theorem of Tanabe}
\lbl{sec-tanabe}

We next recall the outline of Tanabe's proof of
Theorem~\ref{thm-tanabe}, and give an alternative approach to part of
the argument, which may be of independent interest.

\begin{proposition}
 Suppose that $D\in\Div_d^+(\HH)^\Gm$, and that $a\in D$.  Then
 $\prod_{k=1}^{d}x((q^k-1)a)=0$. 
\end{proposition}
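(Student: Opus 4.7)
The plan is to deduce the proposition from the stronger polynomial divisibility
\[ f_D(x) \ \Big|\ \prod_{k=1}^d [q^k-1]_F(x) \qquad \text{in } E^0\psb{x}, \]
where $f_D$ denotes the Weierstrass polynomial cutting out $D$. Since $x((q^k-1)a) = [q^k-1]_F(x(a))$ and any $T$-valued point $a$ of $D$ satisfies $f_D(x(a)) = 0$ in $T$, this divisibility immediately gives $\prod_k x((q^k-1)a) = 0$ in $T$.

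The proof of divisibility proceeds by an orbit analysis. Because $D$ is $\Gm$-invariant, the endomorphism $[q]_F$ restricts to an automorphism of $D$ and partitions the geometric roots of $f_D$ into $\Gm$-orbits $O_1, O_2, \dotsc$ of sizes $m_1, m_2, \dotsc$, appearing in $D$ with multiplicities $n_1, n_2, \dotsc$ satisfying $\sum_j n_j m_j = d$. Each reduced orbit $O_j$ consists of points annihilated by $q^{m_j}-1$ in the formal group, so the defining polynomial $f_{O_j}$ of $O_j$ (of degree $m_j$) divides $[q^{m_j}-1]_F(x)$; more generally, $f_{O_j}$ divides $[q^k-1]_F(x)$ for every $k$ with $m_j \mid k$.

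In the range $\{1,\dotsc,d\}$ there are precisely $\lfloor d/m_j\rfloor$ multiples of $m_j$, and the degree constraint $n_j m_j \leq d$ forces $n_j \leq \lfloor d/m_j\rfloor$. Hence $f_{O_j}^{n_j}$ divides the subproduct of $\prod_{k=1}^d [q^k-1]_F$ indexed by those multiples, and \emph{a fortiori} divides the full product. Distinct orbits have disjoint support, so their defining polynomials are pairwise coprime in the UFD $E^0[x]$; therefore $f_D = \prod_j f_{O_j}^{n_j}$ divides $\prod_{k=1}^d [q^k-1]_F(x)$, as required.

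The hard part will be making the orbit decomposition rigorous over $E^0$, which is not a field and over which $f_D$ need not split. The plan is to base change along a faithfully flat extension $E^0 \to S$ (built, for instance, by iterated Weierstrass adjunction of roots of $f_D$) in which $f_D$ factors into linear factors; over $S$ the $[q]_F$-action on roots and their grouping into orbits with multiplicities become explicit, and the counting argument above goes through directly in $S\psb{x}$. The conclusion then descends to $E^0\psb{x}$ via the injection $E^0\psb{x} \hookrightarrow S\psb{x}$ afforded by flatness of the extension.
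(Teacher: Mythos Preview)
Your plan conflates two base rings: since the proposition concerns the universal $\Gm$-invariant divisor, $f_D$ has coefficients in $R = E^0\psb{c_1,\dotsc,c_d}/(r_1,\dotsc,r_d)$, not in $E^0$, and the divisibility you want lives in $R[x]$. The ``faithfully flat extension $E^0 \to S$'' built by adjoining roots of $f_D$ is really an extension $R \to S$.

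More seriously, even after passing to the splitting algebra $S$ (so that $f_D(x) = \prod_i(x-x_i)$ in $S[x]$), the orbit decomposition does not go through. The hypothesis $\phi(D)=D$ becomes the polynomial identity $\prod_i(x - [q]_F(x_i)) = \prod_i(x - x_i)$ in $S[x]$, but over a ring with zero divisors equality of monic polynomials does \emph{not} force the root multisets to agree: there need be no permutation $\sigma$ with $[q]_F(x_i) = x_{\sigma(i)}$, hence no well-defined $\Gm$-orbits, no orbit polynomials $f_{O_j}$, and no coprimality in any UFD. And $S$ certainly has zero divisors --- already for $d=1$ it is $E^0\psb{x_1}/[q-1]_F(x_1)$, in which $x_1$ is a nonzero zero-divisor. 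This is exactly the obstacle the paper's Remark flags: the orbit picture is the ``easy'' argument available when generalised character theory applies, and the whole point of the proposition is to handle base rings where it does not. Passing to a splitting algebra does not restore it.

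The paper's proof never splits $D$. It introduces the chain of closed subschemes $Y_k = \{(D,a) : \sum_{i<k}[q^ia] \le D\}$ with ideals $J_k \subset R\psb{x}$, and shows by direct divisor manipulation that $J_{k+1} = J_k + (m_k)$ and $[q^k-1]_F(x)\cdot m_k \in J_k$; the $\Gm$-invariance enters only via the divisor equation $[a] + D' = [q^ka] + \phi(D')$ on $Y_k$, which needs no roots. Telescoping the inclusions $[q^k-1](x)\,J_{k+1}\subseteq J_k$ from $k=d$ down to $k=1$ and using $1\in J_{d+1}$ gives $\prod_{k=1}^d[q^k-1](x)\in J_1$, which is the claim.
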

\begin{remark}
 In any context where we can use generalised character theory the
 proof is easy: the $\Gm$-orbit of $a$ must have size $k$ with
 $1\leq k\leq d$, and then $(q^k-1)a=0$.  However, the proposition
 implicitly refers to the situation where $D$ and $a$ are defined over
 an arbitrary $E^0$-algebra, which may have torsion and nilpotents, so
 the generalised character theory will not be applicable.  To hande
 this situation, we will freely make use of the framework of formal
 group schemes and their divisors, as in~\cites{st:fsfg,st:fsf}.
\end{remark}
\begin{proof}
 Let $Y_k$ denote the scheme of pairs $(D,a)$ where
 $D\in\Div_d^+(\HH)^\Gm$ and $\sum_{i=0}^{k-1}[q^ia]\leq D$.  More
 explicitly, we start with the ring 
 \[ R = E^0\psb{c_1,\dotsc,c_d}/(r_1,\dotsc,r_d)
      = \CO_{\Div_d^+(\HH)^\Gm}, 
 \]
 where $r_i=\phi^*(c_i)-c_i$.  Put $f(t)=\sum_{i=0}^dc_it^{d-i}\in
 R\psb{t}$.  We then have $Y_1=\spf(Q_1)$, where $Q_1=R\psb{x}/f(x)$.
 The claim is that $\prod_{k=1}^d[q^k-1](x)$ is zero in $Q_1$.

 More generally, we put $g_k(t)=\prod_{i=0}^{k-1}(t-[q^i](x))$ (so
 $g_0(t)=1$ and $g_1(t)=t-x$).  Let $f_k(t)$ and $s_k(t)$ be the
 quotient and remainder when we divide $f(t)$ by $g_k(t)$, so $s_k(t)$
 is a polynomial in $t$ of degree less than $k$ over $R\psb{x}$.  Let
 $J_k$ be the ideal generated by the coefficients of $s_k(t)$ and put
 $Q_k=R\psb{x}/J_k$; we find that $Y_k=\spf(Q_k)$.  Now put
 $m_k=f_k([q^k](x))$ (so $m_0=f(x)$).  We claim that
 $J_k=(m_i\st i<k)$ and that $[q^k-1](x)m_k\in J_k$.  

 To prove this, it is convenient to restate it in more geometric
 language.  Consider a point $(D,a)\in Y_k$.  Put
 $E=\sum_{i=0}^{k-1}[q^ia]$, so $E\leq D$ by hypothesis, so there is a
 unique $D'$ with $D=E+D'$.  The divisors $E$ and $D'$ have equations
 $g_k(t)$ and $f_k(t)$, respectively.  By construction, we have
 $(D,a)\in Y_{k+1}$ iff $q^ka\in D'$ iff $m_k=f_k([q^k](a))=0$.  This
 shows that $J_{k+1}=J_k+(m_k)$.  It follows inductively that
 $J_l=(m_i\st i<l)$ for all $l$ as claimed.

 Next note that $\phi(D)=\phi(E)+\phi(E')$, but $\phi(D)=D$ and
 $\phi(E)=\sum_{i=1}^k[q^ia]$.  If we put
 $E_1=\sum_{i=1}^{d-1}[q^ia]$, this becomes
 $E_1+[a]+D'=E_1+[q^ka]+\phi(D')$, so $[a]+D'=[q^ka]+\phi(D')$, so
 $q^ka\in [a]+D'$, so $([q^k](x)-x)f_k([q^k](x))=0$.  Recall that
 $y-x$ is a unit multiple of $y-_Fx$ in $E^0\psb{y,x}$, so
 $[q^k](x)-x$ is a unit multiple of $[q^k-1](x)$.  Moreover,
 $f_k([q^k](x))$ is just $m_k$, so we have $[q^k-1](x)m_k=0$.  This
 was under the assumption that $(D,a)\in Y_k$, so we have really only
 proved that $[q^k-1](x)m_k\in J_k$.  As $J_{k+1}=J_k+(m_k)$ we see
 that $[q^k-1](x).J_{k+1}\leq J_k$.  This proves that the element
 $u=\prod_{k=1}^d[q^k-1](x)$ satisfies $uJ_{d+1}\leq J_1$.  However, as
 $D$ has degree $d$ we see that $Y_{d+1}=\emptyset$ so $1\in J_{d+1}$
 so $u\in J_1$ as required.
\end{proof}

\begin{corollary}\lbl{cor-regular}
 The ring of functions on $\Div_d^+(\HH)^\Gm$ is a finitely generated
 free module over $E^0$.  Moreover, the sequence $r_1,\dotsc,r_d$ is
 regular in the ring of functions on $\Div_d^+(\HH)$.
\end{corollary}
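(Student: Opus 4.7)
The plan is to deduce both assertions of the corollary from the single claim that $R := E^0\psb{c_1,\dotsc,c_d}/(r_1,\dotsc,r_d)$ is a finitely generated $E^0$-module; once this is in hand, the regularity of the sequence and freeness of $R$ will follow by standard Cohen--Macaulay arguments. The substantive work is therefore the finiteness statement, and I would attack it by introducing an ``ordered'' version of $R$ to which the previous proposition applies coordinatewise.

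Concretely, set $A = E^0\psb{c_1,\dotsc,c_d}$ and $A'' = E^0\psb{x_1,\dotsc,x_d}$, with $A\to A''$ sending $c_i$ to $(-1)^i e_i(x_1,\dotsc,x_d)$. Classical symmetric-function theory presents $A''$ as a free $A$-module of rank $d!$, so the base change $R'' := R\otimes_A A''$ is free of rank $d!$ over $R$, and the canonical map $R \to R''$ is a split injection of $E^0$-modules. Geometrically, $\spf(R'')$ parametrizes tuples $(D,x_1,\dotsc,x_d)$ in which $D = \sum_i[x_i]$ is a $\Gm$-invariant degree-$d$ divisor on $\HH$. Each $x_i$ is then a point of the $\Gm$-invariant divisor $D$, so the previous proposition (applied with $a = x_i$) yields $g(x_i) = 0$ in $R''$ for every $i$, where $g(x) := \prod_{k=1}^d [q^k-1]_F(x)$. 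Hence $R''$ is a quotient of $S := E^0\psb{x_1,\dotsc,x_d}/(g(x_1),\dotsc,g(x_d))$.

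Next I would extract finiteness from $S$. By Lemma~\ref{lem-vp} and Proposition~\ref{prop-p-series}, each factor $[q^k-1]_F(x)$ is a unit multiple of a monic Weierstrass polynomial over $E^0$ of degree $p^{n(r+v_p(k))}$, so $g(x)$ itself is a unit multiple of a monic polynomial of degree $M := \sum_{k=1}^d p^{n(r+v_p(k))}$. Consequently $S$ is a free $E^0$-module of rank $M^d$; its quotient $R''$ is finite over $E^0$, and hence so is the split $E^0$-summand $R$.

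To finish, observe that $A$ is regular local of Krull dimension $n+d$. Finiteness of $R$ over the $n$-dimensional ring $E^0$ forces $\dim R \leq n$, and Krull's principal ideal theorem gives $\dim R \geq n$, so the ideal $(r_1,\dotsc,r_d)$ has height exactly $d$. In the Cohen--Macaulay ring $A$, a height-$d$ ideal generated by $d$ elements is generated by a regular sequence, giving the regularity of $r_1,\dotsc,r_d$. Then $R$ is Cohen--Macaulay of dimension $n$ with $\dim(R/\mathfrak{m}_{E^0}R) = 0$, and the miracle flatness theorem for the local homomorphism $E^0\to R$ (regular base, Cohen--Macaulay total, matching dimension equality) makes $R$ flat and hence free over $E^0$. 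The main obstacle, as I see it, is the moduli-theoretic setup in the second paragraph: one must translate the proposition into a statement applying simultaneously to each coordinate $x_i$ of the ordered tuple, and this requires identifying $\spf(R'')$ correctly. Once that translation is pinned down, the rest is almost formal.
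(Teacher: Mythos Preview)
Your proposal is correct and follows essentially the same route as the paper: pass to the ordered cover $R''$ (which is exactly the paper's $\CO_Z$), apply the preceding proposition coordinatewise to force $g(x_i)=0$, deduce finiteness of $R$ as a summand, and then finish with a standard dimension argument in the regular local ring $A$. Your endgame via height and miracle flatness is a mild repackaging of the paper's system-of-parameters argument (the sequence $r_1,\dotsc,r_d,u_0,\dotsc,u_{n-1}$ cuts the $(n+d)$-dimensional ring to dimension zero, hence is regular), but the substance is identical.
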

\begin{proof}
 Put $Y=\Div_d^+(\HH)^\Gm$ and 
 \[ Z=\{a\in\HH^d\st \sum_i[a_i]\in Y\}, \]
 so $\CO_Z$ is a quotient of $E^0\psb{x_1,\dotsc,x_d}$.   Put
 $g(x)=\prod_{i=0}^{d-1}[q^i-1](x)$; the proposition tells us that
 $g(x_i)=0$ in $\CO_Z$ for all $i$.  Here $[q^i-1](x)$ is a unit
 multiple of $[p^v](x)$, where $v=v_p(q^i-1)=r+v_p(i)$.  It is thus a
 unit multiple of the Weierstrass polynomial $h_v(x)$ from
 Proposition~\ref{prop-p-series}, which has degree $p^{nv}$.  This
 means that the ring $E^0\psb{x}/h_v(x)$ is a
 finitely generated free module over $E^0$, and thus that
 $\CO_Z$ is also a finitely generated module over $E^0$.  Next,  
 the map $\HH^d\to\HH^d/\Sg_d=\Div_d^+(\HH)$ is a finite flat map of
 degree $d!$, so $\CO_Z\simeq\CO_Y^{d!}$ as $E^0$-modules, so $\CO_Y$
 is also a finitely generated module over $E^0$.  It follows that the
 quotient $\CO_Y/I_n$ is finitely generated as a module over
 $E^0/I_n=\Fp$ (and we can use the divisor $d[0]$ to see that it is
 nontrivial).  In other words, if we start with the complete regular
 local ring 
 \[ E^0(B\bG_d)=\Zp\psb{u_1,\dotsc,u_{n-1},c_1,\dotsc,c_d} \]
 (of Krull dimension $n+d$) and kill the sequence
 $r_1,\dotsc,r_d,u_0,\dotsc,u_{n-1}$ (of length $n+d$) we obtain a
 quotient ring of Krull dimension zero.  By a standard result in
 commutative algebra~\cite{ma:crt}*{Section 14} it follows that the sequence
 is regular.  This in turn means that $r_1,\dotsc,r_d$ is regular on
 $E^0(B\bG_d)$, and that the sequence $u_0,\dotsc,u_{n-1}$ is regular
 on the quotient $\CO_Y=E^0(B\bG_d)/(r_1,\dotsc,r_d)$, which implies
 that $\CO_Y$ is free as an $E^0$-module.
\end{proof}

We now recall the structure of Tanabe's proof.  It is not hard to
check that there is a homotopy cartesian square
\begin{center}
 \begin{tikzcd} 
  BG_d \arrow[r] \arrow[d] & B\bG_d \arrow[d,"{(1,1)}"] \\
   B\bG_d \arrow[r,"{(1,\phi)}"'] & B(\bG_d\tm\bG_d).
 \end{tikzcd}
\end{center}
Tanabe shows that this gives a spectral sequence of Eilenberg-Moore
type: 
\[ \Tor^{K^*(B\bG_d^2)}_{**}(K^*(B\bG_d),K^*(B\bG_d)) 
    \convto K^*(BG_d).
\]
Corollary~\ref{cor-regular} (which Tanabe proved in a slightly
different way) can be used to show that the higher $\Tor$ groups are
zero, so the spectral sequence collapses, and the main claim follows
from this.

\section{The Atiyah-Hirzebruch spectral sequence}
\lbl{sec-ahss}

We now turn to the case of $B\CV$ rather than $B\bCV$, where the
Atiyah-Hirzebruch spectral sequence has many differentials.  We will
first define and analyse a certain spectral sequence $EM^*_{***}$
given by an explicit algebraic construction.  We will then prove that
the Atiyah-Hirzebruch spectral sequence
\[ H_*(B\CV;K_*) \convto K_*(B\CV) \]
is isomorphic to $EM^*_{***}$.  As is usual with spectral sequence
calculations, this will not give unambiguous generators for
$K_*(B\CV)$.  However, it will enable us to prove that $K_0(B\CV)$ is
a polynomial algebra, which will be a crucial piece of information
that we need to prove the completeness of generators and relations
that we can produce by other means.

We will need the following general construction.
\begin{lemma}\lbl{lem-divided-diff}
 Let $A_*$ be a differential graded algebra over $\Fp$ with a
 differential $\dl$ of odd degree.  Then there is a ring homomorphism
 $\phi\:A_{\text{even}}\to H(A)_{\text{even}}$ given by
 $\phi(a)=[a^p]$.  There is also an additive map
 $\al\:A_{\text{even}}\to H(A)_{\text{odd}}$ given by
 $\al(a)=[a^{p-1}\,da]$, which satisfies
 $\al(ab)=\phi(a)\al(b)+\al(a)\phi(b)$.   
\end{lemma}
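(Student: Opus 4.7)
The plan is to work in a graded-commutative DGA over $\Fp$ (the natural context for the applications in this paper) and exploit the Leibniz rule together with characteristic-$p$ polynomial identities. I write $|a|$ for the degree and use that an element of even degree commutes (ungraded) with every homogeneous element.

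First I would verify that $a^p$ and $a^{p-1}\,\dl a$ are cycles whenever $|a|$ is even. By induction from the Leibniz rule, $\dl(a^n)=na^{n-1}\,\dl a$, so $\dl(a^p)=0$ in $\Fp$. For the second, $\dl(a^{p-1}\,\dl a)=(p-1)\,a^{p-2}(\dl a)^2$, and $(\dl a)^2=0$ because $\dl a$ has odd degree (this uses $p>2$, which is assumed throughout the paper). Hence $\phi(a)=[a^p]$ and $\al(a)=[a^{p-1}\,\dl a]$ are well defined. Multiplicativity of $\phi$ is immediate from $(ab)^p=a^pb^p$ (both $a,b$ even, so they commute), and additivity from the standard identity $(a+b)^p=a^p+b^p$ in characteristic $p$, which uses only that $a$ commutes with $b$.

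The heart of the argument is additivity of $\al$. I would prove this by a universal polynomial trick: over $\Z[x,y]$ one has $(x+y)^p=x^p+y^p+p\,P(x,y)$ for a well-defined integral polynomial $P$. Working in the graded-commutative DGA $\Z[x,y,\dl x,\dl y]$ with $x,y$ even and $\dl x,\dl y$ odd, applying $\dl$ to this identity and then dividing by $p$ yields an identity
\[ (x+y)^{p-1}(\dl x+\dl y) \;=\; x^{p-1}\,\dl x + y^{p-1}\,\dl y + \dl\bigl(P(x,y)\bigr) \]
already valid over $\Z$, and hence over $\Fp$ after reduction. Substituting $x=a$, $y=b$ shows that $a^{p-1}\,\dl a+b^{p-1}\,\dl b-(a+b)^{p-1}\dl(a+b)$ is a boundary in $A$, so $\al$ is additive on cohomology. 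This is the one step that is not entirely formal, and is the main obstacle.

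Finally, the twisted Leibniz rule follows by direct expansion in the graded-commutative setting: since $a,b$ are even,
\[ (ab)^{p-1}\,\dl(ab) = a^{p-1}b^{p-1}\bigl((\dl a)\,b + a\,(\dl b)\bigr) = a^{p-1}(\dl a)\,b^p \;+\; a^p\,b^{p-1}(\dl b), \]
where all rearrangements are legitimate because $a$ and $b$ commute with every factor in sight. Passing to cohomology classes gives $\al(ab)=\al(a)\phi(b)+\phi(a)\al(b)$, completing the proof.
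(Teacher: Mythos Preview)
Your proof is correct and follows essentially the same route as the paper: the key additivity of $\al$ is proved in both cases by working in the universal integral DGA $\Z[x,y,\dl x,\dl y]$, dividing the identity $\dl\bigl((x+y)^p-x^p-y^p\bigr)=p\bigl((x+y)^{p-1}\dl(x+y)-x^{p-1}\dl x-y^{p-1}\dl y\bigr)$ by $p$, and then specialising to $\Fp$. You supply more of the routine verifications (cycles, multiplicativity of $\phi$, the Leibniz formula for $\al$) than the paper, which dismisses these as straightforward, but the substantive argument is the same.
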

\begin{proof}
 The key point is the additivity of $\al$.  For this, consider the
 ring $\widetilde{A}=P_\Z[a,b]\ot E_\Z[da,db]$ with the obvious
 differential, graded over $\Z/2$ with $|a|=|b|=0$ and $|da|=|db|=1$.
 Put $\widetilde{\al}(x)=x^{p-1}dx$, so $\al(x)=[\widetilde{\al}(x)]$.
 Put $c=((a+b)^p-a^p-b^p)/p$, which is well-known to lie in
 $\widetilde{A}$.  We find that
 \[ p(\widetilde{\al}(a+b)-\widetilde{\al}(a)-\widetilde{\al}(b)) =
      d((a+b)^p-a^p-b^p) = p\,dc,
 \]
 and $\widetilde{A}$ is torsion-free so
 $\widetilde{\al}(a+b)=\widetilde{\al}(a)+\widetilde{\al}(b)+dc$. 
 This equation in $\widetilde{A}$ gives an equation in
 $\widetilde{A}/p$, but that is a universal example so it holds for
 any DGA over $\Fp$.  It follows that $\al(a+b)=\al(a)+\al(b)$ as
 claimed.  All other claims are straightforward.
\end{proof}

\begin{proposition}\lbl{prop-AHSS-L}
 There is an Atiyah-Hirzebruch spectral sequence
 \[ E_2^{**} = H^*(B\CV(k)_1;K^*) = P[u^{\pm 1},x]\ot E[a] 
     \convto K^*(B\CV(k)_1),
 \]
 with $u\in E_2^{0,-2}$ and $x\in E_2^{2,0}$ and $a\in E_2^{1,0}$.
 The differential $d^r\:E_r^{s,t}\to E_r^{s+r,t+1-r}$ is zero except
 when $r=2p^{n(r+k)}-1$.  In that case we have 
 has $d^r(a)=u^{-1}(\pwr{(ux)}{p^{n(r+k)}})$ and $d^r(x)=0$ and $d^r(u)=0$.
\end{proposition}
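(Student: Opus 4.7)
The groupoid $\CV(k)_1$ is equivalent to $BGL_1(F(k))$, the classifying space of a cyclic group of order $q^{p^k}-1$ whose $p$-Sylow is cyclic of order $p^{r+k}$ by Lemma~\ref{lem-vp}. Since the coprime-to-$p$ quotient contributes trivially to both mod $p$ (co)homology and to Morava $K$-theory, this spectral sequence is identified with the AHSS for $BC_{p^{r+k}}$. Standard results on cyclic groups give $K^*(BC_{p^{r+k}})=K^*\psb{x}/[p^{r+k}]_F(x)$, which by Proposition~\ref{prop-p-series} is free of rank $N:=p^{n(r+k)}$ over $K^*$ and concentrated in even degrees.

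First I would argue that $u$ and $x$ are permanent cycles. The element $u$ is a unit in the coefficient ring $K^*$. The element $x$ is pulled back from the canonical generator of $K^0(\CPi)$ along the map $B\CV(k)_1\to\CPi$ determined by our chosen embedding $i$; since complex orientability makes $K^*(\CPi)=K^*\psb{x}$, the AHSS for $\CPi$ collapses and $x$ is a permanent cycle here as well. By the Leibniz rule for differentials in the multiplicative AHSS, every $d^r$ is determined by $d^r(a)$.

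Next I would observe that $a$ cannot be a permanent cycle, for otherwise $a$ would survive to $E_\infty$, contradicting the evenness of $K^*(B\CV(k)_1)$. Let $r$ be the least page with $d^r(a)\ne 0$; since no earlier differentials have fired, $E_r^{**}$ still has the form $P[u^{\pm 1},x]\otimes E[a]$ as a bigraded $\Fp$-module. A direct monomial count in bidegree $(r+1,1-r)$ shows that the only possible target is a $K^*$-multiple of $u^{(r-1)/2}x^{(r+1)/2}=u^{-1}\pwr{(ux)}{(r+1)/2}$, forcing $r$ to be odd and $d^r(a)=\lambda\,u^{-1}\pwr{(ux)}{M}$ for some $\lambda\in\Fp^\times$ and $M=(r+1)/2$. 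Propagating this via Leibniz to $d^r(x^j a)$ for all $j\ge 0$ shows that $E_{r+1}^{**}$ is concentrated in even columns and is free over $K^*$ of rank $M$ on the basis $1,x,\ldots,x^{M-1}$, so no further differentials can occur. Matching the $K^*$-rank of $E_\infty$ against that of $K^*(B\CV(k)_1)$ forces $M=N$ and hence $r=2N-1=2p^{n(r+k)}-1$.

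The main obstacle that remains is to determine the scalar $\lambda\in\Fp^\times$, which the dimension-counting argument leaves undetermined. As announced in the outline, this is deferred to Appendix~\ref{apx-ahss}, where the proof that $\lambda=1$ is given; the precise value is not needed for the structural consequences drawn in the remainder of this section.
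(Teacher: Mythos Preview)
Your proposal is correct and follows essentially the same approach as the paper: compute the target $K^*(B\CV(k)_1)$ as a free $K^*$-module of rank $p^{n(r+k)}$, observe that $u$ and $x$ are permanent cycles while $a$ cannot be, and then see that a single differential on $a$ is forced by a rank count; the scalar is deferred to Appendix~\ref{apx-ahss}. The paper's version compresses the middle portion of your argument into the phrase ``it is well-known and easy to see that there is only one possible pattern of AHSS differentials that is compatible with this,'' whereas you have actually written out that bidegree-and-rank computation; the content is the same.
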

\begin{proof}
 Recall that $GL_1(F(k))$ is cyclic of order $q^{p^k}-1$, so the
 $p$-torsion part has order $p^{r+k}$.  It follows that
 \[ K^*(B\CV(k)_1)=K^*\psb{x_K}/([p^{r+k}](x_K)) =
     K^*\psb{x_K}/(\pwr{x_K}{p^{n(r+k)}}). 
 \] 
 It is well-known and easy to see that there is only one possible
 pattern of AHSS differentials that is compatible with this: we must
 have $d^{2p^{n(r+k)}-1}(a)=t\,u^{-1}(\pwr{(ux)}{p^{n(r+k)}})$ for some
 $t\in\Fp^\tm$.  The value of $t$ is not really important, but it
 simplifies bookkeeping if we can pin it down.  In
 Appendix~\ref{apx-ahss} we will check that $t=1$.
\end{proof}

We now consider the dual homological spectral sequence.  Recall that
the elements $b^{(k)}_i\in H_{2i}(B\CV(k)_1)$ and
$e^{(k)}_i\in H_{2i+1}(B\CV(k)_1)$ are dual to $x^i$ and $x^ia$,
respectively.  The element $u\in K_2(S^0)=K^{-2}(S^0)$ has degree $-2$
in the previous cohomological spectral sequence, but it has degree $2$
in the homological version.  Recall also that we put
$b^{(k)}(s)=\sum_ib_is^i$ and $e(s)=\sum_ie^{(k)}_is^i$.

\begin{corollary}\lbl{cor-AHSS-L}
 The Atiyah-Hirzebruch spectral sequence
 \[ H_*(B\CV(k)_1;K_*) =
     K_*\{b^{(k)}_i\st i\geq 0\} \oplus K_*\{e^{(k)}_i\st i\geq 0\}
      \convto K_*(B\CV(k)_1)
 \]
 has
 \begin{align*}
   d_{2p^{n(r+k)}-1}(b^{(k)}_i) &= 
    \begin{cases}
     \pwrbb{u}{p^{n(r+k)}-1}\;e_{i-p^{n(r+k)}} & \text{ if } i \geq p^{n(r+k)} \\
     0 & \text{ if } i<p^{n(r+k)}
    \end{cases} \\
   d_{2p^{n(r+k)}-1}(e_i) &= 0.  
 \end{align*}
 Moreover, all other differentials are zero.
\end{corollary}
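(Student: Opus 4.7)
The plan is to deduce this corollary by dualizing the cohomological Proposition~\ref{prop-AHSS-L}. Set $N=p^{n(r+k)}$ throughout. Since $H_*(B\CV(k)_1;\F_p)$ is degreewise finitely generated and free over $\F_p$, with basis dual to the monomials $x^i, x^i a$ in $H^*(B\CV(k)_1;\F_p)$, the homological and cohomological Atiyah--Hirzebruch spectral sequences for $K_*(B\CV(k)_1)$ and $K^*(B\CV(k)_1)$ are $K_*$-linear duals of each other under the evident pairing that sends $b_i \leftrightarrow x^i$ and $e_i \leftrightarrow x^i a$ (and pairs $u^j$ with $u^{-j}$). Under this duality the differential $d_r$ is the transpose (up to a standard sign) of the differential $d^r$ on every page.

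Next I propagate the one nontrivial cohomological differential from $a$ to every monomial. The AHSS for the multiplicative spectrum $K$ has differentials that are $K_*$-linear derivations, so combining $d^{2N-1}(a)=u^{N-1}x^N$ with $d^{2N-1}(x)=0=d^{2N-1}(u)$ from Proposition~\ref{prop-AHSS-L}, the Leibniz rule gives
\[ d^{2N-1}(u^j x^i) \;=\; 0 \qquad\text{and}\qquad d^{2N-1}(u^j x^i a) \;=\; u^{j+N-1} x^{i+N} \]
for all $i\ge 0$ and $j\in\Z$, while every other $d^r$ vanishes identically on the $E$-page.

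Finally, I transpose. To compute $d_{2N-1}(b_i)$, I look for the basis element $\alpha$ of the cohomological page whose $d^{2N-1}$-image pairs nontrivially with $b_i$, i.e.\ is a scalar multiple of $x^i$; by the formula above, the unique such $\alpha$ is $u^{1-N} x^{i-N} a$, which exists precisely when $i\ge N$, and then $d^{2N-1}(\alpha)=x^i$, forcing $d_{2N-1}(b_i) = u^{N-1} e_{i-N}$. For $i<N$ no such $\alpha$ exists and $d_{2N-1}(b_i)=0$. For $d_{2N-1}(e_i)$, note that $d^{2N-1}$ annihilates every monomial not involving $a$, so its transpose annihilates every $e_i$. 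The vanishing of all other $d_r$ follows by the same transposition from the vanishing of the other $d^r$. The only subtlety is setting up the duality between the two spectral sequences cleanly; after that, the argument is pure bookkeeping and poses no real obstacle.
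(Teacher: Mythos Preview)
Your proposal is correct and takes exactly the same approach as the paper: the paper's proof reads, in its entirety, ``Dualise Proposition~\ref{prop-AHSS-L}.'' You have simply spelled out the mechanics of that dualisation---propagating the cohomological differential via the Leibniz rule and then transposing---which the paper leaves implicit.
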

\begin{proof}
 Dualise Proposition~\ref{prop-AHSS-L}.
\end{proof}

\begin{corollary}\lbl{cor-AHSS-L-series}
 We have
 \begin{align*}
   d_{2p^{n(r+k)}-1}(b^{(k)}(s)) &=
     u^{-1}(\pwr{(us)}{p^{n(r+k)}}) e^{(k)}(s) \\
   d_{2p^{n(r+k)}-1}(e^{(k)}(s)) &= 0,
 \end{align*}
 and all other differentials are zero.
\end{corollary}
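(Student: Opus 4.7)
The plan is simply to package Corollary~\ref{cor-AHSS-L} into generating-function form. Set $N = p^{n(r+k)}$ for brevity, and let $d = d_{2N-1}$. By $K_*$-linearity of the differential, I would compute
\[ d(b^{(k)}(s)) = \sum_{i \geq 0} d(b^{(k)}_i)\, s^i. \]
By Corollary~\ref{cor-AHSS-L}, the terms with $i < N$ vanish, and for $i \geq N$ we have $d(b^{(k)}_i) = u^{N-1} e^{(k)}_{i-N}$. Reindexing $j = i - N$ gives
\[ d(b^{(k)}(s)) = u^{N-1} s^{N} \sum_{j \geq 0} e^{(k)}_j s^j = u^{N-1} s^{N} e^{(k)}(s). \]

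The key observation is then the identity $u^{N-1} s^{N} = u^{-1} (us)^{N}$, which is immediate since $u$ is a unit of $K_*$. This turns the right-hand side into $u^{-1}(\pwr{(us)}{N}) e^{(k)}(s)$, as claimed. For the second formula, Corollary~\ref{cor-AHSS-L} says each $d(e^{(k)}_i) = 0$, so $d(e^{(k)}(s)) = \sum_i d(e^{(k)}_i) s^i = 0$ term-by-term.

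Finally, the assertion that all other differentials vanish is just the corresponding assertion of Corollary~\ref{cor-AHSS-L} restated: since both generating series $b^{(k)}(s)$ and $e^{(k)}(s)$ are formed from the additive generators on which only $d_{2N-1}$ acts nontrivially, no other $d_r$ can produce a nonzero contribution. There is no real obstacle; this is bookkeeping, and the only thing worth highlighting is the rewriting of $u^{N-1} s^N$ as $u^{-1}(us)^N$, which is the form needed for comparison with the algebraic spectral sequence to be constructed in the sequel.
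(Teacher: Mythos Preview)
Your proof is correct and follows exactly the approach the paper takes: the paper's proof simply says ``This is a straightforward translation of the previous result into the language of formal power series,'' and you have written out that translation in full, including the reindexing and the rewriting $u^{N-1}s^N = u^{-1}(us)^N$.
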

\begin{proof}
 This is a straightforward translation of the previous result into the
 language of formal power series.
\end{proof}

\begin{corollary}\lbl{cor-AHSS-V-diffs}
 In the Atiyah-Hirzebruch spectral sequence
 \[ E^2_{**} = H_*(B\CV;K_*) =
     K_*\ot P[b_i\st i\geq 0]\ot E[e_i\st i\geq 0]
     \convto K_*(B\CV),
 \]
 the series $b(s)^{p^k}$ survives to $E^{2p^{n(k+r)}-1}$ where we have
 \[ d_{2p^{n(k+r)}-1}(\pwr{b(s)}{p^k}) =
     u^{-1}(\pwr{(us)}{p^{n(k+r)}})(\pwrb{b(s)}{p^k-1}) e(s).
 \]
\end{corollary}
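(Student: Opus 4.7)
My plan is to deduce this from Corollary~\ref{cor-AHSS-L-series} by naturality of the Atiyah-Hirzebruch spectral sequence, applied to the forgetful functor $\rho\colon\CV(k)_1\to\CV_{p^k}$ of Remark~\ref{rem-intermediate-gens}, which corresponds to the inclusion $GL_1(F(k))\hookrightarrow GL_{p^k}(F)$.

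First, specialising Proposition~\ref{prop-rho-lower-star} to $n=0$, $m=k$, I would record the identities
\[ \rho_*(b^{(k)}(s))=b(s)^{p^k},\qquad \rho_*(e^{(k)}(s))=b(s)^{p^k-1}\,e(s). \]
Second, Corollary~\ref{cor-AHSS-L-series} applied in the AHSS for $B\CV(k)_1$ tells us that $b^{(k)}(s)$ is a permanent cycle on every page strictly before $E^{2p^{n(k+r)}-1}$, and that on that page
\[ d_{2p^{n(k+r)}-1}(b^{(k)}(s))=u^{-1}(us)^{p^{n(k+r)}}\,e^{(k)}(s). \]
Third, naturality of the Atiyah-Hirzebruch spectral sequence under $B\rho$ transports both statements across: $\rho_*(b^{(k)}(s))=b(s)^{p^k}$ is itself a permanent cycle on all earlier pages of the AHSS for $B\CV$, and applying the $K_*$-linear map $\rho_*$ to the above differential formula yields
\[ d_{2p^{n(k+r)}-1}(b(s)^{p^k})=u^{-1}(us)^{p^{n(k+r)}}\,b(s)^{p^k-1}\,e(s), \]
which is exactly the claim.

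The one point that requires care is the assertion that $b(s)^{p^k}$ survives past the earlier differential $d_{2p^{nr}-1}$, which already acts nontrivially on $b(s)$ itself. This is immediate from the naturality argument because its preimage $b^{(k)}(s)$ does survive; alternatively, it follows directly from the Leibniz rule in characteristic $p$, which forces $d_r(f^{p^k})=0$ for all $k\geq 1$ on any page where $d_r$ is defined and has odd total degree. Beyond this bookkeeping, I do not foresee any real obstacle.
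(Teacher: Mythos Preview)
Your proposal is correct and follows essentially the same route as the paper: the paper's proof simply says that the map $\rho\colon\CV(k)_1\to\CV_{p^k}\subset\CV$ gives a morphism of spectral sequences, and then appeals to Proposition~\ref{prop-rho-lower-star}. You have unpacked this by explicitly invoking Corollary~\ref{cor-AHSS-L-series} for the source spectral sequence and the formulas $\rho_*(b^{(k)}(s))=b(s)^{p^k}$, $\rho_*(e^{(k)}(s))=b(s)^{p^k-1}e(s)$, which is exactly what the paper's terse proof intends.
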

\begin{proof}
 The map $\rho\:\CV(k)_1\to\CV_{p^k}\subset\CV$ gives a morphism of
 spectral sequences, so the claim follows from
 Proposition~\ref{prop-rho-lower-star}. 
\end{proof}

\begin{definition}\lbl{defn-CN}
 As in the introduction, we put $N_0=p^{nr}$, and for $k>0$ we put
 \[ N_k = p^{(n-1)k+n(r-1)}(p^n-1). \]
 Next, for $k\geq 0$ we put $\ov{N}_k=\sum_{j=0}^kN_j$ and
 $N^*_k=\bN_k-p^{nk+nr-k}$.  We then put
 \[ \CN_{k} = \begin{cases}
          \{i\in\N\st i<\bN_0\}
           & \text{ if } k = 0 \\
          \{i\in\N\st \bN_{k-1}\leq i<\bN_k\} 
           & \text{ if } k > 0,
         \end{cases}
 \]
 so $|\CN_{k}|=N_k$ and $\N=\coprod_k\CN_{k}$. Let $s_i$ be the index
 such that $i\in \CN_{s_i}$.
\end{definition}

\begin{lemma}\lbl{lem-N-identities}
 For all $k\geq 0$ we have
 \begin{align*}
   p^kN_k                     &=
     \begin{cases}
      p^{nr} & \text{ if } k = 0 \\
      p^{n(r+k)}-p^{n(r+k-1)} & \text{ if } k > 0 
     \end{cases}  \tag{A} \\
   \sum_{j=0}^{k}p^jN_j       &= p^{n(r+k)}                \tag{B} \\
   p^k(\bN_k - N_k^*)         &= p^{n(k+r)}                \tag{C} \\
   p^{k+1}(\bN_k - N_{k+1}^*) &= p^{n(k+r)}                \tag{D} \\
   p^{k+1}N^*_{k+1}           &= (p-1)p^k\bN_k + p^kN^*_k. \tag{E}
 \end{align*}
\end{lemma}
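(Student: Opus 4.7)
The plan is to prove the five identities sequentially, each deduced from the definitions together with identities proved earlier in the list. Everything reduces to bookkeeping with exponents, so no real obstacle is anticipated; the main care needed is to track the two cases $k=0$ and $k>0$ in the definition of $N_k$.

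First I would verify (A) by direct substitution. For $k=0$ the claim is $N_0=p^{nr}$, which is the definition. For $k>0$ I would compute
\[
 p^k N_k = p^k\cdot p^{(n-1)k+n(r-1)}(p^n-1)
         = p^{nk+nr-n}(p^n-1)
         = p^{n(r+k)}-p^{n(r+k-1)}.
\]
Then (B) is a telescoping sum: using (A),
\[
 \sum_{j=0}^{k} p^j N_j
   = p^{nr}+\sum_{j=1}^{k}\bigl(p^{n(r+j)}-p^{n(r+j-1)}\bigr)
   = p^{n(r+k)}.
\]
Identity (C) is simply the definition of $N_k^*$ multiplied by $p^k$: since $N_k^*=\bN_k-p^{nk+nr-k}$, we have $p^k(\bN_k-N_k^*)=p^{n(k+r)}$.

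For (D) I would expand $\bN_k-N^*_{k+1}=\bN_k-\bN_{k+1}+p^{n(k+1)+nr-(k+1)}=-N_{k+1}+p^{n(k+r+1)-(k+1)}$, then multiply by $p^{k+1}$ and use (A) applied at index $k+1$:
\[
 p^{k+1}(\bN_k-N^*_{k+1})
   = -\bigl(p^{n(r+k+1)}-p^{n(r+k)}\bigr)+p^{n(r+k+1)}
   = p^{n(k+r)}.
\]
Finally for (E) I would compute both sides and compare. On the left, using the definition of $N^*_{k+1}$, splitting $\bN_{k+1}=\bN_k+N_{k+1}$, and applying (A),
\[
 p^{k+1}N^*_{k+1}=p^{k+1}\bN_k+p^{k+1}N_{k+1}-p^{n(k+r+1)}=p^{k+1}\bN_k-p^{n(k+r)}.
\]
On the right, rewriting $(p-1)p^k\bN_k=p^{k+1}\bN_k-p^k\bN_k$ and invoking (C) in the form $p^k(N^*_k-\bN_k)=-p^{n(k+r)}$ gives the same expression $p^{k+1}\bN_k-p^{n(k+r)}$. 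This completes the proof.
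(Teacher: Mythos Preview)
Your proof is correct and follows essentially the same approach as the paper: direct expansion for (A), telescoping for (B), the definition for (C), and then deducing (D) and (E) from (A) and (C). The only cosmetic difference is that for (E) the paper simply subtracts (D) from (C) and rearranges, whereas you compute both sides separately; the content is the same.
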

\begin{proof}
 Straightforward expansion of the definitions gives~(A), and~(B)
 follows by induction.  Equation~(C) is immediate from the definition
 of $N_k^*$.  Now replace $k$ by $k+1$ in~(C) and substitute
 $\bN_{k+1}=N_{k+1}+\bN_k$ to get
 \[ p^{k+1}N_{k+1} + p^{k+1}(\bN_k-N_{k+1}^*) = p^{n(k+1+r)}. \]
 We can use~(A) to rewrite $p^{k+1}N_{k+1}$ as
 $p^{n(k+1+r)}-p^{n(k+r)}$ and rearrange to get~(D).  We can then
 subtract~(D) from~(C) and rearrange to get~(E).

\end{proof}

\begin{definition}\lbl{defn-Sk}
 We define a sequence of trigraded rings $S[k]$ as follows.  There is
 a invertible generator $u$ with $|u|=(0,2,0)$.
 \begin{itemize}
  \item[(a)] For $0\leq m<k$ and $i\in\CN_m$ we have a polynomial
   generator $b_{mi}$ with degree $(2p^mi,0,p^m)$.  Note here that the
   condition $i\in\CN_m$ is equivalent to $s_i=m$ or
   $\bN_{m-1}\leq i<\bN_m$.  As $m<k$ this implies $i<\bN_{k-1}$.
  \item[(b)] For $i\geq\bN_{k-1}$ we have a polynomial generator
   $b_{ki}$ with degree $(2p^ki,0,p^k)$.
  \item[(c)] For $i\geq N^*_k$ we have an exterior generator $e_{ki}$
   with degree $(2p^ki+1,0,p^k)$.
 \end{itemize}
 In the case $k=0$ we take $\bN_{-1}$ to be $0$, and note that
 $N^*_0=0$.  We write $b_i$ and $e_i$ for $b_{0i}$ and $e_{0i}$, so
 \[ S[0] = P[b_i\st i\geq 0]\ot E[e_i\st i\geq 0], \]
 with $|b_i|=(2i,0,1)$ and $|e_i|=(2i+1,0,1)$.  We also define
 $S[\infty]$ to be the polynomial algebra on generators $b_{mi}$ for
 all $m\geq 0$ and $i\in\CN_i$, with no exterior generators.
\end{definition}

\begin{remark}\lbl{rem-bmi-extended}
 According to the above definition, $b_{ni}$ is defined as a generator
 of $S[k]$ only when $n=\min(s_i,k)$.  However, we will extend the notation
 by putting $b_{mi}=b_{ni}^{p^{m-n}}$ for $m\geq n=\min(s_i,k)$.  This
 can be restated as follows.  First suppose that we fix $i$.
 \begin{itemize}
  \item[(a)] If $s_i<k$ (or equivalently $i<\bN_{k-1}$) then $b_{ni}$
   is defined for $n\geq s_i$, and is a generator iff $n=s_i$.
  \item[(b)] If $s_i\geq k$ (or equivalently $i\geq\bN_{k-1}$) then
   $b_{ni}$ is defined for $n\geq k$, and is a generator iff $n=k$.
 \end{itemize}
 Suppose instead that we fix $n$.
 \begin{itemize}
  \item[(c)] If $n<k$ then $b_{ni}$ is defined for all $i<\bN_n$, and
   is a generator iff $\bN_{n-1}\leq i<\bN_n$.
  \item[(d)] If $n=k$ then $b_{ni}$ is defined for all $i$, and is
   a generator iff $i\geq\bN_{k-1}$.
  \item[(e)] If $n>k$ then $b_{ni}$ is defined for all $i$, and is
   never a generator.
 \end{itemize}
 Whenever $b_{mi}$ is defined we have $|b_{mi}|=(2p^mi,0,p^m)$.  We
 also take $e_{ki}=0$ for $i<N^*_k$. 
\end{remark}
 
\begin{remark}\lbl{rem-trigraded-signs}
 We will need the usual kind of sign rules, which depend on the total
 degree of the elements involved.  For an element of degree
 $(i,j,k)$, we officially take $i+j$ to be the total degree.  However,
 in all cases that we consider $j$ will be even, so the signs will
 only depend on the parity of $i$.
\end{remark}

\begin{definition}\lbl{defn-Sk-differential}
 We define a map $\dl_k\:S[k]\to S[k]$ of degree
 $(1-2p^{n(k+r)},2p^{n(k+r)}-2,0)$ as follows.  First, we put
 $\dl_k(b_{mi})=0$ for all $m\leq k$ and $i\in\CN_m$.  This covers~(a)
 and part of~(b) in Definition~\ref{defn-Sk}.  In particular, we have
 $\dl_k(b_{ki})=0$ for $i<\bN_k$.  Next, for $i\geq 0$ we put
 \[ \dl_k(b_{k,i+\bN_k})= \pwrb{u}{p^{n(k+r)}-1}e_{k,i+N^*_k}. \]
 (One can check using Lemma~\ref{lem-N-identities}(A) that this has
 the required degree.)  We also put $\dl_k(e_{ki})=0$ for all $i$.
 Finally, we extend $\dl_k$ over the whole ring $S[k]$ by the Leibniz
 rule. 
\end{definition}

\begin{remark}
 The Leibniz rule implies that $\dl_k(a^p)=0$ for all $a$.  In
 particular, if $b_{mi}$ is one of the extra elements defined in
 Remark~\ref{rem-bmi-extended}, then $\dl_k(b_{mi})=0$.
\end{remark}

\begin{proposition}\lbl{prop-Sk-homology}
 The map $\dl_k$ satisfies $\dl_k^2=0$, so we have a homology ring
 $H(S[k];\dl_k)=\ker(\dl_k)/\img(\dl_k)$.  Moreover, there is an
 isomorphism $\tht_k\:S[k+1]\to H(S[k];\dl_k)$ of trigraded
 algebras over the ring $K_*=P[u,u^{-1}]$, defined as follows:
 \begin{itemize}
  \item[(a)] If $m\leq k$ and $i\in\CN_m$ then $\tht_k(b_{mi})=[b_{mi}]$. 
  \item[(b)] If $i\geq\bN_k$ then $\tht_k(b_{k+1,i})=[b_{ki}^p]$.
  \item[(c)] If $i\geq 0$ then
   $\tht_k(e_{k+1,i+N^*_{k+1}})=[b_{k,i+\bN_k}^{p-1}e_{k,i+N^*_k}]$.
 \end{itemize}
 Also, if we interpret $b_{k+1,i}\in S[k+1]$ and $b_{ki}\in S[k]$ as
 in Remark~\ref{rem-bmi-extended}, then the relation
 $\tht_k(b_{k+1,i})=[b_{ki}^p]$ is valid for all $i\geq 0$.
\end{proposition}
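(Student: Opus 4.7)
The plan is to verify $\dl_k^2=0$ on generators and then compute $H(S[k];\dl_k)$ by a two-stage K\"unneth decomposition, matching the result against $S[k+1]$ generator by generator. The squared-differential check is immediate: every exterior generator lies in $\ker\dl_k$, and every polynomial generator is sent either to $0$ or to a $K_*$-multiple of an exterior generator, so $\dl_k^2=0$ on generators and hence on all of $S[k]$ by the Leibniz rule.

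Next I would factor $S[k]=C_k\ot_{K_*}B_k$ as DGAs, where $C_k$ is the polynomial subalgebra on the ``closed'' generators $b_{mi}$ with $m\leq k$ and $i\in\CN_m$ (on which $\dl_k$ vanishes), and $B_k$ is the subalgebra generated by the remaining $b_{k,j}$ for $j\geq\bN_k$ together with the exterior generators $e_{k,j}$ for $j\geq N^*_k$. Since $C_k$ is a free $K_*$-module carrying the zero differential, K\"unneth gives $H(S[k])=C_k\ot H(B_k)$.

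I would then further split $B_k$ as the tensor product over $i\geq 0$ of the two-generator DGAs
\[ B_k^{(i)} = P[b_{k,i+\bN_k}]\ot E[e_{k,i+N^*_k}], \]
each equipped with $\dl_k(b_{k,i+\bN_k})=v\,e_{k,i+N^*_k}$ for the unit $v=\pwr{u}{p^{n(k+r)}-1}\in K_*$. For each factor, a direct analysis (or Lemma~\ref{lem-divided-diff} applied to $a=b_{k,i+\bN_k}$) yields
\[ H(B_k^{(i)}) = P[\pwr{b_{k,i+\bN_k}}{p}]\ot E[\pwr{b_{k,i+\bN_k}}{p-1}e_{k,i+N^*_k}], \]
using that the kernel of the formal derivative on $P[b]$ in characteristic $p$ is exactly $P[b^p]$, and that $v$ is invertible (so $[b^{p-1}e]$ and $[b^{p-1}\,\dl_k b]$ agree up to a unit). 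Reassembly by K\"unneth presents no completeness issue since each fixed multidegree involves only finitely many factors nontrivially, and produces an explicit monomial basis for $H(S[k])$ over $K_*$.

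Finally I would compare this basis with the generator list of $S[k+1]$ and define $\tht_k$ as in the statement. The only nonformal degree check is for the exterior generators, and it reduces to the identity $p\,N^*_{k+1}=(p-1)\bN_k+N^*_k$, which is Lemma~\ref{lem-N-identities}(E); matching generators bijectively then gives the claimed isomorphism of trigraded $K_*$-algebras. The extended formula $\tht_k(b_{k+1,i})=[\pwr{b_{ki}}{p}]$ for $i<\bN_k$ follows by applying multiplicativity to the convention $b_{k+1,i}=\pwr{b_{s_i,i}}{p^{k+1-s_i}}$ from Remark~\ref{rem-bmi-extended}. I do not anticipate a substantial obstacle; the argument is essentially bookkeeping of the thresholds $\bN_k$ and $N^*_k$, and the one genuinely nontrivial algebraic ingredient is the characteristic-$p$ ``divided derivative'' computation packaged in Lemma~\ref{lem-divided-diff}.
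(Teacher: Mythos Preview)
Your proposal is correct and follows essentially the same route as the paper: both arguments decompose $S[k]$ as the tensor product of a ``closed'' polynomial subalgebra (your $C_k$, the paper's $A$) with the elementary two-generator complexes $P[b_{k,i+\bN_k}]\ot E[e_{k,i+N^*_k}]$, compute the homology of each small factor directly, and then match generators against $S[k+1]$ using Lemma~\ref{lem-N-identities}(E) for the exterior degree check. Your explicit remark that the infinite K\"unneth is unproblematic because each fixed multidegree meets only finitely many factors, and your reference to Lemma~\ref{lem-divided-diff} for the small-complex computation, are minor elaborations beyond what the paper spells out, but the structure of the argument is the same.
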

\begin{proof}
 The graded Leibniz rule for $\dl_k$ shows that $\ker(\dl_k^2)$ is a
 subring of $S[k]$, and it contains all the generators $b_{mi}$,
 $e_{ki}$ and $u$, so it is the whole of $S[k]$.  Thus, the homology
 ring is defined.  Now let $A$ be the subring of $S[k]$ generated over
 $K_*$ by $\{b_{mi}\st m\leq k,\;i\in\CN_m\}$, and let $B_i$ be the
 subring generated over $K_*$ by $\{b_{k,i+\bN_k},e_{k,i+N^*_k}\}$, so
 $S[k]$ is the tensor product of all these subrings.  Then $\dl_k$ is
 zero on $A$, and preserves $B_i$.  The K\"unneth theorem therefore
 tells us that $H_*(S[k];\dl_k)$ is the tensor product of $A$ with all
 the rings $H(B_i;\dl_k)$.  As $\dl_k(b_{k,i+\bN_k})$ is an invertible
 multiple of $e_{k,i+N^*_k}$, it is easy to see that
 $H(B_i;\dl_k)=P[b'_i]\ot E[e'_i]$, where $b'_i=[b_{k,i+\bN_k}^p]$ and 
 $e'_i=[b_{k,i+\bN_k}^{p-1}e_{k,i+N^*_k}]$.  Thus, we have
 \[ H_*(S[k];\dl_k) = A\ot P[b'_i\st i\geq 0] \ot E[e'_i\st i\geq 0].
 \]
 It is clear that
 $|b_{k+1,i+\bN_k}|=(2p^{k+1}(i+\bN_k),0,p^{k+1})=|b'_i|$.  Also, one
 can use Lemma~\ref{lem-N-identities}(E) to check that
 $|e_{k+1,i+N^*_{k+1}}|=|e'_i|$.  It follows that our definition of
 $\tht_k$ is compatible with our trigradings.  Now
 consider an element $b_{k+1,i}\in S[k+1]$.  If $i\geq\bN_k$ then
 $\tht_k(b_{k+1,i})=[b_{ki}^p]$ by part~(b) of the definition.  If
 $i<\bN_k$ then $b_{k+1,i}=\pwr{b_{ni}}{p^{k+1-i}}$, where $n=s_i\leq k$.
 We therefore have $\tht_k(b_{k+1,i})=[b_{ni}^{p^{k+1-i}}]$ by
 part~(a) of the definition, but in $S[k]$ we have
 $b_{ki}=b_{ni}^{p^{k-i}}$ so this can be rewritten as
 $\tht_k(b_{k+1,i})=[b_{ki}^p]$ as claimed.
\end{proof}

\begin{definition}\lbl{defn-S-al}
 Using Lemma~\ref{lem-divided-diff}, we define
 $\al_k\:S[k]_{\text{even}}\to S[k+1]_{\text{odd}}$ by 
 $\al_k(a)=\tht_k^{-1}[a^{p-1}\,\dl_k(a)]$.  We thus have
 $\al_k(b_{ki})=0$ for $i<\bN_k$, and
 Proposition~\ref{prop-Sk-homology}(c) gives
 \[ \al_k(b_{k,i+\bN_k}) = \pwrb{u}{p^{n(k+r)}-1} e_{k+1,i+N^*_{k+1}} \]
 for $i\geq 0$.
\end{definition}

\begin{proposition}\lbl{prop-S-conv}
 The bigraded group $S[k]_{**m}$ coincides with $S[\infty]_{**m}$ when
 $p^k>m$.  
\end{proposition}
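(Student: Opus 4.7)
The plan is to argue directly from the definitions by tracking which generators can possibly contribute to a fixed third-grading $m < p^k$. The crucial observation is that the third grading is additive on products of generators, so the third degree of a monomial is at least the third degree of any factor appearing in it.

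First, I would catalogue, for a given $k$, the generators of $S[k]$ together with their third degrees. By Definition~\ref{defn-Sk}, these fall into three families: the polynomial generators $b_{ni}$ with $n < k$ and $i \in \CN_n$ (third degree $p^n$), the polynomial generators $b_{ki}$ with $i \geq \bN_{k-1}$ (third degree $p^k$), and the exterior generators $e_{ki}$ with $i \geq N^*_k$ (third degree $p^k$). Similarly, the generators of $S[\infty]$ are exactly the $b_{ni}$ for $n \geq 0$ and $i \in \CN_n$, with third degree $p^n$, and there are no exterior generators.

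Next, assuming $p^k > m$, I would observe that any generator of third degree $\geq p^k$ cannot appear in a monomial of third degree $m$. In $S[k]$ this eliminates the entire families $\{b_{ki}\}_{i \geq \bN_{k-1}}$ and $\{e_{ki}\}_{i \geq N^*_k}$, leaving only the polynomial generators $b_{ni}$ with $n < k$, $i \in \CN_n$. In $S[\infty]$ this eliminates all $b_{ni}$ with $n \geq k$, leaving again only $b_{ni}$ with $n < k$, $i \in \CN_n$. These two sets of surviving generators coincide exactly, and in both rings they are algebraically independent polynomial generators over $K_*$. Consequently, the $K_*$-submodule spanned by monomials of third degree $m$ is identified in both cases with the same explicit free $K_*$-module on monomials in these shared generators, respecting the first two gradings.

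There is no serious obstacle here: the statement is essentially bookkeeping, and the only thing to be careful about is interpreting the extended notation of Remark~\ref{rem-bmi-extended} correctly, so that an element such as $b_{k,i}$ with $i < \bN_{k-1}$ (which equals $b_{s_i,i}^{p^{k-s_i}}$ and is therefore not a generator) is not mistakenly counted as a generator specific to $S[k]$. Once the generators are listed unambiguously, the third-degree bound $p^k > m$ closes the argument immediately, giving the claimed identification $S[k]_{**m} = S[\infty]_{**m}$ as bigraded $K_*$-modules.
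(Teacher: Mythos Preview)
Your proposal is correct and follows essentially the same approach as the paper's own proof: both argue that generators of third degree $\geq p^k$ cannot contribute to $S[k]_{**m}$ for $m<p^k$, and that the remaining polynomial generators $\{b_{ni}\st n<k,\;i\in\CN_n\}$ coincide with those of $S[\infty]$ in this range. Your added caution about the extended notation of Remark~\ref{rem-bmi-extended} is a reasonable clarification but not strictly needed, since only the genuine generators listed in Definition~\ref{defn-Sk} enter the argument.
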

\begin{proof}
 All the exterior generators of $S[k]$ lie in $S[k]_{**p^k}$ and so
 are irrelevant here.  From the definitions we see that the polynomial
 generators in $S[k]_{**,<p^k}$ are $\{b_{mi}\st m<k,\;i\in\CN_m\}$,
 and these are the same as the polynomial generators in
 $S[\infty]_{**,<p^k}$.  The claim is clear from this.
\end{proof}

The above essentially says that the rings $S[k]$ and differentials
$\dl_k$ give a spectral sequence converging to the polynomial ring
$S[\infty]$, but the grading behaviour is different from what one
would expect for the Atiyah-Hirzebruch spectral sequence.  To fix this
we merely need to insert additional pages with trivial differential.
The result is as follows:

\begin{corollary}\lbl{cor-model-ahss}
 There is a trigraded spectral sequence $EM^*_{***}$ given by
 \[ EM^u_{***} = \begin{cases}
     S[0] & \text{ if } u < 2p^{nr} \\
     S[k] & \text{ if } 2p^{n(r+k-1)} \leq u < 2p^{n(r+k)}
            \text{ with } k>0.
    \end{cases}
 \]
 The differentials $d^u$ have degree $(u,1-u,0)$ and are given by
 $d^u=\dl_k$ when $u=2p^{n(r+k)}-1$, and $d^u=0$ in all other cases.
 Moreover, the spectral sequence converges to $S[\infty]$.
 \qed   
\end{corollary}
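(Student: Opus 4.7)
The plan is to realise the corollary as a formal repackaging of Propositions~\ref{prop-Sk-homology} and~\ref{prop-S-conv} into the standard spectral sequence framework. Three items need verification: that each $d^u$ has the stated tridegree and satisfies $d^u\circ d^u=0$; that $EM^{u+1}\cong H(EM^u,d^u)$; and that the sequence converges to $S[\infty]$.

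For the differential structure I would split into cases by $u$. When $u$ is not of the form $2p^{n(r+k)}-1$, the differential $d^u$ is zero by fiat, so $H(EM^u,0)=EM^u$, and the piecewise definition gives $EM^{u+1}=EM^u$ (since either $u$ and $u+1$ lie in the same range of the piecewise description, or $u+1$ marks the boundary absorbed by the nontrivial differential on a neighbouring page). The substantive case is $u=2p^{n(r+k)}-1$: here $EM^u=S[k]$ and $d^u=\dl_k$, and Proposition~\ref{prop-Sk-homology} supplies both $\dl_k^2=0$ and an isomorphism $\tht_k\colon S[k+1]\to H(S[k],\dl_k)$. Since $u+1=2p^{n(r+k)}$ is the left endpoint of the range on which $EM^\bullet$ is declared to be $S[k+1]$, this delivers the required identification $EM^{u+1}\cong H(EM^u,d^u)$. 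The tridegree of $\dl_k$, computed from Definition~\ref{defn-Sk-differential} to be $(1-2p^{n(k+r)},2p^{n(k+r)}-2,0)$, is precisely the homological AHSS shift associated to a differential on page $u=2p^{n(r+k)}-1$, which matches the assertion of the corollary.

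For convergence, fix a tridegree $(i,j,m)$ and choose $k$ minimal with $p^k>m$. Proposition~\ref{prop-S-conv} gives $S[k']_{i,j,m}=S[\infty]_{i,j,m}$ for every $k'\geq k$, so once $u$ is large enough that the piecewise definition puts $EM^u=S[k']$ for some such $k'$, we have $EM^u_{i,j,m}=S[\infty]_{i,j,m}$, and this persists under all subsequent pages because the identification $EM^{u+1}\cong H(EM^u,d^u)$ is tridegree-preserving. Hence the spectral sequence stabilises in each fixed tridegree to $S[\infty]$, yielding the claimed convergence. No real obstacle is anticipated: the substantive mathematical content—the homology identification $H(S[k],\dl_k)\cong S[k+1]$ and the stabilisation of $S[k]_{**m}$ in a fixed tridegree—has already been established upstream, so what remains is careful bookkeeping of the page indices against the ranges defining the piecewise $EM^\bullet$.
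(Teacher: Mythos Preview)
Your proposal is correct and matches the paper's approach: the paper gives no proof beyond the \qed marker, having already explained in the preceding paragraph that the corollary is obtained by inserting additional pages with trivial differential between the rings $S[k]$, so the content lies entirely in Propositions~\ref{prop-Sk-homology} and~\ref{prop-S-conv}. Your write-up simply spells out the bookkeeping that the paper leaves implicit.
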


\begin{remark}\lbl{rem-ss-language}
 Consider elements $x_0,y_0\in S[0]$.  In the ordinary language used to
 describe spectral sequences, we might say that $x_0$ survives to
 $S[k]$ and supports a differential $\dl_k(x_0)=y_0$.  Because we have
 defined our pages $S[k]$ as independent objects and introduced the
 homomorphisms $\tht_k$ explicitly, we need to use slightly different
 language.  The equivalent statement is that there are elements
 $x_i,y_i\in S[i]$ for $i=0,\dotsc,k$ such that
 \begin{itemize}
  \item For $i<k$ we have $\dl_i(x_i)=0$ and $[x_i]=\tht_i(x_{i+1})$
  \item For $i<k$ we have $\dl_i(y_i)=0$ and $[y_i]=\tht_i(y_{i+1})$
  \item $\dl_k(x_k)=y_k$.
 \end{itemize}
\end{remark}

In Proposition~\ref{prop-rho-lower-star} we described the transfer
maps $H_*(B\CV(n+m))\to H_*(B\CV(n))$ in terms of formal power
series.  It will be convenient to have similar formulae in
$EM^*_{***}$. 
\begin{definition}\lbl{defn-be-series-alt}
 We define $b_k(s),e_k(s)\in S[k]\psb{s}$ by
 \begin{align*}
   b_k(s) &= \sum_{i\geq 0}b_{ki}s^{p^k i}\\
   e_k(s) &= \sum_{i\geq N^*_k} e_{ki}s^{p^k i}.
 \end{align*}
 Recall here that $b_{ki}$ is one of the generators of $S[k]$ when
 $i\geq\bN_{k-1}$, but is defined using Remark~\ref{rem-bmi-extended}
 (and so is a $p$'th power) when $0\leq i<\bN_{k-1}$.  We use the
 grading $|s|=(-2,0,0)$ so that $|b_k(s)|=(0,0,p^k)$ and
 $|e_k(s)|=(1,0,p^k)$.  We define $\dl_k$ and $\tht_k$ on power series
 by the rule $\dl_k(s\,a)=s\,\dl_k(a)$ and
 $\tht_k(s\,a)=s\,\tht_k(a)$; the function $\al_k$ then satisfies
 $\al_k(s\,a)=s^p\,\al_k(a)$.
\end{definition}

\begin{lemma}\lbl{lem-AHSS-model-diffs}
 In $S[k]\psb{s}$ we have
 \begin{align*}
   \dl_k(b_k(s)) &= u^{-1}\pwrbb{(us)}{p^{n(k+r)}}e_k(s) \\
   \al_k(b_k(s)) &= u^{-1}\pwrbb{(us)}{p^{n(k+r)}}e_{k+1}(s) \\
   \tht_k(b_{k+1}(s)) &= [b_k(s)^p] \\
   \tht_k(e_{k+1}(s)) &= [b_k(s)^{p-1}e_k(s)].
 \end{align*}
\end{lemma}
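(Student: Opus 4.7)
The plan is to verify each of the four identities by direct coefficient-wise calculation in the differential graded algebra $S[k]\psb{s}$, where the differential $\dl_k$ is extended from $S[k]$ by setting $\dl_k(s)=0$, combining the definitions of $\dl_k$, $\tht_k$, and $\al_k$ on the generators of $S[k]$ with the appropriate additivity and multiplicativity properties together with the arithmetic identities of Lemma~\ref{lem-N-identities}.

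For $\dl_k(b_k(s))$ I would apply $\dl_k$ term by term: the terms with $i<\bN_k$ vanish by Definition~\ref{defn-Sk-differential}, and writing $i=j+\bN_k$ for the surviving terms produces $\pwrb{u}{p^{n(k+r)}-1}\sum_{j\geq 0}e_{k,j+N^*_k}s^{p^k(j+\bN_k)}$. Lemma~\ref{lem-N-identities}(C), in the form $p^k\bN_k=p^kN^*_k+p^{n(k+r)}$, lets me factor $s^{p^{n(k+r)}}$ out and identify the remainder as $e_k(s)$, giving the claimed formula. For $\tht_k(b_{k+1}(s))$ the essential input is the extended identity $\tht_k(b_{k+1,i})=[b_{ki}^p]$ valid for all $i\geq 0$ (the final clause of Proposition~\ref{prop-Sk-homology}); applying $\tht_k$ termwise and using the Frobenius identity $b_k(s)^p=\sum_i b_{ki}^p s^{p^{k+1}i}$, valid in the commutative $\Fp$-algebra of even-degree elements, gives $\tht_k(b_{k+1}(s))=[b_k(s)^p]$.

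For $\al_k(b_k(s))$ the key tool is additivity of $\al_k$, inherited via the $K_*$-linear isomorphism $\tht_k^{-1}$ from Lemma~\ref{lem-divided-diff}, together with the multiplicativity rule $\al(ab)=\phi(a)\al(b)+\al(a)\phi(b)$. Applied with $a=s^{p^k i}$ — a $\dl_k$-cocycle of even degree with $\al_k(a)=0$ since $ds=0$ — this reduces the computation to $\al_k(b_{ki}s^{p^ki})=\al_k(b_{ki})s^{p^{k+1}i}$. Summing, substituting the explicit values from Definition~\ref{defn-S-al}, and re-indexing via Lemma~\ref{lem-N-identities}(D) in the form $p^{k+1}\bN_k=p^{k+1}N^*_{k+1}+p^{n(k+r)}$, yields the formula. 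Finally, for $\tht_k(e_{k+1}(s))$ I would substitute $a=b_k(s)$ into the definition $\al_k(a)=\tht_k^{-1}[a^{p-1}\dl_k(a)]$ and insert the two formulas just proved; applying $\tht_k$ and cancelling the common factor $u^{-1}\pwrbb{(us)}{p^{n(k+r)}}$, which is not a zero divisor in $S[k]\psb{s}$, produces $\tht_k(e_{k+1}(s))=[b_k(s)^{p-1}e_k(s)]$.

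The main obstacle is not conceptual but bookkeeping: the arithmetic in Lemma~\ref{lem-N-identities} must line up precisely for the index shifts, and one must verify that additivity of $\al_k$, established for finite sums in Lemma~\ref{lem-divided-diff}, extends to formal power series. The latter extension is routine since any individual coefficient of $\al_k$ applied to a power series involves only finitely many terms from the original series, so additivity in the finite case suffices after passing to the $s$-adic limit.
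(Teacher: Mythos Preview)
Your proposal is correct and follows essentially the same approach as the paper's proof: termwise application of $\dl_k$ and $\al_k$ together with the index identities~(C) and~(D) from Lemma~\ref{lem-N-identities}, the extended relation $\tht_k(b_{k+1,i})=[b_{ki}^p]$ from Proposition~\ref{prop-Sk-homology}, and cancellation of the non-zero-divisor $u^{-1}(us)^{p^{n(k+r)}}$ for the last identity. The only cosmetic difference is that the paper records the rule $\al_k(s\,a)=s^p\,\al_k(a)$ directly in Definition~\ref{defn-be-series-alt} rather than deriving it from the derivation property as you do.
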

\begin{proof}
 We have $\dl_k(b_k(s))=\sum_{i\geq 0}\dl_k(b_{ki})s^{p^ki}$.  For
 $i<\bN_{k-1}$ the element $b_{ki}$ is defined by
 Remark~\ref{rem-bmi-extended} and is a $p$'th power, so
 $\dl_k(b_{ki})=0$.  For $\bN_{k-1}\leq i<\bN_k$ the element $b_{ki}$
 is a polynomial generator of $S[k]$, but we still have
 $\dl_k(b_{ki})=0$ by definition.  Using this and the identity
 $p^k(\bN_k-N_k^*)=p^n(k+r)$ we get
 \begin{align*}
  \dl_k(b_k(s)) &=
     s^{p^k\bN_k}
      \sum_{i\geq 0} \dl_k(b_{k,\bN_k+i}) s^{p^ki} \\
  &= s^{p^k\bN_k} \pwrb{u}{p^{n(k+r)}-1}
      \sum_{i\geq 0} e_{k,N_k^*+i} s^{p^ki} \\
  &= s^{p^k(\bN_k-N_k^*)} \pwrb{u}{p^{n(k+r)}-1}
      \sum_{i\geq N_k^*} e_{k,i}) s^{p^ki} \\
  &= s^{p^k(\bN_k-N_k^*)} \pwrb{u}{p^{n(k+r)}-1} e_k(s) \\
  &= u^{-1}(us)^{p^{n(k+r)}}e_k(s).
 \end{align*}
 We can analyse $\al_k(b_k(s))$ in a similar way, remembering the rule
 $\al_k(s\,a)=s^p\,\al_k(a)$ and the formulae in
 Definition~\ref{defn-S-al} and the identity
 $p^{k+1}(\bN_k-N_{k+1}^*)=p^{n(k+r)}$.  We get  
 \begin{align*}
  \al_k(b_k(s)) &=
     \pwrb{s}{p^{k+1}\bN_k}
      \sum_{i\geq 0} \al_k(b_{k,\bN_k+i}) \pwrb{s}{p^{k+1}i} \\
  &= s^{p^{k+1}\bN_k} \pwrb{u}{p^{n(k+r)}-1}
      \sum_{i\geq 0} e_{k+1,N_{k+1}^*+i} \pwrb{s}{p^{k+1}i} \\
  &= \pwrb{s}{p^{k+1}(\bN_k-N_{k+1}^*)} \pwrb{u}{p^{n(k+r)}-1}
      \sum_{i\geq N_{k+1}^*} e_{k+1,i} \pwr{s}{p^{k+1}i} \\
  &= \pwrb{s}{p^{k+1}(\bN_k-N_{k+1}^*)} \pwrb{u}{p^{n(k+r)}-1} e_{k+1}(s) \\
  &= u^{-1}\pwrb{(us)}{p^{n(k+r)}}e_{k+1}(s).
 \end{align*}
 Next, using the last part of Proposition~\ref{prop-Sk-homology} we
 get
 \[
  \tht_k(b_{k+1}(s)) =
    \sum_{i\geq 0}\tht_k(b_{k+1,i}) \pwr{s}{p^{k+1}i}
    = \sum_i b_{ki}^p s^{p^{k+1}i} = b_k(s)^p.
 \]
 Finally, for any $a$ we have $\tht_k(\al_k(a))=a^{p-1}\dl_k(a)$ by
 definition.  We can take $a=b_k(s)$ and use our formulae for
 $\al_k(b_k(s))$ and $\dl_k(b_k(s))$ to get
 $z\,\tht_k(e_{k+1}(s))=z\,[b_k(s)^{p-1}\,e_k(s)]$, where
 $z=u^{-1}\pwrb{(us)}{p^{n(k+r)}}$.  This is not a zero divisor, so
 $\tht_k(e_{k+1}(s))=[b_k(s)^{p-1}\,e_k(s)]$ as claimed.
\end{proof}

\begin{corollary}\lbl{cor-AHSS-model-diffs}
 In the sense described in Remark~\ref{rem-ss-language}, the series
 $b(s)^{p^k}\in S[0]\psb{s}$ survives to $S[k]$ and then
 supports a differential
 \[ b(s)^{p^k}\mapsto
     u^{-1}\pwrb{(us)}{p^{n(k+r)}} \pwrb{b(s)}{p^k-1}e(s). \] 
\end{corollary}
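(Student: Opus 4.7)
The plan is to prove this by a straightforward inductive application of Lemma~\ref{lem-AHSS-model-diffs}, tracking both the class $b_0(s)^{p^k}$ and its putative target $b_0(s)^{p^k-1}e_0(s)$ through the pages $S[0], S[1], \dotsc, S[k]$ using the ring isomorphisms $\tht_j\:S[j+1]\to H(S[j];\dl_j)$. The key observation is that $\tht_j(b_{j+1}(s)) = [b_j(s)^p]$ and $\tht_j(e_{j+1}(s)) = [b_j(s)^{p-1}e_j(s)]$, which propagate a $p$-th power and a $(p-1)$-power-times-exterior class across pages, respectively.

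First I would show by induction on $j$ (for $0\leq j\leq k$) that $b_0(s)^{p^k}$ survives on pages $0,\dotsc,j$ and corresponds on page $j$ to $b_j(s)^{p^{k-j}}\in S[j]\psb{s}$. The base case $j=0$ is immediate. Since $b_j(s)$ has even first-component total degree (in the sense of Remark~\ref{rem-trigraded-signs}), the Leibniz rule gives
\[ \dl_j(b_j(s)^{p^{k-j}}) = p^{k-j}\,b_j(s)^{p^{k-j}-1}\,\dl_j(b_j(s)), \]
which vanishes in characteristic $p$ provided $j<k$, confirming survival. The inductive step from page $j$ to page $j+1$ then uses the ring homomorphism property of $\tht_j$:
\[ \tht_j\bigl(b_{j+1}(s)^{p^{k-j-1}}\bigr) = [b_j(s)^p]^{p^{k-j-1}} = [b_j(s)^{p^{k-j}}]. \]

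By an entirely parallel induction, using $\tht_j(e_{j+1}(s)) = [b_j(s)^{p-1}e_j(s)]$ from Lemma~\ref{lem-AHSS-model-diffs} together with the factorisation
\[ b_j(s)^{p^{k-j}-1}e_j(s) = \bigl(b_j(s)^p\bigr)^{p^{k-j-1}-1}\cdot b_j(s)^{p-1}e_j(s), \]
I would show that $b_0(s)^{p^k-1}e_0(s)$ survives pages $0,\dotsc,k-1$ and corresponds on page $j$ to $b_j(s)^{p^{k-j}-1}e_j(s)$. Survival is a short Leibniz computation using $\dl_j(e_j(s))=0$ combined with $e_j(s)^2=0$; the latter identity holds because the exterior generators $e_{ji}$ anticommute (they have odd first-component degree) while the formal variable $s$ is central, so off-diagonal contributions cancel in pairs and diagonal terms vanish already in $E[\dotsb]$. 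When $j=k$ the exponent of $b_k(s)$ drops to zero and this class reduces to $e_k(s)\in S[k]\psb{s}$.

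Combining both tracks, the differential on page $k$ applied to the class represented by $b_0(s)^{p^k}$, which has been identified with $b_k(s)\in S[k]\psb{s}$, is computed directly by the first formula of Lemma~\ref{lem-AHSS-model-diffs}:
\[ \dl_k(b_k(s)) = u^{-1}\pwrb{(us)}{p^{n(k+r)}}\,e_k(s), \]
and the right-hand side is precisely the page-$k$ representative of $u^{-1}\pwrb{(us)}{p^{n(k+r)}}\,b_0(s)^{p^k-1}e_0(s)$, yielding the stated formula. There is no serious obstacle here; the only subtlety is bookkeeping in the language of Remark~\ref{rem-ss-language} and verifying that the two parallel inductions are compatible with the ring structure on each $S[j]$.
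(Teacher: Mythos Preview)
Your proposal is correct and takes essentially the same approach as the paper: the paper defines $x_i=b_i(s)^{p^{k-i}}$ and $y_i=u^{-1}(us)^{p^{n(k+r)}}b_i(s)^{p^{k-i}-1}e_i(s)$ for $0\leq i\leq k$ and asserts that these satisfy the conditions of Remark~\ref{rem-ss-language}, which is exactly the two parallel inductions you have spelled out in detail.
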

\begin{proof}
 For $0\leq i\leq k$, put $x_i=b_i(s)^{p^{k-i}}$ and
 $y_i=u^{-1}(us)^{p^{n(k+r)}}b_i(s)^{p^{k-i}-1}e_i(s)$.  These
 elements have the properties specified in Remark~\ref{rem-ss-language}.
\end{proof}

\begin{definition}\lbl{defn-AHSS-ET}
 We write $ET^*_{***}$ for the Atiyah-Hirzebruch spectral sequence for
 $B\CV$, trigraded so that $ET^*_{**d}$ is the Atiyah-Hirzebruch
 spectral sequence for $B\CV_d$:
 \[ ET^2_{ijd} = H_i(B\CV_d;K_j) \convto K_{i+j}(B\CV_d) \]
\end{definition}

\begin{definition}\lbl{defn-PSV}
 We put $PS(\CV)(t)=\prod_{k\geq 0}(1-t^{p^k})^{-N_k}\in\N\psb{t}$.
\end{definition}

\begin{theorem}\lbl{thm-AHSS-main}
 There is an isomorphism $EM\to ET$ of trigraded spectral sequences.
 Thus, $K_*(B\CV)$ is isomorphic to $S[\infty]$ as a bigraded ring
 (where tridegree $(i,j,d)$ contributes to bidegree $(i+j,d)$).  In
 particular, $K_*(B\CV)$ is a polynomial ring, with Poincar\'e series 
 \[ \sum_d\dim_{K_*}(K_*B\CV_d) = PS(\CV)(t). \]
\end{theorem}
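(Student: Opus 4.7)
My plan is to construct a morphism $\Phi\colon EM \to ET$ of trigraded spectral sequences and then show it is an isomorphism on every page. First, I identify the initial pages: by definition $EM^2 = S[0] = P[b_i]\otimes E[e_i]$, and Proposition~\ref{prop-BV-homology} identifies the initial $E^2$-page $ET^2 = H_*(B\CV;K_*)$ with the same polynomial-exterior algebra. I define $\Phi^2$ by sending each generator of $S[0]$ to the corresponding homology class; this is a trigraded $K_*$-algebra isomorphism.

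Next I compare the nontrivial differentials. In $EM$ they vanish except at $u = 2p^{n(k+r)}-1$, where $d^u = \dl_k$ acts on the generating series by Lemma~\ref{lem-AHSS-model-diffs}. On the $ET$ side, Corollary~\ref{cor-AHSS-V-diffs}, which rests on the transfer computation in Proposition~\ref{prop-rho-lower-star}, shows that $b(s)^{p^k}$ survives to the same page with the matching differential $u^{-1}(us)^{p^{n(k+r)}} b(s)^{p^k-1} e(s)$. The identifications $\tht_j\colon S[j+1] \to H(S[j]; \dl_j)$ of Proposition~\ref{prop-Sk-homology} satisfy $\tht_j(b_{j+1}(s)) = [b_j(s)^p]$ by the last clause of Lemma~\ref{lem-AHSS-model-diffs}, so iterating one finds that $b_k(s)$ corresponds to the class of $b(s)^{p^k}$. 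Similarly $e_k(s)$ corresponds to the class of $b(s)^{p^k-1}e(s)$. Since both spectral sequences are multiplicative with derivation differentials, and since the classes $e_{ki}$ (respectively $e(s)^{p^k}$) are permanent cycles on both sides, agreement on the generating series propagates by the Leibniz rule to agreement throughout $S[k]$.

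The main obstacle will be the verification that these iterated identifications through $\tht_0, \dots, \tht_{k-1}$ are executed cleanly, so that $\Phi$ on the $k$-th page is well-defined and really does intertwine $\dl_k$ with the topological differential; this involves matching the ``new'' polynomial generators $b_{ki}$ for $i \in \CN_k$, which are represented by $p^k$-th powers of homology generators, with the exterior generators $e_{ki}$, which are represented by transfer classes coming from $\CV(k)_1$ via Proposition~\ref{prop-rho-lower-star}. Once this is established, induction on $k$ yields page-by-page isomorphism: $H(EM^{u}; d^{u}) \cong EM^{u+1}$ and its topological counterpart are isomorphic via $\Phi^{u+1} = H(\Phi^u)$. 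The spectral sequence $ET$ converges strongly in each $\CV$-degree $d$ because only finitely many bigrade spots contribute to $ET^2_{**d}$, so passing to $E^\infty$ gives an isomorphism of bigraded rings $S[\infty] \cong K_*(B\CV)$. Finally, the Poincar\'e series formula follows at once from the polynomial presentation of $S[\infty]$: one polynomial generator $b_{mi}$ for each pair $(m,i)$ with $m \geq 0$ and $i \in \CN_m$, each in $\CV$-degree $p^m$, yields $\sum_d \dim_{K_*}K_*(B\CV_d)\, t^d = \prod_{k\geq 0}(1-t^{p^k})^{-N_k} = PS(\CV)(t)$.
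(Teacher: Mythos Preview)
Your proposal is correct and follows essentially the same route as the paper: identify the $E^2$-pages, verify that the nontrivial differentials agree by comparing Corollary~\ref{cor-AHSS-V-diffs} with the algebraic model (Lemma~\ref{lem-AHSS-model-diffs} and Corollary~\ref{cor-AHSS-model-diffs}), and conclude by induction that the map is an isomorphism on every page. The only small point you elide is that $E^\infty$ is a priori only the associated graded of $K_*(B\CV)$; the paper notes explicitly that because this associated graded is polynomial, lifting the generators gives an isomorphism $K_*(B\CV)\cong S[\infty]$ of rings, not merely of their associated gradeds. (Also, your parenthetical ``$e(s)^{p^k}$'' should read ``$b(s)^{p^k-1}e(s)$''.)
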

\begin{proof}
 It is clear that the two spectral sequences have the same $E^2$ page,
 and by comparing Corollary~\ref{cor-AHSS-V-diffs} with
 Lemma~\ref{lem-AHSS-model-diffs}, we see that the differentials are
 the same.  It follows in a standard way that the morphism gives an
 isomorphism on every page, so the $E^\infty$ page of the AHSS is
 isomorphic to $S[\infty]$.  This means that $K_*(B\CV)$ has a
 multiplicative filtration whose associated graded ring is a
 polynomial algebra, and by choosing representatives of the generators
 it follows that $K_*(B\CV)$ is isomorphic to the associated graded
 ring, and thus to $S[\infty]$.  Recall that $S[\infty]$ has
 generators $b_{ki}$ of tridegree $(*,*,p^k)$ for all $i\in\CN_k$, and
 $|\CN_k|=N_k$.  Each such generator contributes a factor of
 $\sum_jt^{p^kj}=(1-t^{p^k})^{-1}$ to the Poincar\'e series, so the
 full Poincar\'e series is $PS(\CV)(t)$.
\end{proof}

The following result can be deduced from the theorem of Tanabe, but we
now have an independent proof.

\begin{corollary}\lbl{cor-EV-free}
 For each $d$, the ring $E^*(B\CV_d)$ is a finitely generated free
 module over $E^*$, and is concentrated in even degrees, and the
 natural map $E^*(B\CV_d)/I_n\to K^*(B\CV_d)$ is an isomorphism.
 Similarly, $E^\vee_*(B\CV_d)$ is a finitely generated free module
 over $E_*$, and the natural map $E^\vee_*(B\CV_d)/I_n\to K_*(B\CV_d)$
 is an isomorphism.  In fact, there are natural isomorphisms
 \[ E^\vee_0(B\CV_d)\simeq E^0(B\CV_d)\simeq
    \Hom_{E_0}(E^\vee_0(B\CV_d),E_0).
 \]
\end{corollary}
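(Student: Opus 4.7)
The plan is to deduce the corollary from Theorem~\ref{thm-AHSS-main} together with the $K(n)$-local duality machinery summarised in Remark~\ref{rem-even}, originating in \cite{host:mkl}*{Section 8}.

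First, I would verify that $K_*(B\CV_d)$ is a finite-dimensional $K_*$-module concentrated in even degrees. Every polynomial generator $b_{ki}$ of $S[\infty]$ has tridegree $(2p^ki,0,p^k)$, so its contribution to the total topological degree of the Atiyah--Hirzebruch bigrading is the even number $2p^ki$. Hence $S[\infty]_{**d}$ is concentrated in even total degree, and by Theorem~\ref{thm-AHSS-main} the same is true of $K_*(B\CV_d)$. In particular $K_1(B\CV_d)=0$, so $\CV_d$ is even in the sense of Definition~\ref{defn-even}. Finite-dimensionality over $K_*$ follows because the $d$-th coefficient of the Poincar\'e series $PS(\CV)(t)=\prod_k(1-t^{p^k})^{-N_k}$ is a finite integer.

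Second, I would feed these two inputs into Remark~\ref{rem-even} applied to the finite group $GL_d(F)=\CV(d)$. This immediately gives that $E^*(B\CV_d)$ and $E^\vee_*(B\CV_d)$ are pro-free and concentrated in even degrees, that $E^1(B\CV_d)=E^\vee_1(B\CV_d)=K^1(B\CV_d)=0$, and that there is a canonical self-duality identification $E^0(B\CV_d)\simeq\Hom_{E_0}(E^\vee_0(B\CV_d),E_0)$. Because the mod $I_n$ reduction $K^0(B\CV_d)$ is already a finite-dimensional $K_0$-vector space, pro-freeness upgrades to genuine free finite generation of $E^0(B\CV_d)$ and $E^\vee_0(B\CV_d)$ as $E_0$-modules, and the standard comparison $E^0(B\CV_d)/I_n\to K^0(B\CV_d)$ is an isomorphism, with the analogous homology statement following in the same way. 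The extension from $*=0$ to arbitrary $*$ is automatic from the $2$-periodicity of $E$ together with the vanishing of the odd-degree groups.

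There is no real obstacle here: the substantive work has already been done in Theorem~\ref{thm-AHSS-main}, and the corollary is essentially a repackaging of the Hovey--Strickland duality apparatus. The only bookkeeping point to be careful about is translating between the Atiyah--Hirzebruch trigrading and the ordinary grading on $K_*(B\CV_d)$ when reading off even concentration.
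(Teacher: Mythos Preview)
Your proposal is correct and follows essentially the same route as the paper: both deduce even concentration and finite rank of $K_*(B\CV_d)$ from Theorem~\ref{thm-AHSS-main}, and then invoke the Hovey--Strickland results from \cite{host:mkl}*{Section 8} (packaged here as Remark~\ref{rem-even}) to obtain all the $E$-theoretic statements. The only minor slip is the notation $\CV(d)$ for $\CV_d$, and you might make the first isomorphism $E^\vee_0(B\CV_d)\simeq E^0(B\CV_d)$ more explicit (it is the $K(n)$-local self-duality for finite groups, also part of the same package).
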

\begin{proof}
 The theorem shows that $K_*(B\CV_d)$ is concentrated in even degrees,
 and is free of finite rank over $K_*$.  Given this, everything
 follows from the results of~\cite{host:mkl}*{Section 8}.
\end{proof}

We next want to show that $E^\vee_0(B\CV_*)$ and $E^0(B\CV_*)$ and
$K^0(B\CV_*)$ are also graded polynomial rings, under the products
discussed in Definition~\ref{defn-various-products}.  Recall that
these are induced by the direct sum functor
$\sg\:\CV_i\tm\CV_j\to\CV_{i+j}$,  corresponding to the standard
inclusion $GL_i(F)\tm GL_j(F)\to GL_{i+j}(F)$.  Recall also that the
homological and cohomological versions are essentially the same, as
explained in Remark~\ref{rem-bialgebra-duality}.

\begin{proposition}\lbl{prop-more-poly-rings}
 The graded rings $K_*(B\CV_*)$, $K^*(B\CV_*)$, $E^\vee_*(B\CV_*)$ and
 $E^*(B\CV_*)$ are all polynomial over $K_*$ or $E_*$ as appropriate,
 and concentrated in even degrees, and in each case the Poincar\'e
 series is $PS(\CV)(t)$.  Thus, the module $\Ind_{p^k}(E^0(B\CV_*))$
 is free of rank $N_k$ over $E^0$. 
\end{proposition}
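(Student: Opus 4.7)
The plan is to bootstrap from Theorem~\ref{thm-AHSS-main}, which identifies $K_*(B\CV)$ with $S[\infty]$ as a bigraded ring under the product induced by the direct sum functor $\sigma$. Since the polynomial generators $b_{mi}$ of $S[\infty]$ sit in tridegree $(2p^mi,0,p^m)$, this shows at once that $K_*(B\CV_*)$ is polynomial over $K_*$ on generators in even total degrees, with exactly $N_m=|\CN_m|$ generators in $\CV$-degree $p^m$. This yields the Poincaré series $\prod_{k\geq 0}(1-t^{p^k})^{-N_k}=PS(\CV)(t)$ and shows the ring is concentrated in even degrees.

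To lift the result to $E^\vee_*(B\CV_*)$, I would use Corollary~\ref{cor-EV-free}, which says each $E^\vee_0(B\CV_d)$ is a finitely generated free $E_0$-module with $E^\vee_0(B\CV_d)/I_n = K_0(B\CV_d)$. Choose lifts $\tilde b_{mi}\in E^\vee_{2p^mi}(B\CV_{p^m})$ of the polynomial generators $b_{mi}$ and form the graded polynomial $E_0$-algebra $A = E_0[\tilde b_{mi}]$. Using the $\times$-product from $\sigma$, the $\tilde b_{mi}$ yield a degree-preserving map $A\to E^\vee_*(B\CV_*)$ of bigraded $E_0$-algebras. In each $\CV$-degree both sides are finitely generated free $E_0$-modules of the same rank (the left side because each $\CV$-degree of $A$ involves only finitely many monomials, the rank equality following from reducing modulo $I_n$ and applying the $K$-theoretic isomorphism already in hand). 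Since $I_n$ lies in the Jacobson radical of the local ring $E_0$, a Nakayama argument combined with rank counting promotes this to an isomorphism in each $\CV$-degree. The cohomological statements for $K^*(B\CV_*)$ and $E^*(B\CV_*)$ then follow from the $K(n)$-local self-duality recalled in Remark~\ref{rem-bialgebra-duality}, which identifies $K^0(B\CV_d)$ with $K_0(B\CV_d)$ and converts $\sigma_*$ into the convolution product $(\sigma^!)^*$, and similarly for $E$.

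For the final statement, the polynomial structure just established shows that the $\times$-decomposable elements of $E^0(B\CV_{p^k})$ are precisely the $E^0$-span of monomials in strictly lower-degree generators, so the quotient $\Ind_{p^k}(E^0(B\CV_*))$ is freely generated as an $E^0$-module by the classes of the $N_k$ polynomial generators of $\CV$-degree $p^k$, and is therefore free of rank $N_k$ over $E^0$. The most delicate step is the Nakayama lift: one must check that $A\to E^\vee_*(B\CV_*)$ is strictly degree-preserving, so that in each $\CV$-degree the comparison really is between finite free $E_0$-modules whose $I_n$-reductions are isomorphic, after which the argument is formal.
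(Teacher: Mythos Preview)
Your proposal is correct and follows essentially the same route as the paper: start from the polynomial structure of $K_*(B\CV_*)$ given by Theorem~\ref{thm-AHSS-main}, lift generators to $E^\vee_*(B\CV_*)$ and use a Nakayama argument (degreewise on finitely generated free $E_0$-modules) to get the $E$-theory statement, then invoke the duality of Remark~\ref{rem-bialgebra-duality} for the cohomological versions. Your closing concern about the Nakayama step being delicate is unnecessary---since each $\CV$-degree is a finitely generated free $E_0$-module (Corollary~\ref{cor-EV-free}) and $I_n$ is the maximal ideal of the complete local ring $E_0$, the lift is entirely routine.
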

\begin{proof}
 We have already seen that $K_*(B\CV_*)$ is polynomial, so we can
 choose a system of generators $\{x_i\st i\in I\}$.  As
 $K_*(B\CV_d)=E^\vee_*(B\CV_d)/I_n$, we can choose lifts
 $x_i\in E^\vee_0(B\CV_*)$.  These give a map
 $\phi\:P_{E_*}[x_i\st i\in I]\to E^\vee_*(B\CV_*)$, which gives an
 isomorphism modulo $I_n$.  As the source and target of $\phi$ are
 free and finitely generated in each degree, it follows that $\phi$
 itself is an isomorphism.  It follows by duality that $K^*(B\CV_*)$
 and $E^*(B\CV_*)$ are polynomial under the convolution products.
 Using duality theory and the fact that
 $K_*(B\CV_*)=E^\vee_*(B\CV_*)/I_n$ we also see that all four rings
 have the same Poincar\'e series, namely $PS(\CV)(t)$.
\end{proof}

\begin{remark}\lbl{rem-not-hopf-b}
 We now outline a different approach that one could hope to use to
 prove that $K_*(B\CV)$ is polynomial.  The work of Tanabe shows that
 $K^*(B\CV)$ is a quotient of $K^*(B\bCV)$, so $K_*(B\CV)$ is a
 subring of $K_*(B\bCV)$, which is a polynomial ring on even
 generators.  It follows that $K_*(B\CV)$ is concentrated in even
 degrees, and that the only nilpotent element is zero.  If we knew
 that $K_*(B\CV)$ was a Hopf algebra, with a coproduct that respects
 gradings in an appropriate way, then we could use the Milnor-Moore
 structure theorem to show that $K_*(B\CV)$ is polynomial.  However,
 as we mentioned in Remark~\ref{rem-not-hopf}, the obvious coproduct
 on $E^*(B\CV_*)$ (derived from the direct sum) does not make
 $E^*(B\CV)$ into a Hopf algebra, and one can check that this problem
 persists if we use $K^*(B\CV_*)$ or $K_*(B\CV_*)$.  If we instead use
 the diagonal map of the space $B\CV$, then we get an alternative
 coproduct that does make $K_*(B\CV_*)$ into a Hopf algebra, but the
 grading behaviour is incompatible with the Milnor-Moore theorem.
 Although we had some success in generalising the Milnor-Moore
 argument, we were not able to complete a proof along those lines.
\end{remark}

We next introduce an element of $K^0(B\CV_{p^k})$ which will turn out
to be a generator of the socle.  For the moment, we will just prove
that it is nonzero.

\begin{definition}\lbl{defn-sk}
 We put $s_k=c_{p^k}\in K^0(B\CV_{p^k})$. 
\end{definition}

\begin{proposition}\lbl{prop-sk-height}
 $s_k^{\bN_k-1}$ is nonzero in $K^0(B\CV_{p^k})$.
\end{proposition}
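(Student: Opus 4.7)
The plan is to use the Atiyah-Hirzebruch spectral sequence developed in this section. Since the AHSS for $B\bG_{p^k}$ collapses at $E_2$ (Proposition \ref{prop-EbG}), and $c_{p^k} \in K^0(B\CV_{p^k})$ is the restriction of a permanent cycle from $K^0(B\bG_{p^k})$, it is itself a permanent cycle in the cohomological AHSS for $B\CV_{p^k}$, in filtration $2p^k$. Consequently $c_{p^k}^{\bN_k - 1}$ will be a permanent cycle in filtration $2p^k(\bN_k - 1)$, and it suffices to show that its class in the cohomological $E^\infty$ page is nonzero.

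The approach is to pair this class with a specific polynomial generator of $S[\infty]$. By Definition \ref{defn-Sk}, the element $b_{k, \bN_k - 1}$ is a polynomial generator of $S[\infty]$ in tridegree $(2p^k(\bN_k - 1), 0, p^k)$, matching the filtration of $c_{p^k}^{\bN_k - 1}$ exactly, and it is nonzero because $S[\infty]$ is a polynomial ring (Theorem \ref{thm-AHSS-main}). To identify $c_{p^k}^{\bN_k - 1}$ with a unit multiple of the cohomology-dual class of $b_{k, \bN_k - 1}$, I would restrict to the cyclic subgroup $C = GL_1(F(k)) \subset GL_{p^k}(F)$ from Corollary \ref{cor-cyclic-subgroup}. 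A $K$-theoretic analog of Lemma \ref{lem-res-c}, applied iteratively, gives $\rho^*(c_{p^k}) = (\text{unit}) \cdot \bar x^{p^k}$ in $K^0(BC)$, while Proposition \ref{prop-rho-lower-star} (dualised appropriately) tracks the image of $b_{k, \bN_k - 1}$ in $K_*(BC)$ through the AHSS.

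The main obstacle will be that the naive restriction $\rho^*(c_{p^k}^{\bN_k - 1})$ actually vanishes in $K^0(BC)$ for $k \geq 1$, since $p^k(\bN_k - 1) \geq p^{n(r+k)}$, while $\bar x$ has nilpotence exponent $p^{n(r+k)}$ in $K^0(BC) = K^0\psb{\bar x}/(\bar x^{p^{n(r+k)}})$. The argument must therefore take place at the level of AHSS filtrations rather than in $K^0(BC)$ itself: one lifts to $E^0(BC) = E^0\psb{\bar x}/[p^{r+k}]_F(\bar x)$, where $\bar x^{p^k(\bN_k - 1)}$ is nonzero but lies in $I_n \cdot E^0(BC)$, and then identifies the resulting filtration jump with the contribution of $b_{k, \bN_k - 1}$ to the $E^\infty$ page. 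Once the unit-multiple identification of $c_{p^k}^{\bN_k - 1}$ with the cohomological dual of $b_{k, \bN_k - 1}$ is established (the hard bookkeeping), the non-vanishing follows immediately from the polynomial structure of $S[\infty]$.
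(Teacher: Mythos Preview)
Your overall strategy---show that $c_{p^k}^{\bN_k-1}$ is a permanent cycle and pair it against the polynomial generator $b_{k,\bN_k-1}$ of $S[\infty]$---is exactly the paper's approach, but your execution has a genuine gap. You correctly diagnose that restriction to the cyclic group $C=GL_1(F(k))$ kills the class in $K^0(BC)$, and your proposed workaround (tracking filtration jumps in $E^0(BC)$ coming from the $I_n$-adic filtration) is left as ``hard bookkeeping'' that you do not actually perform. This is not just bookkeeping: you would need to relate the $I_n$-adic filtration on $E^0(BC)$ to the AHSS filtration, and there is no obvious reason these interact well enough to recover the pairing you want.

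The paper avoids this entirely by never using $C$. Instead, it works in the AHSS for $B\bCV$, which collapses, so $v=c_{H,p^k}^{\bN_k-1}$ lives visibly in $E_\infty(\bCV)$. The image of the map $E^\infty(\CV)\to E^\infty(\bCV)$ is the subring generated by the classes $b_{Hi}^{p^k}$ with $\bN_{k-1}\leq i<\bN_k$, so $m=b_{H,\bN_k-1}^{p^k}$ lies in it. The pairing $\langle m,v\rangle$ is then computed by restricting along the \emph{torus} inclusion $\bCV_1^{p^k}\to\bCV_{p^k}$: there $m$ pulls back to $m'=b_{H,\bN_k-1}^{\otimes p^k}$ and $v$ restricts to $v'=(x_H^{\otimes p^k})^{\bN_k-1}$, and $\langle m',v'\rangle=1$ is immediate from the definition of $b_{Hi}$ as the dual basis to $x_H^i$. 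Using the torus rather than $C$ sidesteps your vanishing problem completely.
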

\begin{proof}
 We have homological and cohomological Atiyah-Hirzebruch spectral
 sequences for $B\CV$ and $B\bCV$.  We denote the final pages by
 $E^\infty(\CV)$, $E_\infty(\CV)$, $E^\infty(\bCV)$ and
 $E_\infty(\bCV)$.  Put 
 \[ v=c_{H,p^k}^{\bN_k-1}\in H^*(B\bCV;K^*)
       =E_2(\bCV)=E_\infty(\bCV).
 \]
 It will suffice to show that $v$ has nontrivial restriction in
 $E_\infty(\CV)$, or that $\ip{m,v}\neq 0$ for some $m$ in the image of the
 map 
 \[ E^\infty(\CV)\to E^\infty(\bCV) = K_*\ot P[b_{Hi}\st i\geq 0].
 \]
 From our earlier analysis, it is clear that this image is the subring
 generated by classes $b_{Hi}^{p^k}$ with $\bN_{k-1}\leq i<\bN_k$.  In
 particular, the class $m=b_{H,\bN_k-1}^{p^k}$ lies in this image.
 Now put $m'=b_{H,\bN_k-1}^{\ot p^k}\in H_*(B\bCV_1^{p^k})$, so the
 usual map $\bCV_1^{p^k}\to\bCV_{p^k}$ sends $m'$ to $m$.  The
 restriction of $v$ to $\bCV_1^{p^k}$ is the element
 $v'=(x_H^{\ot p^k})^{\bN_k-1}$, so we have $\ip{m,v}=\ip{m',v'}$.
 However, it is immediate from the definitions that $\ip{m',v'}=1$, so
 the claim follows.
\end{proof}

\section{Generalised character theory}
\lbl{sec-hkr}

We next recall some ideas about the generalised character theory of
Hopkins, Kuhn and Ravenel~\cite{hokura:ggc}.  We will use the two
kinds of duality introduced in Definition~\ref{defn-duals} and
Remark~\ref{rem-duals}. 

\begin{definition}\lbl{defn-HKR-D}
 Let $D$ denote the algebraic extension of $E_0$ obtained by adjoining
 a full set of roots for $[p^k](x)$ for all $k\geq 0$.  The set $\Tht$
 of all these roots is naturally a group under the formal sum, and an
 isomorphism $(\Z/p^\infty)^n\to\Tht$ is built in to the construction.
 The dual group $\Tht^*=\Hom(\Tht,\Z/p^\infty)$ is thus identified with
 $\Zp^n$.  We regard $\Tht^*$ as a groupoid with one object.  We also
 put $D'=D[p^{-1}]$.
\end{definition}

We will give the next definition in a form that works well for all
groupoids (including $\bCV$, which is not hom-finite).  However, we
will then show that it simplifies for hom-finite groupoids.
\begin{definition}\lbl{defn-functor-groupoid}
 For any groupoid $\CG$, we write $[\Tht^*,\CG]$ for the groupoid of
 functors $\Tht^*\to\CG$ that factor through $\Tht^*/p^k$ for some
 $k\geq 0$.  Equivalently, an object of $[\Tht^*,\CG]$ consists of an
 object $a\in\CG$ and a homomorphism $\al\:\Tht^*\to\CG(a)$ with
 $\al(p^k\Tht^*)=1$ for some $k$.  An isomorphism from $(a,\al)$ to
 $(b,\bt)$ is a morphism $u\in\CG(a,b)$ such that
 $\bt(x)=u\al(x)u^{-1}$ for all $x\in\Tht^*$.

 We will sometimes also write $[\Tht^*,\CG]$ for the set of
 isomorphism classes in the above groupoid, relying on the context to
 distinguish between the two different meanings.
\end{definition}

\begin{remark}\lbl{rem-Lie-topology}
 The condition $\al(p^k\Tht^*)=1$ just means that $\al$ is continuous
 with respect to the $p$-adic topology on $\Tht^*$ and the discrete
 topology on $\CG(a)$.  We might want to consider cases such as the
 groupoid of finite-dimensional complex vector spaces, in which
 $\CG(a)$ has a natural structure as a Lie group.  For any Lie group
 $G$, there is an open neighbourhood $U$ of the identity such that the
 only subgroup contained in $U$ is the trivial one.  Using this, we
 see that a homomorphism $\Tht^*\to\CG(a)$ is continuous with respect
 to the Lie topology iff it is continuous with respect to the discrete
 topology. 
\end{remark}

\begin{lemma}\lbl{lem-Tht-star-quotient}
 Let $\CG$ be a hom-finite groupoid, and suppose we have an object
 $a\in\CG$ and a homomorphism $\al:\Tht^*\to\CG(a)$.  Then there
 always exists $k\geq 0$ with $\al(p^k\Tht^*)=1$.  Thus,
 $[\Tht^*,\CG]$ is the category of all functors $\Tht^*\to\CG$.
\end{lemma}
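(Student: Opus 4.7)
The plan is as follows. Since $\Tht^*\cong\Zp^n$ is generated (as an abstract group, via sums) by the standard basis elements $e_1,\dotsc,e_n$, it suffices to produce, for each $i$, an integer $k_i$ with $\al(p^{k_i}\Zp\,e_i)=1$; then $k=\max_ik_i$ works, because every element of $p^k\Tht^*$ is a sum $\sum_i y_ie_i$ with $y_i\in p^k\Zp$, and $\al(\sum_iy_ie_i)=\prod_i\al(y_ie_i)=1$.

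Thus I reduce to the following subclaim: any group homomorphism $\bt\:\Zp\to H$, with $H$ finite, has kernel of the form $p^k\Zp$. I would prove this by first observing that $\ker(\bt)$ has finite index (dividing $|H|$), then invoking the elementary fact that every finite-index subgroup of $\Zp$ is of this form. To see this last point, if $L\leq\Zp$ has index $m$, then $m\Zp\leq L$; writing $m=p^tu$ with $u\in\Zp^\tm$ gives $m\Zp=p^t\Zp$, so $L\supseteq p^t\Zp$, and hence $L$ corresponds to a subgroup of the cyclic group $\Zp/p^t\Zp\cong\Z/p^t$, all of whose subgroups have the form $p^j\Z/p^t$. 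Pulling back yields $L=p^j\Zp$ for some $0\leq j\leq t$.

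Applying this subclaim to $\bt=\al|_{\Zp e_i}$ produces the desired $k_i$, and hence the desired $k$. The second sentence of the lemma is then immediate: the definition of $[\Tht^*,\CG]$ required the continuity condition $\al(p^k\Tht^*)=1$, and we have just shown this condition is automatic when $\CG$ is hom-finite.

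There is no real obstacle here; the argument is entirely elementary, and the only content is the structure of finite-index subgroups of $\Zp$, which in turn rests on the fact that $\Zp$ is a principal ideal domain with a single (prime) nonzero prime ideal. The result could equivalently be phrased as: every finite quotient of $\Zp^n$ is a finite abelian $p$-group, and $\Zp^n$ has the congruence subgroup property in the trivial sense that every finite-index subgroup is open.
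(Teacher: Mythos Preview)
Your proof is correct, but one sentence is phrased misleadingly: $\Zp^n$ is \emph{not} generated as an abstract group by $e_1,\dotsc,e_n$ (that subgroup is only $\Z^n$). What you actually use, and what is true, is the direct-sum decomposition $\Zp^n=\bigoplus_i\Zp e_i$, so that every element has the form $\sum_iy_ie_i$ with $y_i\in\Zp$; with that correction the argument goes through cleanly.

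Your route differs from the paper's. The paper works with the image $A=\al(\Tht^*)\leq\CG(a)$: it is finite and abelian, and for each integer $m$ coprime to $p$ the $m$th-power map on $A$ is surjective (being induced by the automorphism $m\cdot$ of $\Tht^*$), hence bijective; thus $A$ is a $p$-group of some exponent $p^k$, and $\al(p^k\Tht^*)=1$. You instead reduce coordinatewise and invoke the classification of finite-index subgroups of $\Zp$. Both arguments are short and elementary; the paper's is coordinate-free and directly identifies the image as a $p$-group, while yours makes explicit use of the PID structure of $\Zp$ and would adapt verbatim to any product of complete DVRs.
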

\begin{proof}
 Put $A=\al(\Tht^*)\leq\CG(a)$.  As $\CG(a)$ is finite, we see that
 $A$ is finite.  As $\Tht^*$ is abelian, we see that $A$ is abelian.
 As every integer coprime to $p$ acts isomorphically on $\Tht^*$, the
 same is true of $A$.  It follows that $A$ is a finite abelian
 $p$-group, so it must have exponent $p^k$ for some $k$.  This implies
 that $\al(p^k\Tht^*)=1$ as claimed.
\end{proof}

\begin{definition}\lbl{defn-HKR-functors}
 For any hom-finite groupoid we put
 \begin{align*}
  D'_0\CG   &= \bigoplus_{\CC\in\pi_0(\CG)} D'_0\ot_{E_0}E^\vee_0(B\CG) \\
  (D')^0\CG &= \prod_{\CC\in\pi_0(\CG)} D'_0\ot_{E_0}E^0(B\CG).
 \end{align*}
 If $\CG$ is actually finite, this simplifies to
 \begin{align*}
  D'_0\CG   &= D'\ot_{E^0}E^\vee_0(B\CG) \\
  (D')^0\CG &= D'\ot_{E^0}E^0(B\CG).
 \end{align*}
\end{definition}

\begin{theorem}[Hopkins-Kuhn-Ravenel]\lbl{thm-HKR}
 For any hom-finite groupoid $\CG$, there are natural isomorphisms
 \begin{align*}
  D'_0\CG   &= D'\{[\Tht^*,\CG]\} \\
  (D')^0\CG &= \Map([\Tht^*,\CG],D')
 \end{align*}
\end{theorem}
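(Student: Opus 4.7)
The strategy is the classical Hopkins-Kuhn-Ravenel argument, proceeding by reduction from groupoids to finite groups and then to abelian $p$-subgroups, exploiting the fact that the formal group $\GG$ becomes etale-like over $D'$. First, both identities decompose canonically as sums or products over $\pi_0(\CG)$, so it suffices to treat a single finite group $G$. By Corollary~\ref{cor-EV-free} and the duality theory recalled in Section~\ref{sec-morava}, the two identities are $D'$-linear duals of each other, so I focus on the $(D')^0$-version. Lemma~\ref{lem-Tht-star-quotient} identifies $[\Tht^*, G]$ with the set of $G$-conjugacy classes of group homomorphisms $\Tht^* \to G$.

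The heart of the proof is the abelian case. For a finite abelian $p$-group $A$ with $A^* \simeq \prod_i \Z/p^{k_i}$, the ring $E^0(BA)$ is the complete intersection $E^0\psb{x_1,\ldots,x_r}/([p^{k_1}](x_1), \ldots, [p^{k_r}](x_r))$. By the construction of $D$, each polynomial $[p^{k_i}](x)$ factors as $\prod_{\ep \in \Tht[p^{k_i}]}(x -_F \ep)$ over $D$, so the Chinese Remainder Theorem identifies $D' \otimes_{E^0} E^0(BA)$ with $\Map(\Hom(A^*, \Tht), D')$. Using Remark~\ref{rem-duals}, the indexing set is naturally isomorphic to $\Hom(\Tht^*, A) = [\Tht^*, A]$, proving the theorem for abelian $A$, with functoriality clear from the construction.

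For a general finite group $G$, define the character map $\chi\:(D')^0(BG) \to \Map([\Tht^*, G], D')$ by $\chi(f)(\al) = \al^*(f)$, where $\al^*$ is the composition of $(B\al)^*\:(D')^0(BG) \to (D')^0(B\Tht^*)$ with the tautological evaluation $(D')^0(B\Tht^*) \to D'$ (well-defined by the abelian case applied to the image $A = \al(\Tht^*)$). Since every $\al$ factors through the abelian $p$-subgroup $A$, the map $\chi$ respects restriction to abelian $p$-subgroups, so the statement reduces to showing $\chi$ itself is an isomorphism.

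The main obstacle is this last step. Injectivity reduces to injectivity of the joint restriction $(D')^0(BG) \to \prod_A (D')^0(BA)$ over abelian $p$-subgroups, which follows from $\al$-determinedness (Proposition~\ref{prop-determined-examples} and the general principle it encodes) combined with flatness of $D'$ over $E^0$. Surjectivity can then be established either by a $D'$-rank count using the $K(n)$-local Euler characteristic formula $\chi_{K(n)}(BG) = |[\Tht^*, G]/G|$, or explicitly by constructing indicator functions of conjugacy classes via the transfer maps $\phi_!$ of Section~\ref{sec-morava} applied to the abelian result and using a Mackey-style double coset argument; of these, the rank count is cleaner but the explicit transfer construction yields more usable formulas.
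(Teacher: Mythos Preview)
The paper's own proof is simply a citation: it invokes the main theorem of~\cite{hokura:ggc} for the case of a single finite group and observes that the extension to hom-finite groupoids follows by decomposing over $\pi_0(\CG)$. You have instead attempted to sketch the internals of the HKR argument, which is more than the paper undertakes here.

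Your overall architecture---reduce to a single finite group, settle the abelian case via the factorisation of $[p^k](x)$ over $D$ and the Chinese Remainder Theorem, define the character map, then argue injectivity plus a dimension count---is the correct shape and matches the original HKR proof. However, two of your internal appeals do not carry the weight you place on them. First, Corollary~\ref{cor-EV-free} is specific to $B\CV_d$; duality between $D'_0$ and $(D')^0$ for an arbitrary finite group (which need not be even) is not supplied by the results you cite. Second, and more seriously, your injectivity step invokes $\al$-determinedness via Proposition~\ref{prop-determined-examples}, but that proposition treats ordinary mod~$p$ cohomology of a short explicit list of groups ($G_d$, $\Sg_d$, $N_d$, etc.); there is no general principle there applying to an arbitrary finite group $G$, and indeed not every finite group is $\al$-determined in the sense of Definition~\ref{defn-determined}. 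The genuine HKR injectivity argument is quite different: it uses that over $D'$ the formal group becomes constant, so that transfers from abelian subgroups satisfy an Artin-type induction identity forcing detection on abelian subgroups \emph{rationally}---a phenomenon of the $D'$-world rather than a consequence of mod~$p$ detection. Your rank-count alternative for surjectivity is closer to what HKR actually do, though you would need to establish the Euler-characteristic formula independently to avoid circularity.
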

\begin{proof}
 For the case of a finite group (or equivalently, a groupoid with only
 one isomorphism class), this is the main theorem of~\cite{hokura:ggc}.
 The general case follows easily.
\end{proof}

\begin{remark}\lbl{rem-char-vals}
 Suppose we have an element $f\in E^0(B\CG)$, and an object
 $a\in[\Tht^*,\CG]$.  We then have an element $1\ot f\in (D')^0(\CG)$,
 and we can apply the generalised character map to get a function
 $[\Tht^*,\CG]\to D'$, which we can evaluate at $a$ to get an element
 of $D'$.  We will usually just write $f(a)$ for this character value.
\end{remark}

We consider the various groupoids defined in
Definition~\ref{defn-cats}.  We will use the functors $A^*$ and $A^\#$
from Definition~\ref{defn-duals}, and the invertible $\Zp$-module $T$
introduced in Remark~\ref{rem-duals}, so that $T\ot A^*$ is naturally
isomorphic to $A^\#$.

\begin{definition}\lbl{defn-Phi}
 We define $\Phi$ to be the group of continuous homomorphisms
 $\Tht^*\to\bF^\tm$, or in other words $\Phi=\Tht^{*\#}=T\ot\Tht$, so
 $\Phi^\#=\Tht^*$.  

 Note that $\Phi$ is a $p$-torsion group.  We also put
 $\Phi[m]=\{\phi\in\Phi\st m\phi=0\}$ (and observe that this only
 depends on the $p$-adic valuation of $m$).
\end{definition}

\begin{remark}\lbl{rem-Phi}
 A choice of generator $t\in T$ gives isomorphisms
 $\Phi\simeq\Tht=(\Z/p^\infty)^n$ and $\Phi^*\simeq\Tht^*=\Z_p^n$.  If
 we write $\Sub(\Phi)$ for the set of finite subgroups of $\Phi$, then
 the first isomorphism gives a bijection
 $\Sub(\Phi)\simeq\Sub(\Tht)$.  Any other generator of $t$ has the
 form $t'=tu$ for some $u\in\Z_p^\tm$, and multiplication by $u$ sends
 every finite subgroup of $\Tht$ or $\Phi$ to itself.  Using this, we
 see that the bijection $\Sub(\Phi)\simeq\Sub(\Tht)$ is actually
 independent of any choices.
\end{remark}

\begin{example}\lbl{eg-HKR-L}
 The inclusion $F\to\bF$ gives an isomorphism
 $\Hom(\Tht^*,F^\tm)\simeq\Phi[q-1]=\Phi[p^r]$.  Recall that $\CL$ is
 the groupoid of one-dimensional vector spaces over $F$.  All such
 spaces are isomorphic, and the automorphism group of any one of them
 is naturally identified with $F^\tm$, so we have
 $[\Tht^*,\CL]=\Hom(\Tht^*,F^\tm)=\Phi[p^r]$.  Thus, we have
 $D'_0(\CL)=D'\{\Phi[p^r]\}$ and $(D')^0(\CL)=\Map(\Phi[p^r],D')$.

 We can also replace $F$ by $F(k)$ in the above analysis to get
 $D'_0(\CL(k))=D'\{\Phi[p^{r+k}]\}$.  
\end{example}

\begin{example}\lbl{eg-HKR-V}
 The main HKR theorem gives
 \[ D'_0(\CV)=D'\{[\Tht^*,\CV]\}=\bigoplus_dD'_0\{[\Tht^*,\CV_d]\}. \]
 We can identify $[\Tht^*,\CV]$ with the semiring of isomorphism
 classes of finite-dimensional $F$-linear representations of $\Tht^*$.
 We will denote this by $\Rep(\Tht^*)$ or $\Rep(\Tht^*;F)$.  We also
 write $\Rep_d(\Tht^*)$ for the subset of representations of dimension
 $d$, and $\Irr(\Tht^*)$ for the subset of irreducible
 representations, and $\Irr_d(\Tht^*)$ for
 $\Irr(\Tht^*)\cap\Rep_d(\Tht^*)$.  Lemma~\ref{lem-Tht-star-quotient}
 tells us that any $V\in[\Tht^*,\CV]$ can be regarded as an $F$-linear
 representation of a finite abelian $p$-group quotient of $\Tht^*$,
 and we can therefore use Lemma~\ref{lem-rep-th} to decompose it in an
 essentially unique way as a direct sum of irreducibles.  For any
 irreducible representation $S$ we write $v_S$ for $[S]$ regarded as
 an element of $\Irr(\Tht^*)$ or of $\Rep(\Tht^*)$ or of
 $D'\{\Rep(\Tht^*)\}$.  We then find that $\Rep(\Tht^*)$ is the free
 abelian monoid generated by the elements $v_S$, and thus that
 $D'\{\Rep(\Tht^*)\}$ is the polynomial ring
 $D'[v_S\st S\in\Irr(\Tht^*)]$.  We will study the structure of
 $\Irr(\Tht^*)$ in Proposition~\ref{prop-Irr-Phi} below.
\end{example}

\begin{example}\lbl{eg-HKR-X}
 Recall that $\CX$ is the groupoid of finite sets and bijections, so
 $[\Tht^*,\CX]$ is the set of isomorphism classes of finite sets with
 action of the group $\Tht^*=\Phi^\#$, or in other words the Burnside
 semiring of $\Phi^\#$.  Let $\Sub(\Phi)$ denote the set of finite
 subgroups of $\Phi$.  For any $A\in\Sub(\Phi)$ we have an epimorphism
 $\Phi^\#\to A^\#$ giving a transitive
 action of $\Phi^\#$ on $A^\#$, and any transitive finite
 $\Phi^\#$-set has this form for a unique group $A$.  We write $x_A$
 for $[A^\#]$ regarded as an element of $[\Tht^*,\CX]$ or
 of $D'_0(\CX)=D'\{[\Tht^*,\CX]\}$.  We find that $[\Tht^*,\CX]$ is
 the free abelian monoid generated by the elements $x_A$, and thus
 that $D'_0(\CX)$ is the polynomial ring $D'[x_A\st A\in\Sub(\Phi)]$.
 (It is more usual to formulate this in terms of subgroups of $\Tht$
 rather than subgroups of $\Phi$, but these are canonically
 identified, as discussed in Remark~\ref{rem-Phi}.)
\end{example}

\begin{example}\lbl{eg-HKR-XL}
 We also see that $[\Tht^*,\CXL]$ is the set of isomorphism classes of
 pairs $(X,L)$, where $X$ is a finite $\Tht^*$-set, and $L$ is an
 $F$-linear line bundle over $X$ with compatible action of $\Tht^*$.
 In Definition~\ref{defn-cats} we explained how to regard $\CXL$ as a
 symmetric bimonoidal category.  This makes $[\Tht^*,\CXL]$ into
 another commutative semiring.  We can construct some elements of
 $[\Tht^*,\CXL]$ as follows.  Consider a finite subgroup $A<\Phi$,
 which gives a subgroup $\ann(A)=\ker(\Phi^\#\to A^\#)$ of finite
 index in $\Phi^\#$.  Consider a homomorphism $a\:\ann(A)\to F^\tm$,
 or equivalently an element $a$ in the group $\ann(A)^\#=\Phi/A$ with
 $(q-1)a=0$, or equivalently a coset $\tilde{a}+A\subset\Phi$ with
 $(q-1)\tilde{a}\in A$.  The map $a$ gives an action of $\ann(A)$ on
 $F$, and this gives a line bundle $L_a$ with total space
 $(F\tm\Phi^\#)/\ann(A)$ over $\Phi^\#/\ann(A)=A^\#$.  There is an
 evident action of the group $\Tht^*=\Phi^\#$ on this line bundle, so
 we have an element $x_{Aa}\in[\Tht^*,\CXL]$.  Given a general object
 $(X,L)\in[\Tht^*,\CXL]$ we can decompose $X$ into orbits, each of
 which must have the form $A^\#$ for some $A$, and then check that
 every equivariant line bundle over $A^\#$ arises as above.  From this
 we see that $D'_0(\CXL)$ is again a polynomial algebra over $D'$,
 generated by these elements $x_{Aa}$.
\end{example}

We now discuss a slightly different picture of $\Rep(\Tht^*)$.
\begin{remark}\lbl{rem-Gm-bar}
 Recall that $\Gm$ is the Galois group of $\bF$ over $F$, which is a
 compact Hausdorff topological group and is topologically generated
 by the Frobenius map $\phi\:x\mapsto x^q$.  This gives rise to an
 action of $\Gm$ on the group $\Phi=\Hom(\Tht^*,\bF^\tm)$ that we
 introduced in Example~\ref{eg-HKR-L}.  We will write the group
 structure on $\Phi$ additively, so the action of $\phi\in\Gm$ is just
 multiplication by $q$.  Note that $\Phi\simeq(\Z/p^\infty)^n$, so
 $\Phi$ is a $p$-torsion group, and is naturally a module over $\Zp$.
 If we let $\ov{\Gm}$ denote the closed subgroup of $\Zp^\tm$
 generated by $q$, it follows that the action of $\Gm$ on $\Phi$
 factors through an epimorphism $\Gm\to\ov{\Gm}$.
\end{remark}

\begin{lemma}\lbl{lem-Gm-bar}
 The group $\ov{\Gm}=\ov{\ip{q}}<\Zp^\tm$ is just $1+p^r\Zp$.
\end{lemma}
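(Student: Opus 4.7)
The plan is to verify the two inclusions separately: $\ov{\Gm} \sse 1 + p^r\Zp$ is immediate because $q \in 1 + p^r\Zp$ (by definition of $r = v_p(q-1)$) and $1 + p^r\Zp$ is a closed subgroup of $\Zp^\tm$. The real content is the reverse inclusion, which I would prove by showing that $q$ surjects onto every finite quotient of $1 + p^r\Zp$.

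First I would identify the finite quotients of $1 + p^r\Zp$. Since $p > 2$ and $r \geq 1$, the $p$-adic logarithm gives a topological isomorphism $\log\:1 + p^r\Zp \to p^r\Zp \simeq \Zp$, so $1 + p^r\Zp$ is procyclic and its finite quotients are precisely the groups $(1 + p^r\Zp)/(1 + p^{r+k}\Zp)$, each cyclic of order $p^k$. Then I would apply Lemma~\ref{lem-vp} to see that the image of $q$ in this quotient has order exactly $p^k$: we have $v_p(q^{p^k} - 1) = r + k$, so $q^{p^k} \in 1 + p^{r+k}\Zp$, while $v_p(q^{p^{k-1}} - 1) = r + k - 1$ shows $q^{p^{k-1}} \notin 1 + p^{r+k}\Zp$. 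Hence $q$ generates $(1 + p^r\Zp)/(1 + p^{r+k}\Zp)$ for every $k \geq 0$, and passing to the inverse limit shows $\ov{\ip{q}} = 1 + p^r\Zp$.

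I do not expect any serious obstacle; the main point is simply invoking Lemma~\ref{lem-vp} at the right place. The one subtlety worth checking is that $(1 + p^r\Zp)/(1 + p^{r+k}\Zp)$ is genuinely cyclic of order $p^k$ for all $k \geq 1$, which is where the hypothesis $p > 2$ (and $r \geq 1$) enters through the $p$-adic logarithm; a direct verification via $\log$ avoids any sign or $2$-torsion issues. Alternatively, one could bypass the log and argue by induction on $k$, using that successive quotients $(1 + p^{r+k}\Zp)/(1 + p^{r+k+1}\Zp) \simeq \F_p$ are cyclic and that the image of $q^{p^{k-1}}$ generates each such layer by the valuation computation above.
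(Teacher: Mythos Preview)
Your proof is correct and essentially matches the paper's argument. The paper takes exactly your ``alternative'' route: it sets $U_j = 1 + p^{r+j}\Zp$, uses Lemma~\ref{lem-vp} to see that $q^{p^j}$ generates each layer $U_j/U_{j+1}\simeq\Z/p$, and then inducts to show $\ov{\Gm}\to U_0/U_j$ is surjective for all $j$ before passing to the inverse limit. Your primary approach via the $p$-adic logarithm is a mild reorganisation of the same idea (identifying cyclicity of the finite quotients in one stroke rather than layer by layer), and the key input from Lemma~\ref{lem-vp} is used identically.
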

\begin{proof}
 Put $U_j=1+p^{r+j}\Zp$, so $q\in U_0$ and $U_0$ is compact so
 $\ov{\Gm}\leq U_0$.  Note also that $U_j/U_{j+1}\simeq\Z/p$.
 Lemma~\ref{lem-vp} tells us that $q^{p^j}\in U_j\sm U_{j+1}$ so
 $q^{p^j}$ generates $U_j/U_{j+1}$.  It follows by induction that the
 map $\ov{\Gm}\to U_0/U_j$ is surjective for all $j$, and we can pass
 to the inverse limit using compactness to see that $\ov{\Gm}=U_0$.
\end{proof}

\begin{lemma}\lbl{lem-Gm-orbit}
 Suppose that $\al\in\Phi$.  Recall that $\Phi\simeq(\Z/p^\infty)^n$,
 so the order of $\al$ must be $p^t$ for some $t\geq 0$.  Consider the
 orbit $\Gm\al\subset\Phi$.
 \begin{itemize}
  \item[(a)] If $t\leq r=v_p(q-1)$, then $\Gm\al=\{\al\}$ and so
   $|\Gm\al|=1$.  
  \item[(b)] If $t\geq r$ then
   $\Gm\al=\{(1+p^rk)\al\st 0\leq k<p^{t-r}\}$ and so $|\Gm\al|=p^{t-r}$.
 \end{itemize}
\end{lemma}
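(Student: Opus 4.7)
The plan is to reduce the computation to the action of $\ov{\Gm}=1+p^r\Zp$ on $\al$, which is explicit and easy to analyse by tracking the order of $\al$.

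First, I would invoke Lemma~\ref{lem-Gm-bar} to replace $\Gm$ with $\ov{\Gm}=1+p^r\Zp$, since the $\Gm$-action on $\Phi$ factors through the quotient $\Gm\to\ov{\Gm}$ described in Remark~\ref{rem-Gm-bar}. Then for any $u=1+p^rk\in\ov{\Gm}$, the action on $\al$ is simply $u\cdot\al=\al+p^rk\al$. From this, $u\cdot\al=\al$ if and only if $p^rk\al=0$, that is, $p^t\mid p^rk$.

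For part~(a), if $t\leq r$ then $p^r\al=0$ already, so $p^rk\al=0$ for every $k\in\Zp$, and hence every element of $\ov{\Gm}$ fixes $\al$. Thus $\Gm\al=\{\al\}$.

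For part~(b), if $t\geq r$ then $p^rk\al=0$ iff $p^{t-r}\mid k$, so the stabiliser of $\al$ in $\ov{\Gm}$ is $1+p^t\Zp$, which has index $p^{t-r}$ in $\ov{\Gm}=1+p^r\Zp$. Consequently $|\Gm\al|=p^{t-r}$, and a complete set of coset representatives for the stabiliser is $\{1+p^rk\st 0\leq k<p^{t-r}\}$, yielding the desired description of the orbit. No serious obstacle arises; the only subtlety is the book-keeping with the $\Zp$-module structure on $\Phi\simeq(\Z/p^\infty)^n$, which is handled cleanly by writing the group law on $\Phi$ additively as in Remark~\ref{rem-Gm-bar}.
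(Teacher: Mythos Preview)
Your proposal is correct and follows exactly the approach the paper intends: the paper's proof simply reads ``Clear from Remark~\ref{rem-Gm-bar} and Lemma~\ref{lem-Gm-bar}'' (the printed self-reference to Lemma~\ref{lem-Gm-orbit} is evidently a typo), and you have spelled out precisely the stabiliser computation in $\ov{\Gm}=1+p^r\Zp$ that makes this clear.
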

\begin{proof}
 Clear from Remark~\ref{rem-Gm-bar} and Lemma~\ref{lem-Gm-orbit}.
\end{proof}

We next give a slightly different perspective on the semiring
$\Rep(\Tht^*)=\Rep(\Tht^*;F)$, by comparing it with the semiring
$\Rep(\Tht^*;\bF)=[\Tht^*,\bCV]$.  (Recall that this is a well-defined
algebraic object, even though the HKR theorem does not apply to
$\bCV$, because it is not hom-finite.)

\begin{proposition}\lbl{prop-Irr-Phi}
 There are natural isomorphisms
 \begin{align*}
   \Irr(\Tht^*;\bF) &\simeq \Phi &
   \Rep(\Tht^*;\bF) &\simeq \N[\Phi] \\
   \Irr(\Tht^*;F)   &\simeq \Phi/\Gm &
   \Rep(\Tht^*;F)   &\simeq \N[\Phi]^\Gm=\N[\Phi/\Gm].
 \end{align*}
\end{proposition}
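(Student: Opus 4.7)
The plan is to reduce to Lemma~\ref{lem-rep-th} applied to finite abelian $p$-group quotients of $\Tht^*$, combined with Galois descent. First consider $K=\bF$. By Definition~\ref{defn-functor-groupoid}, any object of $[\Tht^*,\bCV]$ factors through some quotient $\Tht^*/p^k\Tht^*$, a finite abelian $p$-group whose order is invertible in $\bF$. Lemma~\ref{lem-rep-th}(a, c) then gives semisimplicity with one-dimensional irreducibles over $\bF$, and a one-dimensional continuous representation is exactly an element $\al\in\Phi=\Hom(\Tht^*,\bF^\tm)$ in the sense of Definition~\ref{defn-Phi}. Unique decomposition of semisimples makes $\Rep(\Tht^*;\bF)$ into the free abelian monoid on $\Irr(\Tht^*;\bF)\simeq\Phi$, i.e.\ $\N[\Phi]$.

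For the $F$ case I construct a map $\Phi\to\Irr(\Tht^*;F)$ as follows. Given $\al\in\Phi$, let $R_\al\sse\bF$ be the $F$-subalgebra generated by $\al(\Tht^*)$; it is a finite subfield, and $\Tht^*$ acts on $V_\al\mathrel{:=}R_\al$ by multiplication via $\al$. The $F[\Tht^*]$-submodules of $V_\al$ are precisely the $R_\al$-submodules, so $V_\al$ is irreducible. Conversely, for any irreducible $F$-representation $V$, Lemma~\ref{lem-rep-th}(b) gives a finite field $R=\End_{\Tht^*}(V)$, a surjection $F[\Tht^*]\twoheadrightarrow R$, and a $\Tht^*$-action factoring through $R^\tm$; any embedding $R\hookrightarrow\bF$ turns this into an element of $\Phi$ whose associated rep recovers $V$ up to isomorphism. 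To see $V_\al\simeq V_\bt$ iff $\bt\in\Gm\al$, I base-change to $\bF$: the $\bF$ case gives $\bF\ot_FV_\al\simeq\bigoplus_{\gm\in\Gm\al}\bF_\gm$ (one summand per embedding $R_\al\hookrightarrow\bF$), so the isomorphism type of $V_\al$ determines the $\Gm$-orbit of $\al$; and any Galois automorphism sending $\al$ to $\bt$ restricts to an $F$-algebra isomorphism $R_\al\to R_\bt$ intertwining the actions.

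Combining these identifications gives $\Irr(\Tht^*;F)\simeq\Phi/\Gm$, and freeness of semisimple decomposition then yields $\Rep(\Tht^*;F)\simeq\N[\Phi/\Gm]$. For the final equality, each $\Gm$-orbit in $\Phi$ is finite by Lemma~\ref{lem-Gm-orbit}, so the map $\N[\Phi/\Gm]\to\N[\Phi]^\Gm$ sending an orbit $O$ to the formal sum $\sum_{\al\in O}\al$ is a well-defined bijection.

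The principal obstacle is the bookkeeping between the three descriptions of irreducibles (as characters $\al\in\Phi$, as Galois orbits, and via the endomorphism field $R$ from Lemma~\ref{lem-rep-th}(b)); once these are matched, the remaining steps are formal consequences of Galois descent and the free-monoid structure on a semisimple category.
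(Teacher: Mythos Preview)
Your proof is correct and follows essentially the same strategy as the paper's: both reduce the $\bF$ case to Lemma~\ref{lem-rep-th}, construct $V_\al$ as the $F$-span of $\al(\Tht^*)$ with its multiplication action, use Lemma~\ref{lem-rep-th}(b) to show every irreducible arises this way, and distinguish isomorphism classes by base-changing to $\bF$. The only presentational differences are that the paper explicitly identifies your $R_\al$ with $F(k)$ via Proposition~\ref{prop-F-span} and invokes Lemma~\ref{lem-twist-split} for the decomposition $\bF\ot_FV_\al\simeq\bigoplus_{\gm\in\Gm\al}\bF_\gm$, whereas you treat this as standard Galois theory of separable extensions; both are fine.
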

\begin{proof}
 First, the semiring $\Rep(\Tht^*;\bF)$ is by definition the union of
 the semirings $\Rep(\Tht^*/p^k;\bF)$.  As $\Tht^*/p^k$ is a finite
 group of order coprime to the characteristic of $\bF$,
 Lemma~\ref{lem-rep-th} is applicable.  This means that every
 representation can be decomposed in an essentially unique way as a
 direct sum of irreducibles, and the irreducibles biject with the
 homomorphisms $\Tht^*/p^k\to\bF^\tm$, or equivalently the elements of
 $\Phi\ip{p^k}$.  By taking the colimit over $k$, we obtain the
 claimed description of $\Irr(\Tht^*;\bF)$ and $\Rep(\Tht^*;\bF)$.

 Next, we have a semiring map $\xi\:\Rep(\Tht^*;F)\to\Rep(\Tht^*;\bF)$
 given by $[V]\mapsto [\bF\ot_FV]$.  Note that $\Gm$ acts on
 $\Rep(\Tht^*;\bF)$ by Definition~\ref{defn-galois-twist}, and the
 image of $\xi$ lies in $\Rep(\Tht^*;\bF)^\Gm$ by
 Remark~\ref{rem-untwisted}.  Consider a $\Gm$-orbit
 $C\subset\Phi$.  This is finite, with $|C|=p^t$ for some $t$, by
 Lemma~\ref{lem-Gm-orbit}.  We can therefore define
 $e_C=\sum_{\al\in C}[\al]\in\N[\Phi]^\Gm$.  It is clear that
 $\N[\Phi]^\Gm$ is the free abelian monoid on the elements of this
 form, so $\N[\Phi]^\Gm\simeq\N[\Phi/\Gm]$.

 Now consider an element $\al\in\Phi$, of order $p^t$ say.  This means
 that $\al(\Tht^*)=\mu_{p^t}(\bF)$.  Proposition~\ref{prop-F-span}
 therefore tells us that the induced map $\al_*\:F[\Tht^*/p^t]\to\bF$
 has image $F(k)$, where $k=\max(t-r,0)$.  Let $V_\al$ denote $F(k)$,
 regarded as an $F$-linear representation of $\Tht^*$ via $\al_*$.
 Because the map $\al_*\:F[\Tht_*/p^t]\to F(k)$ is surjective, we see
 that the subrepresentations of $V_\al$ are $F(k)$-submodules, and so
 are either $0$ or $V_\al$.  Thus, $V_\al$ is irreducible.
 Lemma~\ref{lem-twist-split} shows that $\xi([V_\al])=\sum_j[q^j\al]$
 in $\Rep(\Tht^*;\bF)^\Gm=\N[\Phi]^\Gm$.  This is just the basis
 element corresponding to the orbit $\Gm\al$.  This shows in
 particular that the map $\xi\:\Rep(\Tht^*;F)\to\N[\Phi/\Gm]$ is
 surjective. 

 Now note that if $\bt=\phi^m\al=q^m\al$ then $\bt$ also has order
 $p^t$, and the map $\phi^m\:F(k)\to F(k)$ gives an isomorphism of
 representations between $V_\al$ and $V_\bt$.  Conversely, if
 $V_\al\simeq V_\bt$ then $\bF\ot_FV_\al\simeq\bF\ot_FV_\bt$, so it
 follows from the previous paragraph that $\Gm\al=\Gm\bt$.

 Now let $W$ be an arbitrary irreducible $F$-linear representation of
 $\Tht^*$.  Let $K$ be the image of $F[\Tht^*]$ in $\End(W)$.  As
 $\Tht^*$ is commutative, $K$ consists of equivariant endomorphisms of
 $W$, and any such endomorphism is zero or invertible by Schur's
 Lemma.  Any invertible endomorphism must have finite multiplicative
 order, so the inverse is also in $K$.  Thus, $K$ is a finite field
 extension of $F$.  Any $K$-submodule of $W$ is a subrepresentation,
 and so is zero or $W$; so we must have $\dim_K(W)=1$.  Thus, any
 choice of embedding of $K$ in $\bF$ gives an element $\al$ with
 $W\simeq V_\al$.  The claim now follows.
\end{proof}

\begin{proposition}\lbl{prop-Nk}
 The set $\Irr_d(\Tht^*)$ is empty unless $d$ is a power of $p$.
 Moreover, we have $|\Irr_{p^k}(\Tht^*)|=N_k$ for all $k\geq 0$ (where
 $N_k$ is as in Definition~\ref{defn-CN}).
\end{proposition}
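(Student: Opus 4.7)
The plan is to use the description of $\Irr(\Tht^*;F)$ established in Proposition~\ref{prop-Irr-Phi} together with the orbit analysis of Lemma~\ref{lem-Gm-orbit}.

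Recall from the proof of Proposition~\ref{prop-Irr-Phi} that the bijection $\Phi/\Gm \to \Irr(\Tht^*;F)$ sends the orbit $\Gm\al$ to the class of $V_\al$, and moreover $V_\al \simeq F(k)$ where $k=\max(t-r,0)$ and $p^t$ is the order of $\al$ in $\Phi$. In particular $\dim_F V_\al = p^k$, so every irreducible has dimension a power of $p$, proving the first claim.

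For the count, fix $k \geq 0$ and consider the set $\Phi_k \sse \Phi$ consisting of elements $\al$ with $\dim_F V_\al = p^k$. When $k=0$, this is the full subgroup $\Phi[p^r]$, of order $|(\Z/p^r)^n| = p^{nr} = N_0$; by Lemma~\ref{lem-Gm-orbit}(a) each orbit is a singleton, so $|\Irr_1(\Tht^*)| = N_0$. When $k > 0$, $\Phi_k$ consists of elements of order exactly $p^{r+k}$, so $|\Phi_k| = |\Phi[p^{r+k}]| - |\Phi[p^{r+k-1}]| = p^{n(r+k)} - p^{n(r+k-1)}$; by Lemma~\ref{lem-Gm-orbit}(b) each $\Gm$-orbit in $\Phi_k$ has size $p^k$, so
\[
|\Irr_{p^k}(\Tht^*)| = \frac{p^{n(r+k)} - p^{n(r+k-1)}}{p^k} = p^{(n-1)k + n(r-1)}(p^n - 1) = N_k,
\]
as required.

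This is almost entirely a bookkeeping exercise once the key structural results are in hand; the only minor obstacle is keeping the exponents straight, and in particular verifying the algebraic identity $n(r+k)-k = (n-1)k + nr$ and extracting a factor of $p^n - 1$ to match the definition of $N_k$ in Definition~\ref{defn-CN}. No substantial new ideas are required beyond Proposition~\ref{prop-Irr-Phi} and Lemma~\ref{lem-Gm-orbit}.
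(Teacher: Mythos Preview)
Your proof is correct and follows essentially the same approach as the paper: both identify $\Irr_{p^k}(\Tht^*)$ with the set of $\Gm$-orbits in $\Phi$ of size $p^k$, use Lemma~\ref{lem-Gm-orbit} to see that these are precisely the orbits of elements of order exactly $p^{r+k}$ (or order dividing $p^r$ when $k=0$), and then perform the same count. The only cosmetic difference is that you phrase the first step via the dimension formula $\dim_F V_\al = p^{\max(t-r,0)}$ extracted from the proof of Proposition~\ref{prop-Irr-Phi}, whereas the paper appeals directly to the orbit-size description; these are equivalent.
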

\begin{proof}
 We have seen that $\Irr_d(\Tht^*)$ bijects with the set of
 $\Gm$-orbits of size $d$ in $\Phi$.  Lemma~\ref{lem-Gm-orbit} tells
 us that this is empty unless $d=p^k$ for some $k$.  The same lemma
 tells us that each $\al\in\Phi$ with $p^r\al=0$ gives an orbit
 $\{\al\}$, and these are all the orbits of size $1$.  As
 $\Phi\simeq(\Z/p^\infty)^n$, we see that the number of elements $\al$
 of this type is $p^{nr}=N_0$, as expected.  The lemma also tells us
 that each element of order precisely $p^{r+k}$ gives an orbit of size
 $p^k$.  The number of elements of order precisely $p^{r+k}$ is
 \[ p^{n(r+k)}-p^{n(r+k-1)} = (p^n-1)p^{nr+nk-n}
      = (p^n-1)p^{nk+n(r-1)}.
 \]
 to get the number of orbits, we divide by $p^k$, which gives $N_k$.    
\end{proof}

\begin{remark}
 This now gives us a useful consistency check.  The discussion above
 shows that $D'_0(\CV_*)$ is a polynomial ring over $D'$, with $N_k$
 generators in degree $p^k$, so the Poincar\'e series is
 \[ \sum_d\dim_{D'}(D'_0(\CV_d))t^d =
     \prod (1-t^{p^k})^{-N_k}.
 \]
 We also know that $E^\vee_0(B\CV_*)$ is free and of finite type as an
 $E_0$-module, and $D'_0(\CV_*)=D'\ot_{E_0}E^\vee_0(B\CV_*)$, so
 $E^\vee_0(B\CV_*)$ should have the same Poincar\'e series.  We saw
 this already in Proposition~\ref{prop-more-poly-rings}.
\end{remark}

We conclude this section with some results on the effect of the maps
$R\phi$ in generalised character theory.  These are slight extensions
to the results given in~\cite{st:kld}.  There we defined
$C\CG=\Q\{\pi_0\CG\}$.  Given $\phi\:\CG\to\CH$ we defined
$L\phi\:C\CG\to C\CH$ by $L\phi[a]=[\phi(a)]$ (where $[a]$ denotes the
basis element in $C\CG$ corresponding to the isomorphism class of
$a$).  Also, for $b\in\CH$ we defined
\[ (R\phi)[b] =
    \sum_{[a]|\phi(a)\simeq b} \frac{|\CH(b)|}{|\CG(a)|}[a].
\]
Theorem~\ref{thm-HKR} identifies $D'\hot_{E^0}E^\vee B\CG$ with
$D'\ot_{\Q}C[\Tht^*,\CG]$.  We proved in~\cite{st:kld} that this
identification is compatible with the constructions $L$ and $R$.  (The
statement for $L$ is clear by construction, and if $\phi$ is faithful,
then the statement for $R\phi$ is essentially contained
in~\cite{hokura:ggc}.) 

It will be convenient to give a slightly different formula for $R\phi$
in the case where $\phi$ is a fibration or a covering, as in the
following definition:
\begin{definition}\lbl{defn-covering}
 Consider a functor $\phi\:\CG\to\CH$ of groupoids.  We say that
 $\phi$ is a \emph{fibration} if for all $a\in\CG$ and all
 $h\:\phi(a)\to b'$ in $\CH$, there is an arrow $g\:a\to a'$ in $\CG$
 with $\phi(a')=b'$ and $\phi(g)=h$.  We say that $\phi$ is a
 \emph{covering} if the pair $(a',g)$ is always unique. 
\end{definition}

\begin{example}\lbl{eg-covering}
 As a typical example, consider the groupoids $\CV$ and $\CV^2$ and
 \[ \CW = \{(V;V_0,V_1)\st V\in\CV,\;V_0,V_1\leq V,\;
                V_0+V_1=V,\; V_0\cap V_1=0\}.
 \]
 The external direct sum operation gives a functor $\sg\:\CV^2\to\CV$
 that is not a fibration.  The construction $(V;V_0,V_1)\mapsto V$
 gives a functor $\sg'\:\CW\to\CV$, which is a covering.  The
 forgetful functor $\pi\:\CW\to\CV^2$ is an equivalence with
 $\sg\pi\simeq\sg'$, so $\sg$ and $\sg'$ are in some sense
 equivalent, in particular
 $R\sg=R\sg'\circ(R\pi)^{-1}=R\sg'\circ L\pi$.  The best way to
 understand $R\sg$ is to use this expression, combined with the
 description of $R\sg'$ given by Proposition~\ref{prop-fib-R} below. 
\end{example}

\begin{definition}\lbl{defn-fib-K}
 Suppose that $\phi\:\CG\to\CH$ is a fibration, and that $b$ is an
 object of $\CH$.  Consider the set of objects $a\in\CG$ with
 $\phi(a)=b$ (on the nose), and the morphisms $u\:a\to a'$ with
 $\phi(a)=\phi(a')=b$ and $\phi(u)=1_b$.  These give a category, which
 we denote by $\phi^{-1}\{b\}$.  For $a\in\phi^{-1}\{b\}$ we also put 
 \[ K_a = \ker(\phi\:\CG(a)\to\CH(b)) \]
 (which is trivial if $\phi$ is a covering). 
\end{definition}

\begin{proposition}\lbl{prop-fib-R}
 If $\phi\:\CG\to\CH$ is a fibration, then 
 \[ (R\phi)[b] = \sum_{[a]\in\pi_0(\phi^{-1}\{b\})} |K_a|^{-1}[a].\]
 In particular, if $\phi$ is a covering then 
 \[ (R\phi)[b] = \sum_{[a]\in\pi_0(\phi^{-1}\{b\})} [a].\]
\end{proposition}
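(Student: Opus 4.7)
The plan is to derive the simplified formula from the general formula
\[ (R\phi)[b] = \sum_{[a]\,|\,\phi(a)\simeq b} \frac{|\CH(b)|}{|\CG(a)|}[a] \]
stated at the start of the discussion preceding the proposition, by regrouping the sum according to isomorphism classes inside the strict fibre $\phi^{-1}\{b\}$. The fibration hypothesis is exactly what makes the strict fibre a good replacement for the "homotopy fibre" that $R\phi$ implicitly sees.

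First I would verify that the natural forgetful map
\[ \pi_0(\phi^{-1}\{b\}) \longrightarrow \{[a]\in\pi_0(\CG)\st \phi(a)\simeq b\} \]
is surjective: given any $a\in\CG$ with $\phi(a)\simeq b$, choose an isomorphism $h\:\phi(a)\to b$ in $\CH$ and lift it via the fibration property to get $g\:a\to a'$ with $\phi(a')=b$; then $[a']$ in $\pi_0(\phi^{-1}\{b\})$ maps to $[a]$. I would also note that the kernel $K_a$ depends only on the $\CG$-isomorphism class of $a$, since an isomorphism $g\:a\to a'$ in $\CG$ induces a conjugation isomorphism $\CG(a)\to\CG(a')$ compatible with $\phi$ up to conjugation in $\CH(b)$.

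The main step is to define an action of $\CH(b)$ on $\pi_0(\phi^{-1}\{b\})$ as follows: for $h\in\CH(b)$ and $a\in\phi^{-1}\{b\}$, use the fibration property to lift $h$ to some morphism $g\:a\to a'$ with $\phi(a')=b$, and set $h\cdot[a]=[a']$. The verifications I would carry out are: (i) well-definedness independent of the lift, because any two lifts $g_1,g_2\:a\to a_1,a_2$ differ by a morphism $g_2g_1^{-1}\:a_1\to a_2$ projecting to $1_b$; (ii) well-definedness on $\phi^{-1}\{b\}$-isomorphism classes by a similar lift-comparison argument; (iii) that it is an action; (iv) that the orbit of $[a]$ consists precisely of those $[a']\in\pi_0(\phi^{-1}\{b\})$ with $a'\cong_\CG a$ (using the fibration property once more to produce lifts from arbitrary $\CG$-isomorphisms); and (v) that the stabiliser of $[a]$ equals the image $\phi(\CG(a))\leq\CH(b)$, which has order $|\CG(a)|/|K_a|$.

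Orbit-stabiliser then gives that the number of classes in $\pi_0(\phi^{-1}\{b\})$ lying over a fixed $[a]$ equals $|\CH(b)||K_a|/|\CG(a)|$, so regrouping the desired sum yields
\[ \sum_{[a]\in\pi_0(\phi^{-1}\{b\})}|K_a|^{-1}[a]
   = \sum_{[a]\,|\,\phi(a)\simeq b}\frac{|\CH(b)||K_a|}{|\CG(a)|}\cdot|K_a|^{-1}[a]
   = (R\phi)[b], \]
establishing the formula; the covering case is the specialisation $|K_a|=1$. The main obstacle is bookkeeping around the well-definedness of the $\CH(b)$-action on $\pi_0(\phi^{-1}\{b\})$, since lifts in a fibration are not unique and one must check that all ambiguity is absorbed by passage to $\phi^{-1}\{b\}$-isomorphism classes.
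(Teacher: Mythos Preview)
Your proposal is correct and takes essentially the same approach as the paper. Both arguments count the classes in $\pi_0(\phi^{-1}\{b\})$ lying over a fixed $\CG$-isomorphism class $[a_i]$ by identifying them with the coset space $\CH(b)/\phi(\CG(a_i))$; the paper does this by writing down an explicit bijection $\dl\:\pi_0\CB_i\to\CH(b)/\phi(\CG(a_i))$, while you package the same computation as an $\CH(b)$-action together with the orbit--stabiliser theorem, but the underlying counting is identical.
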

\begin{proof}
 Let $\CA$ be the full subcategory of objects $a\in\CG$ with
 $\phi(a)=b$.  We will also write $\CB$ for $\phi^{-1}\{b\}$, so $\CB$
 has the same objects as $\CA$, but fewer morphisms.  Because $\phi$
 is a fibration, every isomorphism class that maps to $[b]$ has a
 representative in $\CA$.  Thus, from the original definition,
 $(R\phi)[b]$ can be written as a sum over $\pi_0\CA$.  Choose a list
 $a_1,\dotsc,a_m$ containing precisely one representative of each
 isomorphism class in $\CA$.  From the original definition of $R\phi$,
 we have 
 \[ (R\phi)[b]=\sum_i|\CH(b)|^{-1}|\CH(a_i)|[a_i]. \]
 Now let $\CB_i$ be the full subcategory of $\CB$ consisting of
 objects $a'$ such that $a'\simeq a_i$ in $\CA$, so
 $\CB=\coprod_i\CB_i$.  Put
 $H_i=\phi(\CG(a_i))\simeq\CG(a_i)/K_{a_i}$ and
 $Q_i=\CH(b)/H_i$, so
 $|Q_i|=|K_{a_i}||\CH(b)||\CG(a_i)|^{-1}$.  For $a'\in\CB_i$ we
 can choose $u\in\CG(a_i,a')$, and then get $\phi(u)\in\CH(b)$.  The
 coset $\phi(u)H_i$ is easily seen to depend only on
 the isomorphism class of $a'$ in $\CB_i$, so this construction gives
 a map $\dl\:\pi_0\CB_i\to Q_i$.  Given a coset represented by
 $v\in\CH(b)$, the fibration property gives us an object $a'\in\CG$
 and a morphism $u\:a_i\to a'$ with $\phi(a')=b$ and $\phi(u)=v$; it
 follows that $a'\in\CB_i$ and $\dl[a']=vH_i$.  On the other hand, if
 $a',a''\in\CB_i$ with $\dl[a']=\dl[a'']$ then we can choose
 $u\:a_i\to a'$ and $w\:a_i\to a''$ and $m\:a_i\to a_i$ with
 $\phi(u)=\phi(w)\phi(m)$, which means that $wmu^{-1}\:a'\to a''$ is an
 isomorphism in $\CB_i$.  We see from this that $\dl$ is a bijection.
 It follows that $[a_i]=|Q_i|^{-1}\sum_{[a]\in\pi_0\CB_i}[a]$.  Using
 the above formula for $|Q_i|$, the $i$'th term in $(R\phi)[b]$ now
 becomes $\sum_{[a]\in\CB_i}|K_{a_i}|^{-1}[a]$, and it is also easy to
 see here that $K_a$ is conjugate to $K_{a_i}$, so we can replace
 $|K_{a_i}|$ by $|K_a|$.  Taking the sum over $i$ now gives the
 formula 
 \[ (R\phi)[b] = \sum_{[a]\in\pi_0(\phi^{-1}\{b\})} |K_a|^{-1}[a] \]
 as claimed.
\end{proof}

To apply this in HKR theory, we also need the following:

\begin{proposition}\lbl{prop-fib-star}
 If $\phi\:\CG\to\CH$ is a fibration, then so is the induced functor
 $\phi_*\:[\Tht^*,\CG]\to[\Tht^*,\CH]$.  Similarly, if $\phi$ is a
 covering, then so is $\phi_*$.
\end{proposition}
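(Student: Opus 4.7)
The plan is to unwind both definitions and construct the required lift by transporting the homomorphism along the given morphism via conjugation.

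Suppose $\phi\:\CG\to\CH$ is a fibration. Consider an object $(a,\al)\in[\Tht^*,\CG]$ and a morphism in $[\Tht^*,\CH]$ from $\phi_*(a,\al)=(\phi(a),\phi\al)$ to some $(b',\bt')$, i.e.\ an arrow $h\:\phi(a)\to b'$ in $\CH$ satisfying $\bt'(x)=h\,\phi\al(x)\,h^{-1}$ for all $x\in\Tht^*$. First, I would use the fibration property of $\phi$ to lift $h$ to an arrow $g\:a\to a'$ in $\CG$ with $\phi(a')=b'$ and $\phi(g)=h$. Then I would define $\al'\:\Tht^*\to\CG(a')$ by the conjugation formula $\al'(x)=g\,\al(x)\,g^{-1}$. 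This is plainly a homomorphism, and if $\al(p^k\Tht^*)=1$ then $\al'(p^k\Tht^*)=1$ as well, so $(a',\al')\in[\Tht^*,\CG]$. By construction $g$ satisfies $\al'(x)=g\al(x)g^{-1}$, so it is a morphism $(a,\al)\to(a',\al')$ in $[\Tht^*,\CG]$, and applying $\phi$ gives $\phi\al'(x)=h\,\phi\al(x)\,h^{-1}=\bt'(x)$, so $\phi_*(a',\al')=(b',\bt')$ and $\phi_*(g)=h$.

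For the covering case, I would argue uniqueness. Given any other lift $((a'',\al''),g')$ with $\phi(a'')=b'$, $\phi(g')=h$, and $g'\:(a,\al)\to(a'',\al'')$ a morphism in $[\Tht^*,\CG]$, the covering property applied to $\phi$ forces $(a'',g')=(a',g)$. The requirement that $g'$ be a morphism in $[\Tht^*,\CG]$ then forces $\al''(x)=g'\al(x)(g')^{-1}=\al'(x)$, so the lift is unique.

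There is no real obstacle here; the only thing to keep in mind is that one must produce both the target object and the homomorphism, and the conjugation formula is the only natural candidate. The step that could plausibly go wrong in a less favourable setup --- namely checking that the new homomorphism still factors through a finite $p$-quotient of $\Tht^*$ --- is automatic because conjugation preserves the kernel containment $\al(p^k\Tht^*)=1$.
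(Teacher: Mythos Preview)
Your proof is correct and follows essentially the same approach as the paper: lift the morphism using the fibration property of $\phi$, transport the homomorphism by conjugation, and then verify uniqueness in the covering case by first forcing the underlying arrow to agree and then deducing that the conjugated homomorphism is determined. The only addition you make is the explicit remark that the factorisation through $\Tht^*/p^k$ is preserved under conjugation, which the paper leaves implicit.
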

\begin{proof}
 Any object in $[\Tht^*,\CG]$ has the form $(a,\al)$, where $a\in\CG$
 and $\al\:\Tht^*\to\CG(a)$.  Suppose we have such an object,
 together with an isomorphism $v\:(\phi(a),\phi\circ\al)\to(b,\bt)$ in
 $[\Tht^*,\CH]$.  Explicitly, this means that $v\:\phi(a)\to b$ in
 $\CH$, and that $\bt(\tht)=v.\phi(\al(\tht)).v^{-1}$ for all
 $\tht\in\Tht^*$.  Because $\phi$ is a fibration, we can choose
 $u\:a\to a'$ in $\CG$ such that $\phi(a')=b$ and $\phi(u)=v$.  We now
 define $\al'\:\Tht^*\to\CG(a',a')$ by
 $\al'(\tht)=u.\al(\tht).u^{-1}$.  Now $u$ can be regarded as a
 morphism $(a,\al)\to(a',\al')$ in $[\Tht^*,\CH]$, and we have
 $\phi_*(a',\al')=(b,\bt)$ and $\phi_*(u)=v$.  This is what is needed
 to show that $\phi_*$ is a fibration.  If $\phi$ is a covering then
 the choice of $(a',u)$ is unique, and it follows easily that the
 choice of lift for $\phi_*$ is also unique; we deduce that $\phi_*$
 is also a covering. 
\end{proof}

\begin{example}\lbl{eg-fib-star}
 Consider again the object
 $D^0(B\CV_k)\simeq\Map(\Rep_k(\Tht^*),D_0)$.  This has a ring
 structure arising from the diagonal map of the space $B\CV_k$; this
 just corresponds to the pointwise product of functions
 $\Rep_d(\Tht^*)\to D_0$, which will just be denoted by juxtaposition,
 so $(f_0f_1)(V)=f_0(V)f_1(V)$.  The direct sum functor
 $\sg\:\CV_i\tm\CV_j\to\CV_{i+j}$ gives a different kind of product 
 \[ \Map(\Rep_i(\Tht^*),D_0) \ot 
    \Map(\Rep_j(\Tht^*),D_0) \to
    \Map(\Rep_{i+j}(\Tht^*),D_0),
 \]
 which we denote by $f_0\tm f_1$.  Using the above propositions
 together with Example~\ref{eg-covering} we find that
 \[ (f_0\tm f_1)(V) = \sum_{V=V_0\oplus V_1} f_0(V_0)f_1(V_1). \]
 In more detail, this should be interpreted as follows.  We have a
 representation $V$ of $\Tht^*$ over $F$, and $(f_0\tm f_1)(V)$ refers
 to the value of $f_0\tm f_1$ on the isomorphism class of $V$.  The
 right hand side of the formula refers to the set of pairs
 $(V_0,V_1)$, where each $V_i$ is a subrepresentation of $V$, and
 $V_0\cap V_1=0$ and $V_0+V_1=V$.  (In particular, we have a sum over
 actual subrepresentations, not over isomorphism classes.)
\end{example}

\begin{remark}\lbl{rem-chi-V}
 As we have mentioned previously, the theory developed
 in~\cite{st:kld} gives a natural inner product on $E^\vee_0(B\CV_d)$,
 which allows us to identify it with $E^0(B\CV_d)$.  This gives rise
 to an inner product on $D'_0(\CV_d)=D_0\{\Rep_d(\Tht^*)\}$.  This is
 just given by $\ip{[W],[V]}=|\Iso(W,V)|$, where $\Iso(W,V)$ is the
 number of $\Tht^*$-equivariant $F$-linear isomorphisms from $W$ to
 $V$.  The standard product on $D'_0(\CV_*)$ is given by
 $[V_0][V_1]=[V_0\oplus V_1]$, and the convolution product on
 $(D')^0(\CV)$ is obtained from this by duality.  In other words, the
 isomorphism $D'_0(\CV_*)\to(D')^0(\CV_*)$ sends $[W]$ to the function
 $\chi_W\:\Rep(\Tht^*)\to\Z\subset D_0$ given by
 \[ \chi_W(V) = \ip{W,V} = |\Iso(W,V)|. \]
 Either by chasing through the duality isomorphisms, or by direct
 analysis of the formula in Example~\ref{eg-fib-star}, we find that
 $\chi_{W_0}\tm\chi_{W_1}=\chi_{W_0\oplus W_1}$.  Thus,
 $(D')^0(\CV_*)$ is polynomial on $\{\chi_S\st S\in\Irr(\Tht^*)\}$.
 This analysis also gives a natural isomorphism
 \[ D'\ot_{E^0} \Ind_{p^k}(\HH) \to \Map(\Irr_{p^k}(\Tht^*),D'). \]
\end{remark}

\begin{remark}\lbl{rem-HKR-coproduct}
 As discussed in Definition~\ref{defn-various-products}, the direct
 sum functor $\sg\:\CV^2\to\CV$ also gives a coproduct on
 $E^0(B\CV)$.  In generalised character theory, this corresponds to a
 map
 \[ \Map(\Rep_*(\Tht^*),D_0) \to \Map(\Rep_*(\Tht^*)^2,D_0). \]
 This is just given by $\sg^*(f)(V_0,V_1)=f(V_0\oplus V_1)$, and the
 counit is $f\mapsto f(0)$.
\end{remark}

\section{Indecomposables in \texorpdfstring{$E^0(B\CV)$}{E0(BV)}}
\lbl{sec-indec}

We now return to consideration of the ring $\Ind_{p^k}(E^0(B\CV_*))$,
and the associated formal scheme.

\begin{definition}\lbl{defn-IJ}
 In this section, we fix $k>0$ and put $G=GL_{p^k}(F)$ and
 $H=GL_{p^{k-1}}(F)^p<G$.  Using Lemma~\ref{lem-ind-degrees}, we put
 \begin{align*}
   R &= E^0(BG) \\
   I &= \Prim_{p^k}(E^0(B\CV_*)) = \ker(\res^G_H\:E^0(BG)\to E^0(BH)) \\
   J &= \Dec_{p^k}(E^0(B\CV_*)) = \img(\tr_H^G\:E^0(BH)\to E^0(BG)),
 \end{align*}
 so $R/J=\Ind_{p^k}(E^0(B\CV_*))$.
\end{definition}

We will use generalised character theory to help analyse the above
objects.  Recall that $D'$ is a flat extension of $E^0$, and
that the map $E^0\to D'$ is injective.  This justifies some implicit
identifications in the following definition.
\begin{definition}\lbl{defn-IJ-prime}
 We put
 \begin{align*}
   R' &= D'\ot_{E^0}R = (D')^0(\CV_{p^k}) =
           \Map(\Rep_{p^k}(\Tht^*),D') \\
   I' &= D'\ot_{E^0}I = \ker(\res\:(D')^0(G)\to (D')^0(H)) \\
   J' &= D'\ot_{E^0}J = \img(\tr\:(D')^0(H)\to (D')^0(G)).
 \end{align*}
\end{definition}

\begin{remark}\lbl{rem-IJ-prime}
 We can split $\Rep_{p^k}(\Tht^*)$ as $X\amalg Y$, where $X$ is the
 set of irreducibles, and $Y$ is the set of reducibles.  We find that
 \begin{align*}
   R' &= \Map(X,D')\tm\Map(Y,D') \\
   I' &= \Map(X,D')\tm 0 \\
   J' &= 0 \tm \Map(Y,D').
 \end{align*}
 (For the third statement, it is helpful to recall
 Remark~\ref{rem-chi-V}.) 
\end{remark}

We next want to discuss the annihilators of $I$ and $J$.  We will use
Lemma~\ref{lem-ann} without comment.

\begin{proposition}\lbl{prop-IJ}
 Both $I$ and $J$ are $E^0$-module summands in $R$, and they are
 annihilators of each other.  Thus, we have $\dim(I)+\dim(J)=\dim(R)$,
 so $\dim(I)=\dim(R/J)$ and $\dim(J)=\dim(R/I)$.
\end{proposition}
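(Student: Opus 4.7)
The plan is to deduce both claims from Proposition \ref{prop-res-tr} applied to the inclusion $\phi\:BH\to BG$ of one-object groupoids, so that $\phi^*=\res^G_H$ and $\phi_!=\tr_H^G$, giving $\ker(\phi^*)=I$ and $\img(\phi_!)=J$. Part~(c) of that proposition already yields $I=J^\perp$ with no extra hypothesis; to obtain the reverse equality $J=I^\perp$ and the summand properties, I need to verify the hypothesis of part~(d), namely that one of $\img(\phi^*)$ or $\img(\phi_!)=J$ is an $E^0$-module summand.

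I will verify that $J$ is a summand in $R$. By Corollary~\ref{cor-EV-free}, the module $R=E^0(BG)=E^0(B\CV_{p^k})$ is free of finite rank over $E^0$. By Proposition~\ref{prop-more-poly-rings}, the quotient $R/J=\Ind_{p^k}(E^0(B\CV_*))$ is free over $E^0$ of rank $N_k$. Since $R/J$ is free hence projective, the short exact sequence $0\to J\to R\to R/J\to 0$ splits as a sequence of $E^0$-modules, exhibiting $J$ as a summand in $R$.

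With the hypothesis verified, Proposition~\ref{prop-res-tr}(d) gives that $I$ and $J$ are both $E^0$-module summands of $R$, with $I^\perp=J$ and $J^\perp=I$. Now $I$ is an ideal because it is the kernel of a ring map, and $J$ is an ideal by Frobenius reciprocity (as already noted in Definition~\ref{defn-Ind-Prim}). Therefore Lemma~\ref{lem-ann}(b) gives $\ann(I)=I^\perp=J$ and $\ann(J)=J^\perp=I$, which is the annihilator statement.

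For the dimension identity, the perfect pairing $\ip{f,g}=\ep_!(fg)$ on $R$ identifies $I=J^\perp$ with the $E^0$-linear dual of $R/J$, so $\dim_{E^0}(I)=\dim_{E^0}(R/J)$; symmetrically, $\dim_{E^0}(J)=\dim_{E^0}(R/I)$. Since $J$ is a summand, $\dim_{E^0}(R)=\dim_{E^0}(J)+\dim_{E^0}(R/J)=\dim_{E^0}(J)+\dim_{E^0}(I)$, as required. The only substantive input is the freeness of $R/J$ over $E^0$, which has already been established via the Atiyah-Hirzebruch analysis of Section~\ref{sec-ahss}; the rest is formal manipulation of the duality machinery set up in Section~\ref{sec-ann}, so I do not anticipate any further obstacle.
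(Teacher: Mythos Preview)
Your proof is correct, and the overall shape matches the paper's: both begin by observing that $J$ is a summand because $E^0(B\CV_*)$ is a polynomial ring (equivalently, $R/J=\Ind_{p^k}$ is free).  From there, however, the arguments diverge.  You invoke Proposition~\ref{prop-res-tr}(c) to obtain $I=\ker(\phi^*)=\img(\phi_!)^\perp=J^\perp$ for free, and then Proposition~\ref{prop-res-tr}(d), with the summand hypothesis verified, to upgrade the inclusion $J\leq I^\perp$ to an equality; Lemma~\ref{lem-ann}(b) converts $\perp$ to $\ann$.  The paper instead bypasses Proposition~\ref{prop-res-tr} entirely at this point and proves $\ann(J)=I$ by a direct generalised character computation: it passes to $R'=D'\ot_{E^0}R=\Map(\Rep_{p^k}(\Tht^*),D')$, splits $\Rep_{p^k}(\Tht^*)$ into irreducibles $X$ and reducibles $Y$, and uses Remark~\ref{rem-IJ-prime} to identify $I'$ and $J'$ as the two factors, so that $fJ=0\iff f'J'=0\iff f'|_Y=0\iff f\in I$.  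Your route is cleaner and stays entirely within the abstract duality machinery of Section~\ref{sec-ann}; the paper's route is more explicit and has the side benefit of making the irreducible/reducible decomposition visible, which is used again later in Section~\ref{sec-indec}.  Both are valid.
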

\begin{proof}
 As $E^0(B\CV_*)$ is a polynomial algebra over $E^0$, it follows
 easily that the decomposables form an $E^0$-module summand, so $J$ is
 an $E^0$-module summand in $R$.  Duality theory therefore tells us
 that $\ann(J)$ is also a summand, and that $\ann^2(J)=J$.  Consider
 an element $f\in E^0(BG)$, and let $f'$ be the image in $(D')^0(G)$.
 Because $E^0(BG)$ injects in $(D')^0(G)$ and similarly for $H$, we
 see that $fJ=0$ iff $f'J'=0$ iff $f'|_X=0$ iff $(f|_H)'=0$ iff
 $f|_H=0$ iff $f\in I$, so $\ann(J)=I$.  The claims about dimensions
 follow easily.
\end{proof}

\begin{definition}\lbl{defn-IJ-bar}
 We let $\mxi=(u_0,\dotsc,u_{n-1})$ denote the maximal ideal in $E^0$,
 and put $\bR=R/\mxi R$ and $\bI=I/\mxi I$ and $\bJ=J/\mxi J$.
\end{definition}

\begin{lemma}\lbl{lem-IJ-bar}
 \begin{align*}
  \bR &= K^0(BG) \\
  \bI &= \ker(\res^G_H\:K^0(BG)\to K^0(BH)) = \ann(\bJ) \\
  \bJ &= \img(\tr_H^G\:K^0(BH)\to K^0(BG)) = \ann(\bI).
 \end{align*}
\end{lemma}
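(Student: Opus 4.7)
The plan is to derive each claim from the corresponding $E$-theoretic statement by reducing modulo $\mxi$, using that the modules in question are free over $E^0$ so tensoring with $K^0=E^0/\mxi$ is exact on them. First, Corollary~\ref{cor-EV-free} gives $\bR=E^0(BG)/\mxi=K^0(BG)$ immediately. Second, $\bJ=\img(\tr_H^G\:K^0(BH)\to K^0(BG))$ follows from the definition $J=\img(\tr_H^G\:E^0(BH)\to R)$ and the right exactness of $-\otimes_{E^0}K^0$.

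The key step is identifying $\bI$ with $\ker(\res\:K^0(BG)\to K^0(BH))$. I would use Proposition~\ref{prop-IJ} to see that $J$ is an $E^0$-summand of $R$, and then invoke Proposition~\ref{prop-res-tr}(d), applied to the inclusion $H\hookrightarrow G$, to deduce that $\img(\res\:R\to E^0(BH))$ is a summand of $E^0(BH)$ and hence in particular a free $E^0$-module. The short exact sequence $0\to I\to R\to\img(\res)\to 0$ therefore splits over $E^0$, and remains short exact after reducing modulo $\mxi$; injectivity of $\img(\res)/\mxi \to K^0(BH)$ then gives $\bI=\ker(\res\:K^0(BG)\to K^0(BH))$.

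For the annihilator identities, the inclusions $\bI\subseteq\ann(\bJ)$ and $\bJ\subseteq\ann(\bI)$ follow from $IJ=0$ in $R$ together with the injectivity of $\bI,\bJ\hookrightarrow\bR$ (again using that $I,J$ are summands, so $\mxi I=I\cap\mxi R$ and similarly for $J$). For equality I would argue by dimension count. By Remark~\ref{rem-even} and Corollary~\ref{cor-EV-free}, the Frobenius form on $\bR=K^0(BG)$ is a perfect $K^0$-bilinear pairing, and $\bJ$ is an ideal of $\bR$ (being the image of the ideal $J$), so $\ann(\bJ)=\bJ^\perp$ has $K^0$-dimension $\dim\bR-\dim\bJ$. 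Proposition~\ref{prop-IJ} gives $\dim_{E^0}I+\dim_{E^0}J=\dim_{E^0}R$, and since all three are free over $E^0$ their ranks equal the $K^0$-dimensions of the mod-$\mxi$ reductions; thus $\dim\bI=\dim\ann(\bJ)$ and the inclusion is forced to be an equality. The identity $\bJ=\ann(\bI)$ is symmetric.

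The main obstacle is the step in the second paragraph: everything hinges on applying Proposition~\ref{prop-res-tr}(d) to conclude that the image of the restriction map is a summand of $E^0(BH)$, which is what ensures that reducing the short exact sequence modulo $\mxi$ does not lose injectivity at the right end. Once that is in hand, the rest is bookkeeping with the freeness of $R$, $I$, $J$ over $E^0$ and the perfect pairing on $K^0(BG)$.
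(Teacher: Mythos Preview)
Your proposal is correct and follows essentially the same approach as the paper: reduce everything modulo $\mxi$ using that $I$ and $J$ are $E^0$-summands in $R$, then use the Frobenius algebra structure of $\bR$ for the dimension count on annihilators. The only cosmetic difference is that where you invoke Proposition~\ref{prop-res-tr}(d) to get that $\img(\res)$ is a summand, the paper instead argues directly that since $J=\img(\tr)$ is a summand the transfer factors as a split epimorphism followed by a split monomorphism, and then transports this factorisation to $\res$ by adjointness; both routes yield the same conclusion that tensoring with $K^0$ preserves the kernel and image of $\res$.
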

\begin{proof}
 As $E^0(BG)$ is free and $E^1(BG)=0$, it is standard that
 $K^0(BG)=K^0\ot_{E^0}E^0(BG)=\bR$.  Similarly,
 $K^0(BH)=K^0\ot_{E^0}E^0(BH)$.  We have seen that $I$ and $J$ are
 summands in $R$, and so are free modules over $E^0$ of ranks $r_I$
 and $r_J$ say.  As $I$ and $J$ are annihilators of each other, we
 have $r_I+r_J=\dim_{E^0}(R)$.  It follows easily that $\bI$ and $\bJ$
 are summands in $\bR$, with ranks $r_I$ and $r_J$ over $K^0$.  As
 $IJ=0$ we have $\bI\,\bJ=0$, so $\bJ\leq\ann(\bI)$.  However, $\bR$
 is a Frobenius algebra, so 
 \[ \dim_{K^0}(\ann(\bI))=\dim_{K^0}(\bR)-\dim_{K^0}(\bI)=
     \dim_{K^0}(\bJ),
 \]
 so $\ann(\bI)=\bJ$.  Similarly, we have $\ann(\bJ)=\bI$.

 Next, as $J$ is a summand in $R$, we see that $\tr_H^G$ can be
 written as a split epimorphism of $E^0$-modules, followed by a split
 monomorphism.  The restriction map $\res^G_H$ is adjoint to $\tr_H^G$
 with respect to the standard inner products, and it follows that it
 can also be factored in the same way.  This means that the functor
 $K^0\ot_{E^0}(-)$ preserves the kernel, image and cokernel of
 $\tr_H^G$ and of $\res^G_H$, so $\bI$ is the kernel of the Morava
 $K$-theory restriction map, and $\bJ$ is the image of the Morava
 $K$-theory transfer.
\end{proof}

\begin{definition}\lbl{defn-Q}
 Put $C=GL_1(F(k))<G$, so $C$ is cyclic of order $m=q^{p^k}-1$.  We
 then have a restriction map 
 \[ E^0(BG) \to E^0(BC) = E^0\psb{x}/[m](x). \]
 Here $[m](x)$ is a unit multiple of $[p^{k+r}](x)$, so it is also a
 unit multiple of a Weierstrass polynomial of degree $p^{n(k+r)}$, as
 in Proposition~\ref{prop-p-series}.  The above ring map corresponds
 to a map of formal schemes  
 \[ \{a\in\HH\st q^{p^k}a=a\} \to \Div_{p^k}^+(\HH)^\Gm. \]
 Using Lemma~\ref{lem-twist-split}, we see that this is just 
 \[ a \mapsto \sum_{i=0}^{p^k-1} [q^ia]. \]

 Now put $m'=m/p$, so $[m](t)=[p]([m'](t))$.  As usual, we put
 $\ip{p}(t)=[p](t)/t$, which is a Weierstrass series of degree
 $p^n-1$.  We also put  
 \[ \tQ = E^0\psb{x}/(\ip{p}([m'](x))), \]
 so $\spf(\tQ)\subset\HH$ is the divisor of points of exact order
 $p^{k+r}$.

 The ring $\tQ$ has an action of the Galois group $\Gm$, satisfying
 $\phi^*(x)=[q](x)$.  We put $Q=\tQ^\Gm$, and we put
 \[ s = \prod_{i=0}^{p^k-1}[q^i](x) \in Q. \]

 It is easy to see that the composite
 $E^0(BG)\xra{\res}E^0(BC)\xra{}\tQ$ is $\Gm$-invariant and so lands
 in $Q$.  We define $\al$ to be the resulting map $R=E^0(BG)\to Q$.
\end{definition}

\begin{remark}\lbl{rem-Q}
 The $p$-torsion subgroup of $C$ is cyclic of order $p^{r+k}$.  Recall
 from Corollary~\ref{cor-cyclic-subgroup} that there is a unique
 conjugacy class of such subgroups in $G$, or in any subgroup of $G$
 that contains the monomial matrices.  This is in some sense the real
 reason for the importance of $C$ and $Q$, although that is not
 completely visible in our current approach.
\end{remark}

\begin{proposition}\lbl{prop-Q-regular}
 There is a Weierstrass polynomial $g_k(t)$ of degree $N_k$ such that 
 \[ Q = E^0\psb{s}/g_k(s) = E^0\{s^i\st 0\leq i<N_k\}. \]
 Moreover, both $\tQ$ and $Q$ are complete regular local Noetherian
 rings (so in particular they are unique factorisation domains and are
 Gorenstein). 
\end{proposition}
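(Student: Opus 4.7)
The plan is to analyse $\tQ$ first via Weierstrass preparation and an explicit computation of its cotangent space, then descend to $Q$ using the Galois action, and finally verify regularity of $Q$ by computing a norm.

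For the structure of $\tQ$: reducing $\ip{p}([m'](x))$ modulo $\mxi$, the height-$n$ condition gives $\ip{p}(t)\equiv t^{p^n-1}$ to leading order, and since $m' = q_1 p^{r+k-1}$ with $q_1\in\Zp^\tm$, the series $[m'](x)$ reduces to a unit multiple of $x^{p^{n(r+k-1)}}$ to leading order. Composing, $\ip{p}([m'](x))$ has leading monomial a unit multiple of $x^{(p^n-1)p^{n(r+k-1)}}=x^{p^kN_k}$ by Lemma~\ref{lem-N-identities}(A), so Weierstrass preparation gives that $\tQ$ is free of rank $p^kN_k$ over $E^0$ with $\tQ/\mxi\cong\Fp[x]/x^{p^kN_k}$. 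For regularity of $\tQ$: reducing the defining relation modulo the square of the maximal ideal $\mathfrak{n}=(u_0,\dotsc,u_{n-1},x)$ of $E^0\psb{x}$ collapses all higher-degree contributions, leaving only $\ip{p}(0)=p=u_0$. Hence in the cotangent space of $\tQ$ the relation $u_0\equiv 0$ holds, so the cotangent space has dimension $n$ (spanned by $u_1,\dotsc,u_{n-1},x$), matching the Krull dimension.

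For the structure of $Q$: by Lemma~\ref{lem-vp}, $v_p(q^{p^j}-1)=r+j$, so $\phi$ acts on the primitive $p^{r+k}$-torsion with exact order $p^k$, factoring through a cyclic group $C\simeq\Z/p^k$. Since $q^i\equiv 1\pmod p$, each series $[q^i](x)$ reduces modulo $\mxi$ to $x(1+O(x^{p^n-1}))$, so $s=\prod_{i=0}^{p^k-1}[q^i](x)$ reduces to $x^{p^k}(1+O(x^{p^n-1}))$ in $\tQ/\mxi$. Consequently, the powers $\{s^i:0\le i<N_k\}$ have distinct leading monomials $x^{ip^k}$ (all strictly below $p^kN_k=\dim_{\Fp}(\tQ/\mxi)$), so are $\Fp$-linearly independent in $\tQ/\mxi$, and a fortiori in $Q/\mxi Q$. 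Generalised character theory (Proposition~\ref{prop-Nk}) identifies $D'\otimes_{E^0}Q$ with a product of $N_k$ copies of $D'$, indexed by $\Gm$-orbits of primitive $p^{r+k}$-torsion. Combined with standard invariant theory for the faithful action of the finite cyclic group $C$ on the regular local domain $\tQ$, which gives that $\tQ$ is free over $Q$ of rank $p^k$, this forces $Q$ to be a direct $E^0$-summand of $\tQ$ and hence free of rank $N_k$ over $E^0$. By Nakayama, $\{s^i\}$ is an $E^0$-basis of $Q$, and expressing $s^{N_k}$ in this basis yields a monic polynomial $g_k$ of degree $N_k$; its non-leading coefficients lie in $\mxi$ because $s^{N_k}\equiv 0$ in $\tQ/\mxi$, making $g_k$ Weierstrass and giving $Q=E^0\psb{s}/g_k(s)$.

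Finally, for regularity of $Q$: since $Q$ is a quotient of the regular local ring $E^0\psb{s}$ (of dimension $n+1$) by the principal ideal $(g_k(s))$, it has Krull dimension $n$ and is regular iff $g_k(s)\notin\mathfrak{n}^2$, where now $\mathfrak{n}=(u_0,\dotsc,u_{n-1},s)$. For $N_k\ge 2$ all non-constant terms of $g_k(s)$ lie in $\mathfrak{n}^2$, so the claim reduces to $a_0\notin\mxi^2$. By transitivity of the norm, $a_0=(-1)^{N_k}N_{Q/E^0}(s)=(-1)^{N_k}N_{\tQ/E^0}(x)$, and this equals (up to sign and a unit) the constant term of the Weierstrass polynomial equivalent to $\ip{p}([m'](x))$. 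Evaluating the defining series at $x=0$ gives $\ip{p}(0)=p=u_0$, so $a_0$ is a unit multiple of $u_0$ and lies in $\mxi\setminus\mxi^2$. The main obstacle in the plan is justifying the freeness of $\tQ$ over $Q$: since $|C|=p^k$ is not a unit in the residue field, classical tame-ramification arguments do not apply, and this step will likely require a more delicate analysis of the cyclic cover $\tQ/Q$, perhaps invoking general results on formal subgroup schemes of Lubin-Tate formal groups from \cite{st:fsfg}.
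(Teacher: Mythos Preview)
You have correctly identified the main difficulty, and your honesty about the gap is commendable: the freeness of $\tQ$ over $Q=\tQ^\Gm$ is exactly the nontrivial step, and as you note, the wild ramification prevents any black-box appeal to ``standard invariant theory''. Your detour through Proposition~\ref{prop-Nk} and generalised character theory to pin down the rank of $Q$ is unnecessary once this step is handled properly.

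The paper's resolution is entirely elementary and avoids both invariant theory and HKR. Having computed (as you did) that $s$ is a unit multiple of $x^{p^k}$ in $\tQ/\mxi$, one observes that the product set $B=\{x^j s^m : 0\le j<p^k,\ 0\le m<N_k\}$ has $p^kN_k$ elements with distinct leading terms in $\tQ/\mxi=\Fp[x]/x^{p^kN_k}$, hence $B$ is a basis for $\tQ$ over $E^0$ by Nakayama. In particular, the set $BX=\{x^j:0\le j<p^k\}$ generates $\tQ$ as a module over the subring $Q_0=E^0[s]\subseteq Q$. Now pass to fields of fractions $K_0\subseteq K\subseteq\tK$: classical Galois theory of fields gives $[\tK:K]=|\ov{\Gm}|=p^k$, while $[\tK:K_0]\le|BX|=p^k$; together these force $K=K_0$ and show that $BX$ is a $K$-basis for $\tK$. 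The map $Q\{BX\}\to\tQ$ is thus injective (it becomes so after tensoring with $K$, and $Q$ is a domain) and surjective (already known), so $BX$ is a $Q$-basis for $\tQ$. Comparing with the $E^0$-basis $B$ then forces $BY=\{s^m:0\le m<N_k\}$ to be an $E^0$-basis for $Q$, and $Q=Q_0$.

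Your argument for regularity of $Q$ via the norm $N_{\tQ/E^0}(x)$ is correct in spirit; the paper phrases it slightly differently, writing $s=h_k(x)$ and observing that $g_k(h_k(t))$ and $f_k(t)=\ip{p}([p^{r+k-1}](t))$ have the same Weierstrass degree and the former is divisible by the latter, hence they differ by a unit, and setting $t=0$ gives $g_k(0)$ a unit multiple of $p$. Both arrive at the same conclusion.
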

\begin{proof}
 Note that $[m'](t)$ is a unit multiple of $[p^{r+k-1}](t)$, so it has
 Weierstrass degree $p^{n(r+k-1)}$.  Moreover, $\ip{p}(t)$ has
 Weierstrass degree $p^n-1$, so the Weierstrass degree of the series
 $f_k(t)=\ip{p}([p^{r+k-1}](t))$ is $(p^n-1)p^{n(r+k-1)}=p^kN_k$. 

 Now recall that the maximal ideal $\mxi<E^0$ is generated by a
 regular sequence $u_0,\dotsc,u_{n-1}$ with $u_0=p$.  Note that
 $f_k(0)=p$ so $\tQ/x=E^0/g(0)=E^0/p$ so
 $\tQ/(x,u_1,\dotsc,u_{n-1})=\Fp$.  We now see that $\tQ$ is a
 complete local Noetherian ring of Krull dimension $n$ in which the
 maximal ideal can be generated by a sequence of length $n$.  It
 follows as in~\cite{ma:crt}*{Section 14} that the sequence
 $x,u_1,\dotsc,u_{n-1}$ is regular, and that $\tQ$ is a regular local
 ring.  In particular, it has unique factorisation, and so is
 integrally closed in its field of fractions.

 Next, the above discussion of Weierstrass degrees shows that
 $\tQ/\mxi\tQ=\Fp[x]/x^{p^kN_k}$.  On the other hand, as
 $q=1\pmod{p}$ we have $[q^i](x)=x\pmod{x^2}$ for all $i$, so $s$ is a
 unit multiple of $x^{p^k}$.  Now put
 \begin{align*}
   BX &= \{x^j\st 0\leq j<p^k\} \\
   BY &= \{s^m\st 0\leq m<N_k\} \\
   B  &= BY.BX = \{x^js^m\st 0\leq j<p^k,\; 0\leq m<N_k\}.
 \end{align*}
 We find that $B$ is a basis for $\tQ/\mxi\tQ$ over $\Fp$.  By
 Nakayama's Lemma, we see that $B$ is also a basis for $\tQ$ over $E^0$.  

 Now let $Q_0$ be the subring of $Q$ generated by $E^0$ and $s$.  As
 $B$ is a basis for $\tQ$ over $E^0$, we see that $BX$ generates
 $\tQ$ as a $Q_0$-module.

 Now let $K_0$, $K$ and $\tK$ be the fields of fractions of $Q_0$, $Q$
 and $\tQ$.  Put $\ov{\Gm}=\Gal(F(k)/F)=\ip{\phi\st\phi^{p^k}=1}$, so
 $|\ov{\Gm}|=p^k$ and $\ov{\Gm}$ acts on $\tQ$.  We
 then have $K=\tK^{\ov{\Gm}}$, so classical Galois theory of fields
 tells us that $[\tK:K]=p^k$.  On the other hand, as $BX$ generates
 $\tQ$ as a $Q_0$-module, we have $[K:K_0][\tK:K]=[\tK:K_0]\leq p^k$.
 For this to be consistent, we must have $K=K_0$, and the natural map
 $K\{BX\}\to\tK$ must be an isomorphism.  From this it follows that
 $Q\{BX\}\to\tQ$ is injective, but we saw previously that it is
 also surjective, so $BX$ is a basis for $\tQ$ over $Q$.  This
 allows us to identify the isomorphism $E^0\{B\}\to\tQ$ as a direct
 sum of copies (indexed by $BX$) of the map $E^0\{BY\}\to Q$.  It
 follows that the map $E^0\{BY\}\to Q$ is also an isomorphism, so
 $BY$ is a basis for $Q$ over $E^0$.  This also shows that $Q=Q_0$.
 By writing $s^{N_k}$ in terms of the basis $BY$, we obtain a
 Weierstrass polynomial $g_k(t)$ of degree $N_k$ such that
 $Q=E^0\psb{s}/g_k(s)$.   

 Now note that $s$ can be written as $h_k(x)$ for some series $h_k(t)$
 of Weierstrass degree $p^k$ with $h_k(0)=0$.  It follows that
 $g_k(h_k(t))$ and $f_k(t)$ both have Weierstrass degree $p^kN_k$.  As
 $g_k(s)=0$ we must have $g_k(h_k(x))=0$ in $E^0\psb{x}/f_k(x)$, so
 $g_k(h_k(t))$ is divisible by $f_k(t)$.  As both series have the same
 Weierstrass degree, we see that $g_k(h_k(t))$ is actually a unit
 multiple of $f_k(t)$.  By putting $t=0$, we see that $g_k(0)$ is a
 unit multiple of $f_k(0)=p$.  We can now repeat the argument that we
 gave for $\tQ$, and conclude that $Q$ is also a regular local ring.
\end{proof}

\begin{corollary}\lbl{cor-al-epi}
 The map $\al\:R\to Q$ is surjective, with kernel $J$, so
 $Q=R/J=\Ind_{p^k}(E^0(BGL_*(F)))$.  We therefore have
 $\dim_{E^0}(I)=\dim_{E^0}(Q)=N_k$. 
\end{corollary}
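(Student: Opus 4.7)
The plan is to verify (i) that $\al$ is surjective and (ii) that $J\sse\ker(\al)$, and then conclude by a rank argument, using that both $R/J$ and $Q$ are already known to be free $E^0$-modules of rank $N_k$ (by Proposition~\ref{prop-more-poly-rings} and Proposition~\ref{prop-Q-regular} respectively).

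For~(i) I would compute $\al(c_{p^k})$ explicitly. The inclusion $C=GL_1(F(k))\hookrightarrow G$ realises $F^{p^k}$ as the representation of $C$ on $F(k)$ by multiplication; after extending scalars to $\bF$, Lemma~\ref{lem-twist-split} splits this as $\bigoplus_{i=0}^{p^k-1}(\phi^i)^*(L)$, where $L$ is the defining character of $C$. Since $c_{p^k}$ restricts to the product of the Euler classes of the one-dimensional summands, this yields $\res^G_C(c_{p^k})=(-1)^{p^k}\prod_{i=0}^{p^k-1}[q^i](x)=\pm s$ in $\tQ$. Because $Q=E^0\{s^j\st 0\leq j<N_k\}$ by Proposition~\ref{prop-Q-regular}, this makes $\al$ an $E^0$-linear surjection.

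For~(ii) I would apply the Mackey formula to $\res^G_C\circ\tr_H^G$. Each term is a transfer $\tr_{U_g}^C$ from some subgroup $U_g=C\cap g^{-1}Hg$. By Proposition~\ref{prop-exponent} applied to each factor of $H=GL_{p^{k-1}}(F)^p$, every abelian $p$-subgroup of $H$ (and hence every $U_g$) has $p$-exponent at most $p^{(k-1)+r}$; since $U_g$ is cyclic this forces $U_g\leq U\mathrel{:=}pC_p\tm C_{p'}$, the unique subgroup of $C$ whose $p$-part has index $p$. By transitivity of transfer together with Frobenius reciprocity (and surjectivity of $\res^C_U$, which follows from the identifications below), the image of each Mackey term lies in the principal ideal generated by $\tr_U^C(1)\in E^0(BC)$. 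It remains to identify this element inside $E^0(BC)=E^0\psb{x}/[p^{k+r}](x)$ and to check that it vanishes in $\tQ$: by Corollary~\ref{cor-res-tr}, $\tr_U^C(1)$ generates the annihilator of $\ker(\res^C_U)=([p^{k+r-1}](x))$, and setting $z=[p^{k+r-1}](x)$ and writing $[p^{k+r}](x)=z\cdot\ip{p}(z)$, a short computation (using that $E^0\psb{x}$ is a domain) shows $\ann(z)=(\ip{p}(z))$. Since $v_p(m')=k+r-1$, the series $[m'](x)$ and $[p^{k+r-1}](x)$ differ by a unit in $E^0\psb{x}$; by uniqueness of Weierstrass preparation applied to the common divisor on $\HH$ of exact-$p^{k+r}$-torsion points, $\ip{p}([m'](x))$ and $\ip{p}([p^{k+r-1}](x))$ then generate the same ideal, so $\tr_U^C(1)$ vanishes in $\tQ=E^0\psb{x}/\ip{p}([m'](x))$.

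Combining~(i) and~(ii), $\al$ descends to a surjection $R/J\twoheadrightarrow Q$ between free $E^0$-modules of the same finite rank $N_k$, which must be an isomorphism. Hence $\ker(\al)=J$ and $R/J\cong Q$, and the equality $\dim_{E^0}(I)=N_k$ then follows from Proposition~\ref{prop-IJ}. The main obstacle I anticipate is the explicit identification of $\tr_U^C(1)$ as a unit multiple of $\ip{p}([p^{k+r-1}](x))$ and the subsequent bookkeeping matching it against the defining relation of $\tQ$; everything else is either routine or immediate from results earlier in the paper.
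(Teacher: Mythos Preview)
Your proof is correct, but part~(ii) takes a genuinely different route from the paper. The paper establishes $J\leq\ker(\al)$ by passing to generalised character theory: one checks that $D'\ot_{E^0}Q=\Map(\Irr_{p^k}(\Tht^*),D')=R'/J'$ essentially by construction (since $\spf(\tQ)$ is the divisor of points of exact order $p^{k+r}$, whose $\Gm$-orbits are precisely the irreducibles), so $J'\leq\ker(1\ot\al)$, and then injectivity of $Q\to D'\ot Q$ pulls this back to $J\leq\ker(\al)$. Your argument instead stays in $E$-theory and proves the vanishing directly via the Mackey formula: the key identification $\tr_U^C(1)=(\text{unit})\cdot\ip{p}([p^{k+r-1}](x))$ matches the image of $J$ under $\res^G_C$ exactly against the defining relation of $\tQ$. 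Your approach is more elementary in that it avoids HKR entirely, and it yields the explicit relative socle generator as a byproduct; the paper's approach is shorter and makes the moduli-theoretic meaning of the result transparent. Both then finish with the same rank comparison. One minor point worth making explicit in your write-up: $U_g$ need not be a $p$-group, so when you invoke Proposition~\ref{prop-exponent} you should apply it to the $p$-part of $g^{-1}U_gg$, and observe that the prime-to-$p$ part of $U_g$ automatically lies in $U$ since $|C/U|=p$.
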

\begin{proof}
 Recall that $R$ is a quotient of $E^0\psb{c_1,\dotsc,c_{p^k}}$, and
 we previously defined $s_k=c_{p^k}$.  In
 $\tQ$ we find that $c_j$ maps to $(-1)^j$ times the $j$'th elementary
 symmetric function in $\{[q^i](x)\st 0\leq i<p^k\}$.  In particular,
 $s_k$ maps to $\pm s$, which makes it clear that $\al$ is
 surjective.  As $Q$ is a free module of rank $N_k$ over $E^0$, we see
 that $\ker(\al)$ must be a summand in $R$, with
 $\dim_{E^0}(\ker(\al))=\dim_{E^0}(R)-N_k$.  Next, it is clear by
 construction that $D'\ot_{E^0}Q=\Map(\Irr_{p^k}(\Tht^*),D')=R'/J'$,
 and it follows that $J\leq\ker(\al)$.  However, $J$ is also a
 summand, with $\dim_{E^0}(R/J)=\dim_{D'}(R'/J')=N_k$, so the relation
 $J\leq\ker(\al)$ implies that $J=\ker(\al)$.  We also noted in
 Proposition~\ref{prop-IJ} that $\dim_{E^0}(I)=\dim_{E^0}(R/J)$.
\end{proof}

\begin{corollary}\lbl{cor-bQ}
 The ring $\bQ=K^0\ot_{E^0}Q$ is just $K^0[s]/(s^{N_k})$, and $\al$
 induces an isomorphism $\bR/\bJ\to\bQ$.
\end{corollary}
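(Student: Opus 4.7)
The plan is that both claims follow almost immediately from results already established, and the main point is just to combine them correctly.

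For the first claim, I will use the description $Q = E^0\psb{s}/g_k(s)$ from Proposition~\ref{prop-Q-regular}, together with the fact that $g_k$ is a Weierstrass polynomial of degree $N_k$. By definition this means $g_k(t) = t^{N_k} + \sum_{i<N_k} a_i t^i$ with all $a_i \in \mxi$. Since $Q$ is free of rank $N_k$ over $E^0$ with basis $\{s^i : 0 \leq i < N_k\}$, the reduction $\bQ = K^0 \ot_{E^0} Q = Q/\mxi Q$ is a free $K^0$-module of the same rank with the same basis, and the relation $g_k(s) = 0$ reduces to $s^{N_k} = 0$. This gives $\bQ = K^0[s]/(s^{N_k})$.

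For the second claim, Corollary~\ref{cor-al-epi} tells us that $\al\: R \to Q$ is surjective with kernel $J$, so we have a short exact sequence $0 \to J \to R \to Q \to 0$. Proposition~\ref{prop-IJ} (or equivalently the argument in the proof of Corollary~\ref{cor-al-epi} that $J$ is a summand) shows that $J$ is an $E^0$-module summand in $R$, so this sequence is split as a sequence of $E^0$-modules. Applying the exact functor $K^0 \ot_{E^0} (-)$ preserves the split exact sequence and yields $0 \to \bJ \to \bR \to \bQ \to 0$, so $\al$ induces the required isomorphism $\bR/\bJ \to \bQ$.

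There is no real obstacle here; the work was done in establishing Proposition~\ref{prop-Q-regular} and Corollary~\ref{cor-al-epi}. The only thing to verify carefully is that the Weierstrass form of $g_k$ really does collapse to a pure power of $s$ after reducing mod $\mxi$, and that the splitting of $J$ as an $E^0$-summand is preserved under the change of rings $E^0 \to K^0$, both of which are routine.
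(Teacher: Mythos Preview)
Your proof is correct and is exactly the argument the paper has in mind; the paper simply writes ``Clear.'' since both claims are immediate consequences of Proposition~\ref{prop-Q-regular} and Corollary~\ref{cor-al-epi} as you explain.
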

\begin{proof}
 Clear.
\end{proof}

\begin{remark}\lbl{rem-Q-moduli}
 It is now clear that $\spf(Q)$ deserves to be thought of as the
 subscheme of irreducibles in $\Div_{p^k}^+(\HH)^\Gm$.  We pause to
 explain this in more detail.  In the case of a discrete group one
 would define the irreducibles to be the complement of the image of
 the addition map on nontrivial representations.  We regard formal
 schemes as a subcategory of functors from rings to sets, but in this
 context, the image of a morphism of formal schemes is not usually a
 formal scheme, and there are similar problems with complements.
 Nonetheless, we may be willing to regard certain subschemes as images
 or complements if they have weaker properties of the same general
 nature.  There are various different possible weakenings, and a
 complex web of implications between them.  One version is as
 follows.  Suppose we have morphisms
 \[ W \xra{f} Y \xra{i} X \xla{j} Z \]
 of formal schemes over $S$, such that $f^*\:\CO_Y\to\CO_W$ is a split
 monomorphism of $\CO_S$-modules, and $i^*\:\CO_X\to\CO_Y$ is a split
 epimorphism of $\CO_S$-modules.  This means that $i$ is a regular
 monomorphism of formal schemes, and that this property is stable under
 pullback along an arbitrary morphism $X'\to X$.  It also means that
 $f$ is an epimorphism of formal schemes, and that this property is
 stable under pullback along an arbitrary morphism $S'\to S$, although
 not in general under pullback along an arbitrary morphism $Y'\to Y$.  
 Thus, it is reasonable to regard the closed subscheme $Y$ as the
 image of $if$.  Now suppose that $j^*\:\CO_X\to\CO_Z$ is also a split
 epimorphism of $\CO_S$-modules.  If the combined map 
 $(i,j)\:Y\amalg Z\to X$ is an isomorphism (or equivalently, the
 combined map $(i^*,j^*)\:\CO_X\to\CO_Y\tm\CO_Z$ is an isomorphism of
 rings) then it is clearly appropriate to regard $Y$ and $Z$ as
 complements of each other.  However, this situation occurs rarely in
 examples arising from Morava $E$-theory of classifying spaces.  A
 more typical situation is that $(i^*,j^*)\:\CO_X\to\CO_Y\tm\CO_Z$ is
 a rational isomorphism, and that the ideals $\ker(i^*)$ and
 $\ker(j^*)$ are $\CO_S$-module summands that are annihilators of each
 other.  In this situation, it is natural to regard $Z$ as the best
 possible analog of $X\setminus Y$ in the category of formal schemes.
 As a special case of this, $\spf(Q)$ can be regarded as the
 complement of the image of the addition map 
 \[ \coprod_{i=0}^{p^k-1}
     \Div_i^+(\HH)^\Gm \tm_S \Div_{p^k-i}^+(\HH)^\Gm \tm_S \to 
      \Div_{p^k}^+(\HH)^\Gm,
 \]
 or in other words, as the subscheme of irreducible $\Gm$-invariant
 divisors of degree $p^k$.
\end{remark}

\begin{proposition}\lbl{prop-I-gen}
 The ideal $\bI$ is principal, and is a free module of rank one over
 $\bR/\bJ=\bQ$.  More precisely:
 \begin{itemize}
  \item[(a)] Any element $t\in\bI$ is annihilated by $\bJ$ and so
   induces a map $\mu_t\:\bQ=\bR/\bJ\to\bI$ sending $z+\bJ$ to $zt$.
  \item[(b)] The element $s_k^{N_k-1}t$ always lies in the socle of
   $\bR$, and $\mu_t$ is an isomorphism iff $s_k^{N_k-1}t\neq 0$. 
  \item[(c)] There exist elements $t\in I$ such that~(b) holds.
 \end{itemize}
 Moreover, the ideal $I$ is also principal, and is a free module of
 rank one over $R/J=Q$.
\end{proposition}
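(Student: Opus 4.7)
The plan is to prove (a)--(c) in sequence and then deduce the ``moreover'' by Nakayama. Part (a) is immediate: by Lemma~\ref{lem-IJ-bar}, $\bI=\ann(\bJ)$, so any $t\in\bI$ kills $\bJ$, and $\mu_t\:\bR/\bJ=\bQ\to\bI$ is well-defined.

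For part (b), I first observe that $s_k^{N_k}\in\bJ$: by Definition~\ref{defn-Q}, Corollary~\ref{cor-al-epi}, and Corollary~\ref{cor-bQ}, $\alpha(s_k)$ is a unit multiple of $s\in\bQ=K^0[s]/(s^{N_k})$, so $\alpha(s_k^{N_k})=0$. Next, $\bR=K^0(BG)$ is a local ring (its maximal ideal $\mathfrak{m}_{\bR}$ is the augmentation ideal of the augmented $K^0$-algebra $K^0(BG)$), and the surjection $\alpha$ preserves augmentations, so $\alpha(\mathfrak{m}_{\bR})\subseteq(s)\subset\bQ$. Given $y\in\mathfrak{m}_{\bR}$, write $\alpha(y)=sz$, lift $z$ to $\tilde z\in\bR$, and set $j=y-s_k\tilde z\in\bJ$; then
\[ y\,s_k^{N_k-1}\,t \;=\; \tilde z\,s_k^{N_k}\,t + s_k^{N_k-1}\,j\,t \;=\; 0, \]
since $s_k^{N_k},j\in\bJ$ and $t\in\ann(\bJ)$. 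Hence $s_k^{N_k-1}t\in\soc(\bR)$. For the iff, I use that the zero-dimensional local Gorenstein ring $\bQ$ has one-dimensional socle $(s^{N_k-1})$, so every nonzero ideal of $\bQ$ contains $s^{N_k-1}$; thus $\mu_t$ is injective iff $\mu_t(s^{N_k-1})=s_k^{N_k-1}t\neq 0$. Since $\bI$ and $\bQ$ both have $K^0$-dimension $N_k$ (Corollary~\ref{cor-al-epi} and Proposition~\ref{prop-IJ}), injectivity implies bijectivity.

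Part (c) I handle by contradiction: if $s_k^{N_k-1}t=0$ for every $t\in\bI$, then $s_k^{N_k-1}\in\ann(\bI)=\bJ$, contradicting the fact that its image in $\bQ$ is the nonzero socle generator $s^{N_k-1}$. So some $t_0\in\bI$ works, and since $I$ is an $E^0$-module summand of $R$ (Proposition~\ref{prop-IJ}) the reduction $I\to\bI$ is surjective, letting us lift $t_0$ to $t\in I$. For the ``moreover'', the induced map $\mu_t\:Q\to I$ is $E^0$-linear between free $E^0$-modules of rank $N_k$ and reduces mod $\mxi$ to the isomorphism $\mu_{t_0}\:\bQ\to\bI$, so a standard Nakayama/determinant argument upgrades $\mu_t$ itself to an isomorphism. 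The subtlest step is the socle computation in (b): it rests on $\alpha$ carrying $\mathfrak{m}_{\bR}$ into $(s)$, which in turn rests on the local (augmented) structure of $\bR$; once this is noted, everything else is clean manipulation with annihilators and the Gorenstein socle of $\bQ$.
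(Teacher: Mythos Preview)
Your proof is correct and follows essentially the same architecture as the paper's: part~(a) from Lemma~\ref{lem-IJ-bar}, part~(c) from $s_k^{N_k-1}\notin\bJ=\ann(\bI)$, and the ``moreover'' from Nakayama. The only real difference is in the packaging of~(b). The paper works on the $\bR$ side, using that $\bR$ is Frobenius so that $L\mapsto\ann(L)$ is an order-reversing permutation of ideals; it observes that $J'=\bJ+\bR s_k^{N_k-1}$ is the unique ideal minimally exceeding $\bJ$, and then $t$ generates $\bI$ iff $\ann(\bR t)=\bJ$ iff $J't\neq 0$. You work on the $\bQ$ side, using that $\bQ=K^0[s]/(s^{N_k})$ is Gorenstein with one-dimensional socle, so $\ker(\mu_t)=0$ iff it misses $s^{N_k-1}$. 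These are the same argument viewed through the correspondence between ideals of $\bR$ containing $\bJ$ and ideals of $\bQ$.

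One cosmetic slip: since you only know $\al(s_k)=\pm s$ (a unit multiple of $s$), your decomposition $\al(y)=sz$, $j=y-s_k\tilde z$ does not quite give $\al(j)=0$. Write instead $\al(y)=\al(s_k)z$ (which is possible since $\al(s_k)$ generates the maximal ideal of $\bQ$), and then $j=y-s_k\tilde z\in\bJ$ as desired. The rest of your computation is unaffected.
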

\begin{proof}
 Claim~(a) is clear from Lemma~\ref{lem-IJ-bar}.  Also,
 $\mu_t\:\bQ\to\bI$ is an isomorphism iff it is an epimorphism iff $t$
 generates $\bI$, by comparison of dimensions.

 Now put $J'=\bJ+\bR.s_k^{N_k-1}$.  As ideals containing $\bJ$ biject
 with ideals in $\bR/\bJ$, we see that $J'$ is the smallest ideal that
 is strictly larger than $\bJ$.  We also see that $J'/\bJ$ has
 dimension one over $K^0$, so it must be annihilated by the maximal
 ideal $\mxi$, so $\mxi.s_k^{N_k-1}\leq\bJ$, so
 $\mxi.s_k^{N_k-1}\bI\leq\bJ\bI=0$, so $s_k^{N_k-1}\bI$ is contained in
 the socle.  

 Next, as $\bR$ is a Frobenius algebra, the map $L\mapsto\ann(L)$ is
 an order-reversing permutation of the set of ideals.  Thus, for any
 $t\in\bI$ we have $\ann(\bR t)\geq\ann(\bI)=\bJ$, with equality iff
 $t$ generates $I$.  As $J'$ is the smallest ideal strictly exceeding
 $\bJ$, we see that $t$ generates $\bI$ iff $J't\neq 0$ iff
 $s_k^{N_k-1}t\neq 0$.  This proves~(b).  Moreover, as
 $s_k^{N_k-1}\not\in\bJ=\ann(\bI)$ we see that $s_k^{N_k-1}\bI\neq 0$;
 this proves~(c).

 Finally, Nakayama's Lemma tells us that if we lift any generator of
 $\bI$ to $I$, we get a generator of $I$.  As $\ann(I)=J$, it follows
 that the lifted element generates $I$ freely as a module over
 $R/J=Q$. 
\end{proof}

\begin{proposition}\lbl{prop-socle}
 $\bI$ is generated by $s_k^{\bN_{k-1}}$, and the socle of $\bR$ is
 generated by $s_k^{\bN_k-1}$ (so $s_k^{\bN_k-1}$ is a
 $K_0^\tm$-multiple of the standard socle generator
 $\tr_1^{GL_{p^k}(F)}(1)$). 
\end{proposition}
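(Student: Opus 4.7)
The plan is to prove both assertions together by induction on $k\geq 0$, with the inductive hypothesis being the socle statement at level $k-1$. The base case $k=0$ is direct: Lemma~\ref{lem-ind-degrees} gives $\bR = K^0(BGL_1(F)) = K^0\psb{x}/[p^r](x) = K^0[x]/x^{N_0}$, and since $s_0=c_1$ is a unit multiple of $-x$ while $\bN_0=N_0$, the element $s_0^{\bN_0-1}$ is the top nonzero power and clearly generates the socle. For the inductive step ($k\geq 1$) the whole argument is organised around Proposition~\ref{prop-I-gen}, which reduces generation of $\bI$ to exhibiting one element $t\in\bI$ satisfying $s_k^{N_k-1}\,t\neq 0$.

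First I would verify that $s_k^{\bN_{k-1}}\in\bI$. The tautological bundle over $BG$ restricts on $BH$ to the external direct sum of $p$ copies of the tautological bundle over $BGL_{p^{k-1}}(F)$, so the multiplicativity of the Chern polynomial (Remark~\ref{rem-chern-plus}) applied to the top Chern class gives
\[
 \res^G_H(s_k) \;=\; \prod_{i=1}^{p} s_{k-1}^{(i)}
\]
in $K^0(BH)$, with $s_{k-1}^{(i)}$ pulled back from the $i$th factor. The inductive hypothesis tells us that $s_{k-1}^{\bN_{k-1}-1}$ generates the socle of $K^0(BGL_{p^{k-1}}(F))$ and is therefore annihilated by the augmentation ideal; multiplying by $s_{k-1}$ yields $s_{k-1}^{\bN_{k-1}}=0$. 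Raising the displayed identity to the $\bN_{k-1}$ power then kills each factor, so $\res^G_H(s_k^{\bN_{k-1}})=0$ and $s_k^{\bN_{k-1}}\in\bI$. To upgrade "belongs to" into "generates", I would invoke Proposition~\ref{prop-I-gen}(b): it suffices to check $s_k^{N_k-1}\cdot s_k^{\bN_{k-1}}\neq 0$, and this product is precisely $s_k^{\bN_k-1}$, which is nonzero by Proposition~\ref{prop-sk-height}.

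The socle conclusion then drops out of the same apparatus. Proposition~\ref{prop-I-gen}(b) places $s_k^{N_k-1}\cdot t$ in the socle for every $t\in\bI$; with $t=s_k^{\bN_{k-1}}$ this shows $s_k^{\bN_k-1}$ lies in the socle. Under the Frobenius pairing on $\bR$ the socle (the annihilator of the augmentation ideal) coincides with the orthogonal complement of that ideal, and so has $K^0$-dimension equal to $\dim_{K^0}(\bR) - \dim_{K^0}(\text{augmentation ideal}) = 1$; consequently any nonzero socle element generates it as a $K^0$-module, and in particular $s_k^{\bN_k-1}$ is a $K_0^\times$-multiple of the standard generator $\tr_1^{G}(1)$. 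I do not expect any serious obstacle beyond threading the induction correctly: the restriction calculation at level $k$ requires the vanishing $s_{k-1}^{\bN_{k-1}}=0$, which is the immediate shadow of the socle statement at level $k-1$, while Proposition~\ref{prop-sk-height} supplies the single nonvanishing input that drives every inductive step through.
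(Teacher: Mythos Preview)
Your proof is correct and follows essentially the same approach as the paper: induct on $k$, use the induction hypothesis to show $\res^G_H(s_k^{\bN_{k-1}})=0$ via the factorisation $\res^G_H(s_k)=s_{k-1}^{\otimes p}$, then feed $t=s_k^{\bN_{k-1}}$ into Proposition~\ref{prop-I-gen}(b) together with the nonvanishing of $s_k^{\bN_k-1}$ from Proposition~\ref{prop-sk-height}. The only cosmetic difference is that the paper writes the restriction as $s_{k-1}^{\otimes p}$ rather than as a product over the $p$ factors, and is slightly terser about the one-dimensionality of the socle.
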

\begin{proof}
 In this section we have excluded the case $k=0$, but here we
 partially reinstate it.  The ideal $\bI$ is not defined, but we have
 $K^0(B\CV_{p^0})=K^0\psb{s_0}/[p^r](s_0)$ and
 $[p^r](s_0)=s_0^{p^{nr}}=s_0^{\bN_0}$, so the socle is generated by
 $s_0^{\bN_0-1}$ as claimed.

 Now suppose that $k>0$, and that we have already proved all claims
 for $\CV_{p^{k-1}}$.  Put
 $w=\res^G_H(s_k)=s_{k-1}^{\ot p}\in K^0(BH)$.  The induction
 hypothesis means that $w^{\bN_{k-1}-1}$ generates the socle of
 $K^0(BH)$, and that $w^{\bN_{k-1}}=0$.  This in turn means that the
 element $t=s_k^{\bN_{k-1}}$ has $\res^G_H(t)=0$, or in other words
 $t\in\bI$.  On the other hand, we have $s_k^{N_k-1}t=s_k^{\bN_k-1}$,
 which is nonzero by Proposition~\ref{prop-sk-height}.  It follows
 from Proposition~\ref{prop-I-gen} that $t$ generates $\bI$, and that
 $s_k^{\bN_k-1}$ is a nonzero element of the socle.  As $\bR$ is a
 Frobenius algebra, we see that the socle has dimension one over
 $K^0$, so it is generated by $s_k^{\bN_k-1}$.
\end{proof}

\section{Further relations in \texorpdfstring{$E$}{E}-theory}
\lbl{sec-relations}

In this section we will prove some additional interesting relations in
$E^0(B\CV)$.  In particular, we will give a formula for the socle
generator, in the sense of Definition~\ref{defn-soc}.

We first need to generalise the definition of the map $\phi$ given
by the Galois action.
\begin{definition}\lbl{defn-adams-op}
 For any $k\in\N$, we define $\psi^k\:\bF^\tm\to\bF^\tm$ by
 $\psi^k(u)=u^k$.  We also write $\psi^k$ for the induced self-map of
 $\HH=\spf(E^0(B(\bF^\tm)))$.  This in turn gives self-maps of
 $\Div_d^+(\HH)=\HH^d/\Sg_d$ and $\Div_d^+(\HH)^\Gm$, which we also
 denote by $\psi^k$.  We call these maps \emph{Adams operations}.
\end{definition}

\begin{remark}\lbl{rem-adams-lambda}
 The Adams operation $\psi^q$ is just the same as the operation $\phi$
 coming from $\Gm=\Gal(\bF/F)$.  For general $k\in\N$, we can use
 lambda operations (as in the original work of Adams) to define an
 operator $\psi^k$ on virtual representations, or a corresponding map
 $\Div(\HH)\to\Div(\HH)$ of schemes, and we can then check that this
 extends our definition on $\Div_d^+(\HH)$.  We do not need this so we
 will not give further details here, but they can be found
 in~\cite{st:cag}. 
\end{remark}

\begin{remark}\lbl{rem-adams-frob}
 The operation $\psi^p$ induces an endomorphism of the ring
 \[ K^0(B\bG_d) =\Fp\psb{c_1,\dotsc,c_d} 
     = \Fp\psb{x_1,\dotsc,x_d}^{\Sg_d}.
 \]
 This sends $x_i$ to $[p](x_i)=x_i^{p^{nv}}$, and it follows that
 $\psi^p$ is just the same as the Frobenius endomorphism, sending $a$
 to $a^{p^{nv}}$ for all $a$.
\end{remark}

\begin{remark}\lbl{rem-adams-ab}
 Any $\bF$-linear representation $V$ of an abelian group $A$ gives a
 divisor $D(V)$ defined over $\spf(E^0(BA))=\Hom(A^*,\HH)$.
 Specifically, we can write $V$ as a direct sum of one-dimensional
 representations corresponding to characters $\al_i\in A^*$, and we
 have a tautological homomorphism $\phi\:A^*\to\HH$ defined over
 $\Hom(A^*,\HH)$, giving the divisor $D(V)=\sum_i[\phi(\al_i)]$.  We
 can pull back the representation $V$ along the homomorphism
 $p^k.1_A\:A\to A$, and it is clear from the above discussion that
 $\psi^k(D(V))=D((k.1_A)^*V)$.  In particular, if the exponent of $A$
 divides $k$ we find that $\psi^k(D(V))=\dim(V).[0]$.
\end{remark}

If we want to do computer calculations of $K^0(BG_d)$, then we need
a formula for $[q](x)$, and then we need to do some manipulations with
symmetric functions based on that.  To make the calculation finite, we
need to truncate our power series at an appropriate level.  The
following result will help us to decide which level is appropriate.
(However, the numbers that emerge are very large, so we have had
limited success with explicit calculations.)

\begin{proposition}\lbl{prop-adams-null}
 Let $k$ be the largest integer such that $p^k\leq d$.  Then for
 $D\in\Div_d^+(\HH)^\Gm$ we have $\psi^{p^{k+r}}(D)=d[0]$.  Thus, for
 $u$ in the maximal ideal of $K^0(BG_d)$ we have $u^{p^{n(k+r)}}=0$.
\end{proposition}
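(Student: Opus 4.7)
The plan is to prove the divisor identity $\psi^{p^{k+r}}(D)=d[0]$ in $\Div_d^+(\HH)^\Gm$ first, then translate it into the Morava $K$-theory nilpotency bound.

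Since $E^0(BG_d)$ is a free module of finite rank over $E^0$ by Corollary \ref{cor-EV-free}, and since $E^0\hookrightarrow D'$, the canonical map
\[ E^0(BG_d) \hookrightarrow D'\ot_{E^0}E^0(BG_d) = \Map(\Rep_d(\Tht^*),D') \]
is injective. Thus the divisor identity, which amounts to a collection of relations among $c_1,\dotsc,c_d$, can be checked pointwise on generalised characters. I would fix $V\in\Rep_d(\Tht^*)$ and verify $\psi^{p^{k+r}}(D_V)=d[0]$ for the associated divisor $D_V$. By Lemma \ref{lem-rep-th} the extension $\bF\ot_F V$ decomposes as $\bigoplus_i V_{\al_i}$ into one-dimensional summands indexed by characters $\al_i\in\Phi=\Hom(\Tht^*,\bF^\tm)$, so $D_V=\sum_i[\al_i]$, and $\psi^{p^{k+r}}$ acts on these torsion points of $\HH$ by multiplication by $p^{k+r}$ (Remark \ref{rem-adams-ab}). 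Now $V$ makes $\Tht^*$ act on $F^d$ through a finite abelian $p$-subgroup of $GL_d(F)$, to which Proposition \ref{prop-exponent} applies: its exponent divides $p^{k+r}$. Equivalently, each $\al_i\in\Phi$ has order dividing $p^{k+r}$, so $p^{k+r}\al_i=0$ and $\psi^{p^{k+r}}(D_V)=\sum_i[0]=d[0]$, as required.

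For the nilpotency claim, I would reduce modulo the maximal ideal $\mxi$ of $E^0$ to pass to $K^0$-coefficients. Proposition \ref{prop-p-series} tells us that $[p^{k+r}](x)$ is a unit multiple of the Weierstrass polynomial $h_{k+r}(x)$, which in $K^0\psb{x}$ is a unit multiple of $x^{p^{n(k+r)}}$. So in any splitting extension of $K^0(BG_d)$ in which the Chern polynomial factors as $\prod_i(t-a_i)$, the identity $\psi^{p^{k+r}}(D)=d[0]$ becomes $\prod_i(t-a_i^{p^{n(k+r)}})=t^d$. Comparing coefficients, every elementary symmetric function of the $a_i^{p^{n(k+r)}}$ vanishes; invoking the characteristic-$p$ Frobenius identity $\sg_j(a_i^{p^m})=\sg_j(a_i)^{p^m}$ yields $c_j^{p^{n(k+r)}}=0$ for each $j=1,\dotsc,d$. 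Finally, since $K^0(BG_d)$ is a finite-dimensional $K^0$-algebra, any $u$ in its augmentation ideal can be written as a finite sum $\sum_{|\al|\geq 1}\lm_\al c^\al$, and applying Frobenius once more gives
\[ u^{p^{n(k+r)}}=\sum_\al \lm_\al^{p^{n(k+r)}}(c^\al)^{p^{n(k+r)}}=0, \]
because every nontrivial monomial $(c^\al)^{p^{n(k+r)}}$ contains some factor $c_j^{p^{n(k+r)}}=0$.

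The main obstacle is the passage from the HKR-pointwise statement to the universal divisor identity in $E^0(BG_d)$; this relies essentially on freeness of $E^0(BG_d)$ over $E^0$ (Tanabe / Corollary \ref{cor-EV-free}) together with injectivity of $E^0\to D'$. The remaining steps are direct bookkeeping with the $p$-series and with the Frobenius homomorphism in characteristic $p$.
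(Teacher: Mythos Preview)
Your proof is correct and follows essentially the same approach as the paper: verify the divisor identity via HKR character theory (equivalently, restriction to abelian $p$-subgroups) together with the exponent bound of Proposition~\ref{prop-exponent}, then deduce the nilpotency bound from the fact that $\psi^p$ acts as the $p^n$-power Frobenius on $K^0(B\bG_d)$. The paper packages this last fact as Remark~\ref{rem-adams-frob} and cites it directly, whereas you rederive it via the splitting-extension argument; for that step it is cleanest to do the Frobenius computation universally in $K^0(B\bT_d)\supset K^0(B\bG_d)$ and then descend to the quotient $K^0(BG_d)$, rather than in an unspecified splitting extension of $K^0(BG_d)$.
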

\begin{proof}
 Remark~\ref{rem-adams-ab} and Proposition~\ref{prop-exponent} show
 that $\psi^{p^{k+r}}(D)$ becomes equal to $d[0]$ over $E^0(BA)$, for
 any abelian $p$-subgroup $A\leq G_d$.  Equivalently, the Chern
 classes $c_1,\dotsc,c_d$ of $\psi^{p^{k+r}}(D)$ map to zero in
 $E^0(BA)$.  However, generalised character theory tells us that the
 restriction maps to abelian subgroups are jointly injective, so 
 $\psi^{p^{k+r}}(D)=d[0]$ already over $E^0(BG_d)$.  On the other
 hand, Remark~\ref{rem-adams-frob} tells us that $\psi^{p^{k+r}}$ acts
 on $K^0(BG_d)$ as $u\mapsto u^{p^{n(k+r)}}$.  This operator must
 therefore kill the ideal generated by the Chern classes, which is the
 whole of the maximal ideal.
\end{proof}

\begin{definition}\lbl{defn-euler}
 We let $\euler\:\Div^+(\HH)\to\aff^1$ be the map given by $c_d$ on
 $\Div_d^+(\HH)$.  Equivalently, this is the unique map satisfying
 $\euler(0)=1$ and $\euler(D+E)=\euler(D)\euler(E)$ and
 $\euler([a])=x(a)$ for $a\in\HH$.  
\end{definition}

\begin{proposition}\lbl{prop-fix}
 There is a unique function $\fix\:\Div^+(\HH)\to\aff^1$ such that 
 \[ \euler(\psi^q(D)) = \fix(D)\,\euler(D) \]
 for all $D\in\Div^+(\HH)$.  Moreover, this is invertible and
 satisfies $\fix(0)=1$ and $\fix(D+E)=\fix(D)\fix(E)$ and
 $\fix([0])=q$.
\end{proposition}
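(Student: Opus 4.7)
The plan is to construct $\fix$ explicitly and then verify each clause. First, since $q$ is a power of $q_0\neq p$, it lies in $\Zp^\tm\subseteq (E^0)^\tm$, so $[q]_F(x)=qx+O(x^2)$ is a unit multiple of $x$. Define $\ip{q}(x)=[q]_F(x)/x\in E^0\psb{x}$; this is an invertible power series with constant term $q$. I would then define $\fix$ on $\Div^+(\HH)$ by $\fix(\sum_i[a_i])=\prod_i\ip{q}(x(a_i))$. On $\Div_d^+(\HH)$ this is a symmetric expression in the $x(a_i)$'s, hence by the identification $\CO_{\Div_d^+(\HH)}=E^0\psb{x_1,\dotsc,x_d}^{\Sg_d}=E^0\psb{c_1,\dotsc,c_d}$, it defines a genuine element of the ring of functions on $\Div_d^+(\HH)$.

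Next, I would verify the defining identity. For $D=\sum_i[a_i]$, the multiplicativity of $\euler$ gives
\[ \euler(\psi^q(D))=\prod_i x(q a_i)=\prod_i [q]_F(x(a_i))=\prod_i x(a_i)\,\ip{q}(x(a_i))=\euler(D)\,\fix(D), \]
as required. The three stated algebraic properties are immediate: $\fix(0)$ is an empty product, $\fix(D+E)=\fix(D)\fix(E)$ comes directly from the product formula, and $\fix([0])=\ip{q}(0)=q$.

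For invertibility, note that each factor $\ip{q}(x(a_i))$ reduces to $q\in(E^0)^\tm$ when we set $x=0$, so is a unit in the relevant power series ring; hence the finite product $\fix(D)$ is a unit, i.e.\ $\fix$ factors through $\GG_m\subset\aff^1$. For uniqueness, observe that on $\Div_d^+(\HH)$ the function $\euler$ equals $c_d$, which is a nonzerodivisor in $E^0\psb{c_1,\dotsc,c_d}$ (it has Weierstrass form in the $x_i$ variables and the ring is a domain modulo the maximal ideal of $E^0$); therefore the equation $\euler(\psi^q(D))=\fix(D)\,\euler(D)$ determines $\fix(D)$ at most uniquely in $\CO_{\Div_d^+(\HH)}$.

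There is no real obstacle here: the argument is just the standard computation that Euler-class-type invariants transform multiplicatively under Adams operations, packaged in the language of divisors on $\HH$. The only point that requires any care is the passage from the symmetric formula in the $x(a_i)$'s to a function on $\Div_d^+(\HH)$, and the verification that the nonzerodivisor property of $c_d$ gives uniqueness; both are routine given the structural results in Section~\ref{sec-morava}.
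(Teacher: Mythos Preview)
Your proof is correct and follows essentially the same approach as the paper: define $\ip{q}(x)=[q]_F(x)/x$, build $\fix$ as the symmetric product $\prod_i\ip{q}(x(a_i))$ (the paper phrases this equivalently via the norm map $N\:\CO_D\to\CO_T$), verify the defining identity directly, and deduce uniqueness from the fact that $c_d$ is a nonzerodivisor in $E^0\psb{c_1,\dotsc,c_d}$. The invertibility arguments differ only cosmetically---you argue factorwise that $\ip{q}(x_i)$ is a unit because its constant term $q$ is, while the paper observes that $\fix(d[0])=q^d$ is invertible and all of $\Div_d^+(\HH)$ is infinitesimally close to $d[0]$.
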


\begin{remark}\lbl{rem-fix}
 We have defined $\fix(D)$ for all effective divisors $D$, in a way
 that implicitly depends on our choice of coordinate $x$.  We will
 primarily be interested in the case where $D$ is $\Gm$-invariant,
 and we will check later that it is independent of $x$ in that
 case. 

 The condition $\euler(\psi^q(D))=\fix(D)\,\euler(D)$ implies that
 $\fix(D)=1$ in any context where $D$ is $\Gm$-invariant and
 $\euler(D)$ is not a zero-divisor.  Of course this contrasts with the
 case $D=d[0]$, where $D=\psi^q(D)$ but $\euler(D)=0$ and
 $\fix(D)=q^d$.  One should think $\fix(D)$ as something like $q$
 raised to the power of the multiplicity of $[0]$ in $D$.
\end{remark}

\begin{remark}\lbl{rem-cannibalistic}
 In more traditional language, $\fix$ might be called a cannibalistic
 class and denoted by $\rho^q$.
\end{remark}

\begin{proof}[Proof of Proposition~\ref{prop-fix}]
 It is standard that $x(qa)=[q]_F(x(a))$ for some power series
 $[q]_F(t)$ of the form $qt+O(t^2)$.  We can thus write
 $[q]_F(t)=t\,\ip{q}_F(t)$ and $\fix_1(a)=\ip{q}_F(x(a))$ so
 $x(qa)=\fix_1(a)\,x(a)$ and $\fix_1(0)=q$.  Next, given a divisor $D$
 of degree $d$ on $\HH$ over a base scheme $T$ we have a norm map
 $N\:\CO_D\to\CO_T$ which we can apply to $\fix_1$ to get an element
 $\fix(D)\in\CO_T$.  Equivalently, we define
 $\fix\:\Div_d^+(\HH)=\HH^d/\Sg_d\to\aff^1$ to be the unique element
 such that $\fix(D)=\prod_i\fix_1(a_i)$ whenever
 $D=\sum_{i=1}^d[a_i]$.  It is clear that this satisfies 
 \[ \euler(\psi^q(D))=\euler\left(\sum_i[qa_i]\right)
     = \prod_i[q]_F(x(a_i)) 
     = \prod_i(\fix_1(a_i)x(a_i)) 
     = \fix(D)\euler(D).
 \] 
 As the element $c_d$ is not a zero divisor in the ring
 $\CO_{\Div_d^+(\HH)}=E^0(BGL_d(\bF))=E^0\psb{c_1,\dotsc,c_d}$, 
 the above equation characterises the map $\fix$ uniquely.  Note also
 that the element $\fix(d[0])=q^d$ is invertible in $E^0$ and all of
 $\Div_d^+(\HH)$ is infinitesimally close to $d[0]$ so $\fix$ is
 invertible on $\Div_d^+(\HH)$ as claimed.
\end{proof}

\begin{lemma}\lbl{lem-fix-adams}
 For $k\in\Zp$ with $k^{p-1}=1$ we have $\fix(\psi^kD)=\fix(D)$.  In
 particular, the divisor $\ov{D}=\psi^{-1}(D)$ has
 $\fix(\ov{D})=\fix(D)$. 
\end{lemma}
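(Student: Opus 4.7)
The plan is to reduce the claim to a computation on $\HH$ using the identity $[k]_F(x)=kx$ for such $k$, recorded in Remark~\ref{rem-period}. Since $\fix$ is multiplicative and, by definition, given as a norm of the function $\fix_1$ defined on $\HH$, and $\psi^k$ on $\Div^+$ is just the Adams operation applied pointwise to each summand, it will suffice to prove the pointwise statement $\fix_1\circ\psi^k=\fix_1$ as functions $\HH\to\aff^1$.

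First I would recall that $\fix_1(a)=\ip{q}_F(x(a))$, where $\ip{q}_F(t)=[q]_F(t)/t$, and that $\psi^k$ acts on $\HH$ by $x(\psi^k(a))=[k]_F(x(a))$. The hypothesis $k^{p-1}=1$ (i.e.\ $k^p=k$) makes $k$ precisely one of the Teichm\"uller roots of $t^p-t$ considered in Remark~\ref{rem-period}, so $[k]_F(x)=kx$. The key series identity I need is $\ip{q}_F(kx)=\ip{q}_F(x)$; this follows from $[q]_F(kx)=k\,[q]_F(x)$ (again from Remark~\ref{rem-period}) by dividing both sides by $kx$.

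Given this, $\fix_1(\psi^k(a))=\ip{q}_F(kx(a))=\ip{q}_F(x(a))=\fix_1(a)$, so $\fix_1$ is $\psi^k$-invariant as a function on $\HH$. For a divisor $D=\sum_i[a_i]$ we then compute
\[
 \fix(\psi^kD)=\prod_i\fix_1(\psi^k a_i)=\prod_i\fix_1(a_i)=\fix(D),
\]
and, as in the proof of Proposition~\ref{prop-fix}, this identity on geometric points extends uniquely across the universal family because $\euler(D)=c_d$ is not a zero divisor in $E^0(B\bG_d)$; alternatively one may simply invoke the multiplicative extension of $\fix_1\circ\psi^k=\fix_1$ from $\HH$ to $\Div^+(\HH)=\coprod_d\HH^d/\Sg_d$. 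For the final assertion, note that $p$ is odd by standing assumption, so $(-1)^{p-1}=1$; thus $k=-1$ falls under the hypothesis and $\fix(\ov D)=\fix(\psi^{-1}D)=\fix(D)$.

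The only ``obstacle'' is really the bookkeeping around Remark~\ref{rem-period}: one must make sure to apply it with $w=k$ rather than with a formal variable, which is legitimate precisely because $k^p=k$ in $\Zp$. Everything else is formal manipulation with the functional equation defining $\fix$.
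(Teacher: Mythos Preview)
Your proof is correct and uses essentially the same idea as the paper: the key input is Remark~\ref{rem-period}, giving $[k]_F(x)=kx$ for $k^{p-1}=1$, from which everything follows. The paper organises the endgame slightly differently, computing $\euler(\psi^kD)=k^d\euler(D)$ and $\euler(\psi^q\psi^kD)=k^d\euler(\psi^qD)$ and then cancelling $k^d\euler(D)$ in the defining equation $\euler(\psi^qD)=\fix(D)\euler(D)$ (working in the universal case where $\euler(D)$ is a non-zero-divisor); your route via the power series identity $\ip{q}_F(kx)=\ip{q}_F(x)$ and the norm description of $\fix$ is a touch more direct and avoids that cancellation step, but the substance is the same.
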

\begin{proof}
 Recall from Remark~\ref{rem-period} that for $k$ as above we have
 $x(ka)=[k]_F(x(a))=k\,x(a)$, and thus $x(kqa)=k\,x(qa)$.  It follows
 that for any $D\in\Div_d^+(\HH)$ we have
 $\euler(\psi^kD)=k^d\euler(D)$ and
 $\euler(\psi^q\psi^kD)=k^d\euler(\psi^qD)$.  By working in the
 universal case where $\euler(D)$ is not a zero divisor, we conclude
 that $\fix(D)=1$.
\end{proof}

\begin{definition}\lbl{defn-Fix}
 For any $V\in\Rep(\Tht^*)=[\Tht^*,\CV]$ we put
 \[ \Fix(V) = \{v\in V\st \tht.v=v \text{ for all } \tht\in\Tht^*\}.
 \]
\end{definition}

Note that this is a finite-dimensional vector space over $F$, so
$|\Fix(V)|$ is a power of $q$, and thus lies in $1+p^r\Z$.  Note also
that the element $\fix\in E^0(B\CV)$ gives rise to a map
$\Rep(\Tht^*)\to D'$, by generalised character theory.

\begin{proposition}\lbl{prop-fix-HKR}
 For any representation $V\in\Rep(\Tht^*)$, the character value
 $\fix(V)$ is just $|\Fix(V)|$.
\end{proposition}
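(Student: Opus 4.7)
The plan is to reduce the identity to the case of irreducible representations using multiplicativity on both sides, and then to exploit the HKR correspondence together with the defining property of $\fix$ from Proposition~\ref{prop-fix}.

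First, I would observe that both sides are multiplicative under direct sum: the identity $|\Fix(V_0 \oplus V_1)| = |\Fix(V_0)| \cdot |\Fix(V_1)|$ is immediate, and $\fix(D_0 + D_1) = \fix(D_0)\fix(D_1)$ is part of Proposition~\ref{prop-fix}; under HKR, direct sum of representations corresponds to addition of the associated divisors on $\HH$. So by Proposition~\ref{prop-Irr-Phi} it suffices to verify the formula on each irreducible $V_\al$, for $\al$ ranging over $\Phi/\Gm$. Combining Proposition~\ref{prop-Irr-Phi} with Lemma~\ref{lem-twist-split} shows that $V_\al$ corresponds under HKR to the $\Gm$-invariant divisor $D_\al = \sum_{i=0}^{|\Gm\al|-1}[q^i\al]$ on $\HH$.

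Next, I would split into two cases. When $\al = 0$, the representation $V_0 \simeq F$ is trivial, so $|\Fix(V_0)| = |F| = q$, while $D_0 = [0]$ gives $\fix([0]) = q$ by Proposition~\ref{prop-fix}. When $\al \neq 0$, the construction in Proposition~\ref{prop-Irr-Phi} realises $V_\al$ as a field $F(j)$ (for some $j\geq 0$) with $\Tht^*$ acting through a nontrivial character into $F(j)^\tm$; a nonzero fixed vector would be annihilated by some $\al(\theta) - 1 \neq 0$ inside the field $F(j)$, forcing $\Fix(V_\al) = 0$ and hence $|\Fix(V_\al)| = 1$. On the other side, $D_\al$ is $\Gm$-invariant by construction, and $\euler(D_\al) = \prod_i x(q^i\al)$ should be a non-zero-divisor in $D'$ since every $q^i\al$ is a nontrivial torsion point of $\HH(D)$. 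Then Remark~\ref{rem-fix} forces $\fix(D_\al) = 1 = |\Fix(V_\al)|$.

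Combining the contributions across a decomposition into irreducibles then yields $\fix(V) = q^{m_0(V)} = |\Fix(V)|$, where $m_0(V)$ denotes the multiplicity of the trivial representation in $V$. The main subtle point I expect to need care with is the assertion that $\euler(D_\al)$ is a non-zero-divisor in $D'$ when $\al \neq 0$; this requires being explicit about the ring in which the computation takes place and about how the canonical isomorphism $\Tht \simeq \HH(D)$ from Definition~\ref{defn-HKR-D} interacts with the coordinate $x$ (so that nontrivial torsion points of $\HH$ really do have nonzero coordinate). Once this is pinned down, Proposition~\ref{prop-fix} and Remark~\ref{rem-fix} together deliver the conclusion.
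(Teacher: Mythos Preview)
Your proposal is correct and follows essentially the same strategy as the paper: reduce to irreducibles by multiplicativity, handle the trivial representation directly via $\fix([0])=q$, and for a nontrivial irreducible use that the associated $\Gm$-invariant divisor has $\euler$ a non-zero-divisor in the integral domain $D'$. The only cosmetic difference is that you invoke Remark~\ref{rem-fix} directly, whereas the paper spells out the telescoping product $\prod_i [q^{i+1}](x)/[q^i](x)=1$ that underlies that remark.
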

\begin{proof}
 Because $\Fix(V\oplus W)=\Fix(V)\oplus\Fix(W)$ and
 $\fix(V\oplus W)=\fix(V)\fix(W)$ we can reduce to the case
 where $V$ is irreducible.  If $V$ is just $F$ (with trivial
 $\Tht^*$-action) then we have $\fix(V)=\ip{q}(0)=q=|\Fix(V)|$ as
 required.  Suppose instead that $V$ has nontrivial action, and let
 $L$ denote a one-dimensional subrepresentation of $\bF\ot_FV$.  We
 then see that $\bF\ot_FV\simeq\bigoplus_{i=0}^{k-1}L^{q^i}$ for some
 $k$ with $L^{q^k}\simeq L$.  Let $x$ be the image in $D'$ of the
 Euler class of $L$; we then get
 \[ \fix(V) = \prod_i\ip{q}([q^i](x)) \]
 As $L$ is nontrivial we see that $x\neq 0$.  As $D_A$ is an integral
 domain it will be harmless to work in the field of fractions where we
 have $\ip{q}([q^i](x))=[q^{i+1}](x)/[q^i](x)$.  The whole product
 therefore cancels to give $\fix(V)=1$, which is the same as
 $|\Fix(V)|$, as required.
\end{proof}

\begin{proposition}\lbl{prop-fix-soc}
 The element $s=\prod_{i=0}^{d-1}(\fix-q^i)\in E^0(B\CV_d)$ is the
 transfer of $1$ from the trivial groupoid, and so is the standard
 generator of the socle.
\end{proposition}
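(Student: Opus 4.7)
The plan is to match HKR characters. Since $E^0(B\CV_d)$ is a free $E^0$-module by Corollary~\ref{cor-EV-free} and $E^0$ injects into $D'$, the HKR character map
\[ E^0(B\CV_d)\to (D')^0(\CV_d)=\Map(\Rep_d(\Tht^*),D') \]
is injective, so it suffices to show that $s$ and $\soc_{\CV_d}=\tr_1^{G_d}(1)$ induce the same function on $V\in\Rep_d(\Tht^*)$.

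For $s$, Proposition~\ref{prop-fix-HKR} gives $\fix(V)=|\Fix(V)|$. Since $\Fix(V)$ is an $F$-linear subspace of $V$, we have $|\Fix(V)|=q^m$ for some integer $0\le m\le d$, with $m=d$ if and only if $V$ is the trivial representation $V_0$. Hence
\[ s(V)=\prod_{i=0}^{d-1}(q^m-q^i), \]
which vanishes whenever $m<d$ (the factor at $i=m$ is zero), and equals $\prod_{i=0}^{d-1}(q^d-q^i)=|G_d|$ at $V=V_0$ by Proposition~\ref{prop-Gd-order}.

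For $\soc_{\CV_d}=j_!(1)$, where $j\:1\to\CV_d$ is the spine inclusion, the $K(n)$-local self-duality of $L\CV_d$ identifies $j_!=E^0(Rj)$ on the cohomology side with $j_*=E^\vee_0(Lj)$ on the homology side, as recorded in Remark~\ref{rem-bialgebra-duality}. Under HKR, the homological map $j_*$ sends $1\in E^\vee_0(B1)$ to the basis element $[V_0]\in D'\{\Rep_d(\Tht^*)\}=D'\otimes_{E^0}E^\vee_0(B\CV_d)$, since $j_*\:\{*\}\to\Rep_d(\Tht^*)$ picks out the trivial representation. Converting back to $\Map(\Rep_d(\Tht^*),D')$ via the isomorphism $[W]\mapsto\chi_W$ of Remark~\ref{rem-chi-V}, we find that the character of $\soc_{\CV_d}$ is $\chi_{V_0}$, which takes the value $|\Iso(V_0,V_0)|=|G_d|$ at $V\simeq V_0$ and vanishes on all other representations.

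Comparing the two computations, $s$ and $\soc_{\CV_d}$ define the same function on $\Rep_d(\Tht^*)$, so $s=\soc_{\CV_d}$ in $E^0(B\CV_d)$. The main conceptual obstacle is tracking the duality identifications for the transfer: one must combine the $L\phi$-compatibility of HKR from Section~\ref{sec-hkr}, the self-duality identification of Remark~\ref{rem-bialgebra-duality}, and the explicit pairing formula of Remark~\ref{rem-chi-V} to recognise $\tr_1^{G_d}(1)$ as the weighted delta function at the trivial representation. Once this is in place, the elementary identity $|G_d|=\prod_{i=0}^{d-1}(q^d-q^i)$ together with the obvious vanishing of $s$ on non-trivial $V$ completes the proof.
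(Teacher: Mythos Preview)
Your proof is correct and follows essentially the same strategy as the paper: both compute the HKR character of $s$ and of $\tr_1^{G_d}(1)$, observe they agree, and conclude by injectivity of the character map. The only difference is in how you justify the character of the transfer. The paper simply asserts it (implicitly invoking the $R\phi$ formula from Section~\ref{sec-hkr}), whereas you route through the self-duality identification $E^\vee_0(B\CV_d)\simeq E^0(B\CV_d)$, compute $j_*(1)=[V_0]$ on the homological side, and then translate via $[W]\mapsto\chi_W$ from Remark~\ref{rem-chi-V}. This detour is valid and arguably makes the bookkeeping more explicit; note though that Remark~\ref{rem-bialgebra-duality} literally only states the compatibility for $\sg$, so you are relying on the general principle from~\cite{st:kld} that the duality identification converts $\phi_*$ to $\phi_!$ for any $\phi$, of which $\sg$ is one instance and $j$ is another.
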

\begin{proof}
 Consider a representation $V\in\Rep(\Tht^*)$.  We then have
 $\fix(V)=q^{\dim(\Fix(V))}$ with $0\leq\dim(\Fix(V))\leq d$.  It
 follows that $s(V)=0$ iff $\dim(\Fix(V))<d$ iff $A$ acts
 nontrivially on $V$.  On the other hand, if $A$ acts trivially we
 have $s(V)=\prod_{i=0}^{d-1}(q^d-q^i)$, which is well-known to be
 the same as the order of $GL_d(F)$.  This means that $s$ has the same
 character values as the transfer of $1$, so it is the same as the
 transfer of $1$.
\end{proof}

\begin{corollary}\lbl{cor-soc-fix}
 The socle of $K^0(B\CV_d)$ is generated by $(\fix-1)^d$.  Thus, if
 $d=p^k$ then the socle is generated by $(\fix-1)^{p^k}=\fix^{p^k}-1$.
\end{corollary}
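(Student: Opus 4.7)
The plan is to deduce this corollary directly from Proposition~\ref{prop-fix-soc} by reducing mod the maximal ideal $\mxi = (u_0,\dotsc,u_{n-1})$ of $E^0$.

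First, I would recall that $K^0 = E^0/\mxi$ is an $\Fp$-algebra, so the image of any integer $m\in\Z\subset E^0$ in $K^0$ is just $m\pmod p$. Since we have assumed $|F| = q = 1\pmod p$ (in fact $q=1\pmod{p^r}$ with $r\geq 1$), each integer $q^i$ reduces to $1$ in $K^0$. Consequently, the socle generator produced by Proposition~\ref{prop-fix-soc},
\[ s = \prod_{i=0}^{d-1}(\fix - q^i) \in E^0(B\CV_d), \]
has image in $K^0(B\CV_d)$ equal to $(\fix-1)^d$. Since $s$ generates the socle of $E^0(B\CV_d)$ and the socle behaves well under base change along $E^0\to K^0$ (using that $E^0(B\CV_d)$ is free over $E^0$ and concentrated in even degrees, by Corollary~\ref{cor-EV-free}, so that Frobenius duality is preserved modulo $\mxi$), its reduction $(\fix-1)^d$ generates the socle of $K^0(B\CV_d)$.

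Finally, in the special case $d = p^k$, I would apply the Frobenius (Freshman's Dream) identity available in the characteristic $p$ ring $K^0(B\CV_{p^k})$, which yields
\[ (\fix-1)^{p^k} = \fix^{p^k} - 1^{p^k} = \fix^{p^k} - 1, \]
giving the second stated form of the generator.

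No step here looks difficult; the only thing that warrants care is the compatibility of the socle with reduction mod $\mxi$. Since $E^0(B\CV_d)$ is pro-free over $E^0$ with $E^0(B\CV_d)/\mxi\simeq K^0(B\CV_d)$ and the Frobenius pairing $\ep_!$ is defined over $E^0$, reducing mod $\mxi$ carries the socle generator of $E^0(B\CV_d)$ to the socle generator of $K^0(B\CV_d)$, so this is immediate.
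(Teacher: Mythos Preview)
Your proof is correct and follows the same approach as the paper, whose proof is the single line ``This follows because $q^i=1\pmod{p}$ for all $i$.'' Your extra justification about compatibility of the socle with reduction mod $\mxi$ is fine but slightly more elaborate than needed: since Proposition~\ref{prop-fix-soc} identifies $s$ with the transfer $\tr_1^{G_d}(1)$, and transfers are natural for the change-of-coefficients map $E^0(B\CV_d)\to K^0(B\CV_d)$, the image of $s$ is automatically the $K$-theory socle generator $\tr_1^{G_d}(1)$.
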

\begin{proof}
 This follows because $q^i=1\pmod{p}$ for all $i$.
\end{proof}

\begin{remark}\lbl{rem-soc-soc}
 By comparing Corollary~\ref{cor-soc-fix} with
 Proposition~\ref{prop-socle}, we see that $(\fix-1)^{p^k}$ must be a
 unit multiple of $s_k^{\bN_k-1}=\euler^{\bN_k-1}$ in
 $K^0(B\CV_{p^k})$.  We have not yet been able to find a direct,
 equational proof of this, but we have verified it by strenuous
 computer calculation in some very small cases.
\end{remark}

Recall that the socle is by definition the annihilator of the kernel
of the restriction map from $E^0(BGL_d(F))$ to
$E^0(\text{point})=E^0(BGL_0(F))$.  We are also interested in the
kernel of restriction to $E^0(BGL_{d-1}(F))$.

\begin{proposition}\lbl{prop-fix-transfer}
 If we define $\sg\:\CV\to\CV$ by $\sg(V)=V\oplus F$, then
 \begin{itemize}
  \item[(a)] The map $\sg^*\:E^0(B\CV)\to E^0(B\CV)$ is surjective,
   with kernel generated by the element $\euler\in E^0(B\CV)$.
  \item[(b)] The element $\fix-1\in E^0(B\CV)$ is a unit multiple of
   $\sg_!(1)$.
  \item[(c)] The ideal $\ann(\euler)=\ann(\ker(\sg^*))$ is generated
   by $\fix-1$.
 \end{itemize}
\end{proposition}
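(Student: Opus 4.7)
For (a), I will show $\sg$ corresponds to the standard block inclusion $GL_d(F) \hookrightarrow GL_{d+1}(F)$ and use the Chern polynomial identity $f_{V \op F}(t) = t \cdot f_V(t)$ to conclude $\sg^*(c_i) = c_i$ for $1 \leq i \leq d$ and $\sg^*(c_{d+1}) = 0$. Since $E^0(B\CV_d)$ is generated by $c_1,\ldots,c_d$ over $E^0$ by Theorem~\ref{thm-tanabe}, this makes $\sg^*$ surjective and gives $(\euler) \sse \ker(\sg^*)$. For the reverse containment, I will compute the quotient $E^0(B\CV_{d+1})/(c_{d+1})$ using Tanabe's presentation. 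Working in the polynomial ring on the formal roots $x_1,\ldots,x_{d+1}$ and using that $[q]_F(x)$ is divisible by $x$, one checks that $\phi^*(c_{d+1}) = \prod_j [q]_F(x_j)$ is divisible by $\prod_j x_j = \pm c_{d+1}$, and that for $i \leq d$ the polynomial $\phi^*(c_i^{(d+1)})$ reduces to $\phi^*(c_i^{(d)})$ modulo $c_{d+1}$ (by setting one root to zero). Thus the Tanabe relations $r_i$ for $E^0(B\CV_{d+1})$ reduce to those for $E^0(B\CV_d)$, yielding a ring isomorphism $E^0(B\CV_{d+1})/(c_{d+1}) \xra{\sim} E^0(B\CV_d)$ factoring $\sg^*$, whence $\ker(\sg^*) = (\euler)$.

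For (b), I will compute the character of $\sg_!(1)$ using generalised character theory. By adjointness of $\sg_!$ and $\sg^*$ with respect to the Frobenius pairing and the HKR formula $\ep_!^{GL_d}(\chi) = \sum_{[V] \in \Rep_d(\Tht^*)} |\Aut(V)|^{-1}\chi(V)$, the requirement
\[ \sum_{[W]} |\Aut(W)|^{-1}\,\sg_!(1)(W)\,g(W) =
      \sum_{[V]} |\Aut(V)|^{-1}\,g(V \op F) \]
for every $g \in E^0(B\CV_{d+1})$ forces
\[ \sg_!(1)(W) = |\Aut(W)| \sum_{[V] : V \op F \simeq W} |\Aut(V)|^{-1}. \]
Writing $W = W_0 \op F^e$ with $W_0$ having no trivial summand and $e = \dim_F \Fix(W)$, there is no such $V$ when $e = 0$, and there is a unique $V = W_0 \op F^{e-1}$ when $e \geq 1$. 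The automorphism groups split as $\Aut(W) = \Aut(W_0) \tm GL_e(F)$, so the ratio collapses to $|GL_e(F)|/|GL_{e-1}(F)| = q^{e-1}(q^e - 1)$. Using Proposition~\ref{prop-fix-HKR} to identify $\fix(W) = q^e$, this becomes $\sg_!(1)(W) = q^{-1}\fix(W)(\fix(W) - 1)$. Since $\fix$ is invertible in $E^0(B\CV)$ by Proposition~\ref{prop-fix} and $q$ is a unit in $E^0$ (as $q = 1 \pmod{p}$), the factor $q^{-1}\fix$ is a unit. The character identity lifts to an equation $\sg_!(1) = q^{-1}\fix(\fix - 1)$ in $E^0(B\CV_{d+1})$ itself, because that ring embeds into $(D')^0(B\CV_{d+1})$ by Corollary~\ref{cor-EV-free} and Theorem~\ref{thm-HKR}.

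Part (c) is then immediate from (a), (b), and Corollary~\ref{cor-res-tr}. Since $\sg^*$ is surjective, $\img(\sg_!)$ equals $\ann(\ker(\sg^*)) = \ann(\euler)$ and is principal, generated by $\sg_!(1)$. Combining with (b), $\ann(\euler) = (\sg_!(1)) = (\fix - 1)$.

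The main obstacle is the HKR bookkeeping in (b): carefully tracking the factor of $q^{-1}$ that emerges from the ratio of $GL$-orders, and then confirming that the $D'$-valued character formula $q^{-1}\fix(\fix - 1)$ matches an element already in the integral ring $E^0(B\CV_{d+1})$. Both concerns resolve cleanly: $\fix$ and $q^{-1}$ lie in $E^0(B\CV_{d+1})$ by construction, and the embedding into generalised characters is injective, so the character-level equality lifts uniquely.
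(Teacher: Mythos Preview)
Your proof is correct.  Part~(a) follows essentially the same route as the paper: both arguments use Tanabe's presentation $E^0(B\CV_d)=E^0\psb{c_1,\dotsc,c_d}/(r_1,\dotsc,r_d)$ and the observation that the restriction map $c_{d+1}\mapsto 0$, $c_i\mapsto c_i$ is compatible with $\phi^*$, so that a short diagram chase identifies the kernel with the ideal generated by $c_{d+1}=\euler$.  Part~(c) is identical in both treatments.

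Part~(b) is where your approach genuinely diverges from the paper's.  The paper introduces an auxiliary groupoid $\hCV_1=\{(V,v)\st v\in V\setminus\{0\}\}$ with covering map $\pi_1\:\hCV_1\to\CV$, observes that $(\pi_1)_!(1)(V)$ counts nonzero fixed vectors and hence equals $(\fix-1)(V)$, and then uses the factorisation $\sg=\pi_1\bt$ together with the twisting-element machinery of Proposition~\ref{prop-twisting-element} to conclude that $\sg_!(1)$ is a unit multiple of $\fix-1$, without ever naming the unit.  You instead compute $\sg_!(1)$ directly from the adjunction $\ip{\sg_!(1),g}_{d+1}=\ip{1,\sg^*g}_d$ and the HKR formula for $\ep_!$, obtaining the explicit equality $\sg_!(1)=q^{-1}\fix(\fix-1)$.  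Your route is shorter and yields strictly more information (the exact unit), while the paper's route sets up the groupoid $\hCV$ which is reused later (e.g.\ in the analysis of $\hom$ in Proposition~\ref{prop-hom-aux}), so each approach has its own payoff.
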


We will deduce this from a more elaborate statement involving some
auxiliary groupoids, which we now introduce.

\begin{definition}\lbl{defn-fix-transfer}
 We will construct a diagram of groupoids of the following shape:
 \begin{center}
  \begin{tikzcd}[sep=large]
   \CV \arrow[d,equal] &
   \hCV_1 
    \arrow[l,twoheadrightarrow,"\al"']
    \arrow[d,twoheadrightarrow,"\pi_1"'] 
    \arrow[r,rightarrowtail,"\text{inc}"] &
   \hCV 
    \arrow[dl,twoheadrightarrow,"\pi"] \\
   \CV
    \arrow[ur,rightarrowtail,"\bt"] 
    \arrow[r,rightarrowtail,"\sg"'] &
   \CV &
   \hCV_0
    \arrow[l,"\pi_0","\simeq"']
    \arrow[u,rightarrowtail,"\text{inc}"']
  \end{tikzcd}
 \end{center}
 \begin{itemize}
  \item The groupoid $\hCV$ consists of pairs $(V,v)$ with $V\in\CV$
   and $v\in V$.  The morphisms from $(V_0,v_0)$ to $(V_1,v_1)$ are
   $F$-linear isomorphisms $f\:V_0\to V_1$ with $f(v_0)=v_1$.  
   This splits as the disjoint union of subgroupoids
   $\hCV_0=\{(V,v)\st v=0\}$ and $\hCV_1=\{(V,v)\st v\neq 0\}$.
  \item The functor $\pi\:\hCV\to\CV$ is given by $\pi(V,v)=V$, and
   $\pi_0$ and $\pi_1$ are just restrictions of $\pi$.
  \item The remaining functors are $\sg(V)=V\oplus F$ and
   $\bt(V)=(V\oplus F,(0,1))$ and $\al(V,v)=V/Fv$.
 \end{itemize}
\end{definition}
\begin{remark}
 It is convenient to start with $\hCV$, because it has an evident
 symmetric monoidal direct sum operation making $\pi$ a symmetric
 monoidal functor.  However, we are primarily interested in the
 subgroupoid $\hCV_1$.
\end{remark}

Proposition~\ref{prop-fix-transfer} clearly follows from the following
extended version.

\begin{proposition}\lbl{prop-fix-transfer-extra}\leavevmode
 \begin{itemize}
  \item[(a)] The functor $\pi$ is a covering, and the restriction
   $\pi_0$ is an isomorphism of groupoids.
  \item[(b)] The functors $\al$ and $\bt$ are not equivalences, but
   nonetheless they give mutually inverse isomorphisms in Morava
   $E$-theory.
  \item[(c)] The maps $\pi_1^*\:E^0(B\CV)\to E^0(B\hCV_1)$ and
   $\sg^*\:E^0(B\CV)\to E^0(B\CV)$ are surjective, with kernel
   generated by $\euler$.
  \item[(d)] The element $\fix-1$ is equal to $(\pi_1)_!(1)$, and is a
   unit multiple of $\sg_!(1)$.
  \item[(e)] The ideal
   $\ann(\euler)=\ann(\ker(\sg^*))=\ann(\ker((\pi_1)^*))$ is generated 
   by $\sg_!(1)$ or by $\fix-1=(\pi_1)_!(1)$.
 \end{itemize}
\end{proposition}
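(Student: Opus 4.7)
The plan is to handle parts (a) and (b) directly, then use Tanabe's theorem for (c), generalised character theory for (d), and the annihilator formalism of Corollary~\ref{cor-res-tr} for (e).

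For (a), both claims are immediate: given $(V,v) \in \hCV$ and $h\colon V \to V'$ in $\CV$, the unique lift is $h\colon (V,v) \to (V', h(v))$, proving the covering property; restriction to $\hCV_0$ gives an isomorphism of groupoids since every morphism in $\CV$ preserves the constraint $v = 0$. For (b), the map $\al\colon \hCV_1(V,v) \to \CV(V/Fv)$ is surjective (lift any automorphism of $V/Fv$ to $V$ and rescale to fix $v$), and its kernel consists of the $F$-linear maps $w \mapsto w + \phi(w)v$ for $\phi\colon V/Fv \to F$, of order $q^{\dim V - 1}$ which is coprime to $p$. Proposition~\ref{prop-quot-equiv} therefore shows $\al^*$ is an isomorphism; since $\al\bt = 1_\CV$ canonically, Proposition~\ref{prop-twisting-element} identifies $\bt^* = (\al^*)^{-1}$ and records the relation $\al_!(1)\cdot\bt^*(\bt_!(1)) = 1$ in $E^0(B\CV)$.

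For (c), I will use the Tanabe presentation $E^0(B\CV_{d+1}) = E^0\psb{c_1,\ldots,c_{d+1}}/(r_1,\ldots,r_{d+1})$. The map $\sg^*$ sends $c_i \mapsto c_i$ for $i \leq d$ and $c_{d+1} \mapsto 0$, hence is surjective and factors through $E^0(B\CV_{d+1})/(c_{d+1})$. Because $[q]_F(0) = 0$, the series $\phi^*(c_{d+1}) = \prod_j [q]_F(x_j)$ is divisible by $c_{d+1} = \prod_j x_j$ in the universal polynomial ring, so $r_{d+1} \in (c_{d+1})$; an analogous symmetric-function check gives $r_i^{(d+1)} \equiv r_i^{(d)} \pmod{(c_{d+1})}$ for $i \leq d$, producing a canonical isomorphism $E^0(B\CV_{d+1})/(c_{d+1}) \cong E^0(B\CV_d)$ through which $\sg^*$ factors. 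Hence $\ker(\sg^*) = (\euler)$; and because $\sg^* = \bt^*\pi_1^*$ with $\bt^*$ an isomorphism (by (b)), the same assertions hold for $\pi_1^*$.

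For (d), since $\pi_1$ is a covering, Proposition~\ref{prop-fib-R} at the HKR level gives
\[ (\pi_1)_!(1)(V) = |\Fix(V)\setminus 0| = q^{\dim\Fix V} - 1 = \fix(V) - 1 \]
by Proposition~\ref{prop-fix-HKR}; because the integral HKR map $E^0(B\CV) \hookrightarrow D^0\CV$ is injective (a consequence of freeness over $E^0$ and $E^0 \hookrightarrow D'$), this character identity lifts to the equality $(\pi_1)_!(1) = \fix - 1$. A parallel adjunction computation for $\al_!(1)$, summing over iso classes $(W,v)$ with $v \in \Fix(W)\setminus 0$ and using the canonical splitting $W = V' \oplus F^{\dim\Fix W}$ (with $V'$ containing no trivial summand) together with the stabiliser ratio $|\Aut(W,v)|/|\Aut(W/Fv)| = q^{\dim\Fix W - 1}$, gives $\al_!(1) = \fix^{-1}$; combined with (b) this yields $\bt^*(\bt_!(1)) = \fix$. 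Now observe $\sg^*(\fix) = q\fix$ (because $\dim\Fix(V \oplus F) = \dim\Fix V + 1$), so setting $w := \fix/q \in E^0(B\CV)$ we have $\sg^*(w) = \fix$; applying $\bt^*$ and invoking its injectivity forces $\bt_!(1) = \pi_1^*(w)$. Frobenius reciprocity then yields $\sg_!(1) = (\pi_1)_!(\pi_1^*(w)) = w \cdot (\pi_1)_!(1) = (\fix/q)(\fix - 1)$, and $\fix/q$ is a unit in $E^0(B\CV)$ because $\fix$ reduces modulo the maximal ideal of $K^0(B\CV_d)$ to $q^d$ while $q \in \Zp^\times$. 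Finally, (e) is immediate from Corollary~\ref{cor-res-tr}: surjectivity of $\sg^*$ and $\pi_1^*$ (part (c)) makes $\ker(\sg^*) = \ker(\pi_1^*) = (\euler)$ equal to the annihilator of the principal ideal generated by the transfer of $1$, and by (d) these principal ideals coincide with $(\fix - 1)$. The main technical obstacle is the HKR adjunction calculation of $\al_!(1)$ in (d), which requires careful bookkeeping of isomorphism classes of equivariant pointed pairs and the order of the stabiliser of a nonzero vector in $GL_k(F)$; the clean identity $\al_!(1) = \fix^{-1}$ is what makes the Frobenius reciprocity step close and delivers the explicit unit $\fix/q$.
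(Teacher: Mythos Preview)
Your proof is correct, and parts (a), (b), (c), (e) follow essentially the same route as the paper. The interesting difference is in part (d).

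For (d), the paper argues abstractly: it notes from Proposition~\ref{prop-twisting-element} that $\al_!(1)$ is a unit, chooses an arbitrary lift $z\in E^0(B\CV)$ with $\sg^*(z)=\al_!(1)$, observes that $z$ is invertible by restricting to the spine, and then computes $z\,\sg_!(1)=\sg_!(\sg^*(z))=\sg_!\al_!(1)=(\pi_1)_!(1)=\fix-1$, using $(\pi_1)_!=\sg_!\al_!$. This establishes that $\sg_!(1)$ is a unit multiple of $\fix-1$ without identifying the unit. Your approach instead computes $\al_!(1)$ explicitly via HKR: there is a single isomorphism class $(V\oplus F,(0,1))$ over $V$, and the stabiliser ratio gives $\al_!(1)(V)=q^{-\dim\Fix(V)}=\fix(V)^{-1}$. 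From this and $\sg^*(\fix)=q\,\fix$ you extract the concrete identity $\sg_!(1)=(\fix/q)(\fix-1)$, which is sharper than what the paper states. Both arguments use the same Frobenius reciprocity step; yours trades the abstract lifting for an extra character calculation, and the payoff is an explicit formula for the unit.

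Two minor remarks on presentation. In (c), your divisibility check $r_{d+1}\in(c_{d+1})$ is correct but not actually needed: the paper's diagram chase only uses that $\sg^*\colon\ov{R}_{d+1}\to\ov{R}_d$ is surjective with kernel $(c_{d+1})$ and is $\phi^*$-equivariant, from which surjectivity of $L_{d+1}\to L_d$ and hence of $R_{d+1}\to R_d$ follows directly. In (d), your justification that $\fix/q$ is a unit is fine, but you could simply cite Proposition~\ref{prop-fix}, which already records that $\fix$ is invertible on $\Div_d^+(\HH)$.
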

\begin{proof}\leavevmode
 \begin{itemize}
  \item[(a)] If we have an object $(V,v)\in\hCV$ and a morphism
   $f\:V\to W$ in $\CV$ then $f$ counts as a morphism
   $(V,v)\to(W,f(v))$ in $\hCV$, and $f(v)$ is the unique element of
   $W$ such that this works.  This means that $\pi$ is a covering.  It
   is clear that $\pi_0$ is an isomorphism, with inverse
   $V\mapsto(V,0)$.
  \item[(b)] It is clear that $\al\bt\simeq 1$, and that $\al$ and
   $\bt$ give mutually inverse bijections between $\pi_0(\CV)$ and
   $\pi_0(\hCV)$.  Moreover, the map  
   \[ \bt\:\Aut_{\hCV}(V,v) \to \Aut_{\CV}(V/Fv) \]
   is easily seen to be surjective, with kernel of order coprime to
   $p$.  It follows by Proposition~\ref{prop-quot-equiv} that $\al$
   and $\bt$ give mutually inverse isomorphisms in Morava $E$-theory.
  \item[(c)] In view of~(b) it will suffice to show that $\sg^*$ has
   the stated properties.  Put $r_i=\phi^*(c_i)-c_i$ as usual, and
   \begin{align*}
    \ov{R}_d &= E^0(B\bCV_d) = E^0\psb{c_1,\dotsc,c_d} \\
    L_d &= (r_1,\dotsc,r_d) \leq \ov{R}_d \\
    R_d &= E^0(B\CV_d) = \ov{R}_d/L_d.
   \end{align*}
   The restriction map $\ov{R}_d\to\ov{R}_{d-1}$ sends $c_d$ to
   $0$ and $c_i$ to $c_i$ for $i<d$, so it is surjective.  It is
   compatible with the action of $\phi^*$, so it sends $r_d$ to $0$
   and $r_i$ to $r_i$ for $i<d$.  It follows that the induced map
   $L_d\to L_{d-1}$ is also surjective, as is the induced map
   $R_d\to R_{d-1}$.  A diagram chase therefore shows that the map
   \[ \ov{R}_d.c_d = \ker(\sg^*\:\ov{R}_d\to\ov{R}_{d-1}) \to
        \ker(\sg^*\:R_d\to R_{d-1})
   \]
   is again surjective, so $\ker(\sg^*\:R_d\to R_{d-1})$ is generated
   by $c_d$.  Moreover, $c_d$ is the same as $\euler$ on $\CV_d$, so
   claim~(c) follows. 
  \item[(d)] Consider a representation $V\in\Rep_d(\Tht^*)$.  Because
   $\pi_1$ is a covering, we see that $(\pi_1)_!(1)(V)$ is just the
   number of objects $(V,v)\in[\Tht^*,\hCV_1]$ lying over $V$, or in
   other words the number of nonzero $\Tht^*$-fixed points in $V$,
   which is $(\fix-1)(V)$.  As the generalised character map is
   injective, it follows that $\fix-1=(\pi_1)_!(1)$.

   Next, recall that $\al_!(1).\al^*(\bt_!(1))=\al_!(\bt_!(1))=1$, so
   $\al_!(1)$ is invertible in $E^0(B\CV)$.  We have seen that $\sg^*$
   is surjective, so we can choose $z$ with $\sg^*(z)=\al_!(1)$.  By
   considering the restriction to the spine, we see that $z$ is also
   invertible.  As $\bt_!$ is inverse to $\al_!$ and $\sg=\pi_1\bt$ we
   have $(\pi_1)_!=\sg_!\al_!$.  It follows that
   \[ z\,\sg_!(1)=\sg_!(\sg^*(z))=\sg_!\al_!(1)=(\pi_1)_!(1)=\fix-1,
   \]
   so $\sg_!(1)$ is a unit multiple of $\fix-1$ as claimed.
  \item[(e)] Claim~(e) now follows from Corollary~\ref{cor-res-tr}.
 \end{itemize}
\end{proof}

\begin{definition}\lbl{defn-hom}
 We define $\hom\:\Div^+(\HH)\tm\Div^+(\HH)\to\aff^1$ by 
 $\hom(D,E)=\fix(\ov{D}*E)$.  Here the map $D\mapsto\ov{D}$ is
 induced by $-1\:\HH\to\HH$, and $*$ denotes convolution of divisors,
 so
 \[ \hom\left(\sum_i[a_i],\sum_j[b_j]\right) =
      \fix\left(\sum_{i,j}[b_j-a_i]\right).
 \]
\end{definition}
\begin{remark}
 It is easy to see that this satisfies $\hom(D,E)=\hom(E,D)$ and 
 \[ \hom(D_0+D_1,E_0+E_1) = 
    \hom(D_0,E_0)\hom(D_0,E_1)\hom(D_1,E_0)\hom(D_1,E_1).
 \]
 In other words, it is a symmetric biexponential function.
\end{remark}

Using Proposition~\ref{prop-fix-transfer}, we can also describe the
element $\hom\in E^0(B\CV^2)$ as a transfer.  To explain the details,
we need some more auxiliary groupoids and functors.

\begin{definition}\lbl{defn-hom-aux}\leavevmode
 \begin{itemize}
  \item[(a)] Let $\CS$ be the groupoid of pairs $(U,W)$ with $W\in\CV$
   and $U\leq W$.  We will write such pairs as $(U\leq W)$.
  \item[(b)] Let $\CH$ be the groupoid of triples $(U,V,m)$ with
   $U,V\in\CV$ and $m\in\Hom_F(V,U)$.
  \item[(c)] Define functors as follows:
   \begin{center}
    \begin{tikzcd}[sep=2cm]
     \hCV \arrow[d,"\pi"'] & 
     \CH \arrow[l,"\hxi"'] \arrow[d,"\zt"'] \arrow[r,"\zt"] &
     \CV^2 \arrow[d,shift right,"\tau"'] \arrow[dr,"\sg"] \\
     \CV &
     \CV^2 \arrow[l,"\xi"] \arrow[r,"\tau"'] &
     \CS
      \arrow[u,shift right,"\al"']
      \arrow[r,shift left,"\rho"]
      \arrow[r,shift right,"\bt"'] &
     \CV
    \end{tikzcd}
   \end{center}
   \begin{align*}
    \zt(U,V,m) &= (U,V) &
    \hxi(U,V,m) &= (\Hom_F(V,U),m) \\
    \pi(V,v) &= V &
    \xi(U,V) &= \Hom_F(V,U) \\
    \al(U\leq W) &= (U,W/U) &
    \bt(U\leq W) &= W \\
    \tau(U,V) &= (U\leq U\oplus V) &
    \rho(U\leq W) &= U\oplus W/U \\
    \sg(U,V) &= U\oplus V. 
   \end{align*}
  \item[(d)] Given $(U,V,m)\in\CH$, define $n\in\Aut(U\oplus V)$ by
   $n(u,v)=(u+m(v),v)$.  This gives a natural automorphism of the
   functor $\tau\zt$.  
 \end{itemize}
\end{definition}

\begin{proposition}\lbl{prop-hom-aux}\leavevmode
 \begin{itemize}
  \item[(a)] $L\tau$ and $L\al$ are equivalences, and are inverse to
   each other.
  \item[(b)] $L\bt=L\rho=L\sg\,L\al\:L\CS\to L\CV$.
  \item[(c)] $R\al$ and $R\tau$ are equivalences, and are inverse to
   each other.
  \item[(d)] $R\bt=R\rho=R\al\,R\sg\:L\CV\to L\CS$.
  \item[(e)] The element $\hom\in E^0(B\CV^2)$ is the same as
   $\zt_!(1)$ or $\tau^*\tau_!(1)$ or $\al_!(1)^{-1}$.
 \end{itemize}
\end{proposition}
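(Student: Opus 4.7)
The plan is to establish~(a) and~(c) from the structure of $\al$ on automorphism groups, deduce~(b) and~(d) from equational identities between the relevant functors, and finally prove the three identifications in~(e) by combining a Mackey argument, the twisting-element proposition, and generalised character theory.

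For~(a), I would observe that at the level of automorphism groups, $\al\:\CS\to\CV^2$ sends the parabolic $P=\Aut_\CS(U\leq U\oplus V)$ onto the Levi quotient $L=\Aut(U)\tm\Aut(V)$, with kernel the unipotent radical $N\simeq\Hom_F(V,U)$ of order $q^{\dim U\cdot\dim V}$ and so coprime to $p$. Since $\al$ is full, a $\pi_0$-isomorphism, and has such kernels, Proposition~\ref{prop-quot-equiv} makes $L\al$ an equivalence; the identity $\al\tau=1_{\CV^2}$ (on the nose) then forces $L\tau$ to be its two-sided inverse. For~(c), the functor $\tau$ is faithful, a $\pi_0$-isomorphism, and the index of $\tau(\Aut_{\CV^2}(U,V))$ in $\Aut_\CS(\tau(U,V))$ equals $|N|$, which is coprime to $p$; Proposition~\ref{prop-inc-split} then makes $L\tau\circ R\tau$ an equivalence, and combined with $L\tau$ being an equivalence from~(a), this forces $R\tau$ to be an equivalence. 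The identity $R\tau\circ R\al=R(\al\tau)=1$ then identifies $R\al$ as the inverse of $R\tau$.

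For~(b), note that $\rho=\sg\circ\al$ as functors (both send $(U\leq W)$ to $U\oplus W/U$), so $L\rho=L\sg\circ L\al$; and $\bt\circ\tau\circ\al=\rho$ on the nose, since $\bt\tau\al(U\leq W)=\bt(U\leq U\oplus W/U)=U\oplus W/U$. Applying $L$ and using $L\tau\circ L\al\simeq 1$ from~(a) yields $L\bt=L\rho$. Part~(d) follows identically, using the contravariant composition $R(\bt\tau\al)=R\al\circ R\tau\circ R\bt=R\rho$ together with $R\al\circ R\tau\simeq 1$ from~(c).

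For~(e), I would handle the three identifications in turn. First, the square formed by $\zt,\hxi,\xi,\pi$ is a homotopy pullback, since the fibre of $\zt$ over $(U,V)$ is $\Hom_F(V,U)$, matching the fibre of $\pi$ over $\xi(U,V)=\Hom_F(V,U)$. Mackey then gives $\zt_!(1)=\zt_!\hxi^*(1)=\xi^*\pi_!(1)$, and since $\pi=\pi_0\sqcup\pi_1$ with $(\pi_0)_!(1)=1$ and $(\pi_1)_!(1)=\fix-1$ by Proposition~\ref{prop-fix-transfer-extra}(d), we get $\pi_!(1)=\fix$ and hence $\zt_!(1)=\xi^*(\fix)$. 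A character computation using Proposition~\ref{prop-fix-HKR} then identifies $\xi^*(\fix)$ with $\hom$: at $(U,V)$ the former equals $q^{\dim\Hom_{\Tht^*}(V,U)}$ while $\hom(U,V)=\fix(\ov{U}*V)$ equals $q^{\dim\Hom_{\Tht^*}(U,V)}$, and these exponents coincide by the standard symmetry $\dim\Hom_{\Tht^*}(U,V)=\dim\Hom_{\Tht^*}(V,U)$ for semisimple representations. Next, Proposition~\ref{prop-twisting-element} applied to $(\phi,\psi)=(\al,\tau)$ gives $\al_!(1)\cdot\tau^*\tau_!(1)=1$, so $\tau^*\tau_!(1)=\al_!(1)^{-1}$. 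Finally, to identify $\hom$ with $\al_!(1)^{-1}$, I would compute the character of $\al_!(1)$ at $(U,V)$: by the semisimplicity of $\Tht^*$-representations over $F$, the fibre of $\al_*$ consists of the single isomorphism class of the split extension, so the HKR character formula gives $\al_!(1)(U,V)=|\Aut_{\CV^2}(U,V)|/|\Aut_\CS(U\leq U\oplus V)|=|\Hom_{\Tht^*}(V,U)|^{-1}=\hom(U,V)^{-1}$.

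The main obstacle is the careful bookkeeping in~(e): verifying the homotopy pullback property of the $\zt,\hxi,\xi,\pi$ square, producing the HKR character formulas for $\xi^*(\fix)$ and $\al_!(1)$ (which requires tracking parabolic versus Levi automorphism orders and the distinction between $\Hom_F(V,U)$ appearing in $\xi$ and $\ov{U}*V$ appearing in the definition of $\hom$), and checking the symmetry of the Hom-dimension that reconciles the two formulations.
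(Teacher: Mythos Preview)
Your proof is correct, and for parts~(a)--(d) it matches the paper's argument almost exactly (the paper is slightly terser, simply saying that~(c) and~(d) ``follow by taking adjoints'' from~(a) and~(b), but your explicit use of Proposition~\ref{prop-inc-split} for $R\tau$ is a perfectly good way to unpack this).

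For part~(e), your route is genuinely different in one respect. The paper establishes $\zt_!(1)=\tau^*\tau_!(1)$ \emph{structurally}: it shows that the middle square (with $\zt$ on two sides and $\tau$ on two sides) is itself a homotopy pullback, by explicitly writing down the homotopy pullback $\CH'$ and producing an isomorphism of groupoids $\CH\simeq\CH'$; Mackey then gives the identity directly in $E^0$. You instead bypass this square entirely, using Proposition~\ref{prop-twisting-element} to get $\al_!(1)\cdot\tau^*\tau_!(1)=1$ and then computing both $\al_!(1)$ and $\zt_!(1)$ in HKR characters to see that each equals $\hom^{\pm 1}$. Similarly, for $\xi^*(\fix)=\hom$, the paper observes this holds on the nose in $E^0(B\CV^2)$ (since $\xi$ realises $(D_1,D_2)\mapsto\ov{D_2}*D_1$ on divisors, and $\hom$ is symmetric by the remark after Definition~\ref{defn-hom}), whereas you verify it after passing to characters and invoking the symmetry $\dim\Hom_{\Tht^*}(U,V)=\dim\Hom_{\Tht^*}(V,U)$. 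Both approaches are valid; yours leans more heavily on the injectivity of the HKR character map, while the paper's middle-square argument yields an identity of spectrum maps that could in principle be reused elsewhere.
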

The message here is that in many contexts, it is harmless to assume
that any short exact sequence of $F$-vectors spaces comes equipped
with a specified splitting.
\begin{proof}
 First, it is easy to see that the functor $\pi$ is a covering and the
 left square is a pullback so it is also a homotopy pullback.  This
 implies that 
 \[ \zt_!(1) = \zt_!\hxi^*(1) = \xi^*\pi_!(1) = \xi^*(\fix) 
     = \hom.
 \]

 Next, we claim that the middle square is a homotopy pullback.
 Indeed, we certainly have a homotopy pullback square
 \begin{center}
  \begin{tikzcd}
   \CH'  \arrow[r,"\zt_0"] \arrow[d,"\zt_1"'] &
   \CV^2 \arrow[d,"\tau"] \\
   \CV^2 \arrow[r,"\tau"'] &
   \CS,
  \end{tikzcd}
 \end{center} 
 where
 \[ \CH' = \{(U_0,V_0,U_0,V_1,k) \st U_i,V_j\in\CV,\;
      k\:\tau(U_0,V_0)\xra{\simeq}\tau(U_1,V_1) \}
 \]
 and $\zt_i(U_0,V_0,U_1,V_1,k)=(U_i,V_i)$.  We can define
 $\phi\:\CH\to\CH'$ by $\phi(U,V,m)=(U,V,U,V,n)$, where $n$ is as in
 Definition~\ref{defn-hom-aux}(d).  In the opposite direction, if
 $(U_0,V_0,U_1,V_1,k)\in\CH'$ then $k$ is an isomorphism
 $U_0\oplus V_0\to U_1\oplus V_1$ that sends $U_0$ to $U_1$, so it can
 be decomposed into components $k_U:U_0\xra{\simeq}U_1$ and
 $k_V\:V_0\xra{\simeq}V_1$ and $k'\:V_0\to U_1$.  We can thus define
 $\psi(U_0,V_0,U_1,V_1,k)=(U_0,V_0,k_U^{-1}\circ k')$, and check that
 this gives a functor $\psi\:\CH'\to\CH$ that is inverse to $\phi$.
 We also have $\zt_i\phi=\zt$ for $i=0,1$.  It follows that the middle
 square is a homotopy pullback as claimed, and thus that
 $\zt_!(1)=\zt_!\zt^*(1)=\tau^*\tau_!(1)$, which proves most of~(e).  

 It is also easy to see that $\al\tau\simeq 1$ and $\rho=\sg\al$ and
 $\bt\tau=\sg$.  Next, we see that there is a split extension
 \begin{center}
  \begin{tikzcd}
   \Hom(V,U)
     \arrow[r,rightarrowtail] &
   \Aut(U\leq U\oplus V)
     \arrow[r,twoheadrightarrow,shift right] &
   \Aut(U)\tm\Aut(V)
     \arrow[l,shift right]
  \end{tikzcd}
 \end{center}
 Here $\Hom(V,U)$ is a vector space over $F$ and so has order coprime to
 $p$.  We can thus apply Proposition~\ref{prop-inc-split} to see that
 $L\tau$ is an equivalence, or we can apply
 Proposition~\ref{prop-quot-equiv} to see that $L\al$ is an
 equivalence.  As $\al\tau=1$, we see that $L\al$ and $L_\tau$ are
 inverse to each other.  As $\rho=\sg\al$ we have $L\rho=L\sg L\al$.
 As $\bt\tau=\sg$ we have $L\bt\,L\tau=L\sg$, and we can compose with
 $L\al$ to get $L\bt=L\sg\,L\al=L\rho$.  We have now proved~(a)
 and~(b); claims~(c) and~(d) follow by taking adjoints.  

 To complete the proof of~(e), we note that
 \[ \al_!(1).\hom = \al_!(\al^*(\hom)) =
     \al_!\al^*\tau^*\tau_!(1).
 \]
 We have seen that $\al^*$ is inverse to $\tau^*$, and $\al_!$ is
 inverse to $\tau_!$, so this simplifies to $\al_!(1).\hom=1$ as
 required. 
\end{proof}

\section{General theory of twisted products}
\lbl{sec-twist}

We have used the direct sum functor to make $E^0(B\CV)$ into a
bialgebra.  There is another natural way to make $E^0(B\CV)$ into a
bialgebra, using short exact sequences without a specified splitting.
This is related to the theory of Harish-Chandra induction in
representation theory.  We have also mentioned that the coproduct map
$E^0(B\CV)\to E^0(B\CV^2)$ is only a ring map if we modify the obvious
ring structure on the codomain.  Thus, we have various modified
products to consider.  It will turn out that they can all be
encompassed by a general framework that we will discuss in this
section. 

Let $\CA$ be a graded groupoid of finite type.  (We will primarily be
interested in the case $\CA=\CV$, and the cases $\CA=\CV^k$ will
appear for auxiliary reasons; but is is convenient to proceed more
generally.)  We assume that we have an object $0\in\CA$ and a functor
$\sigma\:\CA^2\to\CA$ that give a symmetric monoidal structure and are
compatible with the grading. We also assume that $\CA_0$ is the
category with only one object (namely $0$), and only the identity
morphism.   We write $U\oplus V$ for $\sg(U,V)$.  We also put
$R_k=E^0(B\CA_k)$. 

As with $\CV$, we have a diagonal product $\dl^*\:R_k\ot R_k\to R_k$,
a convolution product $\sg_!\:R_i\ot R_j\to R_{i+j}$, and a coproduct
$\sg^*\:R_k\to\bigoplus_{k=i+j}R_i\ot R_j$.  We will write $f\tm g$
for $\sg_!(f\ot g)$.  We will usually write $fg$ for $\dl^*(f\ot g)$,
but we may write $f\bl g$ when necessary to emphasise the
difference from $f\tm g$.  We say that an $E^0$-submodule $U\leq R$ is
a $\bl$-ideal if it satisfies $R\bl U\leq U$.  

We write $M_k=\spf(R_k)$ and $M=\coprod_kM_k$, so $M$ is a commutative
monoid in the category of formal schemes.  We will use
scheme-theoretic language, and treat elements of $R_k$ as functions on
$M_k$.  We will use integral notation for the standard inner product
on $R_k$, as discussed in Section~\ref{sec-morava}.  We will say that
a function $f$ on $M$ is \emph{finitely supported} if it vanishes on
$M_k$ for $k\gg 0$, or equivalently it lies in $\bigoplus_kR_k$; this
insures that $\int_Mf$ is defined.

For example, the convolution product is characterised by the identity
\[ \int (f\tm g)(m)u(m)\,dm =
      \iint f(m_1)g(m_2)u(m_1+m_2)\,dm_1\,dm_2
\]
for all finitely supported functions $u$.  The unit for the
convolution product is the function $[0]$ which is $1$ on $\CA_0$ and
$0$ on $\CA_k$ for $k>0$.  The coproduct is given by
$(\sg^*f)(m,n)=f(m+n)$, and the counit is $f\mapsto f(0)$.

\begin{definition}\lbl{defn-twisted-product}
 Suppose we have an element $t\in (R\hot R)^\tm$, or in other
 words, an invertible function on $M^2$.  We define a twisted
 convolution product on $R$ by the rule
 \[ f\tm_t g = \sg_!(t.(f\ot g)). \]
 This is characterised by 
 \[ \int (f\tm_tg)(m)u(m)\,dm =
     \iint f(m_1)g(m_2)t(m_1,m_2)u(m_1+m_2)\,dm_1\,dm_2
 \]
 for all finitely supported functions $u$ on $M$.   
\end{definition}

\begin{lemma}\lbl{lem-twist-alt}
 Suppose that $t$ is biexponential, in the sense that
 $t(0,m)=t(m,0)=1$ and $t(m+n,p)=t(m,p)t(n,p)$ and
 $t(m,n+p)=t(m,n)t(m,p)$ for all $n,m,p$.  Then the product $\tm_t$ is
 associative, with $[0]$ as a two sided unit.  Moreover, the induced
 $k$-fold product is given by
 \[ f_1 \tm_t \dotsb \tm_t f_k = \mu(t^{(k)}.(f_1\ot\dotsb\ot f_k)), \]
 where
 \[ t^{(k)}(m_1,\dotsc,m_k) = \prod_{1\leq i<j\leq k} t(m_i,m_j). \]
 Moreover, the product $\tm_t$ is commutative if $t$ is symmetric (in
 the sense that $t(m,n)=t(n,m)$).
\end{lemma}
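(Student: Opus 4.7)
The plan is to verify all three claims using the integral characterization of $\tm_t$ given in Definition~\ref{defn-twisted-product}, testing equalities by pairing against an arbitrary finitely supported function $u$ on $M$ and then invoking nondegeneracy of the standard pairing.

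For the unit claim, I would compute $\int (f \tm_t [0])(m) u(m)\,dm = \iint f(m_1) [0](m_2) t(m_1,m_2) u(m_1+m_2)\,dm_1\,dm_2$. Since $[0]$ is supported at $0\in M_0$ (which is a single point with no automorphisms), the inner integral collapses, and using $t(m_1,0)=1$ together with $m_1+0=m_1$ this reduces to $\int f(m_1)u(m_1)\,dm_1$. The left unit is symmetric. For associativity, I would expand both $\int((f\tm_t g)\tm_t h)(m)u(m)\,dm$ and $\int(f\tm_t(g\tm_t h))(m)u(m)\,dm$ as triple integrals over $M^3$ by iterating Definition~\ref{defn-twisted-product}. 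In the first one the twist factor becomes $t(n_1,n_2)\,t(n_1+n_2,m_2)$, and in the second it becomes $t(m_1,m_2)\,t(n_1,m_1+m_2)$. Applying the biexponential identities $t(n_1+n_2,m_2)=t(n_1,m_2)t(n_2,m_2)$ and $t(n_1,m_1+m_2)=t(n_1,m_1)t(n_1,m_2)$ respectively, both expressions turn into the symmetric triple integral with weight $t(n_1,n_2)t(n_1,m_2)t(n_2,m_2)$, proving associativity.

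The $k$-fold formula then follows by a straightforward induction on $k$, the base case $k=2$ being the definition. In the inductive step I would compute $(f_1\tm_t\dotsb\tm_t f_{k-1})\tm_t f_k$ against $u$, expand the left factor via the inductive hypothesis into an integral over $M^{k-1}$ with weight $t^{(k-1)}(m_1,\dotsc,m_{k-1})$, and then apply the definition of $\tm_t$ once more to pick up a factor $t(m_1+\dotsb+m_{k-1},m_k)$. Using biexponentiality in the first variable repeatedly, this factor splits as $\prod_{i<k} t(m_i,m_k)$, and combining with $t^{(k-1)}$ yields exactly $t^{(k)}$.

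Finally, for commutativity under the additional hypothesis that $t$ is symmetric, I would use that the symmetric monoidal structure on $\CA$ provides a natural isomorphism $\tau\:\CA^2\to\CA^2$ swapping factors, with $\sg\circ\tau\simeq\sg$. Pushing this through $\sg_!$ gives $\sg_!(t.(f\ot g))=\sg_!(\tau^*(t.(f\ot g)))=\sg_!((\tau^*t).(g\ot f))$, and symmetry of $t$ identifies $\tau^*t$ with $t$. The only genuinely substantive step is associativity, and the only thing that might require a little care there is the bookkeeping in the iterated integrals and the verification that the biexponential identities do produce matching symmetric weights; everything else is formal manipulation.
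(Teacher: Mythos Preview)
Your proof is correct and follows essentially the same approach as the paper: both use the integral characterization of $\tm_t$ and test against arbitrary finitely supported $u$, reduce associativity to the biexponential identity $t(m_1,m_2)t(m_1+m_2,m_3)=t^{(3)}(m_1,m_2,m_3)=t(m_2,m_3)t(m_1,m_2+m_3)$, and then extend to the $k$-fold product by induction. Your commutativity argument via the symmetric monoidal twist $\tau$ is slightly more explicit than the paper's (which simply declares it clear), and your variable labelling in the associativity paragraph wobbles a bit between $n_i$ and $m_i$, but the substance is identical.
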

\begin{proof}
 First, we have
 \[ \int ([0]\tm_tf)(m) u(m)\,dm = 
    \iint [0](m_1)f(m_2)t(m_1,m_2)u(m_1+m_2)\,dn\,dm.
 \]
 Here $[0]$ is the characteristic function of $M_0=\{0\}$, so this
 reduces to $\int f(m)t(0,m)u(m)\,dm$.  As $t(0,m)=1$ this is just
 $\int f(m)u(m)\,dm$.  By the perfectness of the inner product, we
 must therefore have $[0]\tm_tf=f$.  Essentially the same argument gives
 $f\tm_t[0]=f$.  Next, we have
 \begin{gather*}
  \int (f\tm_t(g\tm_th))(m)u(m)\,dm 
   = \iint f(m_1)(g\tm_th)(m_{23})t(m_1,m_{23})
            u(m_1+m_{23})\,dm_1dm_{23} \\ 
   = \iiint f(m_1)g(m_2)h(m_3) t(m_2,m_3)t(m_1,m_2+m_3)
             u(m_1+m_2+m_3) dm_1 dm_2 dm_3 \\
  \int ((f\tm_tg)\tm_th)(m)u(m)\,dm 
   = \iint (f\tm_tg)(m_{12})h(m_3)t(m_{12},m_3)
            u(m_{12}+m_3)\,dm_{12}dm_3 \\ 
   = \iiint f(m_1)g(m_2)h(m_3) t(m_1,m_2)t(m_1+m_2,m_3)
             u(m_1+m_2+m_3) dm_1 dm_2 dm_3.
 \end{gather*}
 The biexponential property gives
 \[ t(m_2,m_3)t(m_1,m_2+m_3) = t^{(3)}(m_1,m_2,m_3) = 
    t(m_1,m_2)t(m_1+m_2,m_3),
 \]
 and it follows that 
 \[ f\tm_t(g\tm_th) = \mu(t^{(3)}.(f\ot g\ot h)) = (f\tm_tg)\tm_t h. \]
 This can be extended inductively for products of more than three
 factors.  The commutativity statement is clear.
\end{proof}

We will assume from now on that our twisting function $t\in R\hot R$
is biexponential.

\begin{definition}
 Recall that we can regard $\pi_0(\CA)$ as a discrete subcategory of
 $\CA$, which we call the spine.  With respect to the usual topology
 on $E^0(B\CA)$, an element $f\in E^0(B\CA)$ is topologically
 nilpotent iff $f$ maps to zero in
 $K^0(B\pi_0(\CA))=\Map(\pi_0(\CA),K^0)$.   We also say that $f$ is a
 \emph{strong unit} if $f-1$ is topologically nilpotent.
\end{definition}

We next prove some results showing that the difference between $\tm$
and $\tm_t$ is irrelevant in various contexts.

\begin{lemma}\lbl{lem-twisted-ideal-product}
 If $U$ and $V$ are $\bl$-ideals in $R$ then $U\tm V$ is the same as
 $U\tm_tV$, and this is again a $\bl$-ideal.
\end{lemma}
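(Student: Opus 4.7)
The plan is to work inside the ring $R\hot R \simeq E^0(B\CA^2)$, viewed as a ring under the K\"unneth product of the two diagonal $\bl$-products, and to exploit the fact that by construction the map $\sg_!\:R\hot R\to R$ carries $W:=U\hot V$ onto $U\tm V$ and carries $t\bl W$ onto $U\tm_t V$. The key structural input I need is that $W$ is itself a $\bl$-ideal of $R\hot R$. I will check this first on pure tensors, where $(a\ot b)\bl(u\ot v) = (a\bl u)\ot(b\bl v)$ lies in $U\hot V$ because $U$ and $V$ are $\bl$-ideals of $R$, and then extend by bilinearity and $I_n$-adic continuity.

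Granted this, the equality $U\tm V = U\tm_t V$ will be essentially formal. Since $t\in(R\hot R)^\tm$, the operation $x\mapsto t\bl x$ is a bijection of $R\hot R$ to itself; and because $W$ is an $(R\hot R)$-submodule under $\bl$, both $t\bl W\sse W$ and $t^{-1}\bl W\sse W$, forcing $t\bl W = W$. Applying $\sg_!$ to both sides will then yield
\[ U\tm_t V \;=\; \sg_!(t\bl W) \;=\; \sg_!(W) \;=\; U\tm V. \]

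For the $\bl$-ideal property of $U\tm V$, the plan is to invoke Frobenius reciprocity for the direct-sum functor $\sg\:\CA^2\to\CA$, which says that for $h\in R$ and $x\in R\hot R$,
\[ h\bl\sg_!(x) \;=\; \sg_!(\sg^*(h)\bl x). \]
Given $x\in W$, the element $\sg^*(h)\bl x$ will again lie in $W$ by the ideal property of $W$ established in the first paragraph, so $h\bl\sg_!(x)\in\sg_!(W) = U\tm V$, as required.

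The hard part is nothing deep, but rather the completed-tensor bookkeeping: one must justify that $U\hot V$ is genuinely the image of $U\ot_{E^0}V$ in $R\hot R$ and that $\bl$-multiplication by a general (limit) element of $R\hot R$ preserves this submodule. This is not a real obstacle in context, since all the rings in question are finitely generated or pro-free over $E^0$ (compare Corollary~\ref{cor-EV-free}), the $I_n$-adic topology behaves well, and the $\bl$-ideal condition is closed under passage to the completion.
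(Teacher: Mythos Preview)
Your proof is correct and follows essentially the same approach as the paper: both arguments observe that $W=U\hot V$ is a $\bl$-ideal in $R\hot R$, deduce $t\bl W=W$ from invertibility of $t$, apply $\sg_!$ to obtain $U\tm V=U\tm_tV$, and then use Frobenius reciprocity $f\bl\sg_!(x)=\sg_!(\sg^*(f)\bl x)$ for the $\bl$-ideal claim. The paper is simply terser, asserting without further comment that $U\hot V$ is a $\bl$-ideal, whereas you spell out the pure-tensor check and the completion bookkeeping.
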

\begin{proof}
 It is clear that $U\hot V$ is a $\bl$-ideal in $E^0(B\CA^2)=R\hot R$,
 and $t$ is a unit in $R\hot R$ so $U\hot V = t\bl(U\hot V)$.  By
 applying $\sg_!$ we deduce that $U\tm V=U\tm_tV$.  Also, if $f\in R$
 then $f\bl(U\tm V)=f\bl\sg_!(U\hot V)=\sg_!(\sg^*(f)\bl(U\hot V))$.
 As $U\hot V$ is a $\bl$-ideal, this lies in $\sg_!(U\hot V)=U\tm V$,
 as claimed. 
\end{proof}

\begin{corollary}\leavevmode
 \begin{itemize}
  \item[(a)] If $U$ is a $\bl$-ideal in $R$, then it is a
   $\tm$-subring iff it is a $\tm_t$-subring.
  \item[(b)] Suppose that~(a) holds, and that $V\leq U$, and that $V$
   is also a $\bl$-ideal.  Then $V$ is a $\tm$-ideal in $U$ iff it is a
   $\tm_t$-ideal.
  \item[(c)] Suppose that~(b) holds, so we have an induced $\tm$-product
   and an induced $\tm_t$-product on $U/V$, and also an induced
   $\bl$-product making $U/V$ into a module over $R$.  Suppose also
   that $t$ is a strong unit, and that $U/V$ is annihilated by
   topologically nilpotent elements.  Then the $\tm$-product and the
   $\tm_t$-product on $U/V$ are the same.
 \end{itemize}
\end{corollary}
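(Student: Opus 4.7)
Parts~(a) and~(b) should follow immediately from Lemma~\ref{lem-twisted-ideal-product}. For~(a), applying the lemma with $V=U$ gives $U\tm U = U\tm_t U$, so $U$ is closed under $\tm$ iff it is closed under $\tm_t$. For~(b), the lemma applied to the pairs $(U,V)$ and $(V,U)$ gives $U\tm V = U\tm_t V$ and $V\tm U = V\tm_t U$; hence $V$ is a two-sided $\tm$-ideal in $U$ iff it is a two-sided $\tm_t$-ideal. Note that in both cases the $\bl$-ideal hypothesis is exactly what makes the lemma applicable.

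For part~(c), I would start from the identity
\[ f\tm_t g - f\tm g = \sg_!\bigl((t-1)\bl(f\ot g)\bigr), \]
which is immediate from the definition of $\tm_t$ together with $E^0$-linearity of $\sg_!$. Write $\eta = t-1\in R\hot R$. The assumption that $t$ is a strong unit is precisely the statement that $\eta$ is topologically nilpotent, i.e. maps to zero in $K^0(B\pi_0(\CA^2))$. Using the K\"unneth isomorphism
\[ K^0(B\pi_0(\CA^2)) \simeq K^0(B\pi_0(\CA))\hot K^0(B\pi_0(\CA)) \]
one expects to identify the ideal of topologically nilpotent elements in $R\hot R$ with the closure of $J_0\hot R + R\hot J_0$, where $J_0\leq R$ is the closed ideal of topologically nilpotent elements. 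Granting this, $\eta$ lies in that closure.

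With this in hand, the argument concludes as follows. Since $U$ is a $\bl$-ideal and $U/V$ is annihilated by $J_0$, we have $J_0\bl U\leq V$, and therefore
\[ \eta\bl(f\ot g) \in (J_0\bl U)\hot U \;+\; U\hot(J_0\bl U)
   \;\subseteq\; V\hot U + U\hot V \]
(interpreted in the appropriate topologically complete sense). Applying $\sg_!$ gives $\sg_!(\eta\bl(f\ot g)) \in V\tm U + U\tm V$, and by~(b) this is contained in $V$. Hence $f\tm_t g\equiv f\tm g\pmod V$, as required.

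The main technical obstacle will be the K\"unneth step: rigorously identifying the topologically nilpotent elements of $R\hot R$ with the closure of $J_0\hot R+R\hot J_0$, and ensuring that the displayed containment survives the relevant $I_n$-adic completion. In the intended applications we have $\CA=\CV$ (or a product of copies thereof), and the pro-freeness and evenness statements of Corollary~\ref{cor-EV-free} together with the results of Section~\ref{sec-morava} on $K$-local duality should make this step routine; the genuinely subtle input is the algebraic statement of the preceding lemma.
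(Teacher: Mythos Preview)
Your proof is correct and essentially identical to the paper's. Parts~(a) and~(b) are exactly as in the paper (the paper is terser, writing only $U\tm V=U\tm_t V$ for~(b)), and for~(c) the paper argues precisely as you do: it sets $J$ to be the ideal of topologically nilpotent elements in $R$, asserts that the corresponding ideal in $R\hot R$ is $J'=J\hot R+R\hot J$, and then uses $t-1\in J'$ together with $J\bl U\leq V$ to conclude. Your only deviation is the caution you express about the K\"unneth identification of $J'$; the paper simply states this without comment, so you need not treat it as an obstacle.
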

\begin{proof}\leavevmode
 \begin{itemize}
  \item[(a)] The lemma gives $U\tm U=U\tm_tU$.
  \item[(b)] The lemma gives $U\tm V=U\tm_tV$.
  \item[(c)] Let $J$ be the ideal of topologically nilpotent elements
   in $R$, so we have $J\bl U\leq V$ by assumption.  Now put
   $V'=V\hot U+U\hot V$, so that $(U/V)^{\ot 2}=U^{\ot 2}/V'$.  
   The ideal of topologically nilpotent elements of $R\hot R$ is 
   $J'=J\hot R+R\hot J$, so $J'.U^{\ot 2}\leq V'$.  By assumption we
   have $t-1\in J'$, so for $f,g\in U$ we have
   $t\bl(f\ot g)\in (f\ot g)+V'$.  Applying $\sg_!$ to this gives
   $f\tm_tg\in f\tm g+V$, as required.
 \end{itemize}
\end{proof}

\begin{definition}\lbl{defn-quad-twist}
 We say that an element $u\in R^\tm$ is \emph{exponentially quadratic}
 if the map
 \[ (\dl u)(m_0,m_1)=u(m_0+m_1)u(m_0)^{-1}u(m_1)^{-1} \]
 is biexponential.
\end{definition}

\begin{proposition}\lbl{prop-quad-twist}
  Let $u$ be exponentially quadratic, and let $t$ be biexponential,
  and put $w=\dl(u)\bl t$.  Then the map $f\mapsto u\bl f$ gives an
  isomorphism $(R,\tm_t)\to (R,\tm_w)$.
\end{proposition}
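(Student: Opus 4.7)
The plan is to verify the statement by a direct computation using the integral characterisation of twisted products. I will also need to keep track of units and of the fact that $w$ really is biexponential so that $\tm_w$ makes sense as an associative, unital product.

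First I would check that $w = \dl(u) \bl t$ is biexponential, so that Lemma~\ref{lem-twist-alt} applies to $\tm_w$. This is immediate: if $s$ and $t$ are both biexponential then $(s\bl t)(m+n,p) = s(m+n,p)t(m+n,p) = s(m,p)s(n,p)t(m,p)t(n,p) = (s\bl t)(m,p)(s\bl t)(n,p)$, and similarly in the other variable, so the pointwise product of biexponential functions is biexponential. By hypothesis both $\dl(u)$ and $t$ are biexponential, so $w$ is too. I would also note that the definition of exponentially quadratic forces $u(0) = 1$: evaluating $(\dl u)(0,m) = u(m)u(0)^{-1}u(m)^{-1} = u(0)^{-1}$, and biexponentiality requires this to equal $1$.

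Next I would let $\phi \: R \to R$ be the map $\phi(f) = u \bl f$. Since $u \in R^\tm$, multiplication by $u$ is an isomorphism of $E^0$-modules, and $\phi([0]) = u(0) \cdot [0] = [0]$ by the computation above. The heart of the argument is then to verify
\[ \phi(f \tm_t g) = \phi(f) \tm_w \phi(g) \]
for all $f,g \in R$. Testing against an arbitrary finitely supported function $v$ and using the characterisation from Definition~\ref{defn-twisted-product}, the left-hand side pairs with $v$ as
\[ \iint f(m_1)g(m_2) t(m_1,m_2)\, u(m_1+m_2) v(m_1+m_2)\, dm_1\, dm_2, \]
while the right-hand side pairs with $v$ as
\[ \iint u(m_1)f(m_1)\, u(m_2)g(m_2)\, w(m_1,m_2) v(m_1+m_2)\, dm_1\, dm_2. \]
Expanding $w(m_1,m_2) = u(m_1+m_2)u(m_1)^{-1}u(m_2)^{-1} t(m_1,m_2)$ and cancelling the factors $u(m_1)$ and $u(m_2)$ gives the same integrand. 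Perfectness of the inner product then yields the required identity.

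There is no genuine obstacle here; the only thing to watch is the combinatorial bookkeeping around the definition of $\dl(u)$ and the cancellation of $u(m_1)^{\pm 1}$ and $u(m_2)^{\pm 1}$ factors. Once that is in place, the isomorphism claim is automatic from the invertibility of $u$, and the ring map property follows from the computation above. I would close by remarking that $\phi$ is not in general a homomorphism for the diagonal product $\bl$, only for the twisted convolutions, since the twisting by $u$ is precisely designed to convert the discrepancy $\dl(u)$ between the two convolutions.
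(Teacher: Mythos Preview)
Your proof is correct and takes essentially the same approach as the paper. The paper writes the computation more compactly using the $\sg_!$/$\sg^*$ formalism and Frobenius reciprocity---rearranging $w=\dl(u)\bl t$ as $w\bl(u\ot u)=\sg^*(u)\bl t$ and then computing $(u\bl f)\tm_w(u\bl g)=\sg_!(w\bl(u\ot u)\bl(f\ot g))=\sg_!(\sg^*(u)\bl t\bl(f\ot g))=u\bl\sg_!(t\bl(f\ot g))$---but this is exactly your integral computation in different notation, and your additional checks that $w$ is biexponential and that $u(0)=1$ are welcome details that the paper leaves implicit.
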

\begin{proof}
 The condition $w=\dl(u)\bl t$ can be rearranged as
 $w\bl (u\ot u)=\sg^*(u)\bl t$.  This gives 
 \begin{align*}
      (u\bl f)\tm_w(u\bl g)
   &= \sg_!(w\bl (u\ot u)\bl (f\ot g))
    = \sg_!(\sg^*(u)\bl t\bl (f\ot g)) \\
   &= u\bl \sg_!(t\bl (f\ot g))
    = u\bl(f\tm_tg).
 \end{align*}
\end{proof}

\begin{corollary}\lbl{cor-root-untwist}
 Suppose that $t=r^2$ for some symmetric biexponential function $r$.
 Then $(R,\tm_t)\simeq(R,\tm)$.
\end{corollary}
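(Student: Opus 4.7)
The plan is to deduce this directly from Proposition \ref{prop-quad-twist}: that result shows that multiplication by an exponentially quadratic element $u$ intertwines $\tm_t$ with $\tm_w$ where $w = \dl(u) \bl t$. Thus it suffices to exhibit an exponentially quadratic $u$ with $\dl(u) = t^{-1} = r^{-2}$; the resulting $w$ will equal $1$, so $\tm_w = \tm$ on the nose.

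The candidate I propose is $u(m) = r(m,m)^{-1}$, obtained by restricting $r$ along the diagonal $\CA \to \CA^2$ and inverting. Since $r$ lies in $(R \hot R)^\tm$ and the diagonal pullback is a ring map, $u$ lies in $R^\tm$. Note also that the biexponential property forces $r(0,0) = 1$, so $u(0) = 1$, which ensures the isomorphism is unital.

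The key calculation is that symmetry and biexponentiality of $r$ together yield
\[
 r(m_0+m_1,\, m_0+m_1) \;=\; r(m_0,m_0)\, r(m_1,m_1)\, r(m_0,m_1)^2.
\]
Indeed, expanding biexponentially first in the left slot and then in the right gives $r(m_0+m_1, m_0+m_1) = r(m_0, m_0)\,r(m_0, m_1)\,r(m_1, m_0)\,r(m_1, m_1)$, and symmetry collapses the two cross terms. Taking reciprocals translates this into $\dl(u)(m_0,m_1) = r(m_0,m_1)^{-2} = t(m_0,m_1)^{-1}$, i.e.\ $\dl(u) = t^{-1}$. Since $t$ is biexponential, so is $t^{-1}$, and hence $u$ is exponentially quadratic.

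With this in place, Proposition \ref{prop-quad-twist} supplies the isomorphism $(R, \tm_t) \to (R, \tm_w)$ given by $f \mapsto u \bl f$, with $w = \dl(u) \bl t = t^{-1} \bl t = 1$. Since $\tm_w$ reduces to the untwisted $\tm$ when $w=1$, this produces the desired isomorphism $(R, \tm_t) \simeq (R, \tm)$. The argument is essentially formal once the displayed identity is in hand, so there is no serious obstacle; the only thing to verify carefully is the bookkeeping in the biexponential expansion, which is routine.
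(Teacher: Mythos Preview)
Your proof is correct and takes essentially the same approach as the paper: both rest on the computation that $r(m_0+m_1,m_0+m_1)=r(m_0,m_0)\,r(m_1,m_1)\,r(m_0,m_1)^2$ via biexponentiality and symmetry, and then feed this into Proposition~\ref{prop-quad-twist}. The only cosmetic difference is that the paper takes $u(m)=r(m,m)$ (so $\dl(u)=t$) and applies the proposition with base twist $1$ to obtain $(R,\tm)\to(R,\tm_t)$, whereas you take the inverse $u(m)=r(m,m)^{-1}$ to land directly on $(R,\tm_t)\to(R,\tm)$.
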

\begin{proof}
 The function $u(m)=r(m,m)$ is exponentially quadratic with
 $\dl(u)=t$. 
\end{proof}

\begin{corollary}\lbl{cor-strong-unit-untwist}
 If $t$ is a symmetric biexponential function and is a strong unit,
 then $(R,\tm_t)\simeq(R,\tm)$.
\end{corollary}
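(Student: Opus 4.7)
The plan is to reduce to Corollary~\ref{cor-root-untwist} by extracting a symmetric biexponential square root $r$ of $t$ via the binomial series. Since $t$ is a strong unit, the element $t-1$ is topologically nilpotent in $R\hot R$, so the series
\[ r = t^{1/2} := \sum_{k\geq 0}\binom{1/2}{k}(t-1)^k \]
converges (using $p>2$, so $1/2 \in \Zp \sse E^0$). By construction $r^2=t$ and $r-1$ is topologically nilpotent, and the same binomial-series construction applied to the symmetric element $t$ yields a symmetric element $r$.

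The main step will be verifying that $r$ is biexponential; this is where the work lies. Fix notation by writing $r(m,n)$, $r(m+n,p)$, etc., for the appropriate pullbacks of $r$ into $R\hot R\hot R$ under the structure maps $M^3\to M^2$ induced by $\sg$. Consider
\[ s = r(m+n,p)\,r(m,p)^{-1}\,r(n,p)^{-1} \in R\hot R\hot R. \]
Squaring and using the biexponentiality of $t=r^2$ gives $s^2=1$. On the other hand, $s$ is a strong unit because $r-1$ (hence each pullback of $r-1$) is topologically nilpotent and the set of strong units is closed under products and inverses. From $(s-1)(s+1)=0$ and the fact that $s+1=2+(s-1)$ is a unit (since $2$ is a unit in $E^0$ and $s-1$ lies in the topologically nilpotent ideal), we conclude $s=1$, i.e.\ $r(m+n,p)=r(m,p)\,r(n,p)$. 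Biexponentiality in the second argument then follows from the symmetry of $r$, and the normalisation $r(0,m)=1=r(m,0)$ is obtained by the same ``square equals $1$ plus close to $1$'' argument applied to $r(0,m)^2=t(0,m)=1$.

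With $r$ symmetric and biexponential and $r^2=t$, Corollary~\ref{cor-root-untwist} immediately yields an isomorphism $(R,\tm_t)\simeq(R,\tm)$. The one delicate point I anticipate is a careful bookkeeping of the topology: one must confirm that ``topologically nilpotent'' (as defined via the reduction $E^0(B\CA)\to K^0(B\pi_0(\CA))$) is preserved under the $\hot$-completion and under pullback along the structure maps, so that the binomial series genuinely converges in $R\hot R$ and the subsequent manipulations of $s$ take place in a ring where the nilpotent ideal is well-behaved. Once this is checked, the argument is essentially formal.
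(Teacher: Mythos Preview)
Your proposal is correct and follows essentially the same approach as the paper: extract a square root $r$ of $t$ among the strong units (using $p>2$), verify that $r$ is symmetric biexponential, and invoke Corollary~\ref{cor-root-untwist}. The paper phrases the first step as ``squaring is an automorphism of the group of strong units'' rather than writing out the binomial series, and leaves the biexponentiality check implicit (it follows from uniqueness of the square root among strong units, which is exactly what your $s^2=1$, $s$-close-to-$1$ argument establishes).
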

\begin{proof}
 We have assumed that $p>2$, so the squaring map is an automorphism of
 the group of strong units.  If we let $r$ denote the unique strong
 unit with $r^2=t$, it follows easily that $r$ is again symmetric and
 biexponential, so we can use Corollary~\ref{cor-root-untwist}.
\end{proof}

\begin{proposition}\lbl{prop-twisted-hom}
 Suppose that $(\CA,t)$ is as above, and $(\CA',t')$ is of the same
 type.  Let $\phi\:\CA'\to\CA$ be a symmetric monoidal functor, so we
 can build a homotopy-commutative diagram as follows, in which the
 bottom right region is a homotopy pullback.
 \begin{center}
  \begin{tikzcd}
   (\CA')^2
    \arrow[rr,"\phi^2"]
    \arrow[dd,"\sg'"']
    \arrow[dr,"\kp"'] &&
   \CA^2
    \arrow[dd,"\sg"] \\ &
   \CP
    \arrow[ur,"\tphi"']
    \arrow[dl,"\tsg"] \\
   \CA'
    \arrow[rr,"\phi"'] &&
   \CA
  \end{tikzcd}
 \end{center}
 Suppose also that $\kp_!(t')=\tphi^*(t)\in E^0(B\CP)$.  Then $\phi^*$
 gives a ring map $(R,\tm_t)\to(R',\tm_{t'})$.  
\end{proposition}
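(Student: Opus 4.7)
The plan is to verify directly that $\phi^*(f\tm_t g)=\phi^*(f)\tm_{t'}\phi^*(g)$ by a diagram chase, using only the Mackey property for the given homotopy pullback square, Frobenius reciprocity, and the hypothesis $\kp_!(t')=\tphi^*(t)$. No real obstacles are anticipated; the main thing is to keep track of which map acts where.

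First I would apply the Mackey property to the homotopy pullback square with corners $\CP$, $\CA^2$, $\CA'$, $\CA$ (the lower right region of the diagram). This gives the identity $\phi^*\sg_! = \tsg_!\,\tphi^*$ as maps $E^0(B\CA^2)\to E^0(B\CA')$. Applied to the element $t\bl(f\ot g)\in E^0(B\CA^2)$, this yields
\[
 \phi^*(f\tm_t g) = \phi^*\sg_!\bigl(t\bl(f\ot g)\bigr)
  = \tsg_!\bigl(\tphi^*(t)\bl\tphi^*(f\ot g)\bigr).
\]
Next I would substitute the hypothesis $\tphi^*(t)=\kp_!(t')$ to rewrite the right hand side as $\tsg_!\bigl(\kp_!(t')\bl\tphi^*(f\ot g)\bigr)$.

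Then I would apply Frobenius reciprocity to the functor $\kp\:(\CA')^2\to\CP$, which gives
\[
 \kp_!(t')\bl\tphi^*(f\ot g)
  = \kp_!\bigl(t'\bl\kp^*\tphi^*(f\ot g)\bigr).
\]
Because the upper triangle of the diagram commutes (up to homotopy), we have $\kp^*\tphi^* = (\tphi\kp)^* = (\phi^2)^*$, and so $\kp^*\tphi^*(f\ot g) = \phi^*(f)\ot\phi^*(g)$. Substituting this and using $\tsg\kp\simeq\sg'$ from the other triangle, I obtain
\[
 \phi^*(f\tm_t g)
  = (\tsg\kp)_!\bigl(t'\bl(\phi^*(f)\ot\phi^*(g))\bigr)
  = \sg'_!\bigl(t'\bl(\phi^*(f)\ot\phi^*(g))\bigr)
  = \phi^*(f)\tm_{t'}\phi^*(g),
\]
which is the required multiplicativity. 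Finally, because $\phi$ is symmetric monoidal and preserves the grading, it restricts to an isomorphism $\CA'_0\to\CA_0$, so $\phi^*$ sends the unit $[0]\in R$ to the unit $[0]\in R'$; this verifies that $\phi^*$ is a unital ring map.
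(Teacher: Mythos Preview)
Your proof is correct and follows essentially the same chain of equalities as the paper's proof: Mackey for the pullback square, substitution of the hypothesis $\tphi^*(t)=\kp_!(t')$, Frobenius reciprocity for $\kp$, and the identifications $\tphi\kp\simeq\phi^2$ and $\tsg\kp\simeq\sg'$. The only addition is your explicit check that $\phi^*$ preserves the unit, which the paper leaves implicit.
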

\begin{proof}
 Consider elements $f_0,f_1\in R$ and put $f=f_0\ot f_1$.  We have
 \begin{align*}
      \phi^*(f_0\tm_tf_1)
   &= \phi^*\sg_!(t\bl f)
    = \tsg_!\tphi^*(t\bl f)
    = \tsg_!(\tphi^*(t)\bl\tphi^*(f)) \\
   &= \tsg_!(\kp_!(t')\bl\tphi^*(f))
    = \tsg_!(\kp_!(t'\bl\kp^*\tphi^*(f)))
    = (\tsg\kp)_!(t'\bl((\tphi\kp)^*(f))) \\
   &= \sg'_!(t'\bl(\phi^2)^*(f))
     = \sg'_!(t'\bl(\phi^*(f_0)\ot\phi^*(f_1)))
     = \phi^*(f_0)\tm_{t'}\phi^*(f_1).
 \end{align*}
\end{proof}

\section{The Harish-Chandra (co)product}
\lbl{sec-HC}

We have used the direct sum functor to make $E^0(B\CV)$ into a
bialgebra.  There is another natural way to make $E^0(B\CV)$ into a
bialgebra, related to the theory of Harish-Chandra induction in
representation theory, which we will explain in this section.
However, our main conclusion will be that the Harish-Chandra structure
is very closely related to our original structure.

\begin{definition}\lbl{defn-HC-chi}
 We define $\chi\:\CV\to\CV$ by $\chi(V)=V^*$ on objects, and
 $\chi(u)=(u^*)^{-1}$ on morphisms. 
\end{definition}

\begin{definition}\lbl{defn-HC-mu}
 We define $\mu\:L\CV^{(2)}\to L\CV$ by $\mu=\bt\al^!$, where $\al$
 and $\bt$ are as in Definition~\ref{defn-hom-aux}.  Dually, we define
 $\nu=\al\bt^!\:L\CV\to L\CV^{(2)}$.
\end{definition}

\begin{proposition}\lbl{prop-HC-assoc}
 $\mu$ is an associative and unital product on $L\CV$, with $\chi$ as
 an anti-involution.  Dually, $\nu$ is a coassociative and counital
 coproduct. 
\end{proposition}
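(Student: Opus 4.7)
My plan is to reduce everything to the direct sum product $L\sg$, using Proposition~\ref{prop-hom-aux}.  That proposition gives $L\al$ as an equivalence of $K$-local spectra with inverse $L\tau$, so its right adjoint coincides with its inverse: $\al^!=R\al=L\tau$.  The identity $\bt\tau=\sg$ is immediate from Definition~\ref{defn-hom-aux}, so
\[
\mu = L\bt\circ R\al = L\bt\circ L\tau = L(\bt\tau) = L\sg.
\]
Thus $\mu$ is nothing other than the convolution product associated to the symmetric monoidal structure $(\CV,\oplus,0)$; associativity, unitality (with unit the inclusion $S\simeq L\CV_0\to L\CV$), and even commutativity are inherited directly from $\oplus$.

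For the anti-involution statement, the natural isomorphism $(V\oplus W)^*\simeq V^*\oplus W^*$ makes $\chi$ into a symmetric monoidal self-functor of $(\CV,\oplus)$, so $L\chi\circ L\sg=L\sg\circ L(\chi\tm\chi)$.  Combined with the commutativity of $\mu=L\sg$, this gives $\chi\circ\mu=\mu\circ(\chi\tm\chi)\circ\tau_{\text{sw}}$, which is the anti-involution property.

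For the dual claims about $\nu$, I would first note that Proposition~\ref{prop-hom-aux}(b) gives $L\bt=L\sg\circ L\al$, so taking right adjoints (which reverses the order of composition) yields $R\bt=R\al\circ R\sg=L\tau\circ R\sg$.  Therefore
\[
\nu = L\al\circ R\bt = L\al\circ L\tau\circ R\sg = L(\al\tau)\circ R\sg = R\sg,
\]
the standard coproduct given by the transfer along $\sg$.  Coassociativity, counitality, and the anti-coinvolution property for $\nu$ now follow by dualising the corresponding assertions for $L\sg$.

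The only real point needing care is bookkeeping with adjoints: that the right adjoint of an equivalence is its inverse, and that $R$ reverses composition.  If a more self-contained argument is preferred, the alternative is to bypass Proposition~\ref{prop-hom-aux} and work directly by Mackey.  One would introduce the three-step flag groupoid $\CS^{(3)}=\{U_1\leq U_2\leq W\}$ with functors $\al_3\:\CS^{(3)}\to\CV^3$ sending a flag to $(U_1,U_2/U_1,W/U_2)$ and $\bt_3(U_1\leq U_2\leq W)=W$, and verify that both $\mu\circ(\mu\tm 1)$ and $\mu\circ(1\tm\mu)$ equal $L\bt_3\circ\al_3^!$ by applying the Mackey property to two homotopy pullback squares (each identified with $\CS^{(3)}$ by pinning down a flag).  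Verifying those pullback identifications cleanly is the main technical obstacle in the self-contained approach, but the shortcut via Proposition~\ref{prop-hom-aux} avoids it entirely.
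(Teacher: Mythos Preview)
Your shortcut fails at the step $R\al=L\tau$.  It is true that $L\al$ and $L\tau$ are inverse equivalences on $K$-local spectra (Proposition~\ref{prop-hom-aux}(a)), but $\al$ itself is \emph{not} an equivalence of groupoids, and the paper explicitly warns in Section~\ref{sec-morava} that in such cases $R\phi$ is usually different from $(L\phi)^{-1}$.  Concretely, Proposition~\ref{prop-twisting-element} applied to the pair $(\al,\tau)$ gives $\al_!(g)=\al_!(1)\cdot\tau^*(g)$, and Proposition~\ref{prop-hom-aux}(e) tells you $\al_!(1)=\hom^{-1}\neq 1$.  Thus $\al_!\neq\tau^*$, equivalently $R\al\neq L\tau$.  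Your conclusion $\mu=L\sg$ is therefore false; indeed Proposition~\ref{prop-HC-product} shows that the induced product is $f*g=f\tm_{\hom^{-1}}g$, genuinely twisted away from the convolution product.  (The two rings are abstractly isomorphic, but the products are not equal.)

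The paper instead takes exactly the route you sketch as your ``alternative'': it introduces the $r$-step flag groupoids $\CS(r)$ with functors $\al(r)\:\CS(r)\to\CV^r$ and $\bt(r)\:\CS(r)\to\CV$, sets $\mu(r)=\bt(r)\al(r)^!$, and proves $\mu(r+s)=\mu(2)\circ(\mu(r)\Smash\mu(s))$ via a homotopy pullback square and the Mackey property.  The anti-involution is handled by defining $\chi$ on $\CS(2)$ by $(V_1\leq V_2)\mapsto(\ann(V_1)\leq V_2^*)$ and checking the evident commutative squares; your symmetric-monoidal argument for $\chi$ would have worked had $\mu$ really been $L\sg$, but as it stands one must work with the flag picture directly.
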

\begin{proof}
 Let $\CS(r)$ be the groupoid of tuples $(V_1\leq\dotsb\leq V_r)$ in
 $\CV$.  Define functors $\CV^r\xla{\al(r)}\CS(r)\xra{\bt(r)}\CV$ by
 \begin{align*}
   \al(r)(V_1\leq\dotsb\leq V_r) &= (V_1,V_2/V_1,\dotsc,V_r/V_{r-1}) \\
   \bt(r)(V_1\leq\dotsb\leq V_r) &= V_r.
 \end{align*}
 Then put $\mu(r)=\bt(r)\al(r)^!\:L\CV^{(r)}\to L\CV$.  In terms of
 our earlier definitions, we have $\CS(2)=\CS$ and $\al(2)=\al$ and
 $\bt(2)=\bt$ and $\mu(2)=\mu$.  We claim that
 $\mu(r+s)=\mu(2)\circ(\mu(r)\Smash\mu(s))$.  To see this, consider
 the diagram
 \begin{center}
  \begin{tikzcd}[sep=huge]
   && \CV \\ &
   \CS(r+s)
    \arrow[ur,"\bt(r+s)"]
    \arrow[r,"{\bt(r,s)}"']
    \arrow[d,"{\al(r,s)}"] 
    \arrow[dl,"\al(r+s)"'] &
   \CS(2)
    \arrow[u,"\bt(2)"']
    \arrow[d,"\al(2)"] \\
   \CV^{r+s} &
   \CS(r)\tm\CS(s)
    \arrow[l,"\al(r)\tm\al(s)"]
    \arrow[r,"\bt(r)\tm\bt(s)"'] &
   \CV\tm\CV
  \end{tikzcd}
 \end{center}
 where 
 \begin{align*}
  \al(r,s)(V_1\leq\dotsb\leq V_{r+s}) &= 
   ((V_1\leq\dotsb\leq V_r),
    (V_{r+1}/V_r\leq\dotsb\leq V_{r+s}/V_r)) \\
  \bt(r,s)(V_1\leq\dotsb\leq V_{r+s}) &= (V_r\leq V_{r+s}).
 \end{align*}
 The top triangle and the square commute on the nose, and the left
 triangle commutes up to natural isomorphism.  Moreover, $\bt(r,s)$
 and $\bt(r)\tm\bt(s)$ are coverings, and the square is a pullback (on
 the nose), so it is also a homotopy pullback.  The generalised Mackey
 property~\cite{st:kld}*{Proposition 8.6} therefore gives 
 \[ \bt(r,s)\al(r,s)^!=\al(2)^!(\bt(r)\Smash\bt(s)). \]
 We now compose on the left by $\bt(2)$, and on the right
 by $\al(r)^!\Smash\al(s)^!$.  The two commutative triangles give
 $\al(r,s)^!(\al(r)^!\Smash\al(s)^!)=\al(r+s)^!$ and
 $\bt(2)\bt(r,s)=\bt(r+s)$ so we get 
 \[ \bt(r+s)\al(r+s)^!=
       \mu(2)(\bt(r)\Smash\bt(s))(\al(r)^!\Smash\al(s)^!),
 \]
 or in other words $\mu(r+s)=\mu(2)\circ(\mu(r)\Smash\mu(s))$ as
 claimed.  From this it follows easily that there is an associative
 ring structure with $\mu(0)$ as the unit, $\mu(1)$ as the identity
 map, $\mu=\mu(2)$ as the product, and $\mu(r)$ as the $r$-fold product
 for general $r$.
 
 All that is left is to prove that $\chi$ is an anti-involution for
 $\mu$, or in other words that the following diagram commutes, where
 $\tau$ is the twist map:
 \begin{center}
  \begin{tikzcd}[sep=large]
   L\CV\Smash L\CV \arrow[d,"\mu"'] \arrow[r,"\chi\Smash\chi"] &
   L\CV\Smash L\CV \arrow[r,"\tau"] &
   L\CV\Smash L\CV \arrow[d,"\mu"] \\
   L\CV \arrow[rr,"\chi"'] &&
   L\CV
  \end{tikzcd}
 \end{center}
 To prove this, we define $\chi\:\CS(2)\to\CS(2)$ by
 $\chi(V_1\leq V_2)=(\ann(V_1)\leq V_2^*)$.  On both $\CV$ and $\CS(2)$ we
 have $\chi^2\simeq 1$, so $\chi$ is an equivalence and
 $\chi^!=\chi^{-1}=\chi$.  Similarly, we have
 $\tau^!=\tau^{-1}=\tau$.  As $V_1^*/\ann(V_2)\simeq(V_2/V_1)^*$, we
 see that the following diagram commutes up to natural isomorphism:
 \begin{center}
  \begin{tikzcd}[sep=large]
    \CV\tm\CV \arrow[d,"(\chi\tm\chi)\tau"'] &
    \CS(2) \arrow[l,"\al"'] \arrow[d,"\chi"'] \arrow[r,"\bt"] &
    \CV \arrow[d,"\chi"] \\
    \CV\tm\CV &
    \CS(2) \arrow[l,"\al"] \arrow[r,"\bt"'] & 
    \CV
  \end{tikzcd}
 \end{center}
 The right square gives $\chi\bt=\bt\chi$, and the left square gives
 \[ \chi\al^! = (\al\chi)^! = ((\chi\tm\chi)\tau\al)^! = 
     \al^!\tau(\chi\tm\chi).
 \]
 Combining these gives
 \[ \chi\mu = \chi\bt\al^! = \bt\chi\al^! =
      \bt\al^!\tau(\chi\Smash\chi) = \mu\tau(\chi\Smash\chi)
 \]
 as required.
\end{proof}

\begin{corollary}\lbl{cor-HC-coassoc}
 $L\CV$ has a coassociative coring structure with
 $\nu(r)=\al(r)\bt(r)^!$ as the $r$-fold coproduct.
\end{corollary}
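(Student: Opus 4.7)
The plan is to mirror the argument for Proposition~\ref{prop-HC-assoc} with the roles of $\al$ and $\bt$ systematically swapped. The entire structural diagram used to build the $r$-fold product in that proof is symmetric in the two functors $\al(r)\: \CS(r)\to\CV^r$ and $\bt(r)\:\CS(r)\to\CV$, so the same diagrams feed into a dual argument. Concretely, I would define $\nu(r)=\al(r)\circ\bt(r)^!\:L\CV\to L\CV^{(r)}$ (taking $\CS(0)=\CS(1)=$ point, so $\nu(0)\:L\CV\to S$ is the counit and $\nu(1)=\mathrm{id}$), and prove coassociativity by establishing the factorisation
\[
 \nu(r+s) \;=\; (\nu(r)\Smash\nu(s))\circ\nu(2),
\]
from which coassociativity (and counitality) follow by iteration, exactly as in the proposition.

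The central computation is the analogue of the Mackey identity used there. Reconsidering the same square
\begin{center}
 \begin{tikzcd}
    \CS(r+s) \arrow[r,"\al(r,s)"] \arrow[d,"\bt(r,s)"'] &
    \CS(r)\tm\CS(s) \arrow[d,"\bt(r)\tm\bt(s)"] \\
    \CS(2) \arrow[r,"\al(2)"'] &
    \CV\tm\CV,
 \end{tikzcd}
\end{center}
but now reading it with $\al(r,s)$ horizontal and $\bt(r,s)$ vertical, I would apply the Mackey property in the transposed orientation. Since $\al(2)$ is a covering (the splitting functor from short exact sequences), the hypothesis needed for Mackey holds, and one obtains
\[
 \al(r,s)\circ\bt(r,s)^! \;=\; (\bt(r)\Smash\bt(s))^!\circ\al(2).
\]
Combining this with the tautological factorisations $\al(r+s)=(\al(r)\Smash\al(s))\circ\al(r,s)$ and $\bt(r+s)=\bt(2)\circ\bt(r,s)$ (which gives $\bt(r+s)^!=\bt(r,s)^!\circ\bt(2)^!$) yields
\[
 \nu(r+s) = \al(r+s)\bt(r+s)^!
          = (\al(r)\Smash\al(s))(\bt(r)\Smash\bt(s))^!\al(2)\bt(2)^!
          = (\nu(r)\Smash\nu(s))\nu(2),
\]
as required.

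The only real issue is bookkeeping: one must verify that the left triangle in the proof of Proposition~\ref{prop-HC-assoc} commutes (up to natural isomorphism) in the form needed to supply $\al(r+s)=(\al(r)\Smash\al(s))\circ\al(r,s)$, and that the top triangle likewise gives $\bt(r+s)=\bt(2)\circ\bt(r,s)$; both of these were verified already in that proof and can be quoted. The potentially nontrivial point is that the Mackey property applies equally in both orientations of the pullback square, but this is immediate from the fact that a pullback square is symmetric under transposition (swapping $\CB$ and $\CC$), so the identity used here is literally the Mackey identity for the same square, just with the roles of the two non-corner functors exchanged. Alternatively, one could derive the corollary from the proposition by invoking $K(n)$-local Spanier--Whitehead duality on $L\CV$, under which $L\phi$ and $\phi^!$ interchange, but the direct approach above seems cleaner and avoids having to identify $D(L\CV)$ with $L\CV$ in a symmetric monoidal manner.
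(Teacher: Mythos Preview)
Your direct approach is correct and gives a valid proof, but the paper's own proof is much terser: it simply says ``This follows from the proposition by duality.'' The point is that under $K(n)$-local self-duality of $L\CG$ for finite groupoids (as developed in~\cite{st:kld}), the operations $L\phi$ and $R\phi=\phi^!$ are exchanged, so $\nu=L\al\circ R\bt$ is the Spanier--Whitehead dual of $\mu=L\bt\circ R\al$, and coassociativity of $\nu$ is literally the dual statement to associativity of $\mu$. You mention this alternative at the end and set it aside as requiring a symmetric monoidal identification of $D(L\CV)$ with $L\CV$; but that identification is exactly what the cited duality framework supplies, so the paper's route is both legitimate and considerably shorter. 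Your approach has the virtue of being self-contained within the Mackey calculus, while the paper's buys brevity at the cost of invoking the full duality package.

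One correction to your argument: $\al(2)$ is \emph{not} a covering. The functor $\al\:\CS(2)\to\CV^2$ sending $(U\leq W)\mapsto(U,W/U)$ is not even faithful, since an automorphism of $(U\leq W)$ is determined by its restriction to $U$ and its induced map on $W/U$ only up to an element of $\Hom(W/U,U)$. (You may be thinking of $\tau\:\CV^2\to\CS(2)$, which goes the other way.) Fortunately this does not damage your proof: the square was already shown in the proof of Proposition~\ref{prop-HC-assoc} to be a homotopy pullback (via the fact that the $\bt$ maps are coverings), and the Mackey property for a homotopy pullback is symmetric under transposition, exactly as you observe a few lines later. So your argument stands once you simply delete the false parenthetical about $\al(2)$.
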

\begin{proof}
 This follows from the proposition by duality.
\end{proof}

\begin{definition}\lbl{defn-HC-product}
 Given $f,g\in E^0(B\CV)$ we define $f*g=\nu^*(f\hot g)$.
 Corollary~\ref{cor-HC-coassoc} tells us that this gives an
 associative and unital product on $E^0(B\CV)$, which we call the
 Harish-Chandra product.  Similarly, $\mu^*$ gives a coassociative and
 counital coproduct, which we call the Harish-Chandra coproduct.
\end{definition}

\begin{remark}\lbl{rem-hom-twist}
 Recall that in Definition~\ref{defn-hom} we defined an element
 $\hom\in E^0(B\CV^2)$ and observed that it is a symmetric
 biexponential function.  We can therefore define a twisted product
 $\tm_{\hom^{-1}}$ as in Definition~\ref{defn-twisted-product}.
\end{remark}

\begin{proposition}\lbl{prop-HC-product}
 The Harish-Chandra product $f*g$ is the same as $f\tm_{\hom^{-1}}g$.
 Moreover, the ring $(E^0(B\CV),*)$ is isomorphic to
 $(E^0(B\CV),\tm)$. 
\end{proposition}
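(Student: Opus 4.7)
The plan is to prove the two claims in turn, both by direct manipulation using machinery already set up in the paper.

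For the first claim, I will unpack $f*g = \nu^*(f\hot g)$ using $\nu = \al\bt^!$, which gives $f*g = \bt_!\al^*(f\ot g)$. Next I will invoke Proposition~\ref{prop-hom-aux}(b), which asserts $L\bt = L\sg\circ L\al$ in the $K$-local stable category, so that $\bt_! = \sg_!\circ\al_!$. Applying Frobenius reciprocity to $\al$ then gives $\al_!\al^*(f\ot g) = (f\ot g)\cdot\al_!(1)$. Finally, I will substitute the identity $\al_!(1) = \hom^{-1}$, which is precisely what Proposition~\ref{prop-hom-aux}(e) asserts (recorded there as $\hom = \al_!(1)^{-1}$). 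Chaining these yields
\[ f*g = \bt_!\al^*(f\ot g) = \sg_!\bigl((f\ot g)\cdot\hom^{-1}\bigr) = f\tm_{\hom^{-1}}g. \]

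For the second claim, my plan is to apply Corollary~\ref{cor-strong-unit-untwist}, whose hypotheses require that $\hom^{-1}$ be a symmetric biexponential strong unit. Symmetry and the biexponential property were recorded in the remark following Definition~\ref{defn-hom}, and they clearly pass to $\hom^{-1}$. The substantive check is that $\hom$ is a strong unit, i.e.\ that $\hom-1$ dies in $K^0(B\pi_0(\CV^2)) = \Map(\N^2,K^0)$. I will evaluate $\hom$ on the spine object $(d,e)\in\N^2$ using the divisor-theoretic description: it corresponds to the pair of divisors $(d[0],e[0])$, and $\hom(d[0],e[0])=\fix(\ov{d[0]}*e[0])=\fix(de[0])=q^{de}$. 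Since $q\equiv 1\pmod{p}$ and $K^0$ has characteristic $p$, the image in $K^0$ is $1$, so $\hom-1$ vanishes on the spine. Because $p>2$, Corollary~\ref{cor-strong-unit-untwist} then yields the desired ring isomorphism (explicitly, of the form $f\mapsto u\cdot f$ where $u$ is extracted from a square root of $\hom^{-1}$ in the group of strong units).

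The main obstacle is really just bookkeeping the directions of the various inverses and pairings, in particular matching $\al_!(1) = \hom^{-1}$ against the definition of $\tm_{\hom^{-1}}$. All of the genuine content has been deposited into the earlier propositions: the identification $\hom = \al_!(1)^{-1}$ is the heart of Proposition~\ref{prop-hom-aux}, while the reduction of $\fix(de[0])$ to $q^{de}$ is immediate from Proposition~\ref{prop-fix}. This section is then in essence the verification that the twisting picture of Section~\ref{sec-twist} is compatible with the Harish-Chandra picture via the identification $\hom\longleftrightarrow\al_!(1)^{-1}$.
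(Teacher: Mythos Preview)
Your proof is correct, and the second claim is handled exactly as in the paper. For the first claim, your route differs from the paper's but is arguably more direct.

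The paper introduces the element $h=(f\ot g)\cdot\hom^{-1}$ and applies Proposition~\ref{prop-twisting-element} to the pair $(\al,\tau)$ to obtain $\tau_!(h)=\al^*(h)\cdot\tau_!(1)$; it then identifies $\tau_!(1)=\al^*(\hom)$ via Proposition~\ref{prop-hom-aux}(e), so that $\tau_!(h)=\al^*(f\ot g)$, and finally applies $\bt_!$ using the on-the-nose relation $\bt\tau=\sg$. You instead use Proposition~\ref{prop-hom-aux}(b) (equivalently~(d)) to write $\bt_!=\sg_!\al_!$ directly, and then a single application of Frobenius reciprocity together with $\al_!(1)=\hom^{-1}$ finishes. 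Both arguments draw on the same underlying content of Proposition~\ref{prop-hom-aux}; yours avoids the detour through $\tau$ and Proposition~\ref{prop-twisting-element}, at the cost of relying on the $L$-equivalence $L\bt=L\sg\,L\al$ (which was itself proved using $\tau$). The paper's route keeps the functor-level identities at the level of strict equalities ($\bt\tau=\sg$) rather than $L$-equivalences, which may be why the authors preferred it.
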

\begin{proof}
 Consider the element $h=(f\ot g).\hom^{-1}\in E^0(B\CV^2)$.
 We can apply Proposition~\ref{prop-twisting-element} to the functors
 $\al$ and $\tau$ to see that $\tau_!(h)=\al^*(h).v$, where
 $v=\tau_!(1)$.  We have seen that $\al^*$ is inverse to $\tau^*$, so
 $v=\al^*\tau^*\tau_!(1)$, and this is the same as $\al^*(\hom)$ by
 Proposition~\ref{prop-hom-aux}.  We therefore have
 $\tau_!(h)=\al^*(h).\al^*(\hom)=\al^*(f\ot g)$.  We now apply $\bt_!$
 to this equation.  As $\bt\tau=\sg$, the left hand side becomes
 $\sg_!((f\ot g).\hom^{-1})=f\tm_{\hom^{-1}}g$, whereas the right hand
 side becomes $f*g$.  This proves the first claim.

 Next, it is clear that if we restrict $\hom$ to the basepoint in
 $B\CV_i\tm B\CV_j$ we get $q^{ij}\in E^0$, which maps to $1$ in
 $K^0$.  It follows that $\hom$ is a strong unit, so we can appeal to
 Corollary~\ref{cor-strong-unit-untwist} to see that
 $(E^0(B\CV),*)\simeq(E^0(B\CV),\tm)$. 
\end{proof}

\begin{proposition}\lbl{prop-HC-HKR}
 For the functors $\al\:\CS(2)\to\CV^2$ and $\bt\:\CS(2)\to\CV$, the
 corresponding maps of $D'_0(\CG)$ are 
 \begin{align*}
   (L\al)[U\leq W] &= [U,W/U] \\
   (L\bt)[U\leq W] &= [W] \\
   (R\al)[U,V] &= |\Hom(V,U)|^{-1}[U\leq U\oplus V]
     = |\Hom(V,U)|^{-1}(L\tau)[U,V] \\
   (R\bt)[V] &= \sum_{U\leq V}[U\leq V].
 \end{align*}
 In more detail:
 \begin{itemize}
  \item[(a)] All the objects $U,V$ and $W$ are
   finite-dimensional $F$-linear representations of $\Tht^*$.
  \item[(b)] The symbol $\Hom(V,U)$ refers to $\Tht^*$-equivariant
   $F$-linear homomorphisms.  
  \item[(c)] In the formula for $R\bt$, the sum is indexed by
   subrepresentations of $V$.
 \end{itemize}
\end{proposition}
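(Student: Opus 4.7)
The four formulas split into two types. The $L$-formulas are immediate from the general rule $L\phi[a]=[\phi(a)]$ and the description of objects in $[\Tht^*,\CS(2)]$ as pairs $(U\leq W)$ of equivariant representations with $U$ a subrepresentation. The substantive work is in the two $R$-formulas, for which I plan to use two different strategies: Proposition~\ref{prop-fib-R} for $R\bt$ (where $\bt$ is a covering), and Proposition~\ref{prop-twisting-element} together with Proposition~\ref{prop-hom-aux}(e) for $R\al$ (where $\al$ fails to be a fibration but satisfies $\al\tau\simeq 1$).

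For $R\bt$, I will first check that $\bt\:\CS(2)\to\CV$ is a covering: given $(U\leq W)$ and $f\:W\to W'$ in $\CV$, the unique lift is $f$ itself regarded as $(U\leq W)\to (f(U)\leq W')$. Proposition~\ref{prop-fib-star} then ensures that $\bt_*\:[\Tht^*,\CS(2)]\to[\Tht^*,\CV]$ remains a covering. Objects in the strict fiber over $(V,\beta)$ are pairs $((U\leq V),\alpha)$ in which $\alpha$ extends $\beta$ and preserves $U$; equivalently, $U$ is a $\Tht^*$-subrepresentation of $V$. Since $\bt$ is a covering, only identities occur as fiber morphisms, so distinct subrepresentations give distinct isomorphism classes in the fiber. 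Proposition~\ref{prop-fib-R} then yields $(R\bt)[V]=\sum_{U\leq V}[U\leq V]$ exactly as stated.

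For $R\al$, the obstruction is that $\al$ is not a fibration: one cannot canonically lift an arrow $(f_1,f_2)\:(U,W/U)\to (U',V')$ to a morphism of extensions. The plan is to route around this by invoking Proposition~\ref{prop-twisting-element}, whose hypotheses I will verify in turn: $\al$ is full (a lift is constructed after choosing any $F$-splitting of $W\to W/U$); $\al\tau\simeq 1$ and $\tau\al\simeq 1$ (both follow from the splittability of short exact sequences of $F$-vector spaces), so $\al$ is a $\pi_0$-isomorphism; and the kernel of $\Aut_{\CS(2)}(U\leq U\oplus V)\to\Aut_\CV(U)\times\Aut_\CV(V)$ is $\Hom_F(V,U)$, an $F$-vector space of order coprime to $p$. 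The proposition then gives $\al_!(g)=\al_!(1)\cdot\tau^*(g)$, which under the HKR identification translates to $(R\al)[U,V]=\al_!(1)(U,V)\cdot(L\tau)[U,V]$. To finish, I will invoke Proposition~\ref{prop-hom-aux}(e) to identify $\al_!(1)$ with $\hom^{-1}$, and combine Definition~\ref{defn-hom} with Proposition~\ref{prop-fix-HKR} to compute the character value as $\hom([U],[V])=\fix([\Hom_F(U,V)])=|\Hom_{\Tht^*}(U,V)|=|\Hom_{\Tht^*}(V,U)|$, where the last equality uses the symmetry of $\hom$ (or equivalently the semisimplicity of $\Tht^*$-representations over $F$). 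The main obstacle I anticipate is bookkeeping: keeping the direction of the twisting straight, and verifying that $\tau^*$ on the cohomology side corresponds to $L\tau$ (not $R\tau$) on the HKR side, together with the sign of the exponent on $\hom$. These issues are ultimately harmless because of the symmetry properties, but they need to be tracked explicitly to match the stated formula.
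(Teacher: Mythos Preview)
Your treatment of $L\al$, $L\bt$, and $R\bt$ matches the paper exactly: the $L$-formulas are immediate, and $R\bt$ is handled via Proposition~\ref{prop-fib-R} after noting that $\bt$ is a covering.

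For $R\al$ you take a genuinely different route from the paper. The paper simply applies the defining formula $(R\al)[U,V]=\sum_{[a]:\al[a]=[U,V]}|\Aut(U,V)|\,|\Aut(a)|^{-1}[a]$, observes via Maschke's theorem that the only contributing class is $[U\leq U\oplus V]$, and reads off the numerical factor from the split extension $\Hom(V,U)\rightarrowtail\Aut(U\leq U\oplus V)\twoheadrightarrow\Aut(U)\times\Aut(V)$. Your argument instead invokes Proposition~\ref{prop-twisting-element} to get $\al_!(g)=\al_!(1)\cdot\tau^*(g)$, then Proposition~\ref{prop-hom-aux}(e) for $\al_!(1)=\hom^{-1}$, then Proposition~\ref{prop-fix-HKR} for the character value of $\hom$, and finally dualizes via the Kronecker pairing to obtain $(R\al)[U,V]=\al_!(1)(U,V)\,[U\leq U\oplus V]$. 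This is correct: the key point you flag, that $\al_!$ on $(D')^0$ is Kronecker-dual to $R\al$ on $D'_0$, is exactly what makes the translation go through, and the symmetry $|\Hom_{\Tht^*}(U,V)|=|\Hom_{\Tht^*}(V,U)|$ (from semisimplicity) handles the ordering. The trade-off is that the paper's argument is short and self-contained, needing nothing beyond the definition of $R\phi$ and elementary representation theory, whereas your route imports the $\hom$/$\fix$ machinery and a dualization step; on the other hand, your approach makes the connection to the twisting element $\hom^{-1}$ explicit, which is conceptually in line with how the Harish-Chandra product is analyzed elsewhere in the section.
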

\begin{proof}
 The formulae for $L\al$, $L\bt$ and $L\tau$ are immediate.  As $\bt$
 is a covering, the formula for $R\bt$ follows from
 Proposition~\ref{prop-fib-R}.  Next, $(R\al)[U,V]$ is by definition a
 sum over isomorphism classes that are sent by $\al$ to $[U,V]$.  As
 Maschke's theorem applies to $[\Tht^*,\CV]$, the only such
 isomorphism class is $[U\leq U\oplus V]$.  The formula also involves
 a numerical factor, namely $|\Aut(U,V)|/|\Aut(U\leq U\oplus V)|$,
 where the first $\Aut$ is in the category $[\Tht^*,\CV^2]$ and the
 second is in $[\Tht^*,\CS(2)]$.  As we mentioned above, there is a
 split extension 
 \begin{center}
  \begin{tikzcd}
   \Hom(V,U) \arrow[r,rightarrowtail] &
   \Aut(U\leq U\oplus V) \arrow[r,twoheadrightarrow,shift right] &
   \Aut(U)\tm\Aut(V) \arrow[l,shift right]
  \end{tikzcd}
 \end{center}
 and this shows that the numerical factor is $|\Hom(V,U)|$.
\end{proof}

\begin{corollary}\lbl{cor-HC-mu}
 The action of the Harish-Chandra product and coproduct on
 $(D')^0(\CV)$ is given by
 \begin{align*}
   (f*g)(W) &= \sum_{U\leq W} f(U)g(W/U)
             = \sum_{W=U\oplus V} |\Hom(V,U)|^{-1}f(U)g(V) \\
   (\mu^*h)(U,V) &= |\Hom(V,U)|^{-1}h(U\oplus V).
 \end{align*}
\end{corollary}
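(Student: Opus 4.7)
The plan is to evaluate both formulas directly in HKR character theory, using the descriptions of $L\al$, $L\bt$, $R\al$, $R\bt$ from Proposition~\ref{prop-HC-HKR} together with the identification of $\hom$ in terms of equivariant hom-spaces.

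For the product formula, the cleanest route goes via Proposition~\ref{prop-HC-product}, which identifies $f*g$ with the twisted convolution $f\tm_{\hom^{-1}}g = \sg_!(\hom^{-1}\cdot(f\hot g))$. Combining this with the character formula $\sg_!(F)(W) = \sum_{W=U\oplus V}F(U,V)$ from Example~\ref{eg-fib-star} yields $(f*g)(W) = \sum_{W=U\oplus V}\hom(U,V)^{-1}f(U)g(V)$. The key computation is then to identify $\hom(U,V) = \fix(U^*\otimes V)$ with $|\Hom_{\Tht^*}(U,V)|$, done using Proposition~\ref{prop-fix-HKR}: the space $\Fix(U^*\otimes V)$ is exactly the space of $\Tht^*$-equivariant $F$-linear maps $U\to V$, so its order equals $|\Hom_{\Tht^*}(U,V)|$, which coincides with $|\Hom_{\Tht^*}(V,U)|$ by symmetry of equivariant hom dimensions. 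This gives the second form of the claim. The first form then follows by reindexing the sum over direct-sum decompositions $W = U\oplus V$ as a sum over subrepresentations $U\leq W$, using that Maschke's theorem (Lemma~\ref{lem-rep-th}) guarantees a complement and that the set of complements for a fixed $U$ is a torsor of size $|\Hom(W/U,U)|$ under addition of graph maps; for each such complement $V$ one has $g(V) = g(W/U)$, and the torsor factor exactly cancels $|\Hom(V,U)|^{-1}$.

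For the coproduct formula, compute $\mu^*h = \al_!\bt^*h$ directly. The identity $\sg = \bt\tau$ gives $\sg^*h = \tau^*\bt^*h$, and since $\tau^*$ and $\al^*$ are mutually inverse isomorphisms (because $L\tau$ and $L\al$ are inverse equivalences by Proposition~\ref{prop-hom-aux}(a)), one can rewrite $\bt^*h = \al^*\sg^*h$. Frobenius reciprocity then gives
\[ \al_!\bt^*h = \al_!(\al^*\sg^*h) = \sg^*h\cdot\al_!(1) = \sg^*h\cdot\hom^{-1}, \]
using $\al_!(1) = \hom^{-1}$ from Proposition~\ref{prop-hom-aux}(e). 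Evaluating at $(U,V)$ with $\sg^*h(U,V) = h(U\oplus V)$ and $\hom(U,V) = |\Hom(V,U)|$ gives the stated formula.

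The argument is essentially routine once Propositions~\ref{prop-HC-HKR}, \ref{prop-HC-product}, \ref{prop-fix-HKR}, and \ref{prop-hom-aux} are in hand. The main subtlety to verify is the identification $\hom(U,V) = |\Hom(V,U)|$, which relies on recognizing $\Fix(U^*\otimes V)$ as the equivariant hom-space, together with the reindexing step that passes between sums over subrepresentations and sums over direct-sum decompositions.
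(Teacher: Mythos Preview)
Your argument is correct, but the logical flow differs from the paper's.  For the product, the paper obtains the \emph{first} formula $(f*g)(W)=\sum_{U\leq W}f(U)g(W/U)$ directly from the definition $\nu=(L\al)(R\bt)$ by dualising the formula $(R\bt)[W]=\sum_{U\leq W}[U\leq W]$ of Proposition~\ref{prop-HC-HKR} and composing with $\al^*$; it then cites Proposition~\ref{prop-HC-product} (or linear algebra) for the second formula.  You reverse this: you start from the twisted convolution of Proposition~\ref{prop-HC-product} and Example~\ref{eg-fib-star} to get the second formula, and then recover the first via the torsor-counting argument.  For the coproduct, the paper again simply dualises the explicit formula $(R\al)[U,V]=|\Hom(V,U)|^{-1}[U\leq U\oplus V]$, whereas you use the $L$-equivalence $\bt^*=\al^*\sg^*$ from Proposition~\ref{prop-hom-aux}(b), Frobenius reciprocity, and the identity $\al_!(1)=\hom^{-1}$ from Proposition~\ref{prop-hom-aux}(e).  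Both routes are short; the paper's leans entirely on Proposition~\ref{prop-HC-HKR}, while yours trades the explicit dualisation of $R\al$ and $R\bt$ for heavier use of Propositions~\ref{prop-HC-product} and~\ref{prop-hom-aux}.  One small remark: your identification $\hom(U,V)=\fix(U^*\otimes V)=|\Hom(U,V)|$ comes out with the arguments in the opposite order to the paper's $\xi(U,V)=\Hom_F(V,U)$, but the two agree by Lemma~\ref{lem-fix-adams} (or, at the level of characters, by the symmetry you invoke).
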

\begin{proof}
 We can dualise the proposition to get descriptions of the effect of
 $L\al$, $L\bt$,$R_\al$ and $R\bt$ on $(D')^0(\CV)$, then just use the
 definitions $\mu=(L\bt)(R\al)$ and $\nu=(L\al)(R\bt)$.  This gives
 the formula for $\mu^*h$, and also the first formula for $(f*g)(W)$.  
 Proposition~\ref{prop-HC-product} gives the second formula for
 $(f*g)(W)$ (or one can deduce it from the first formula by a little
 linear algebra).
\end{proof}

\begin{definition}\lbl{defn-coprod-prod}
 We define a non-symmetric biexponential function $t$ on
 $\spf(E^0(B\CV^2))=(\Div^+(\HH)^\Gm)^2$ by
 \[ t((D_0,D_1),(E_0,E_1)) = \hom(D_0,E_1)\hom(D_1,E_0)^2. \]
 Thus, the character values are
 \[ t((U_0,U_1),(V_0,V_1)) =|\Hom(U_0,V_1)||\Hom(U_1,V_0)|^2 \]
 for $U_0,U_1,V_0,V_1\in\Rep(\Tht^*)$.
\end{definition}

\begin{proposition}\lbl{prop-coprod-prod}
 The map $\sg^*$ gives a ring homomorphism
 $(E^0(B\CV),\tm)\to(E^0(B\CV^2),\tm_t)$
\end{proposition}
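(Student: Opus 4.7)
My plan is to deduce this from Proposition~\ref{prop-twisted-hom} applied to the symmetric monoidal functor $\phi=\sigma\:\CV^2\to\CV$, taking the trivial twist on $E^0(B\CV)$ (so $(R,\tm_t)=(R,\tm)$) and the twist $t$ of Definition~\ref{defn-coprod-prod} on $E^0(B\CV^2)$. Given that framework, the claim reduces to verifying the single identity $\kappa_!(t)=\tilde\phi^*(1)=1$ in $E^0(B\CP)$, where $\CP$ is the homotopy pullback of $\sigma$ with itself and $\kappa\:(\CV^2)^2\to\CP$ is the induced map.

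First I would set up the diagram concretely. The homotopy pullback $\CP=\CV^2\tm_\CV\CV^2$ has objects $((U,V),(W_0,W_1),\beta)$ with $\beta\:U\op V\xra{\simeq}W_0\op W_1$, and the two projections $\tilde\phi,\tilde\sigma\:\CP\to\CV^2$ return $(U,V)$ and $(W_0,W_1)$ respectively. The map $\kappa$ is forced by the commutativity conditions to send $((U_0,U_1),(V_0,V_1))$ to $((U_0\op U_1,\,V_0\op V_1),\,(U_0\op V_0,\,U_1\op V_1),\text{swap})$. The outer square built from $\sigma'\:(\CV^2)^2\to\CV^2$ (componentwise sum) and $\sigma^2$ commutes up to the canonical reshuffling isomorphism, which is exactly what is needed to invoke Proposition~\ref{prop-twisted-hom}.

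The heart of the argument is the verification that $\kappa_!(t)=1$. I would carry this out in generalised character theory, translating via Theorem~\ref{thm-HKR} to an identity of functions on $[\Tht^*,\CP]$. For $p=((U,V),(W_0,W_1),\beta)\in[\Tht^*,\CP]$, Proposition~\ref{prop-fib-R} (combined with a check that $\kappa$ is a fibration, which follows from the splitting behaviour of short exact sequences in $\CV$) expresses $\kappa_!(t)(p)$ as a weighted sum over those $q=((U_0,U_1),(V_0,V_1))$ whose image under $\kappa$ is isomorphic to $p$ inside $[\Tht^*,\CP]$. Using Maschke's theorem as in Lemma~\ref{lem-rep-th}, such lifts are parametrised by compatible isotypic decompositions of $U,V,W_0,W_1$ subordinate to $\beta$, and the character formula $\hom(A,B)=|\Hom_{\Tht^*}(A,B)|$ from Proposition~\ref{prop-fix-HKR} reduces the question to a combinatorial identity about counting sub-bimodule decompositions of $\beta$ weighted by $|\Hom(U_0,V_1)|\,|\Hom(U_1,V_0)|^2$; this identity is verified one isotypic component at a time, so it reduces to a purely linear-algebraic assertion over a field.

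The main obstacle is precisely this last combinatorial identity: one has to enumerate the fibre of $\kappa$ carefully, keeping track of automorphism groups, and then check that the $\hom$-factors in $t$ compensate exactly for the $|\Aut|$-ratios appearing in the formula of Proposition~\ref{prop-fib-R}. I expect this calculation to pivot on the fact that, for fixed $U$ and $V$, the set of direct summands $A\leq U\op V$ fibres over pairs of decompositions $(U=U_0\op U_1,\,V=V_0\op V_1)$ with a fibre of known order expressible in terms of $|\Hom|$-groups between the pieces; once that is in place, telescoping the count against the weights in $t$ yields $1$, and Proposition~\ref{prop-twisted-hom} then completes the proof.
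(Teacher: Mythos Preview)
Your reduction via Proposition~\ref{prop-twisted-hom} is set up correctly, but the resulting identity $\kappa_!(t)=1$ is false, so the approach cannot succeed. The functor $\kappa\:(\CV^2)^2\to\CP$ is fully faithful: a morphism in $\CP$ between objects in the image of $\kappa$ must preserve each of the four summands separately and hence comes from $(\CV^2)^2$. In the character picture this means that for $p\in[\Tht^*,\CP]$ there is \emph{at most one} isomorphism class of lift, with automorphism ratio $1$; thus $\kappa_!(t)(p)$ equals either $t(a)$ for the unique lift $a$, or $0$. Neither is $1$ in general. For the first, $t(a)=|\Hom(U_0,V_1)|\,|\Hom(U_1,V_0)|^2$ is a nontrivial power of $q$ whenever those $\Hom$ groups are nonzero. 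For the second, the essential image is proper already over $[\Tht^*,-]$: take $W_0=W_1=F$ with trivial action, $U$ the diagonal and $V$ the antidiagonal; no pair of isomorphisms of $(W_0,W_1)$ carries $(U,V)$ to something of split form. So the fibre you plan to enumerate is a single point or empty, not the family ``parametrised by compatible isotypic decompositions'' that your outline anticipates, and the telescoping count never gets off the ground.

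This is why the paper explicitly declines to apply Proposition~\ref{prop-twisted-hom} here. Instead it builds a functor $\tht\:\CP\to(\CV^2)^2$ going the other way (with $\tht\kappa\simeq 1$), assembled through auxiliary groupoids $\CP_2,\CP_3$. The key step is that the projections $\zt_1,\xi_1$ from $\CP$ are $L$-equivalent to $\zt_0\tht,\xi_0\tht$; this uses the nontrivial input from Proposition~\ref{prop-hom-aux} that $\bt$ and $\rho$ are $L$-equivalent, i.e.\ that choosing a splitting of a short exact sequence over $F$ is invisible after $K(n)$-localisation. One can then rewrite the Mackey composite as $(\xi_0)_!\tht_!\tht^*\zt_0^*$, and Frobenius reciprocity reduces the claim to the identity $\tht_!(1)=t$ in $E^0(B(\CV^2)^2)$. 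That is a genuinely different condition from your $\kappa_!(t)=1$, and it is the one that the character computation actually verifies.
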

\begin{proof}
 We will need various auxiliary groupoids and functors between them.
 \begin{itemize}
  \item[(a)] $\CP_0$ is just $\CV^4$.  A typical object will be
   written as $(U_0,V_0,U_1,V_1)$.
  \item[(b)] $\CP_1$ is the groupoid of quadruples $(W_0,W_1,U,V)$
   where $W_0,W_1\in\CV$, and $U$ and $V$ are subspaces of
   $W_0\oplus W_1$ such that $W_0\oplus W_1$ is the internal direct
   sum of $U$ and $V$.
  \item[(c)] $\CP_2$ is the groupoid of triples $(W_0,W_1,U)$ where
   $U\leq W_0\oplus W_1$.
  \item[(d)] $\CP_3$ is the groupoid of quadruples $(W_0,W_1,U_0,U_1)$
   with $U_0\leq W_0$ and $U_1\leq W_1$.
 \end{itemize}
 We next need functors as shown below.  Not all of the squares will
 commute.
 \begin{center}
 \begin{tikzcd}
  \CV^2 \arrow[r,equal] & 
  \CV^2 \arrow[r,equal] & 
  \CV^2 \arrow[r,equal] & 
  \CV^2 \arrow[r,equal] & 
  \CV^2 \\
  \CP_0 \arrow[u,"\zt_0"] \arrow[d,"\xi_0"'] \arrow[r,"\phi_0"] &
  \CP_1 \arrow[u,"\zt_1"] \arrow[d,"\xi_1"'] \arrow[r,"\phi_1"] &
  \CP_2 \arrow[u,"\zt_2"] \arrow[d,"\xi_2"'] \arrow[r,"\phi_2"] &
  \CP_3 \arrow[u,"\zt_3"] \arrow[d,"\xi_3"'] \arrow[r,"\phi_3"] &
  \CP_0 \arrow[u,"\zt_0"] \arrow[d,"\xi_0"'] \\
  \CV^2 \arrow[r,equal] & 
  \CV^2 \arrow[r,equal] & 
  \CV^2 \arrow[r,equal] & 
  \CV^2 \arrow[r,equal] & 
  \CV^2 \\
   \end{tikzcd}
 \end{center}
 These are defined by 
 \begin{align*}
  \zt_0(U_0,V_0,U_1,V_1)  &= (U_0\oplus V_0,U_1\oplus V_1) & 
  \xi_0(U_0,V_0,U_1,V_1)  &= (U_0\oplus U_1,V_0\oplus V_1) \\
  \zt_1(W_0,W_1,U,V)      &= (W_0,W_1) &
  \xi_1(W_0,W_1,U,V)      &= (U,V) \\
  \zt_2(W_0,W_1,U)        &= (W_0,W_1) &
  \xi_2(W_0,W_1,U)        &= (U,(W_0\oplus W_1)/U) \\
  \zt_3(W_0,W_1,U_0,U_1)  &= (W_0,W_1) &
  \xi_3(W_0,W_1,U_0,U_1)  &= (U_0\oplus U_1,W_0/U_0\oplus W_1/U_1) \\
 \end{align*}
 \begin{align*}
  \phi_0(U_0,V_0,U_1,V_1) &=
   (U_0\oplus V_0,U_1\oplus V_1,U_0\oplus U_1,V_0\oplus V_1) \\
  \phi_1(W_0,W_1,U,V)     &= (W_0,W_1,U) \\
  \phi_2(W_0,W_1,U)       &= (W_0,W_1,U\cap W_0,\pi_{W_1}(U)) \\
  \phi_3(W_0,W_1,U_0,U_1) &= (U_0,W_0/U_0,U_1,W_1/U_1).
 \end{align*}
 We also put $\tht=\phi_3\phi_2\phi_1\:\CP_1\to\CP_0$.  It is easy to
 see that the composite
 $\tht\phi_0=\phi_3\phi_2\phi_1\phi_0\:\CP_0\to\CP_0$ is naturally
 equivalent to the identity. 
 
 Let us say that two parallel functors $\lm_0$ and $\lm_1$ are
 $L$-equivalent if $L\lm_0=L\lm_1$, which implies that
 $R\lm_0=R\lm_1$.  It is straightforward to check that the first three
 squares on the top commute up to natural isomorphism, as do the first
 and fourth squares on the bottom.  The second square on the bottom
 also commutes up to natural isomorphism, for a slightly less obvious
 reason.  Indeed, for $(W_0,W_1,U,V)\in\CP_1$ we have
 $\xi_1(W_0,W_1,U,V)=(U,V)$ and
 $\xi_2\phi_1(W_0,W_1,U,V)=(U,(W_0\oplus W_1)/U)$ but $W_0\oplus W_1$
 is assumed to be the internal direct sum of $U$ and $V$ so the
 evident composite $V\to W_0\oplus W_1\to (W_0\oplus W_1)/U$ gives the
 required natural isomorphism.

 We claim that the remaining two squares commute up to
 $L$-equivalence.  The key point is that the functors
 $\bt\:(U\leq W)\mapsto W$ and $\rho\:(U\leq W)\mapsto U\oplus W/U$
 are $L$-equivalent, as we saw in Proposition~\ref{prop-hom-aux}.  The
 functors $\zt_3$ and $\zt_0\phi_3$ are essentially $\bt^2$ and
 $\rho^2$, so the top right square commutes up to $L$-equivalence.
 Similarly, the functors $\xi_2$ and $\xi_3\phi_2$ are, up to
 isomorphism, the composites of $\bt^2$ and $\rho^2$ with the functor
 $\CP_2\to\CS^2$ given by
 \[ (W_0,W_1,U)\mapsto
  \left((U\cap W_0\leq U),
   \frac{U+W_0}{U}\leq \frac{W_0\oplus W_1}{U}\right).
 \]

 Now consider the diagram
 \begin{center}
  \begin{tikzcd}
   \CP_0
    \arrow[rr,"\zt_0"]
    \arrow[dd,"\xi_0"']
    \arrow[dr,"\phi_0"] &&
   \CV^2
    \arrow[dd,"\sg"] \\ &
   \CP_1
    \arrow[ur,"\zt_1"']
    \arrow[dl,"\xi_1"] \\
   \CV^2
    \arrow[rr,"\sg"'] &&
   \CV
  \end{tikzcd}
 \end{center}
 One can check that this commutes up to natural isomorphism, and that
 the bottom right region is a homotopy pullback, as in
 Proposition~\ref{prop-twisted-hom}.  However, we will not apply that
 Proposition directly, but will give a give a different argument of a
 similar kind.

 Suppose we have $f,g\in E^0(B\CV)$ and put $h=f\ot g$.  We then have
 $\sg^*(f\tm g)=\sg^*\sg_!(h)$.  The pullback property of the
 above diagram implies that $\sg^*\sg_!(h)=(\xi_1)_!\zt_1^*(h)$.  Because
 the previous diagram commutes up to $L$-equivalence, this is the same
 as $(\xi_0)_!\tht_!\tht^*\zt_0^*(h)$.  Here $\xi_0$ is just the
 direct sum functor for the category $\CV^2$, whereas $\zt_0$ is just
 $\sg^2$, so $\zt_0^*(h)=\sg^*(f)\ot\sg^*(g)$.  Moreover, we have a
 Frobenius reciprocity formula $\tht_!\tht^*(k)=s\bl k$ for all
 $k\in E^0(B\CP_0)$, where $s=\tht_!(1)$.  We therefore conclude that
 $\sg^*\sg_!(f\ot g)=\sg^*(f)\tm_s\sg^*(g)$, and it will suffice to
 check that $s$ is the same as $t$.  We can do this in generalised
 character theory.

 Firstly, $\phi_1$ is a covering.  Given
 $(W_0,W_1,U)\in[\Tht^*,\CP_2]$ we can always choose a
 subrepresentation $V_0$ complementary to $U$ in $W_0\oplus W_1$, and
 then any other complement is the graph of a homomorphism $V_0\to U$.
 The number of complements is thus equal to
 \begin{gather*}
  |\Hom(V_0,U)|=|\Hom(U,V_0)|=|\Hom(U,(W_0\oplus W_1)/U)| = \\
  |\Hom(U,W_0\oplus W_1)||\Hom(U,U)|^{-1}.
 \end{gather*}
 We therefore have
 \[ (\phi_1)_!(1)(W_0,W_1,U) = |\Hom(U,W)||\Hom(U,U)|^{-1}, \]
 where $W=W_0\oplus W_1$.
 
 Next, $\phi_2$ is also a covering.  If $(W_0,W_1,U)$ is in the
 preimage of $(W_0,W_1,U_0,U_1)$ then $U/U_0$ is the graph of a
 homomorphism $U_1\to W_0/U_0$, and this construction is bijective, so
 the number of preimages of $(W_0,W_1,U_0,U_1)$ is
 $|\Hom(U_1,W_0/U_0)|$.  If $(W_0,W_1,U)$ is one such preimage, then
 $U$ is isomorphic to $U_0\oplus U_1$, so the value of
 $(\phi_1)_!(W_0,W_1,U)$ is independent of the choice of preimage.
 Putting this together, we see that
 $(\phi_2\phi_1)_!(1)(W_0,W_1,U_0,U_1)$ is equal to
 \[ |\Hom(U,W)||\Hom(U,U)|^{-1}|\Hom(U_1,W_0/U_0)|, \]
 where $U=U_0\oplus U_1$ and $W=W_0\oplus W_1$.
 
 Next, $\phi_3$ is not a covering, but it is not hard to understand
 $(\phi_3)_!$ anyway.  Recall the general framework: for a functor
 $\al\:\CA\to\CB$, an element $f\in(D')^0(\CA)$ and an object
 $b\in[\Tht^*,\CB]$ we have
 \[ (\al_!f)(b) = \sum_{\al[a]=[b]}\frac{|\CB(b)|}{|\CA(a)|}f(a). \]
 Consider an object $P_0=(U_0,V_0,U_1,V_1)\in[\Tht^*,\CP_0]$, and put
 $W_i=U_i\oplus V_i$ and $P_3=(W_0,W_1,U_0,U_1)\in[\Tht^*,\CP_3]$.  We
 find that $[P_3]$ is the only isomorphism class mapping to $[P_0]$,
 and that
 $|\Aut(P_0)|/|\Aut(P_3)|=|\Hom(U_0,V_0)|^{-1}|\Hom(U_1,V_1)|^{-1}$. 
 This gives
 \[ \tht_!(1)(P_0) = \frac{|\Hom(U,W)||\Hom(U_1,V_0)|}{
    |\Hom(U,U)||\Hom(U_0,V_0)||\Hom(U_1,V_1)|},
 \]
 By expanding this out and cancelling in the obvious way, we get
 \[ \tht_!(1)(P_0) = |\Hom(U_0,V_1)||\Hom(U_1,V_0)|^2
      = t(P_0),
 \]
 as required.    
\end{proof}

\begin{remark}\lbl{rem-coprod-prod-general}
 Proposition~\ref{prop-coprod-prod} should ideally be be embedded in a
 larger context.  For any finite set $X$, let $\CV[X]$ denote the
 category of bundles of finite-dimensional $F$-vector spaces over
 $X$.  Any map $p\:X\to Y$ gives a functor $\sg_p\:\CV[X]\to\CV[Y]$ by
 $\sg_p(V)_y=\bigoplus_{p(x)=y}V_x$.  This in turn gives maps
 $\sg_p^*\:E^0(B\CV[Y])\to E^0(B\CV[X])$ and
 $\sg_{p!}\:E^0(B\CV[X])\to E^0(B\CV[Y])$.  Next, for any map
 $m\:X^2\to\Z$ we define $h_m\in E^0(B\CV[X])$ by
 $h_m(D)=\prod_{x,y}\hom(D_x,D_y)^{m(x,y)}$.  We then define
 $\mu_m\:E^0(B\CV[X])\to E^0(B\CV[X])$ to be multiplication by $h_m$.
 Maps of the form $\sg_p^*$ can be composed in an obvious way, as can
 maps of the form $\sg_{p!}$.  One can also check that
 $\mu_n\circ\sg_{p!}=\sg_{p!}\circ\mu_{p^*(n)}$ and
 $\sg_p^*\circ\mu_n=\mu_{p^*(n)}\circ\sg_p^*$.  Now suppose we have
 maps $X\xra{p}Y\xla{q}Z$, and we want to understand
 $\sg_q^*\circ\sg_{p!}$.  We can form a pullback square
 \begin{center}
  \begin{tikzcd}
   W \arrow[r,"i"] \arrow[d,"j"'] &
   X \arrow[d,"p"] \\
   Z \arrow[r,"q"'] &
   Y.
   \end{tikzcd}
 \end{center}
 We conjecture that there exists $m\:W^2\to\Z$ such that
 \[ \sg_q^*\sg_{p!}=\sg_{j!}\mu_m\sg_i^* \:
     E^0(B\CV[X]) \to E^0(B\CV[Z]).
 \]
 This $m$ will certainly not be unique, because of the fact that
 $\hom(D_0,D_1)=\hom(D_1,D_0)$.  One might hope that this was the only
 source of nonuniqueness, but one can find counterexamples to that.
 We expect that there should be some straightforward combinatorial
 characterisation of the set of maps $m$ for which this works, but so
 far we have not found one.  Assuming that this works, we find that
 the class of maps of the form $\sg_{f!}\mu_m\sg_g^*$ is closed under
 composition. 
\end{remark}

\section{Groupoids of line bundles}
\lbl{sec-line-bundles}

In this section, we will compare $E^0(B\CV)$ with $E^0(B\CXL)$, where
$\CXL$ is the groupoid of finite sets equipped with an $F$-linear line
bundle.  We also consider the groupoid $\CX$ of finite sets.  All of
these groupoids have evident finite type gradings, and all our
algebraic constructions will again be interpreted in the graded
category. 

We proved in~\cite{st:msg} that $E^0(B\CX)$ is a polynomial ring, and
that $\spf(\Ind(E^0(B\CX)))$ is the moduli scheme $\Sub(\GG)$ of
finite subgroups of the formal group $\GG=\spf(E^0(BS^1))$.  Recall that
we have chosen an embedding $\bF^\tm\to\mu_{\infty}(\C)$, which gives
an isomorphism $\HH\simeq\GG$.  This is unique up to the action of
$\Aut(\mu_\infty(\C))=\widehat{\Z}^\tm$, and every subgroup scheme is
necessarily preserved by that action.  We thus have a canonical
identification of $\spf(\Ind(E^0(B\CX)))$ with $\Sub(\HH)$.  We have a
functor $X\mapsto F[X]$ from $\CX$ to $\CV$, and the induced map
\[ \Sub(\HH) \to \spf(E^0(B\CX)) \to
     \spf(E^0(B\CV)) = \Div^+(\HH)^\Gm
\]
is just the evident inclusion.  We have an analogous statement for
$\CXL$ as follows:

\begin{definition}\lbl{defn-coset}
 We define $\Coset(\HH)$ to be the moduli scheme of pairs $(A,a)$
 where $A$ is a finite subgroup scheme of $\HH$, and $a\in\HH/A$.  The
 group $\Gm$ acts on this by $\phi.(A,a)=(A,qa)$, so
 \[ \Coset(\HH)^\Gm = \{(A,a)\st (q-1)a=0\}
                    = \{(A,a)\st p^ra=0\}.
 \]
\end{definition}

\begin{proposition}\lbl{prop-E-CXL}
 $E^0(B\CXL)$ is a polynomial ring, with
 $\spf(\Ind(E^0(B\CXL)))=\Coset(\HH)^\Gm$. 
\end{proposition}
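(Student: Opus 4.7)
The plan is to parallel the treatment of $\CV$ in Proposition~\ref{prop-more-poly-rings}, combined with a moduli-theoretic identification of the indecomposables analogous to the $\CX$ case in~\cite{st:msg}.  Since $\CXL_d$ is equivalent to $BN_d$ with $N_d = F^\tm\wr\Sg_d$, and Proposition~\ref{prop-index} gives $|G_d/N_d|$ coprime to $p$, Proposition~\ref{prop-inc-split} makes the functor $L\gm\:L\CXL\to L\CV$ a split epimorphism.  This comparison is useful but does not by itself furnish the polynomial structure.

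First I would prove that $E^0(B\CXL_d)$ is pro-free and concentrated in even degrees.  This can be established by running the Atiyah-Hirzebruch spectral sequence for $B\CXL$ along the lines of Section~\ref{sec-ahss}: the description $H^*(B\CL) = P[x]\ot E[a]$ from Definition~\ref{defn-aH} together with the wreath product formula determines $H^*(BN_d)$, and the dominant differentials are forced by the known computation $K^*(B\CL) = K^*\psb{x}/[p^r](x)$ via the structural maps $\CL^d\to\CXL_d$.  Combined with generalised character theory (Example~\ref{eg-HKR-XL}), which shows that $D'_0(\CXL)$ is polynomial over $D'$ on generators $x_{Aa}$ indexed by pairs $(A,a)$ with $A<\Phi$ finite and $a\in\Phi/A$ satisfying $(q-1)a=0$, this yields a matching Poincar\'e series.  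Lifting polynomial generators from $K_0$ to $E_0$ as in the proof of Proposition~\ref{prop-more-poly-rings} then shows $E^0(B\CXL)$ is a polynomial algebra over $E^0$, with one generator of degree $d$ for each pair $(A,a)$ with $|A|=d$.

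To identify $\spf(\Ind(E^0(B\CXL_d)))$ with $\Coset_d(\HH)^\Gm$, I would construct a classifying map $\Coset_d(\HH)^\Gm \to \spf(\Ind(E^0(B\CXL_d)))$ using the universal subgroup and coset of $\HH$; this is an $F$-linear analogue of the power-operation construction for $B\Sg_d$ from~\cite{st:msg}, where the $(q-1)$-torsion condition on the coset reflects the $F$-linearity of the line bundle, and the $\Gm$-invariance reflects that we work over $F$ rather than $\bF$.  To show this map is an isomorphism, I would reduce (using freeness of both sides over $E^0$) to checking it after $\ot_{E^0}D'$, where both sides are identified via HKR theory with the ring of functions on the finite set of pairs $(A,a)$ of the appropriate type, and the map is tautological.

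The main obstacle is the construction of the classifying map $\Coset_d(\HH)^\Gm \to \spf(\Ind(E^0(B\CXL_d)))$ itself.  For $B\Sg_d$ in~\cite{st:msg}, this required a delicate analysis of power operations and of the transfer along $BU(1)^d\to B\Sg_d$; here the $F$-linear line bundle structure must be incorporated so that it simultaneously records the coset $a\in\HH/A$ and its $(q-1)$-torsion constraint, while remaining compatible with the Galois action of $\Gm$.  Once this map is in hand, the remainder of the argument is essentially formal bookkeeping with HKR character data and the freeness inputs established earlier.
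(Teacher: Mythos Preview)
Your proposal and the paper take genuinely different routes.  The paper's proof rests entirely on the theory of power operations, exploiting the identification of $B\CXL$ with the total extended power $\coprod_d D_d(B\CL)$ of $B\CL$.  Rezk's framework then does all the work: applying the free functor $\TT$ to the free module $E^\vee_*(B\CL)$ automatically yields a polynomial algebra (this is the simplest case of the theory), and the total power operation $\ov{P}\:E^0(B\CL)\to E^0(D_d(B\CL))/T=\Ind_d(E^0(B\CXL))$ furnishes the classifying map directly.  One then checks that this map sits in a pullback square over $\Sub_d(\HH)\xra{\pi_1}S$, where $\pi_1=\spf(\ov{P})$ classifies the quotient $\HH/A$; the pullback of $\HH[q-1]=\spf(E^0(B\CL))$ along $\pi_1$ is precisely $\Coset_d(\HH)^\Gm$.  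Both the polynomial structure and the moduli identification fall out of the same machine.

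Your outline could in principle be completed, but it is substantially harder and has a real gap.  The AHSS analysis for $B\CXL_d=BN_d$ is not a minor variation on Section~\ref{sec-ahss}: that section relied on the specific maps $\rho\:\CV(k)_1\to\CV_{p^k}$ and the transfer calculations of Section~\ref{sec-ordinary}, and you would need to develop analogous input for wreath products.  More seriously, the step you yourself flag as the main obstacle, namely constructing the classifying map $\Coset_d(\HH)^\Gm\to\spf(\Ind_d(E^0(B\CXL)))$, is exactly what the power-operation framework provides for free, and you offer no concrete construction.  Note that the argument in~\cite{st:msg} you wish to adapt already rests on power operations; the $F$-linear twist you need is handled by taking $X=B\CL$ rather than $X=\text{point}$ in the same machine, not by some separate Galois-theoretic construction.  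In short, your plan reconstructs piecemeal what the power-operation viewpoint delivers uniformly.
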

\begin{proof}
 This relies on the theory of power operations in Morava $E$-theory.
 The paper~\cite{re:mic} by Rezk provides a good account of this.  

 The projection $\CXL\to\CX$ gives a map
 $\Ind(E^0(\CX))\to\Ind(E^0(\CXL))$ and thus a map
 $\spf(\Ind_d(E^0(\CXL)))\to\Sub_d(\HH)$.  (We can assume that $d$ is
 a power of $p$, because in other cases $\Sub_d(\HH)$ is empty; but it
 is notationally convenient to write $d$ rather than $p^k$.)

 The $E_\infty$ structure of the spectrum $E$ provides a power
 operation $P\:E^0(X)\to E^0(D_d(X))$ for any space $X$, where
 $D_d(X)=E\Sg_d\tm_{\Sg_d}X^d$ is the $d$'th extended power.  This is
 multiplicative but not additive.  However, there is a transfer ideal
 $T\leq E^0(D_d(X))$ such that the composite
 $\ov{P}\:E^0(X)\to E^0(D_d(X))/T$ is a ring homomorphism.  In
 particular, we can take $X$ to be a point, so $D_d(X)=B\Sg_d$, so
 $\spf(E^0(B\Sg_d)/T)=\Sub_d(\HH)$.  Put $S=\spf(E^0)$.  There is an
 obvious projection $\pi_0\:\Sub_d(\HH)\to S$, and there is also another
 map $\pi_1=\spf(\ov{P})$ in the same direction.  The key property of
 $\pi_1$ is that there is a canonical isomorphism
 $\pi_0^*(\HH)/A\to\pi_1^*(\HH)$, where $A$ is the tautological
 subgroup of $\pi_0^*(\HH)$ given by the universal property of
 $\Sub_d(\HH)$.  (This is proved by taking $X=\CPi$ in the above
 construction.) 

 We can now take $X=B\CL=BGL_1(F)$.  Standard methods then identify
 $D_d(X)$ with $B(GL_1(F)\wr\Sg_d)$ or with $B\CXL_d$.  The relevant
 ideal $T$ in $E^0(B\CXL_d)$ is just the ideal of decomposables, so we
 get a map $\ov{P}\:E^0(B\CL)\to\Ind_d(E^0(B\CXL))$.  By naturality,
 the projection $B\CL\to 1$ gives a commutative diagram of rings as
 shown on the left below, and thus a commutative diagram of formal
 schemes as shown on the right.
 \begin{center}
  \begin{tikzcd}
   \Ind_d(E^0(B\CXL)) &
   E^0(B\CL) \arrow[l,"\ov{P}"'] \\
   \Ind_d(E^0(B\CX)) \arrow[u] &
   E^0 \arrow[l,"\ov{P}"] \arrow[u] &
  \end{tikzcd}
  \begin{tikzcd}
   \spf(\Ind_d(E^0(B\CXL))) \arrow[r] \arrow[d] & 
   \HH[q-1] \arrow[d] \\
   \Sub(\HH) \arrow[r,"\pi_1"'] \arrow[r] &
   S
  \end{tikzcd}
 \end{center}
 The bottom map is $\pi_1$, which classifies the quotient $\HH/A$ as
 explained above, so the pullback in the right hand square is
 $\Coset_d(\HH)^\Gm$.  We therefore have a comparison map 
 \[ \spf(\Ind_d(E^0(B\CXL))) \to \Coset_d(\HH)^\Gm. \]

 The general theory of power operations shows that this is an
 isomorphism.  To make contact with the account by Rezk, we need to
 work with $E^\vee_0(B\CXL)$, but that is isomorphic to $E^0(B\CXL)$
 by duality theory, as we have discussed previously.  As $B\CXL$ is
 the total extended power of $B\CL$, Rezk shows that $E^\vee_0(B\CXL)$
 is obtained from $E_0(B\CL)$ by applying a functor $\TT$.  As
 $E^\vee_*(B\CL)$ is a free module over $E_*$ we are in the simplest
 case of the theory, where $\TT$ always produces polynomial algebras.
 We also have $\TT(M_*\oplus N_*)=\TT(M_*)\ot\TT(N_*)$, so
 $\Ind(\TT(M_*))$ is an additive functor of $M_*$.  Multiplication by
 an element $a\in E_0$ gives an endomorphism of $M_*$ and thus an
 endomorphism of $\Ind_d(\TT(M_*))$, and a naturality argument shows
 that this is multiplication by $\ov{P}(a)$.  Using this we find that
 $\Ind_d(\TT(M_*))=\Ind_d(\TT(E_*))\ot_{E_*}M_*$.  Here
 $\Ind_d(\TT(E_*))$ is the dual of $\CO_{\Sub_d(\HH)}$, and the tensor
 product is formed using $\ov{P}$.  After dualising this and unwinding
 we find that $E^0(D_d(B\CL))/T=\CO_{\Sub_d(\HH)}\ot_{E^0}E^0(B\CL)$,
 where again the tensor product is formed using $\ov{P}$.  Our main
 claim follows from this.
\end{proof}

\appendix

\section{The Atiyah-Hirzebruch spectral sequence for a cyclic group}
\lbl{apx-ahss}

Put $C=\{z\in\C\st z^{p^k}=1\}$.  There is a unique homomorphism
$C\to\Fp$ sending $e^{2\pi i/p^k}$ to $1$, and we write $a^H$ for the
corresponding class in $H^1(BC)$.  The inclusion $C\to S^1$ gives
a line bundle over $BC$ with Euler classes $x^H\in H^2(BC)$ and
$x^K\in K^0(BC)$.  We therefore have an AHSS
\[ H^*(BC;K^*) = P[x]\ot E[a]\ot K^* \convto K^*(BC) = 
    K^*\psb{x^K}/[p^k](x^K). 
\]
Here $x^K$ is represented by $ux^H$ in the AHSS.  As is well-known and
easy to see, the only possible way that the differentials can work is
to have $d_r=0$ for $r\neq 2p^{nk}-1$, and
$d_{2p^{nk}-1}(a)=tu^{-1}(ux)^{p^{nk}}$ for some $t\in\Fp^\tm$.  For
most purposes the value of $t$ is not important.  However, if we do
not know the value, then it creates some overhead of bookkeeping and
notation.  Because of this, the following result is convenient.

\begin{proposition}\lbl{prop-ahss-cyclic}
 In the above AHSS we have $d_{2p^{nk}-1}(a^H)=u^{-1}(ux^H)^{p^{nk}}$
 (so $t=1$). 
\end{proposition}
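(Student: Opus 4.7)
The plan is to pin down the coefficient $t \in \F_p^\times$ by relating the AHSS differential to the leading coefficient of the $p^k$-series of the formal group law $F$ of $K$, via the cofiber sequence coming from the line bundle $\mathcal{L}^{p^k}$ on $BS^1$ whose unit sphere bundle is $BC_{p^k}$.

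First, I would set up the Gysin cofiber sequence
\[ (BC_{p^k})_+ \to (BS^1)_+ \xrightarrow{\cdot e(\mathcal L^{p^k})} (BS^1)^{\mathcal L^{p^k}} \simeq \Sigma^2 (BS^1)_+, \]
where the last equivalence is the Thom isomorphism, available because $K$ is complex oriented. Under this identification the second map is multiplication by the $K$-theoretic Euler class $e(\mathcal L^{p^k}) = [p^k]_F(x^K) \in K^0(BS^1) = K^*[[x^K]]$.

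Next, I would compute the leading term of the $p^k$-series. By Proposition~\ref{prop-p-series}, $[p^k]_F$ is a unit multiple of a Weierstrass polynomial of degree $p^{nk}$. In our normalization (with $u_n=1$) and modulo the maximal ideal $I_n$, so inside $K^0 = \F_p$, one has $[p]_F(y) \equiv y^{p^n} \pmod{y^{p^n+1}}$ with leading coefficient exactly $1$, and iterating $k$ times yields $[p^k]_F(y) = y^{p^{nk}} + O(y^{p^{nk}+1})$ still with leading coefficient $1$.

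Then I would run the cofiber sequence through the AHSS. Since the AHSS for $BS^1$ (and hence for its double suspension) is degenerate, all AHSS differentials for $BC_{p^k}$ are forced by the long exact sequence associated with the cofiber sequence. On the $E_2$-page, the connecting homomorphism sends $a^H$ to $1\in H^0(BS^1;\F_p)$, reflecting the vanishing of the $\F_p$ Euler class $p^kx^H$. The class $1\in K^*$ is a permanent cycle on the $BS^1$ side, but in the AHSS for $BC_{p^k}$ the obstruction to lifting $1$ back along multiplication by $[p^k]_F(x^K)$ appears at filtration $2p^{nk}$, with the explicit leading term $(x^K)^{p^{nk}}$ by Step~1. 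Converting to AHSS coordinates via $x^K = ux^H$ and reconciling bidegrees, this forces $d_{2p^{nk}-1}(a^H) = u^{-1}(ux^H)^{p^{nk}}$, so $t=1$.

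The main obstacle is the rigorous identification in the last step: one must verify that the compatibility of the AHSS filtration with the Gysin long exact sequence introduces no additional scalar factors (from the Thom isomorphism or from the normalization of the complex orientation). A useful alternative, which I would adopt if the direct comparison proved too delicate, is to use obstruction theory on the skeletal tower $L^{2j+1}_{p^k}\subset BC_{p^k}$: the class $a^H$ lifts to $a^K\in K^1(L^1)=K^1(S^1)$ by Bott periodicity, and the obstruction to extending $a^K$ across the first new cell of dimension $2p^{nk}$ is measured explicitly by the leading term of $[p^k]_F$, with coefficient $1$ by Step~1.
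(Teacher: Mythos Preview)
Your approach and the paper's share the same geometric core: both exploit the circle-bundle structure $S^1 \to BC_{p^k} \to BS^1$ (equivalently, the Thom/Gysin cofibration for the bundle $M = L^{\otimes p^k}$) to reduce the question to the leading coefficient of $[p^k]_F$. The key computation that $[p^k]_F(y) = y^{p^{nk}} + O(y^{p^{nk}+1})$ with leading coefficient $1$ is exactly what drives the paper's argument as well.

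The difference is in execution, and it lies precisely at the point you flag as the obstacle. The paper does not work with the abstract Gysin sequence over $BS^1$; instead it passes to the finite model $\C P^{r-1}$ with $r = p^{nk}$, identifies $S(\C^r)/C_m$ explicitly as a lens space with a concrete CW structure whose $2r$-skeleton is the cofibre of the $p^k$-th power map $\psi\colon S(\C^r)_+ \to (S(\C^r)/C_m)_+$, and then reads off the differential directly from the attaching map. A specific class $a_K \in K^1(S(\C^r)/C_m)$ is constructed via the Thom cofibration for $M$, and the relation $\psi^*(t(M)) = \langle p^k\rangle(e(L))\,t(L)$ (your Step~2, made precise at the Thom-class level) gives $\psi^*(a_K)$ on the nose. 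This is essentially your suggested alternative of ``obstruction theory on the skeletal tower,'' carried out explicitly. Working with finite skeleta and explicit Thom classes is what eliminates the ambiguity about stray scalars from the Thom isomorphism or orientation conventions --- every map is written down and the constant is visibly $1$.

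So your proposal is correct in outline, and your instinct about where the difficulty lies is accurate; the paper resolves that difficulty by the more hands-on route you mention at the end rather than by a direct spectral-sequence comparison along the Gysin sequence.
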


The proof will be given after some preliminary discussion.
\begin{definition}
 Let $L$ be a complex line bundle over a space $X$, equipped with a
 Hermitian inner product.  Let $S(L)$ be the unit circle bundle in
 $L$, and let $\pi_L\:S(L)\to X$ be the projection.  Let
 $X^L=E(L)\cup\{\infty\}$ be the Thom space, and let $\zt_L\:X\to X^L$
 be the zero section.  This can be identified with the cofibre of
 $\pi_L\:S(L)_+\to X_+$, so we have a connecting map
 $\dl_L\:X^L\to\Sg S(L)_+$.  To identify $X^L$ as a cofibre we
 implicitly need a homeomorphism $f\:[0,1]\to [0,\infty]$, which we
 can take to be $f(t)=t/(1-t)$, so $f^{-1}(t)=t/(1+t)$.  We also take
 $\Sg Y_+$ to be $([0,1]\tm Y)/(\{0,1\}\tm Y)$. The map $\dl_L$ is
 then given by $\dl_L(x,tv)=f^{-1}(t)\Smash(x,v)$ for $(x,v)\in S(L)$
 and $t\in[0,\infty]$.

 We also have a diagonal map $\Dl_L\:E(L)\to X\tm E(L)$ given by
 $\Dl_L(x,v)=(x,(x,v))$, and this has a unique continuous
 extension $\Dl_L\:X^L\to X_+\Smash X^L$.  We write $t(L)$ for the
 Morava $K$-theory Thom class in $\tK^0(X^L)$.  We can use $\Dl_L$ to
 make $\tK^*(X^L)$ into a module over $K^*(X)$, and the Thom
 isomorphism theorem tells us that it is freely generated as such by
 $t(L)$.
\end{definition}

\begin{definition}
 Now fix $m\geq 1$, and let $M$ be the $m$'th tensor power of $L$.
 Define $\psi\:S(L)\to S(M)$ by $\psi(x,v)=(x,v^{\ot m})$.  We also
 extend this to give a map $\psi\:X^L\to X^M$ by
 $\psi(x,tv)=(x,t\,v^{\ot m})$ for $(x,v)\in S(L)$ and
 $t\in[0,\infty]$. 
\end{definition}

\begin{remark}\lbl{rem-psi-diagram}
 It is easy to see that the following diagrams commute:
 \begin{center}
  \begin{tikzcd}[sep=large]
   S(L)_+ \arrow[r,"\pi_L"] \arrow[d,"\psi"'] &
   X_+    \arrow[r,"\zt_L"] \arrow[d,equal]   &
   S^L    \arrow[d,"\psi"]  \arrow[r,"\dl_L"] & 
   \Sg S(L)_+ \arrow[d,"\Sg\psi_+"] & 
   X^L    \arrow[d,"\psi"'] \arrow[r,"\Dl_L"] &
   X_+\Smash X^L \arrow[d,"1\Smash\psi"] \\
   S(M)                     \arrow[r,"\pi_M"'] &
   X_+                      \arrow[r,"\zt_M"'] &
   S^M                      \arrow[r,"\dl_M"'] &
   \Sg S(M)_+ &
   X^M \arrow[r,"\Dl_M"'] &
   X_+\Smash X^M.
  \end{tikzcd}
 \end{center}
\end{remark}

\begin{lemma}\lbl{lem-psi-tM}
 The map 
 \[ \psi^* \: K^*(X).t(M) = \tK^*(X^M) \to 
     \tK^*(X^L) = K^*(X).t(L)
 \]
 is given by 
 \[ \psi^*(a.t(M))= a.\ip{m}(e(L)).t(L) \]
 for all $a\in K^*(X)$.
\end{lemma}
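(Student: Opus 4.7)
The plan is to first identify $\psi^*(t(M))$ as a $K^*(X)$-multiple of $t(L)$ via the Thom isomorphism, and then pin down the multiplier by restricting along the zero section.

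Since $t(L)$ generates $\tK^*(X^L)$ freely as a module over $K^*(X)$, there is a unique class $\lambda = \lambda(X,L) \in K^*(X)$ with $\psi^*(t(M)) = \lambda \cdot t(L)$. The rightmost square of Remark~\ref{rem-psi-diagram}, combined with the fact that the $K^*(X)$-module structure on $\tK^*(X^L)$ is defined through $\Dl_L$, shows that $\psi^*\:\tK^*(X^M)\to\tK^*(X^L)$ is a map of $K^*(X)$-modules. Consequently $\psi^*(a \cdot t(M)) = a \cdot \lambda \cdot t(L)$ for every $a \in K^*(X)$, and it suffices to prove that $\lambda = \ip{m}(e(L))$.

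Next I would pull back along the zero section. Inspecting the formula $\psi(x,tv) = (x,t\,v^{\ot m})$ one sees at once that $\psi \circ \zt_L = \zt_M \: X \to X^M$, because both sides send $x$ to $(x,0)$. Using $\zt_L^*(t(L)) = e(L)$ and $\zt_M^*(t(M)) = e(M) = [m]_F(e(L))$, applying $\zt_L^*$ to $\psi^*(t(M)) = \lambda \cdot t(L)$ yields
\[ \lambda \cdot e(L) = [m]_F(e(L)) \in K^*(X). \]

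The one delicate point is that $e(L)$ need not be a non-zero-divisor in $K^*(X)$, so this equation does not by itself determine $\lambda$. The plan to overcome this is a standard naturality argument: $L$ is classified by a map $f_L \: X \to \CPi$, and all of the constructions $X^L$, $t(L)$, $e(L)$, and $\psi$ are natural in the pair $(X,L)$, so $\lambda(X,L) = f_L^*\lambda(\CPi, L_{\text{univ}})$ where $L_{\text{univ}}$ is the tautological line bundle. Over $\CPi$ we have $K^*(\CPi) = K^*\psb{x}$ with $e(L_{\text{univ}}) = x$, which is a non-zero-divisor, so the universal identity forces $\lambda(\CPi, L_{\text{univ}}) = [m]_F(x)/x = \ip{m}(x)$. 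Pulling back along $f_L$ gives $\lambda(X,L) = \ip{m}(e(L))$, as required.
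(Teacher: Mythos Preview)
Your proof is correct and follows essentially the same approach as the paper: establish $K^*(X)$-linearity of $\psi^*$ via the diagonal square, write $\psi^*(t(M))=\lambda\cdot t(L)$, pull back along the zero section using $\psi\zt_L=\zt_M$ to get $\lambda\cdot e(L)=[m](e(L))$, and then resolve the zero-divisor issue by naturality with the universal case over $\CPi$.
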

\begin{proof}
 The right hand diagram in Remark~\ref{rem-psi-diagram} shows that
 $\psi^*$ is $K^*(X)$-linear, so we need only consider
 $\psi^*(t(M))$.  This must have the form $c\,t(L)$ for some unique
 element $a\in K^*(X)$.  As $\psi\zt_L=\zt_M$ it follows that 
 \[ \zt_M^*(t(M)) = \zt_L^*(c\,t(L)) = c\,e(L). \]
 On the other hand, we also have 
 \[ \zt_M^*(t(M)) = e(M) = [m](e(L)) = \ip{m}(e(L)).e(L), \]
 so $(c - \ip{m}(e(L)))\,e(L)=0$.  This does not immediately complete
 the proof because $e(L)$ may be a zero-divisor.  However, all our
 constructions are natural, so it will suffice to prove that
 $c=\ip{m}(e(L))$ in the universal case of the tautological bundle
 over $\C P^\infty$.  Here $K^*(X)=K^*\psb{e(L)}$ and so $e(L)$ is a
 regular element and we get $c=\ip{m}(e(L))$ as required.
\end{proof}

We now specialise to the case where $X=\C P^{r-1}=P(\C^r)$ for some
$r\leq\infty$, and $L$ is the tautological bundle.  Then the map
$v\mapsto(\C v,v)$ identifies the space $S^{2r-1}=S(\C^r)$ with
$S(L)$.  Similarly, the map $C_mv\mapsto(\C v,v^{\ot m})$ identifies
the space $S(\C^r)/C_m$ with $S(M)$.  From this point of view, the map
$\psi\:S(L)\to S(M)$ is just the projection $S(\C^r)\to S(\C^r)/C_m$.

\begin{lemma}\lbl{lem-taut-thom}
 The space $P(\C^r)^L$ can be identified with $P(\C^{r+1})$, in such a
 way that the zero section $\zt_L$ becomes the obvious inclusion
 $P(\C^r)\to P(\C^{r+1})$.
\end{lemma}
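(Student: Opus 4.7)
The plan is to exhibit an explicit homeomorphism $\Phi\: P(\C^r)^L \to P(\C^{r+1})$ that sends the zero section to the obvious hyperplane inclusion. Viewing $P(\C^{r+1})$ as $P(\C^r\oplus\C)$, I will define $\Phi$ by
\[ \Phi(\ell,v) = [v : |v|^2] \text{ when } v\neq 0, \qquad
   \Phi(\ell,0) = [\ell : 0], \qquad \Phi(\infty) = [0:1]. \]
Here $|v|^2 = \langle v,v\rangle$ uses the chosen Hermitian inner product on $L$, and $[\ell:0]$ means $[w:0]$ for any (equivalently, every) nonzero $w\in\ell$, which is well-defined because $[\lambda w:0]=[w:0]$. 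The Hermitian structure is essential: with only the linear structure one cannot construct a continuous extension of the naive formula $v\mapsto [v:1]$ across the zero section.

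The main work is to verify continuity at the two strata where the formula changes. Near the zero section, after choosing a local trivialization of $L$ in which the unit circle bundle has a section $e$, any nearby point of $E(L)$ has the form $(\ell',t\,e(\ell'))$ with $\ell'\to\ell$ and $t\in\C$, $t\to 0$; then
\[ \Phi(\ell',t\,e(\ell')) = [t\,e(\ell'):|t|^2] = [e(\ell'):\bar t], \]
which tends to $[e(\ell):0]=[\ell:0]=\Phi(\ell,0)$. Near $\infty$, we have $|v|\to\infty$, and $[v:|v|^2]=[v/|v|^2:1]$ with $|v/|v|^2|=1/|v|\to 0$, so the image tends to $[0:1]=\Phi(\infty)$.

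Once continuity is in hand, bijectivity is a direct computation: a point $[w:t]\in P(\C^{r+1})$ with $w,t$ both nonzero has unique preimage $v=\bar t\,w/|w|^2$; points $[w:0]$ with $w\neq 0$ come from the zero section at $\ell=\C w$; and $[0:1]$ is the image of $\infty$. Since both $P(\C^r)^L$ and $P(\C^{r+1})$ are compact Hausdorff, a continuous bijection is automatically a homeomorphism. The compatibility with $\zt_L$ is manifest from the definition: $\Phi\circ\zt_L(\ell)=[\ell:0]$, which is the inclusion $P(\C^r)\hookrightarrow P(\C^{r+1})$ as the hyperplane $\C^r\oplus 0$. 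The most subtle step is the continuity verification on the zero section, which really amounts to understanding the topology of the blow-up $E(L)$: different directions of approach to $(\ell,0)$ correspond to different limiting lines in $P(\C^r)$, and the quadratic formula $[v:|v|^2]$ is chosen precisely so that this directional information is preserved in the projective limit.
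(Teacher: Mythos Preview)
Your construction is correct and in fact produces the same homeomorphism as the paper's. If you write a nonzero $v\in L_\ell$ as $v=zv_0$ with $v_0\in S(\C^r)$ (so $\ell=[v_0]$ and $z\in\C$), your formula becomes $[v:|v|^2]=[zv_0:|z|^2]=[v_0:\ov{z}]$, which is precisely the image under the paper's map $\ov{g}$ induced from $g(v_0,z)=(v_0,\ov{z})/\sqrt{1+|z|^2}$. The difference is only in packaging: the paper first identifies $E(L)$ with $(S(\C^r)\times\C)/S^1$ and checks that $g$ is $S^1$-equivariant, whereas you write the map directly on points of $E(L)$ and verify continuity at the zero section by hand; your version has the virtue of making that check explicit. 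One small quibble: your closing sentence about ``different directions of approach to $(\ell,0)$'' giving different limiting lines is not quite right, since $\Phi$ is continuous there and every approach lands at the single point $[\ell:0]$. What you presumably mean is that different \emph{points} $(\ell,0)$ of the zero section map to distinct points $[\ell:0]$, which is exactly what the naive formula $v\mapsto[v:1]$ would fail to achieve.
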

\begin{proof}
 Define $f\:S(\C^r)\tm\C\to E(L)$ by $f(v,z)=(S^1v,zv)$.  If we let
 $S^1$ act on $S(\C^r)\tm\C$ by $u.(v,z)=(uv,\ov{u}z)$ then this gives
 a homeomorphism $(S(\C^r)\tm\C)/S^1\to E(L)$.  Now define
 $g\:S(\C^r)\tm\C\to S(\C^{r+1})$ by 
 \[ g(v,z) = (v,\ov{z})/\sqrt{1+z\ov{z}}. \]
 This is $S^1$-equivariant and so induces a map
 $\ov{g}\:E(L)\to\C P^r$.  This is easily seen to be a homeomorphism
 from $E(L)$ to the complement of a single point in $\C P^r$, so we
 can pass to the one-point compactification to get a homeomorphism
 $(\C P^{r-1})^L\to\C P^r$ as claimed.
\end{proof}

\begin{lemma}\lbl{lem-BC-attach}
 The space $BC_m$ can be identified with $S(\C^\infty)/C_m$.  This has
 a CW structure where the $(2r-1)$-skeleton is $S(\C^r)/C_m$, and the
 $(2r)$-skeleton is the image of $S(\C^r\oplus\R)$ in
 $S(\C^{r+1})/C_m$.  Moreover, the $(2r)$-skeleton is also the cofibre
 of the map $\psi_+\:S(\C^r)_+\to (S(\C^r)/C_m)_+$.
\end{lemma}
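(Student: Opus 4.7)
My plan is to proceed in four stages. First, I will identify $BC_m$ with $S(\C^\infty)/C_m$ by observing that $S(\C^\infty)=\bigcup_r S(\C^r)$ is contractible (each inclusion $S(\C^r)\hookrightarrow S(\C^{r+1})$ is nullhomotopic, because one has enough room in the extra $\C$-factor to slide the sphere to a point) and that $C_m$ acts freely by scalar multiplication, so the quotient is a model for $BC_m$.

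Second, I will set up the filtration $\emptyset=Y_0\subset Z_0\subset Y_1\subset Z_1\subset Y_2\subset\dotsb$ in $S(\C^\infty)/C_m$, where $Y_r=S(\C^r)/C_m$ and $Z_r$ is the image of the composite $S(\C^r\oplus\R)\hookrightarrow S(\C^{r+1})\twoheadrightarrow S(\C^{r+1})/C_m$. Each $Y_r$ is closed of real dimension $2r-1$, and each $Z_r$ is closed of real dimension $2r$, since the projections from the corresponding subspheres are finite-to-one.

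Third, I will analyse the cell structure. The restriction of the quotient map to the equator $S(\C^r)=S(\C^r\oplus 0)\subset S(\C^r\oplus\R)$ is precisely the $m$-fold covering projection $\psi\:S(\C^r)\to Y_r$. Off the equator, the $C_m$-action on $\C^{r+1}$ does not generally preserve the real subspace $\C^r\oplus\R$, so I can analyse the fibres of $S(\C^r\oplus\R)\to Z_r$ directly by solving $\zt t\in\R$ for $t\in\R^\tm$ and $\zt\in C_m$. This shows that the map $S(\C^r\oplus\R)\setminus S(\C^r)\to Z_r\setminus Y_r$ admits explicit local sections, so $Z_r$ is obtained from $Y_r$ by attaching $2r$-cells whose attaching maps factor through $\psi$.

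Fourth, I will verify the cofibre description by exhibiting an explicit pushout square. A closed hemisphere in $S(\C^r\oplus\R)$ is homeomorphic to $D^{2r}$ with boundary $S(\C^r)$; its image in $S(\C^{r+1})/C_m$ realises $Y_r\cup_\psi D^{2r}$ as a subspace of $Z_r$. Combining this pushout with the disjoint basepoint conventions of the $+$-construction, one identifies $Z_r$ with the reduced mapping cone of $\psi_+\:S(\C^r)_+\to(S(\C^r)/C_m)_+$. The main obstacle will be handling the basepoint conventions of the $+$-construction carefully, and managing the interaction of the $C_m$-action with the real subspace $\R\subset\C$, so that the abstract mapping cone matches the concrete subspace $Z_r$ on the nose rather than merely up to homotopy.
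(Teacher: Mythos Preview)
There is a genuine gap in your step four, arising from a parity issue that your own step three should have flagged. When $m$ is odd (in particular for $m=p^k$ with $p>2$, the only case this paper uses), solving $\zeta t\in\R$ for $t\in\R^\times$ and $\zeta\in C_m$ gives only $\zeta=1$, since $-1\notin C_m$. Hence the map $S(\C^r\oplus\R)\setminus S(\C^r)\to Z_r\setminus Y_r$ is a \emph{bijection}, and $Z_r$ carries \emph{two} open $2r$-cells: the upper and lower open hemispheres map injectively to disjoint pieces. But the cofibre of $\psi_+$ is the unreduced mapping cone $Y_r\cup_\psi D^{2r}$, with only one $2r$-cell. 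So your claimed identification of $Z_r$ with that cofibre fails; the cofibre is only the image of a single closed hemisphere, which for odd $m$ is a proper subspace of your $Z_r$. You nearly see this yourself when you write that the hemisphere ``realises $Y_r\cup_\psi D^{2r}$ as a subspace of $Z_r$'', but then overclaim in the next sentence. (This also means the lemma's description of the $(2r)$-skeleton as the image of the full $S(\C^r\oplus\R)$ is slightly imprecise for odd $m$; what matters for the application is the cofibre identification.)

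The paper's proof sidesteps all of this by writing down a single explicit parametrisation
\[
 \phi\:[0,1]\times S(\C^r)\to S(\C^{r+1})/C_m,\qquad
 \phi(t,v)=C_m.\bigl(\cos(\pi t/2)\,v,\;\sin(\pi t/2)\bigr),
\]
whose image is exactly the upper-hemisphere image. One checks directly that $\phi(0,v)=\psi(v)$, that $\phi(1,v)$ is the single point $C_m.(0,1)$, and that $\phi$ is injective for $t\in(0,1)$: the last coordinate $\sin(\pi t/2)>0$ forces any identifying $\zeta\in C_m$ to be real and positive, hence $\zeta=1$. This immediately exhibits the image as the quotient of $[0,1]\times S(\C^r)$ collapsing $\{1\}\times S(\C^r)$ to a point and gluing $\{0\}\times S(\C^r)$ to $Y_r$ via $\psi$, which is precisely the cofibre of $\psi_+$. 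Your argument becomes correct if you replace $Z_r$ by the image of one closed hemisphere throughout; at that point the explicit $\phi$ is simply the cleanest way to verify injectivity and boundary behaviour in one stroke, without separate bookkeeping for local sections and basepoint conventions.
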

\begin{proof}
 It is standard that $S(\C^\infty)$ is contractible and that the
 action of $C_m$ is free so that $S(\C^\infty)/C_m$ is a model for the
 homotopy type $BC_m$.  Now define $\phi\:[0,1]\tm S(\C^r)\to
 S(\C^{r+1})/C$ by 
 \[ \phi(t,v) = C.(\cos(\pi t/2)v,\sin(\pi t)/2). \]
 The image is the $2r$-skeleton of $BC_m$.  Moreover, we have
 $\phi(0,v)=\psi(v)$ and $\phi(1,v)=C.(0,1)$ for all $v$, but
 otherwise $\phi$ is injective.  This allows us to identify the
 $(2r)$-skeleton with the cofibre of $\psi_+$.  
\end{proof}

Recall that the differentials in the AHSS for $BC_m$ are determined by
the skeleta and the attaching maps.  The above lemma identifies the
quotient $\text{skel}^{2r}(BC_m)/\text{skel}^{2r-1}(BC_m)$ with 
$\Sg S(\C^r)$, which gives
\[ E_1^{2r,j-2r} = \tK^j(\Sg S(\C^r)) = \tK^{j-1}(S(\C^r)). \]

\begin{corollary}\lbl{cor-BC-attach}
 Suppose that $a\in\tK^1(S(\C^r)/C_m)$, and let $a_1$ be the
 restriction of $a$ in the group 
 \[ \tK^1(S(\C)/C_m)=\tK^1(\text{skel}^1(BC_m))= E_1^{1,0}. \]
 Note that 
 \[ \psi^*(a)\in\tK^1(S(\C^r)) = E_1^{2r,2-2r}. \]
 Then $a_1$ and $\psi^*(a)$ survive to the page $E_{2r-1}^{**}$ where
 we have $d_{2r-1}(a_1)=\psi^*(a)$.  
\end{corollary}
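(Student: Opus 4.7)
The plan is to use the standard skeletal description of the Atiyah--Hirzebruch spectral sequence and identify the $d_{2r-1}$-differential with the connecting homomorphism of the cofibre sequence supplied by Lemma~\ref{lem-BC-attach}.

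Write $X_p$ for the $p$-skeleton of $BC_m$, so by the lemma $X_1 = S(\C)/C_m$ and $X_{2r-1} = S(\C^r)/C_m$. First I would recall the setup: the AHSS arises from the exact couple with $E_1^{p, q} = \tK^{p+q}(X_p/X_{p-1})$ and boundary maps coming from the cofibre sequences $X_{p-1} \to X_p \to X_p/X_{p-1}$. A general fact about this spectral sequence is that a class in $E_1^{p, q}$ survives to page $E_r$ precisely when it admits a lift to $\tK^{p+q}(X_{p+r-1})$. Hence $a_1 = a|_{X_1} \in \tK^1(X_1) = E_1^{1, 0}$ is a permanent cycle through page $E_{2r-1}$, since $a \in \tK^1(X_{2r-1})$ itself exhibits such a lift.

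Next I would identify $d_{2r-1}(a_1)$. By the same exact-couple framework, $d_{2r-1}(a_1) \in E_{2r-1}^{2r, 2-2r}$ is represented by the image of $a$ under the connecting map
\[ \partial\: \tK^1(X_{2r-1}) \to \tK^2(X_{2r}/X_{2r-1}) \]
of the cofibre sequence $X_{2r-1} \to X_{2r} \to X_{2r}/X_{2r-1}$. Lemma~\ref{lem-BC-attach} identifies $X_{2r}$ with the mapping cone of $\psi_+\: S(\C^r)_+ \to (X_{2r-1})_+$, so $X_{2r}/X_{2r-1}$ is canonically homotopy equivalent to $\Sg S(\C^r)$, and the Puppe sequence for $\psi$ identifies $\partial$ with the composite of $\psi^*\: \tK^1(X_{2r-1}) \to \tK^1(S(\C^r))$ and the suspension isomorphism $\tK^1(S(\C^r)) \simeq \tK^2(\Sg S(\C^r))$. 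This yields $d_{2r-1}(a_1) = \psi^*(a)$. The element $\psi^*(a)$ lies in the image of a connecting map of the exact couple, and so automatically represents a class on page $E_{2r-1}^{2r, 2-2r}$, which is standard in the exact-couple construction of higher AHSS differentials.

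The hard part will be reconciling basepoint and sign conventions. The $+$-construction in $\psi_+$ contributes a split summand in the Puppe sequence that must be tracked carefully, and the suspension isomorphism $\tK^1(S(\C^r)) \simeq \tK^2(\Sg S(\C^r))$ carries a choice of sign that must be matched to the sign convention implicit in the AHSS differential. Since only a universal example is needed to pin this down, the cleanest approach is probably to compare with the tautological bundle over $\CPi$ in the spirit of Lemma~\ref{lem-psi-tM}, where everything can be computed in a free $K^*$-module setting.
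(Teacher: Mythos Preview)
Your proposal is correct and follows exactly the approach the paper intends: the paper's proof is simply ``Just unwind the definitions,'' and what you have written is precisely that unwinding --- the exact-couple description of the AHSS together with the identification of the attaching map from Lemma~\ref{lem-BC-attach}. Your closing worry about signs and basepoints is more caution than is needed here; the corollary is a formal consequence of the spectral-sequence machinery, and the delicate sign bookkeeping is deferred to the explicit computation that follows the corollary in the paper.
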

\begin{proof}
 Just unwind the definitions.
\end{proof}

We now specialise further to the case where $m=p^k$ and $r=p^{nk}$.
The cofibration 
\[ (S(\C^r)/C_m)_+ \xra{\pi_M} \C P^{r-1}_+ \xra{\zt_M}
    (\C P^{r-1})^M \xra{\dl_M} \Sg (S(\C^r)/C_m)_+
\]
gives an exact sequence relating $K^*(S(\C^r)/C_m)$ to the kernel and
cokernel of the map 
\[ \zt_M^* \: \tK^*((\C P^{r-1})^M) = 
    (K^*[x_K]/x_K^r).t(M) \to K^*[x_K]/x_K^r = K^*(\C P^{r-1}).
\]
Here $\zt_M^*(t(M))=e(M)=[p^k](x_K)=x^r=0$.  As $K^*(\C P^{r-1})$ is
in even degrees there are no extension problems, so there is a unique
element $a_K\in K^1(S(\C^r)/C_m)$ such that 
$\dl_M^*(\Sg a)=u^{-1}t(M)\in K^2((\C P^{r-1})^M)$.  We then find
that $K^*(S(\C^r)/C_m)$ is freely generated by $\{1,a_K\}$ as a module
over $K^*(\C P^{r-1})$.  We can perform the same analysis with $r=1$
to see that $(a_K)_1$ corresponds to $a_H\in H^1(BC_m;K^0)$.  We now
want to understand $\psi^*(a_K)\in\tK^1(S(\C^r))$.  We have seen that
there is a commutative diagram 
\begin{center}
 \begin{tikzcd}
  \C P^r = (\C P^{r-1})^L \arrow[rr,"\dl_L"] \arrow[d,"\psi"'] &&
  \Sg S(\C^r)_+ \arrow[d,"\Sg\psi_+"] \\
  (\C P^{r-1})^M \arrow[rr,"\dl_M"'] &&
  \Sg S(\C^r)/C_m.
 \end{tikzcd}
\end{center}
We have $\dl_M^*(a_K)=u^{-1}t(M)$ by the definition of $a_K$, and 
\[ \psi^*(t(M))=\ip{p^k}(x_K)e(L)=[p^k](x_K)=x_K^r, \]
so $\dl_L^*(\Sg\psi_+)^*(a_K)=u^{-1}x_K^r$.  On the other hand,
$\dl_L$ just pinches off the top cell of $\C P^r$ and so induces an
isomorphism $\tK^i(\Sg S(\C^r))\simeq H^{2r}(\C P^r;K^{i-2r})$.  This
identifies $x_K^r$ with $(ux_H)^r$, so we see that
$d_{2r-1}(a_H)=u^{-1}(ux_H)^r$.  This proves
Proposition~\ref{prop-ahss-cyclic}.

\section{Index of notation}
\label{apx-index}

\begin{itemize}
 \item $\tm$ (Convolution product): Definition~\ref{defn-various-products}
 \item $*$ (Harish-Chandra convolution product): Definition~\ref{defn-HC-product}
 \item $\al$ (Spectral sequence operator): Lemma~\ref{lem-divided-diff}
 \item $\al_k\:S[k]_{\text{even}}\to S[k+1]_{\text{odd}}$: Definition~\ref{defn-S-al}
 \item $\gm(V)$ (Galois twist of $V$): Definition~\ref{defn-galois-twist}
 \item $\Gm=\Gal(\bF/F)\simeq\widehat{Z}$: Definition~\ref{defn-F-bar}
 \item $\dl_k$ (Differential on $S[k]$): Definition~\ref{defn-Sk-differential}
 \item $\Tht=(\Z/p^\infty)^n$, $\Tht^*=\Z_p^n$: Definition~\ref{defn-HKR-D}
 \item $[\Tht^*,\CG]$ (Groupoid of functors): Definition~\ref{defn-functor-groupoid}
 \item $\tht_k\:S[k+1]\to H(S[k];\dl_k)$: Proposition~\ref{prop-Sk-homology}
 \item $\pi_0(\CG)$ (Set of isomorphism classes in a groupoid): Definition~\ref{defn-gpd-misc}
 \item $\Phi=\Tht^\#\simeq(\Z/p^\infty)^n$, $\Phi[m]<\Phi$: Definition~\ref{defn-Phi}
 \item $\phi$ (Frobenius automorphism): Definition~\ref{defn-F-bar}
 \item $\phi$ (Spectral sequence operator): Lemma~\ref{lem-divided-diff}
 \item $\psi^k$ (Adams operation): Definition~\ref{defn-adams-op}
 \item $\mu_{q'}(\C)$, $\mu_{p^\infty}(\C)$ (Groups of roots of unity): Proposition~\ref{prop-all-units}
 \item $\mu$, $\nu$ (Spectrum-level Harish-Chandra (co)product): Definition~\ref{defn-HC-mu}
 \item $\rho\:C=GL_1(F(p))\to GL_p(F)=G$: Used temporarily in Section~\ref{sec-ordinary}
 \item $\sg(V)=V\oplus F$: Proposition~\ref{prop-fix-transfer}
 \item $\sg\:\CV_*\tm\CV_*\to\CV_*$: Definition~\ref{defn-various-products}
 \item $\xi_d$ (Universal Brauer character): Definition~\ref{defn-brauer-character}
 \item $A^*=\Hom(A,\Z/p^\infty)$, $A^\#=\Hom(A,\mu_{p^\infty}(\bF))$: Definition~\ref{defn-duals}
 \item $a\in H^1(BGL_1(F))$: Definition~\ref{defn-aH}
 \item $\ann(X)$ (Annihilator): Definition~\ref{defn-ann}
 \item $b_k=b_k^H\in H_{2k}(BGL_1(F))$: Definition~\ref{defn-xH}
 \item $b^{(n)}(s)\in H_*(B\CV(n))\psb{s}$: Definition~\ref{defn-be-series}
 \item $b_i=b_{Ei}\in E_0(BGL_1(\bF))$, $b_i=b_{Ki}\in K_0(BGL_1(\bF))$: Definition~\ref{defn-bE}
 \item $b_k(s)\in S[k]\psb{s}$: Definition~\ref{defn-be-series-alt}
 \item $b_{mi}\in S[k]$: Definition~\ref{defn-Sk}, Remark~\ref{rem-bmi-extended}
 \item $C=GL_1(F(p))$: Used temporarily in Section~\ref{sec-ordinary}
 \item $C=GL_1(F(k))$ (Used temporarily in Section~\ref{sec-indec}): Definition~\ref{defn-Q}
 \item $\CC_k$ (Equivalent to a groupoid of field extensions): Definition~\ref{defn-field-cat}
 \item $\Coset(\HH)$ (Moduli scheme of cosets): Definition~\ref{defn-coset}
 \item $c_k=c_k^H\in H^{2k}(BGL_d(F))$: Definition~\ref{defn-H-gens-T}, Proposition~\ref{prop-HG}
 \item $c_j^{(k)}\in H^{2j}(BGL_d(F(k)))$: Remark~\ref{rem-intermediate-gens}
 \item $c_k=c_{Ek}\in E^0(BU(d))$ Remark~\ref{rem-E-BUd}
 \item $c_k=c_{Ek}\in E^0(BGL_1(\bF)^d)$, $c_k=c_{Kk}\in K^0(BGL_1(\bF)^d)$: Definition~\ref{defn-E-gens-T}
 \item $D$, $D'$ (Integral and rational HKR coefficient rings): Definition~\ref{defn-HKR-D}
 \item $(D')^0\CG$, $D'_0\CG$ (Generalised character ring and its dual): Definition~\ref{defn-HKR-functors}
 \item $\Dec_d(E^0(B\CV_*))$ (Module of $\tm$-decomposables): Definition~\ref{defn-Ind-Prim}
 \item $\Div_d^+(\HH)$ (Moduli scheme of effective divisors): Remark~\ref{rem-Div-scheme}
 \item $E$ (Morava $E$-theory spectrum): Definition~\ref{defn-E}
 \item $EM^*_{***}$ (Spectral sequence defined algebraically): Corollary~\ref{cor-model-ahss}
 \item $ET^*_{***}$ (Atiyah-Hirzebruch spectral sequence for $K_*(B\CV_*)$): Definition~\ref{defn-AHSS-ET}
 \item $\euler$ (Euler class for $\bF$-linear representations): Definition~\ref{defn-euler}
 \item $e^{(n)}(s)H_*(B\CV(n))\psb{s}$: Definition~\ref{defn-be-series}
 \item $e_j^{(k)}\in H_{2j+1}(BGL_d(F(k)))$: Remark~\ref{rem-intermediate-gens}
 \item $e_k\in H_{2k+1}(BGL_1(F))$: Definition~\ref{defn-aH}
 \item $e_k(s)\in S[k]\psb{s}$: Definition~\ref{defn-be-series-alt}
 \item $e_{ki}\in S[k]$: Definition~\ref{defn-Sk}
 \item $F$ (Finite field of order $q=1\pmod{p^r}$): Definition~\ref{defn-F}
 \item $F[m]$, $F(k)$, $F(\infty)$ $\bF$ (Extension fields of $F$): Definition~\ref{defn-F-bar}
 \item $\Fix(V)$ (Fixed subspace of a representation): Definition~\ref{defn-Fix}
 \item $\CF_k$ (A groupoid of field extensions): Definition~\ref{defn-field-cat}
 \item $\fix$ (Cannibalistic class): Proposition~\ref{prop-fix}
 \item $f_W(t)$ (Chern polynomial): Definition~\ref{defn-chern-poly}
 \item $G_d$, $\bG_d$, $GW_d$, $\bGW_d$ ($GL_d$ of various rings): Definition~\ref{defn-groups}
 \item $G=GL_p(F)$: Used temporarily in Section~\ref{sec-ordinary}
 \item $G=GL_{p^k}(F)$ (Used temporarily in Section~\ref{sec-indec}): Definition~\ref{defn-IJ}
 \item $\CG(a)=\Aut_{\CG}(a)$: Definition~\ref{defn-gpd-misc}
 \item $H=GL_{p^{k-1}}(F)^p$ (Used temporarily in Section~\ref{sec-indec}): Definition~\ref{defn-IJ}
 \item $H_*(X)$, $H^*(X)$ ((Co)homology with coefficients $\F_p$): Definition~\ref{defn-H-coeffs}
 \item $h_k(x)$ (Weierstrass polynomial): Proposition~\ref{prop-p-series}
 \item $\hom\:\Div^+(\HH)^2\to\aff^1$: Definition~\ref{defn-hom}
 \item $I=\Prim_{p^k}(E^0(B\CV_*))$, $J=\Dec_{p^k}(E^0(B\CV_*))$ (Used temporarily in Section~\ref{sec-indec}): Definition~\ref{defn-IJ}
 \item $I'=D'\ot_{E^0}I$, $J'=D'\ot_{E^0}J$ (Used temporarily in Section~\ref{sec-indec}): Definition~\ref{defn-IJ-prime}
 \item $\bI=I/\mxi I$, $\bJ=J/\mxi J$ (Used temporarily in Section~\ref{sec-indec}): Definition~\ref{defn-IJ-bar}
 \item $\Ind_d(E^0(B\CV_*))$ (Module of $\tm$-indecomposables): Definition~\ref{defn-Ind-Prim}
 \item $\Irr(\Tht^*)=\Irr(\Tht^*;F)$, $\Irr_d(\Tht^*)$, $\Irr(\Tht^*;\bF)$ (Sets of irreducible representations): Example~\ref{eg-HKR-V}
 \item $i\:GL_1(\bF)\to\mu_{q'}(\C)$: Proposition~\ref{prop-all-units}
 \item $i\:W\bF\to\C$: Proposition~\ref{prop-witt-embedding}
 \item $K$ (Morava $K$-theory spectrum): Definition~\ref{defn-E}
 \item $K_a$ (Kernel of a fibration): Definition~\ref{defn-fib-K}
 \item $L\CG=L_{K(n)}\Sgip B\CG$: Definition~\ref{defn-K-loc-cat}
 \item $\CL$, $\bCL$ (Groupoids of lines over $F$ and $\bF$): Definition~\ref{defn-cats}
 \item $m_k$ (Degree of a useful field extension): Remark~\ref{rem-nested-subfields}
 \item $N_d$, $\bN_d$, $NW_d$, $\bNW_d$ (Groups of monomial matrices): Definition~\ref{defn-groups}
 \item $N_k$, $\ov{N}_k$, $N^*_k$, $\CN_k$: Definition~\ref{defn-CN}
 \item $PS(\CV)(t)$ (Poincar\'e series): Definition~\ref{defn-PSV}
 \item $\Prim_d(E^0(B\CV_*))$ (Module of coalgebra primitives): Definition~\ref{defn-Ind-Prim}
 \item $\tQ=E^0\psb{x}/(\ip{p}([m'](t)))$, $Q=\tQ^\Gm$: Definition~\ref{defn-Q}
 \item $q_0$, $q$ (Characteristic and order of the field $F$): Definition~\ref{defn-F}
 \item $R=E^0(BGL_{p^k}(F))$ (Used temporarily in Section~\ref{sec-indec}): Definition~\ref{defn-IJ}
 \item $R'=D'\ot_{E^0}R$ (Used temporarily in Section~\ref{sec-indec}): Definition~\ref{defn-IJ-prime}
 \item $\bR=R/\mxi R$ (Used temporarily in Section~\ref{sec-indec}): Definition~\ref{defn-IJ-bar}
 \item $\Rep(\Tht^*)=\Rep(\Tht^*;F)$, $\Rep_d(\Tht^*)$, $\Rep(\Tht^*;\bF)$ (Sets of representations): Example~\ref{eg-HKR-V}
 \item $r=v_p(q-1)$: Definition~\ref{defn-F}
 \item $S[k]$, $S[\infty]$: Definition~\ref{defn-Sk}
 \item $\CS$ (Category of $K(n)$-local spectra): Definition~\ref{defn-K-loc-cat}
 \item $\soc_{\CG}$, $\soc_{\CG,\CH}$ (Absolute and relative socle generators): Definition~\ref{defn-soc}
 \item $s_i$ (Integer characterised by $i\in\CN_{s_i}$): Definition~\ref{defn-CN}
 \item $s_k=c_{p^k}\in K^0(B\CV_{p^k})$: Definition~\ref{defn-sk}
 \item $T_d=GL_1(F)^d$, $\bT_d=GL_1(\bF)^d$, $TW_d$, $\bTW_d$ (Groups of diagonal matrices): Definition~\ref{defn-groups}
 \item $T=GL_1(F)^p$: Used temporarily in Section~\ref{sec-ordinary}
 \item $T=\Hom(\Z/p^\infty,\mu_{p^\infty}(\bF))$:  Remark~\ref{rem-duals}
 \item $\CV$, $\CV(k)$, $\bCV$ (Groupoids of vector spaces): Definition~\ref{defn-cats}
 \item $v_k\in H_{2k-1}(BGL_d(F))$: Definition~\ref{defn-H-gens-T}
 \item $v_j^{(k)}\in H_{2j-1}(BGL_d(F(k)))$: Remark~\ref{rem-intermediate-gens}
 \item $W$, $WF$, $WF(m)$, $W\bF$ (Witt functor and Witt rings): Definition~\ref{defn-witt}
 \item $X^\perp$ (Inner product annihilator): Definition~\ref{defn-ann}
 \item $\CX$, $\CXL$, $\bCXL$ (Groupoids of finite sets or line bundles): Definition~\ref{defn-cats}
 \item $x=x_H\in H^2(BGL_1(\bF))$: Definition~\ref{defn-xH}
 \item $x_E\in E^0(BGL_1(\bF))$, $x_K\in K^0(BGL_1(\bF))$: Definition~\ref{defn-xE}
\end{itemize}
 
\begin{bibdiv}
\begin{biblist}

\bib{ba:ccc}{article}{
   author={Bakuradze, Malkhaz},
   title={All extensions of $C_2$ by $C_{2^n}\times C_{2^n}$ are good for
   the Morava $K$-theory},
   journal={Hiroshima Math. J.},
   volume={50},
   date={2020},
   number={1},
   pages={1--15},
   issn={0018-2079},
   review={\MR{4074376}},
   doi={10.32917/hmj/1583550012},
}

\bib{baji:mkr}{article}{
   author={Bakuradze, Malkhaz},
   author={Jibladze, Mamuka},
   title={Morava $K$-theory rings for the groups $G_{38},\dots,G_{41}$ of
   order 32},
   journal={J. K-Theory},
   volume={13},
   date={2014},
   number={1},
   pages={171--198},
   issn={1865-2433},
   review={\MR{3177822}},
   doi={10.1017/is013011009jkt245},
}

\bib{bast:cgg}{article}{
   author={Barthel, Tobias},
   author={Stapleton, Nathaniel},
   title={Centralizers in good groups are good},
   journal={Algebr. Geom. Topol.},
   volume={16},
   date={2016},
   number={3},
   pages={1453--1472},
   issn={1472-2747},
   review={\MR{3523046}},
   doi={10.2140/agt.2016.16.1453},
}

\bib{bast:ttm}{article}{
   author={Barthel, Tobias},
   author={Stapleton, Nathaniel},
   title={Transfer ideals and torsion in the Morava $E$-theory of abelian
   groups},
   journal={J. Homotopy Relat. Struct.},
   volume={15},
   date={2020},
   number={2},
   pages={369--375},
   issn={2193-8407},
   review={\MR{4103988}},
   doi={10.1007/s40062-020-00259-z},
}

\bib{cascya:ach}{article}{
   title={Ambidexterity in Chromatic Homotopy Theory}, 
   author={Shachar Carmeli},
   author={Tomer M. Schlank},
   author={Lior Yanovski},
   eprint={arXiv:1811.02057 [math.AT]}
}

\bib{gr:hah}{article}{
  author={Green, James A.},
  title={Hall algebras, hereditary algebras and quantum groups},
  journal={Invent. Math.},
  volume={120},
  date={1995},
  number={2},
  pages={361--377},
  issn={0020-9910},
  review={\MR {1329046}},
  doi={10.1007/BF01241133},
}

\bib{gr:cfg}{article}{
  author={Green, J. A.},
  title={The characters of the finite general linear groups},
  journal={Trans. Amer. Math. Soc.},
  volume={80},
  date={1955},
  pages={402--447},
  issn={0002-9947},
  review={\MR {0072878}},
  doi={10.2307/1992997},
}

\bib{grst:vlc}{article}{
   author={Greenlees, J. P. C.},
   author={Strickland, N. P.},
   title={Varieties and local cohomology for chromatic group cohomology
   rings},
   journal={Topology},
   volume={38},
   date={1999},
   number={5},
   pages={1093--1139},
   issn={0040-9383},
   review={\MR{1688422}},
   doi={10.1016/S0040-9383(98)00048-2},
}

\bib{hokura:ggc}{article}{
   author={Hopkins, Michael J.},
   author={Kuhn, Nicholas J.},
   author={Ravenel, Douglas C.},
   title={Generalized group characters and complex oriented cohomology
   theories},
   journal={J. Amer. Math. Soc.},
   volume={13},
   date={2000},
   number={3},
   pages={553--594},
   issn={0894-0347},
   review={\MR{1758754}},
   doi={10.1090/S0894-0347-00-00332-5},
}

\bib{holu:akl}{webpage}{
  author={Hopkins, Michael~J.},
  author={Lurie, Jacob},
  title={Ambidexterity in $K(n)$-local stable homotopy theory}
  url={https://www.math.ias.edu/~lurie/}
}

\bib{host:mkl}{article}{
   author={Hovey, Mark},
   author={Strickland, Neil P.},
   title={Morava $K$-theories and localisation},
   journal={Mem. Amer. Math. Soc.},
   volume={139},
   date={1999},
   number={666},
   pages={viii+100},
   issn={0065-9266},
   review={\MR{1601906}},
   doi={10.1090/memo/0666},
}

\bib{hu:mcf}{thesis}{
  author={Hutchinson, Samuel},
  title={The Morava Cohomology of Finite General Linear Groups},
  type={Ph.D. Thesis},
  organization={The University of Sheffield},
  date={2018},
}

\bib{krle:ode}{article}{
   author={Kriz, Igor},
   author={Lee, Kevin P.},
   title={Odd-degree elements in the Morava $K(n)$ cohomology of finite
   groups},
   journal={Topology Appl.},
   volume={103},
   date={2000},
   number={3},
   pages={229--241},
   issn={0166-8641},
   review={\MR{1758436}},
   doi={10.1016/S0166-8641(99)00031-0},
}

\bib{lu:drg}{article}{
  author={Lusztig, G.},
  title={On the discrete representations of the general linear groups over a finite field},
  journal={Bull. Amer. Math. Soc.},
  volume={79},
  date={1973},
  pages={550--554},
  issn={0002-9904},
  review={\MR {0315010}},
  doi={10.1090/S0002-9904-1973-13198-2},
}

\bib{ma:met}{thesis}{
  author={Marsh, Samuel},
  title={The Morava $E$-theories of finite general linear groups},
  type={Ph.D. Thesis},
  organization={The University of Sheffield},
  date={2008},
}

\bib{ma:crt}{book}{
   author={Matsumura, Hideyuki},
   title={Commutative ring theory},
   series={Cambridge Studies in Advanced Mathematics},
   volume={8},
   edition={2},
   note={Translated from the Japanese by M. Reid},
   publisher={Cambridge University Press, Cambridge},
   date={1989},
   pages={xiv+320},
   isbn={0-521-36764-6},
   review={\MR{1011461}},
}

\bib{manano:dir}{article}{
   author={Mathew, Akhil},
   author={Naumann, Niko},
   author={Noel, Justin},
   title={Derived induction and restriction theory},
   journal={Geom. Topol.},
   volume={23},
   date={2019},
   number={2},
   pages={541--636},
   issn={1465-3060},
   review={\MR{3939042}},
   doi={10.2140/gt.2019.23.541},
}

\bib{qu:ckt}{article}{
  author={Quillen, Daniel},
  title={On the cohomology and $K$-theory of the general linear groups over a finite field},
  journal={Ann. of Math. (2)},
  volume={96},
  date={1972},
  pages={552--586},
  issn={0003-486X},
  review={\MR {0315016}},
  doi={10.2307/1970825},
}

\bib{re:mic}{article}{
  author={Rezk, Charles},
  title={Modular isogeny complexes},
  journal={Algebr. Geom. Topol.},
  volume={12},
  date={2012},
  number={3},
  pages={1373--1403},
  issn={1472-2747},
  review={\MR {2966690}},
  doi={10.2140/agt.2012.12.1373},
}

\bib{ta:mkc}{article}{
   author={Tanabe, Michimasa},
   title={On Morava $K$-theories of Chevalley groups},
   journal={Amer. J. Math.},
   volume={117},
   date={1995},
   number={1},
   pages={263--278},
   issn={0002-9327},
   review={\MR{1314467}},
   doi={10.2307/2375045},
}

\bib{scst:tps}{article}{
   author={Schlank, Tomer M.},
   author={Stapleton, Nathaniel},
   title={A transchromatic proof of Strickland's theorem},
   journal={Adv. Math.},
   volume={285},
   date={2015},
   pages={1415--1447},
   issn={0001-8708},
   review={\MR{3406531}},
   doi={10.1016/j.aim.2015.07.025},
}

\bib{sc:ktg}{article}{
   author={Schuster, Bj\"{o}rn},
   title={Morava $K$-theory of groups of order 32},
   journal={Algebr. Geom. Topol.},
   volume={11},
   date={2011},
   number={1},
   pages={503--521},
   issn={1472-2747},
   review={\MR{2783236}},
   doi={10.2140/agt.2011.11.503},
}

\bib{sc:kca}{article}{
   author={Schuster, Bj\"{o}rn},
   title={$K(n)$ Chern approximations of some finite groups},
   journal={Algebr. Geom. Topol.},
   volume={12},
   date={2012},
   number={3},
   pages={1695--1720},
   issn={1472-2747},
   review={\MR{2966700}},
   doi={10.2140/agt.2012.12.1695},
}

\bib{st:stg}{article}{
   author={Stapleton, Nathaniel},
   title={Subgroups of $p$-divisible groups and centralizers in symmetric
   groups},
   journal={Trans. Amer. Math. Soc.},
   volume={367},
   date={2015},
   number={5},
   pages={3733--3757},
   issn={0002-9947},
   review={\MR{3314822}},
   doi={10.1090/S0002-9947-2014-06344-7},
}

\bib{st:tgc}{article}{
   author={Stapleton, Nathaniel},
   title={Transchromatic generalized character maps},
   journal={Algebr. Geom. Topol.},
   volume={13},
   date={2013},
   number={1},
   pages={171--203},
   issn={1472-2747},
   review={\MR{3031640}},
   doi={10.2140/agt.2013.13.171},
}

\bib{st:tcm}{article}{
   author={Stapleton, Nathaniel},
   title={Transchromatic twisted character maps},
   journal={J. Homotopy Relat. Struct.},
   volume={10},
   date={2015},
   number={1},
   pages={29--61},
   issn={2193-8407},
   review={\MR{3313634}},
   doi={10.1007/s40062-013-0040-9},
}

\bib{st:fsf}{article}{
   author={Strickland, Neil P.},
   title={Finite subgroups of formal groups},
   journal={J. Pure Appl. Algebra},
   volume={121},
   date={1997},
   number={2},
   pages={161--208},
   issn={0022-4049},
   review={\MR{1473889}},
   doi={10.1016/S0022-4049(96)00113-2},
}

\bib{st:fsfg}{incollection}{
   author={Strickland, Neil P.},
   title={Formal schemes and formal groups},
   conference={
      title={Homotopy invariant algebraic structures},
      address={Baltimore, MD},
      date={1998},
   },
   book={
      series={Contemp. Math.},
      volume={239},
      publisher={Amer. Math. Soc., Providence, RI},
   },
   date={1999},
   pages={263--352},
   review={\MR{1718087}},
   doi={10.1090/conm/239/03608},
}

\bib{st:kld}{article}{
   author={Strickland, N. P.},
   title={$K(N)$-local duality for finite groups and groupoids},
   journal={Topology},
   volume={39},
   date={2000},
   number={4},
   pages={733--772},
   issn={0040-9383},
   review={\MR{1760427}},
   doi={10.1016/S0040-9383(99)00031-2},
}

\bib{st:cag}{article}{
   author={Strickland, N. P.},
   title={Chern approximations for generalised group cohomology},
   journal={Topology},
   volume={40},
   date={2001},
   number={6},
   pages={1167--1216},
   issn={0040-9383},
   review={\MR{1867242}},
   doi={10.1016/S0040-9383(00)00001-X},
}

\bib{st:msg}{article}{
   author={Strickland, N. P.},
   title={Morava $E$-theory of symmetric groups},
   journal={Topology},
   volume={37},
   date={1998},
   number={4},
   pages={757--779},
   issn={0040-9383},
   review={\MR{1607736}},
   doi={10.1016/S0040-9383(97)00054-2},
}

\bib{th:ccc}{book}{
   author={Thomas, C. B.},
   title={Characteristic classes and the cohomology of finite groups},
   series={Cambridge Studies in Advanced Mathematics},
   volume={9},
   publisher={Cambridge University Press, Cambridge},
   date={1986},
   pages={xii+129},
   isbn={0-521-25661-5},
   review={\MR{878978}},
}

\bib{to:hpg}{article}{
   author={Torii, Takeshi},
   title={HKR characters, $p$-divisible groups and the generalized Chern
   character},
   journal={Trans. Amer. Math. Soc.},
   volume={362},
   date={2010},
   number={11},
   pages={6159--6181},
   issn={0002-9947},
   review={\MR{2661512}},
   doi={10.1090/S0002-9947-2010-05194-3},
}


\end{biblist}
\end{bibdiv}

\end{document}